\theoremstyle{theorem}
\newtheorem{satz}{Theorem}[section]
  \newtheorem{lemma}[satz]{Lemma}
  \newtheorem{kor}[satz]{Corollary}
  \newtheorem{thmx}{Theorem}
  \newtheorem{prop}[satz]{Proposition}
  \theoremstyle{definition}
 \newtheorem{defi}[satz]{Definition}
  \newtheorem{bem}[satz]{Remark}
\newcommand{\norm}[1]{\lVert#1\rVert}   
\newcommand{\betrag}[1]{\lvert#1\rvert}
\newcommand{\KK}{\mathrm{KK}}
\newcommand{\RKK}{\mathcal{R}\mathrm{KK}}
\newcommand{\K}{\mathrm K}
\newcommand{\EE}{\mathbb E}
\newcommand{\RR}{\mathbb R}
\newcommand{\CC}{\mathbb C}
\newcommand{\NN}{\mathbb N}
\newcommand{\ZZ}{\mathbb Z}
\newcommand{\lk}{\langle}
\newcommand{\rk}{\rangle}
\title[A Going-Down Principle for ample groupoids and the Baum-Connes conjecture]{A Going-Down Principle for ample groupoids and the Baum-Connes conjecture}
\author{Christian B\"onicke}
\address{Mathematisches Institut der WWU M\"unster,
	\newline Einsteinstrasse 62, 48149 M\"unster, Germany}
\email{christian.bonicke@glasgow.ac.uk}
\curraddr{School of Mathematics and Statistics, University of Glasgow,
	\newline University Gardens, Glasgow, G12 8QQ, UK}
\thanks{Supported by Deutsche Forschungsgemeinschaft (SFB 878).}
\date{\today}
\subjclass[2010]{19K35, 22A22, 46L80}
\keywords{Ample groupoids, Baum-Connes conjecture, KK-theory}
\begin{document}
\maketitle

\begin{abstract}
We study a Going-Down (or restriction) principle for ample groupoids and its applications. The Going-Down principle for locally compact groups was developed by Chabert, Echterhoff and Oyono-Oyono and allows to study certain functors, that arise in the context of the topological K-theory of a locally compact group, in terms of their restrictions to compact subgroups.
We extend this principle to the class of ample Hausdorff groupoids using Le Gall's groupoid equivariant version of Kasparov's bivariant $\KK$-theory. Moreover, we provide an application to the Baum-Connes conjecture for ample groupoids which are strongly amenable at infinity. This result in turn is then used to relate the Baum-Connes conjecture for an ample groupoid group bundle which is strongly amenable at infinity to the Baum-Connes conjecture for the fibres.
\end{abstract}

\tableofcontents
\section{Introduction}
One important step in the study of $\mathrm{C}^*$-algebras is the computation of its $\K$-theory. This is a notoriously difficult problem, especially for group $\mathrm{C}^*$-algebras and crossed products. Baum, Connes, and Higson present in \cite{MR1292018} a general method to attack this problem:

If $G$ is a locally compact, second countable group and $A$ is a $\mathrm{C}^*$-algebra equipped with a strongly continuous action of $G$ by $\ast$-auto\-morphisms, the topological $\K$-theory of $G$ with coefficients in $A$ is defined as
$$\K_*^{\text{top}}(G;A):=\lim\limits_{X\subseteq \mathcal{E}(G)}\KK^G_*(C_0(X),A),$$
where $X$ runs through the $G$-compact (i.e. the quotient space $X/G$ is compact) subspaces of a universal proper $G$-space $\mathcal{E}(G)$ ordered by inclusion, and $\KK_*^G$ denotes Kasparov's equivariant $\KK$-theory.
The authors in \cite{MR1292018} then proceed to construct a group homomorphism
$$\mu_A:\K^{\mathrm{top}}_*(G;A)\rightarrow \K_*(A\rtimes_rG).$$
This map is usually called the \textit{assembly map} and the Baum-Connes conjecture asserts, that $\mu_A$ is an isomorphism. By work of Higson, Lafforgue and Skandalis (see \cite{MR1911663}) it is now known, that the conjecture is false in this generality. It has however been proven to be true for large classes of groups including the class of amenable groups and the conjecture with trivial coefficients (i.e. $A=\CC$) is still open.

In his thesis \cite{LeGall}, Le Gall introduced a groupoid equivariant version of Kasparov's $\KK$-theory, which was subsequently used to define a version of the Baum-Connes assembly map for groupoids (see \cite{MR1798599} for a survey). The question, when this map is an isomorphism has been investigated by Tu in \cite{Tu98,Tu99}. He proves that the Baum-Connes conjecture is true for every locally compact, $\sigma$-compact Hausdorff groupoid acting continuously and isometrically
on a continuous field of affine Euclidean spaces. The latter condition is fulfilled in particular by all amenable groupoids. On the other hand, the groupoid version of the Baum-Connes conjecture is known to be false even in the case of trivial coefficients (again by results in \cite{MR1911663}).

In the case of locally compact groups, Chabert started in \cite{MR1752299} to study permanence properties of the Baum-Connes conjecture for the case of semi-direct products. In subsequent work of Chabert and Echterhoff (see \cite{MR1836047}) these methods were refined and it was proved that the class of groups satisfying the conjecture is stable under taking subgroups, Cartesian products, and certain group extensions. A similar approach was used in \cite{MR1966758} to prove that the topological $\K$-theory of a transformation groupoid $G\ltimes X$ does not depend on $X$, i.e. that the canonical forgetful map $\K_*^{\text{top}}(G\ltimes X;A)\rightarrow \K_*^{\text{top}}(G;A)$ is an isomorphism. Finally, in \cite{CEO}, the authors formalize the methods used to prove the main results in all of the above mentioned work and abstractly develop the so called Going-Down principle, which allows to analyse certain functors connected to the topological $\K$-theory of a locally compact group in terms of their restrictions to compact subgroups.
The Going-Down principle has turned out to be very useful in the computation of the $\K$-theory of certain $\mathrm{C}^*$-algebras, for example crossed products of the irrational rotation algebras by finite subgroups of $SL_2(\ZZ)$ (see \cite{MR2608195}) or the $\mathrm{C}^*$-algebras associated to a large class of semigroups (see \cite{MR3094498,MR3323201}).

The starting point of this paper is the work of Tu, who proves in \cite{Tu12} an analogue of the main result of \cite{MR1966758} for second countable, locally compact, étale groupoids and uses it to show that satisfying the Baum-Connes conjecture passes to subgroupoids (within this class). Inspired by the ideas in this work we set out to develop a general Going-Down principle in the spirit of \cite{CEO} for the class of ample groupoids. Although it seems plausible that similar results can be obtained for all étale groupoids, there are a lot of topological difficulties yet to overcome. In the case of ample groupoids however these difficulties disappear and the theory can be developed beautifully. Many interesting examples studied in the literature fall naturally into the class of ample groupoids.

Let us summarize our main results and simultaneously give an overview of how this paper is organized.

After reviewing some preliminaries on groupoids and proper actions we focus on a detailed study of induced algebras. One way to look at the induced algebras we are interested in is to use the picture of pullbacks along generalized morphisms of groupoids as developed by Le Gall in \cite{LeGall}. We however chose to develop the theory in analogy to the classical approach in the group case, which seems to be more useful for our purposes. To the best of our knowledge this approach has not been carried out before in the literature.

We then turn to the study of Le Gall's groupoid equivariant version of Kasparov's $\KK$-theory. We prove a generalization of a result of Meyer (see \cite{Meyer00equivariantkasparov}) on when the operator in an equivariant Kasparov triple can be chosen to be equivariant with respect to the action of the groupoid. We then proceed to prove one of the main technical ingredients in the proof of the Going-Down principle. It says that a canonically defined compression homomorphism $\textsf{comp}_H^G$ is an isomorphism:
\begin{thmx}(see Theorem \ref{CompressionIsomorphism})
	Let $G$ be an étale, locally compact Hausdorff groupoid with a clopen, proper subgroupoid $H\subseteq G$. Let $X:=G_{H^{(0)}}$. If $A$ is an $H$-algebra and $B$ is a $G$-algebra, then
	$$\textsf{comp}_H^G:\KK^G(Ind_H^XA,B)\rightarrow \KK^H(A,B_{\mid H})$$
	is an isomorphism.
\end{thmx}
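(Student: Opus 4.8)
The plan is to construct an explicit inverse to $\textsf{comp}_H^G$ by an induction map $\textsf{ind}_H^G \colon \KK^H(A, B_{\mid H}) \to \KK^G(Ind_H^X A, B)$ and then check that the two composites are the identity. The key geometric input is that, since $H$ is clopen and proper in $G$, the space $X = G_{H^{(0)}}$ (the arrows of $G$ with range in $H^{(0)}$) is a proper $G$-space on the left (via left multiplication) carrying a commuting free and proper $H$-action on the right, and the quotient $X/H$ is identified $G$-equivariantly with a clopen subset of $G^{(0)}$; indeed $C_0(X/H) \cong C_0(H^{(0)})$-ish data sits inside $C_0(G^{(0)})$. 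Concretely $Ind_H^X A$ should be the algebra of $H$-invariant sections of the pullback of $A$ to $X$, which is a $G$-algebra via the left $G$-action on $X$; this is exactly the induced algebra studied in the preliminary sections, so I will freely use its basic properties (functoriality, compatibility with the restriction functor, the natural $\ast$-homomorphism $Ind_H^X A \to A$ obtained from evaluating at the unit section over $H^{(0)}$, etc.).

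**Construction of the comparison maps.** First I would recall that $\textsf{comp}_H^G$ is defined by restricting a $G$-equivariant Kasparov cycle for $(Ind_H^X A, B)$ to $H$ and then composing with the canonical $H$-equivariant $\ast$-homomorphism $A \to (Ind_H^X A)_{\mid H}$ (coming from the inclusion $H^{(0)} \hookrightarrow X$, $h \mapsto r(h)$, which is an $H$-equivariant map and yields restriction of sections). For the inverse, I would build $\textsf{ind}_H^G$ as follows: given an $H$-equivariant cycle $(\mathcal{E}, \phi, F)$ for $(A, B_{\mid H})$, induce it along $X$ to obtain a $G$-equivariant cycle for $(Ind_H^X A, Ind_H^X(B_{\mid H}))$, and then post-compose with a canonical $G$-equivariant $\ast$-homomorphism $Ind_H^X(B_{\mid H}) \to B$. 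The existence of this last map is the heart of the matter: because $H$ is proper and $B$ is already a $G$-algebra (so $B_{\mid H}$ extends to a $G$-action), there is a natural "averaging/evaluation" map $Ind_H^X(B_{\mid H}) \to B$ that is $G$-equivariant — this is the groupoid analogue of the classical isomorphism $Ind_H^G \mathrm{Res}_H^G B \cong C_0(G/H) \otimes B$ composed with the $G$-map $C_0(G/H) \to C_0(G^{(0)})$ induced by the clopen inclusion $G/H \cong (\text{clopen in } G^{(0)})$. I would make the induction functor on Kasparov cycles precise using Le Gall's framework (pullback of equivariant Hilbert modules along the generalized morphism $X$), and note it is well-defined on $\KK$-classes by a homotopy argument.

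**Checking the composites.** Next I would verify $\textsf{comp}_H^G \circ \textsf{ind}_H^G = \id$ and $\textsf{ind}_H^G \circ \textsf{comp}_H^G = \id$. For the first, starting from an $H$-cycle, inducing, mapping into $B$, restricting back to $H$, and composing with $A \to (Ind_H^X A)_{\mid H}$, one should recover the original cycle up to a natural isomorphism of Hilbert modules; this reduces to the statement that the composite $A \to (Ind_H^X A)_{\mid H} \to \big(Ind_H^X(B_{\mid H})\big)_{\mid H} \to B_{\mid H}$ is (homotopic to) the coefficient map one started with, which is a direct computation using that induction followed by restriction-to-$H$-and-evaluation-at-$H^{(0)}$ is the identity on $H$-algebras. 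For the second composite one needs that a $G$-cycle for $(Ind_H^X A, B)$ is reconstructed from its restriction to $H$; here the essential point is a Morita-type identification: the induced $G$-algebra $Ind_H^X((\,\cdot\,)_{\mid H})$ together with the map to $B$ exhibits $\KK^G(Ind_H^X A, B)$ as a retract of $\KK^H(A, B_{\mid H})$, and the properness of $H$ (so that $X \to X/H$ has local sections and $C_0(X)\rtimes H \sim_{\text{Morita}} C_0(X/H)$) forces the retraction to be an isomorphism. I expect the main obstacle to be exactly this last point — namely, proving rigorously that no information is lost upon restricting a $G$-cycle to the clopen proper subgroupoid $H$, which amounts to a descent/Morita argument for the pair $(G, H)$ and will require the previously established properties of induced algebras together with a careful analysis of $G$-equivariant Hilbert $Ind_H^X A$-modules as $H$-equivariant Hilbert $A$-modules induced back up. The equivariant version of Meyer's theorem (cited in the excerpt) on choosing the operator $F$ equivariant will be used to reduce cycles to a standard form where these identifications are transparent.
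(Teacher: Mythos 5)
Your overall strategy (construct an explicit inverse to $\textsf{comp}_H^G$ by inducing cycles) is the same as the paper's, but two of your key steps are genuinely problematic. First, the geometric claim on which your construction of the inverse rests is false: $X/H$ is not $G$-equivariantly a clopen subset of $G^{(0)}$; the map $\tilde r\colon X/H\to G^{(0)}$ induced by the range map is only a local homeomorphism and is far from injective (for $G$ a discrete group and $H$ a proper compact open subgroup, $X/H=G/H$ maps onto the one-point unit space). Consequently the homomorphism you need does not exist: one does have $Ind_H^X(B_{\mid H})\cong C_0(X/H)\otimes_{G^{(0)}}B$ equivariantly, but already for $B=\CC$ a nonzero $G$-equivariant $*$-homomorphism $C_0(G/H)\to\CC$ would be evaluation at a $G$-fixed coset, which exists only if $H=G$; and fibrewise summation over $X^u/H$, while equivariant, is not multiplicative. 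So the step ``induce and then post-compose with a $G$-equivariant $*$-homomorphism $Ind_H^X(B_{\mid H})\to B$'' cannot be carried out. What replaces it (and what the paper does) is to use a correspondence rather than a homomorphism: one builds the inflated cycle directly as a Hilbert $B$-module $\widetilde E$ of $H$-equivariant sections $\xi\colon X\to d^*\mathcal E$ with orbitwise compact support, with $B$-valued inner product $\lk\xi,\eta\rk_B(u)=\sum_{gH\in X^u/H}\beta_g(\lk\xi(g),\eta(g)\rk)$, i.e. the counting-measure pushforward along the étale map $\tilde r$; the Kasparov conditions are then verified on a dense subalgebra of $Ind_H^X A$ generated by $G$-translates of $i_A(A)$.

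Second, the direction $\textsf{inf}_H^G\circ\textsf{comp}_H^G=\mathrm{id}$, which you yourself flag as the main obstacle, is not actually argued: ``the properness of $H$ forces the retraction to be an isomorphism'' restates what must be proved, and the Morita equivalence $C_0(X)\rtimes H\sim C_0(X/H)$ does not by itself reconstruct a $G$-cycle from its compression. In the paper this step needs a specific chain of ideas: replace $A$ by $A\otimes_{H^{(0)}}K(L^2(G^{H^{(0)}}))$ (legitimate by Morita invariance together with compatibility of induction with balanced tensor products), precisely so that the groupoid version of Meyer's automatic-equivariance theorem (which requires the $K(L^2(G))$ factor) allows one to assume the representation is essential and the operator $S$ is $G$-equivariant; then use discreteness of $X^u/H$ (this is where clopenness of $H$ enters) to define a continuous family of orthogonal projections $p_{gH}$, show that $S$ admits a compact perturbation commuting with them, cut down by $p_H$ to obtain a representative of $\textsf{comp}_H^G([F,\Psi,S])$, and finally exhibit an explicit $G$-equivariant unitary $\Theta\colon\widetilde E\to F$ intertwining all the data. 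Without this (or an equivalent descent argument), surjectivity of $\textsf{comp}_H^G$ is missing from your proposal, so as written the proof has a genuine gap in addition to the faulty construction of the inverse map.
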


Section 7 focuses solely on the proof of the Going-Down principle for ample groupoids. For convenience, this paper focuses on the following case of the Going-Down principle, which is the main technical result of this paper:
\begin{thmx}(see Theorem \ref{MainTheorem})
	Suppose $G$ is an ample, locally compact Hausdorff groupoid and $A$ and $B$ are $G$-algebras. Suppose there is an element $x\in \KK^G(A,B)$ such that 
	\[ \KK^H(C(H^{(0)}),A_{\mid H})\stackrel{\cdot\otimes res_H^G(x)}{\longrightarrow}\KK^H(C(H^{(0)}), B_{\mid H})\]
	is an isomorphism for all compact open subgroupoids $H\subseteq G$. Then the Kasparov-product with $x$ induces an isomorphism
	$$ \cdot\otimes x:\K_*^{\mathrm{top}}(G;A)\rightarrow \K_*^{\mathrm{top}}(G;B).$$
\end{thmx}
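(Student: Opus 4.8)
The plan is to transport the Going-Down machinery of \cite{CEO} to the ample-groupoid setting, with the compression isomorphism (Theorem \ref{CompressionIsomorphism}) as the pivotal reduction. The first step is to pass from the colimit to a single space. Writing $\K_*^{\mathrm{top}}(G;A)=\varinjlim_Y\KK^G_*(C_0(Y),A)$ as a filtered colimit over the $G$-compact $G$-invariant subspaces $Y$ of a universal proper $G$-space (which, $G$ being ample, may be chosen totally disconnected), the Kasparov product with $x$ is a morphism between the colimit systems for $A$ and for $B$ compatible with all structure maps; since a filtered colimit of isomorphisms is an isomorphism, it suffices to prove that
\[ \cdot\otimes x\colon \KK^G_*(C_0(Y),A)\longrightarrow \KK^G_*(C_0(Y),B) \]
is an isomorphism for every $G$-compact proper $G$-space $Y$, and every such $Y$ is totally disconnected.

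To organise this I would fix the class $\mathcal E$ of separable $G$-algebras $D$ on which $\cdot\otimes x$ is an isomorphism $\KK^G_*(D,A)\to\KK^G_*(D,B)$, and record that $\mathcal E$ is closed under $\KK^G$-equivalence in the first variable, under equivariantly semisplit extensions (two out of three, via the six-term sequences and the five lemma), under finite direct sums, and under sequential inductive limits (via the Milnor $\varprojlim^1$-sequence for $\KK^G(\,\cdot\,,A)$; in particular $\mathcal E$ is closed under countable direct sums). It then remains to show $C_0(Y)\in\mathcal E$, and for this I would invoke the structure theory of proper actions of ample groupoids developed earlier (building on \cite{Tu12}): $C_0(Y)$ carries a finite filtration by $G$-invariant ideals whose subquotients are $C_0$ of locally closed $G$-invariant pieces of constant isotropy type, and each such piece, after partitioning the totally disconnected orbit space into countably many relatively compact clopen sets, is $G$-equivariantly, up to $\KK^G$-equivalence, a countable direct sum of induced algebras $Ind_H^XC_0(V)$ with $H\subseteq G$ a compact open (hence clopen, proper) subgroupoid and $V$ an $H$-compact $H$-space. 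As open-subset extensions of $C_0$-algebras of totally disconnected spaces are $G$-equivariantly semisplit, the closure properties reduce us to $Ind_H^XC_0(V)\in\mathcal E$.

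Here the compression isomorphism enters: applying Theorem \ref{CompressionIsomorphism} with second variable ranging over $\{A,B\}$, and using its naturality together with its compatibility with restriction of $\KK$-classes, the map $\cdot\otimes x$ on $\KK^G_*(Ind_H^XC_0(V),\,\cdot\,)$ is identified with $\cdot\otimes res_H^G(x)$ on $\KK^H_*(C_0(V),\,\cdot\,)$, the coefficients $A,B$ being replaced by $A_{\mid H},B_{\mid H}$. One is thus reduced to the analogue of the theorem for the compact open groupoid $H$: writing $y:=res_H^G(x)$, deduce from the invertibility of $\cdot\otimes y$ on $\KK^H_*(C(H^{(0)}),\,\cdot\,)$ its invertibility on $\KK^H_*(C_0(V),\,\cdot\,)$ for all $H$-compact $H$-spaces $V$. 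I would run the same formalism inside $H$. The class $\mathcal E_H$ of $H$-algebras on which $\cdot\otimes y$ is invertible contains $C(H^{(0)})$ by hypothesis, and it contains $C_0(H/H')=Ind_{H'}^{X'}C((H')^{(0)})$ for every compact open subgroupoid $H'\subseteq H$ — a second application of the compression isomorphism, combined with the hypothesis of the theorem for the compact open subgroupoid $H'\subseteq G$, places it there. Stratifying the proper $H$-space $V$ by the order and isomorphism type of the (finite) isotropy groups — finitely many possibilities, by properness and $H$-compactness — yields a finite filtration of $C_0(V)$ whose subquotients are, over clopen pieces of the totally disconnected orbit space, fibre bundles with fibre an orbit $H/H'$; such bundles trivialise over clopen covers, so the subquotients are built, up to $\KK^H$-equivalence, by finite direct sums and sequential colimits from algebras $C_0(H/H')\otimes C_0(W)$ with $W$ totally disconnected. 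Since $C_0(H/H')\in\mathcal E_H$ and $\mathcal E_H$ enjoys the same closure properties as $\mathcal E$, we get $C_0(V)\in\mathcal E_H$, which closes the argument.

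I expect the principal obstacle to be the structure theorem invoked in the second step: pinning down the precise orbit-type decomposition of a $G$-compact proper action of an ample groupoid and showing that it expresses $C_0(Y)$, via finitely many equivariantly semisplit extensions, direct sums and sequential limits, in terms of induced algebras $Ind_H^XC_0(V)$ with $H$ compact open. This is exactly the place where total disconnectedness is used decisively and where the topological difficulties present for general étale groupoids disappear; I would expect most of the effort to go into establishing a suitable slice theorem and tube decomposition for proper ample-groupoid actions, or into isolating this from \cite{Tu12} and the preceding sections. A subsidiary but unavoidable point is the exactness bookkeeping: verifying that the extensions of $C_0$-algebras of totally disconnected spaces that occur are $G$-equivariantly semisplit, so that the six-term sequences and the five lemma apply, and controlling the $\varprojlim^1$-terms when passing to sequential inductive limits.
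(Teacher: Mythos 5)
There is a genuine gap, and it sits exactly where you located the "principal obstacle", but it is worse than a missing technical lemma: the reduction you propose cannot work as stated. Your argument rests on the claim that the universal proper $G$-space (and hence every $G$-compact piece $Y$ computing $\K_*^{\mathrm{top}}$) can be taken totally disconnected, and your structure/stratification step uses total disconnectedness of $Y$ and of $Y/G$ decisively (clopen partitions of the orbit space, trivialisation of orbit bundles over clopen covers). This is false for ample groupoids: already for $G=\ZZ$ (an ample groupoid over a point) there is no totally disconnected model of $\mathcal E(G)$, and totally disconnected $G$-compact proper $G$-spaces are not cofinal -- any $\ZZ$-equivariant map from $\RR$ (with the translation action) to a totally disconnected space has connected, hence one-point, image, and that point would be a fixed point of a proper action, a contradiction. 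The same happens for ample group bundles with non-compact fibres, which are precisely the examples the paper cares about. So the class of spaces your orbit-type decomposition applies to does not suffice to compute the colimit defining $\K_*^{\mathrm{top}}(G;\cdot)$, and no slice theorem for \emph{totally disconnected} proper actions can close this hole.

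The missing idea is how the paper deals with positive-dimensional spaces: one first replaces an arbitrary $G$-compact $X\subseteq\mathcal E(G)$ by the concrete spaces $P_K(G)$ of probability measures with support of diameter controlled by a compact open $K\subseteq G$ (using Tu's maps $X\to P_K(G)$ and the universality of $\mathcal E(G)$, a zig-zag argument through the inductive limit shows it suffices to treat the $P_K(G)$); these are geometric realizations of finite-dimensional $G$-simplicial complexes, where only the \emph{vertex set} is totally disconnected. One then inducts on dimension via the skeleta filtration $0\to C_0(U)\to C_0(X)\to C_0(Y)\to 0$, using the six-term sequences, the Five Lemma, and the identification $U\cong X'\times\RR^n$ together with suspension/Bott periodicity to fall back to the zero-dimensional case. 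Only in that zero-dimensional case does one argue as you do at the end: $X=GU\cong G\times_H U$ with $H=\lbrace g: gU\cap U\neq\emptyset\rbrace$ compact open, $C_0(GU)\cong Ind_H^{G}C(H^{(0)})$, the compression isomorphism and its compatibility with Kasparov products convert $\cdot\otimes x$ into $\cdot\otimes res_H^G(x)$ on $\KK^H(C(H^{(0)}),\cdot)$, which is the hypothesis, and a Mayer--Vietoris argument handles finitely many tubes $GU_i$. Note also that the paper never needs your extra layer (invertibility on $\KK^H_*(C_0(V),\cdot)$ for general $H$-compact $H$-spaces $V$, stratified by isotropy type), nor Milnor $\varprojlim^1$ sequences or countable sums: the hypothesis is used only for $C(H^{(0)})$, since the relevant slice $U$ is homeomorphic to $H^{(0)}$.
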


The final section is dedicated to illustrate an application of the Going-Down principle.
It revolves around the recent notion of (strong) amenability at infinity for étale groupoids as introduced by Lassagne in \cite{Lassagne} (see also \cite{Delaroche}). Based on ideas of Higson (see \cite{Higson}) we prove the following result:
\begin{thmx}(see Theorem \ref{Theorem:BC Injectivity  for amenable at infty grpds})
	Let $G$ be a second countable ample groupoid, which is strongly amenable at infinity and let $A$ be a $G$-algebra. Then the Baum-Connes assembly map
	$$\mu_A:\K_*^{\mathrm{top}}(G;A)\rightarrow \K_*(A\rtimes_r G)$$
	is split injective.
\end{thmx}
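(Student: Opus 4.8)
The strategy follows Higson's "dual Dirac" argument for groups, adapted to the groupoid setting with the Going-Down principle of Theorem B replacing the Chabert--Echterhoff machinery. First I would use strong amenability at infinity to produce a unital, $G$-equivariant embedding of $C(G^{(0)})$ into a "sufficiently amenable" $G$-algebra: concretely, strong amenability at infinity yields a compact $G$-space $Y$ with a continuous $G$-equivariant map $G^{(0)}\to Y$ such that $G\ltimes Y$ is amenable (equivalently a suitable coefficient algebra $C(Y)$ on which the $G$-action is amenable). One then forms the $G$-algebra $D:=C(Y)$ (or a $C_0$-version if one prefers a proper model); the point is that $G\ltimes D$ is amenable, so by Tu's theorem the Baum--Connes conjecture holds for $G$ with all coefficients of the form $D\otimes(-)$, and more importantly the forgetful/restriction maps behave well.

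The second step is to show that the unital inclusion $\iota\colon C(G^{(0)})\hookrightarrow D$ induces an isomorphism on topological $K$-theory with $A$-twisted coefficients for \emph{every} $G$-algebra $A$; that is, $\cdot\otimes[\iota]\colon \K_*^{\mathrm{top}}(G;A)\to \K_*^{\mathrm{top}}(G;D\otimes A)$ is an isomorphism. This is exactly where Theorem B enters: it suffices to check that for every compact open subgroupoid $H\subseteq G$ the map
\[
\KK^H\bigl(C(H^{(0)}),A_{\mid H}\bigr)\xrightarrow{\ \cdot\otimes res_H^G[\iota]\ }\KK^H\bigl(C(H^{(0)}),(D\otimes A)_{\mid H}\bigr)
\]
is an isomorphism. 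Since $H$ is compact open, $\KK^H$-theory with coefficients is computed (after suitable identifications, e.g.\ via Theorem A / a Green--Julg type statement for the compact groupoid $H$) in terms of the fibrewise stabilizer actions, and the restriction of the amenable action to $H$ is still amenable; over a compact groupoid an amenable action with a fixed point in each fibre (which $\iota$ provides) gives a $G$-equivariantly contractible-to-$C(H^{(0)})$ situation, so $res_H^G[\iota]$ is invertible in $\KK^H$. I would spell this out by checking that $D_{\mid H}$ is $H$-equivariantly homotopy equivalent to $C(H^{(0)})$, using that amenable actions of compact groupoids admit equivariant means; the unit map and this homotopy are mutually inverse in $\KK^H\bigl(C(H^{(0)}),-\bigr)$.

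With the isomorphism on the left-hand sides of the assembly map in hand, the third step is the standard diagram chase: naturality of the assembly map $\mu$ with respect to the coefficient morphism $\iota$ gives a commuting square relating $\mu_A$ and $\mu_{D\otimes A}$, the left vertical arrow $\cdot\otimes[\iota]$ is an isomorphism by step two, and $\mu_{D\otimes A}$ is an isomorphism by Tu's theorem because $G\ltimes(D\otimes A)$ is amenable (amenability of the $G$-action on $D$ forces the action on $D\otimes A$ to be amenable). Hence $\mu_A$ is (split) injective, the splitting coming from the inverse of $\mu_{D\otimes A}$ composed with the $K$-theory map induced by a conditional-expectation-type $G$-equivariant completely positive splitting of $\iota$ on the reduced crossed product level; I would construct the latter from the invariant mean furnished by amenability at infinity, exactly as in Higson's original argument. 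The main obstacle I anticipate is \textbf{step two}, and within it the precise bookkeeping of how strong (as opposed to mere) amenability at infinity guarantees that the restriction of the coefficient algebra $D$ to every compact open subgroupoid is $H$-equivariantly $\KK$-trivial; this requires a careful treatment of amenable actions of ample compact-open groupoids and the interaction of the universal proper $G$-space with these restrictions, and is the technical heart where the ample hypothesis is genuinely used to avoid the topological pathologies mentioned in the introduction.
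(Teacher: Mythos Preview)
Your overall architecture matches the paper's proof exactly: produce the $G$-algebra $C_0(Y)$ from (strong) amenability at infinity, use the Going-Down principle (Theorem~B) to show that the coefficient change $A\to A\otimes_{G^{(0)}}C_0(Y)$ is an isomorphism on topological $K$-theory, and then use Tu's theorem on the amenable groupoid $G\ltimes Y$ for the right-hand side of the commuting square. The splitting is then the obvious diagram chase $\sigma_A=p_*^{-1}\circ\mu_{A\otimes C_0(Y)}^{-1}\circ(p\rtimes_rG)_*$; no conditional expectation or completely positive machinery is needed.

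However, your execution of the local step contains a genuine misunderstanding. Strong amenability at infinity does \emph{not} give a $G$-equivariant section $G^{(0)}\to Y$; it gives only a continuous section $s$, explicitly not assumed equivariant. Consequently there is no ``fixed point in each fibre'' and your appeal to ``amenable actions of compact groupoids admit equivariant means'' is misplaced: amenability of $G\ltimes Y$ plays no role whatsoever in verifying the Going-Down hypothesis. What the paper actually does is first replace $Y$ (using results of Anantharaman-Delaroche and Lassagne) by a model whose fibres $Y_u$ are convex and on which $G$ acts by affine transformations. Then, for a compact open subgroupoid $H$, one uses properness of $H$ to obtain a cutoff function $c$ and \emph{averages} the non-equivariant section:
\[
\tilde s(u)=\sum_{k\in H^u} c(d(k))\,k\cdot s(d(k)).
\]
Convexity ensures $\tilde s(u)\in Y_u$, and the affine action makes the straight-line homotopy $\tilde s\circ p\simeq \mathrm{id}_{Y_H}$ equivariant. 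This is Proposition~\ref{contractible} in the paper, and it is precisely here that both the ``strongly'' in strongly amenable at infinity (the continuous section) and the ampleness (compact \emph{open} subgroupoids, hence proper with cutoff) are used. Your proposal identifies the right difficulty but attributes its resolution to the wrong ingredient.
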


The counterexamples to the Baum-Connes conjecture for groupoids presented in \cite{MR1911663} are in fact ample groupoid group bundles. Consequently, it is in turn very natural to ask when such a group bundle does satisfy the Baum-Connes conjecture.
As an application of our results we are able to relate the Baum-Connes conjecture for such a bundle to the Baum-Connes conjecture for each of the fibre groups. More precisely, we prove the following:
\begin{thmx}(see Theorem \ref{Thm:BC for group bundles})
	Let $G$ be a second countable ample group bundle, which is strongly amenable at infinity. Suppose $A$ is a $G$-algebra such that the associated $C^*$-bundle is continuous, and $G_u^u$ satisfies the Baum-Connes conjecture with coefficients in $A_u$ for all $u\in G^{(0)}$. Then $G$ satisfies the Baum-Connes conjecture with coefficients in $A$. 
\end{thmx}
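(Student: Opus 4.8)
The plan is to obtain injectivity of $\mu_A$ directly from the split-injectivity statement of Theorem~\ref{Theorem:BC Injectivity  for amenable at infty grpds}, and to deduce surjectivity by combining the Going-Down principle with the fact that the unit space of an ample groupoid is zero-dimensional, exploiting that a group bundle and its reduced crossed products are continuous fields over the unit space $X:=G^{(0)}$. Since $G$ is second countable, ample and strongly amenable at infinity, $\mu_A$ is split injective by Theorem~\ref{Theorem:BC Injectivity  for amenable at infty grpds}; so only surjectivity remains to be shown.

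\textbf{A Going-Down reduction.} From the proof of Theorem~\ref{Theorem:BC Injectivity  for amenable at infty grpds} one extracts a proper $G$-algebra $\mathcal{P}$ and a dual-Dirac element $\eta\in\KK^G(C_0(X),\mathcal{P})$ whose restriction $res_H^G(\eta)$ to every compact open subgroupoid $H\subseteq G$ is a $\KK^H$-equivalence --- this is the groupoid counterpart of the fact that the $\gamma$-element of a group restricts to the unit on compact subgroups, and reflects that over a compact (hence proper) groupoid the relevant Dirac and dual-Dirac elements are mutually inverse. Put $B:=\mathcal{P}\otimes_{C_0(X)}A$, which is again a proper $G$-algebra, and $x:=\eta\otimes_{C_0(X)}1_A\in\KK^G(A,B)$. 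Naturality of the assembly map yields a commuting square
\[
\begin{array}{ccc}
\K_*^{\mathrm{top}}(G;A) & \longrightarrow & \K_*^{\mathrm{top}}(G;B)\\
\downarrow & & \downarrow\\
\K_*(A\rtimes_r G) & \longrightarrow & \K_*(B\rtimes_r G)
\end{array}
\]
in which the horizontal maps are the Kasparov products with $x$ and with its reduced descent $j_G^r(x)$, and the vertical maps are $\mu_A$ and $\mu_B$. Here $\mu_B$ is an isomorphism because $B$ is a proper coefficient algebra, and since $res_H^G(x)=res_H^G(\eta)\otimes 1_{A_{\mid H}}$ is a $\KK^H$-equivalence for every compact open $H\subseteq G$, Theorem~\ref{MainTheorem} applies and shows that the top horizontal map is an isomorphism. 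It follows that $\mu_A$ is an isomorphism if and only if $\cdot\otimes j_G^r(x)\colon\K_*(A\rtimes_r G)\to\K_*(B\rtimes_r G)$ is an isomorphism.

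\textbf{Checking the crossed-product map fibrewise.} At this point I use that $G$ is a group bundle over $X$. Strong amenability at infinity makes $G$ exact, so --- using that the $C^*$-bundle associated to $A$, and hence to $B$, is continuous --- both $A\rtimes_r G$ and $B\rtimes_r G$ are continuous $C_0(X)$-algebras with fibres $A_u\rtimes_r G_u^u$ and $(\mathcal{P}_u\otimes A_u)\rtimes_r G_u^u$ over $u\in X$; moreover $j_G^r(x)$ is $C_0(X)$-linear, i.e.\ defines a class in $\RKK(X;A\rtimes_r G,B\rtimes_r G)$, whose fibre over $u$ is $j_{G_u^u}^r(\eta_u\otimes 1_{A_u})$ for the fibre $\eta_u\in\KK^{G_u^u}(\CC,\mathcal{P}_u)$ of the dual-Dirac element. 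Applying the reduction of the previous step to the discrete group $G_u^u$ --- whose finite subgroups $K$ are proper, so that the restriction of $\eta_u$ to $K$ is again a $\KK^K$-equivalence, and for which $\mu_{A_u}^{G_u^u}$ is an isomorphism by hypothesis --- shows that each fibre map $\cdot\otimes j_{G_u^u}^r(\eta_u\otimes 1_{A_u})$ is an isomorphism. Finally, since $G$ is ample the base $X$ is a second countable, locally compact, Hausdorff, totally disconnected, hence zero-dimensional space, and over such a base a $C_0(X)$-linear element of $\RKK(X;-,-)$ between continuous $C_0(X)$-algebras which is fibrewise a $\KK$-equivalence is itself a $\KK$-equivalence. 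Applied to $j_G^r(x)$ this gives that $\cdot\otimes j_G^r(x)$ is an isomorphism, so $\mu_A$ is an isomorphism; together with split injectivity this proves that $G$ satisfies the Baum-Connes conjecture with coefficients in $A$.

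\textbf{The main obstacle.} The heart of the matter is the passage from fibrewise to global in the last step. One must show that the set of points of $X$ at which the fibre of $j_G^r(x)$ is invertible is open --- an openness property for invertibility in $\RKK(X;-,-)$ --- and then glue the fibrewise inverses over a clopen exhaustion of the (possibly non-compact) space $X$, keeping track of a potential $\lim^1$-obstruction. This requires real control over the continuity of the fields $A\rtimes_r G$ and $B\rtimes_r G$, which is precisely where exactness (supplied by strong amenability at infinity) and the continuity hypothesis on $A$ are used, and it rests on the compatibility of the reduced descent and of reduced crossed products with restriction to the fibres of the bundle. By contrast, the two preparatory reductions --- to a proper coefficient algebra by means of the dual-Dirac element, and from the topological side to compact open subgroupoids via Theorem~\ref{MainTheorem} --- are formal once the cited properties of $\eta$ and the Baum-Connes conjecture for proper coefficients are available.
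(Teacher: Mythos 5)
Your overall strategy (replace $A$ by $B:=A\otimes_{G^{(0)}}C_0(Y)$ via an element $x\in\KK^G(A,B)$, use Theorem \ref{MainTheorem} to see that $\cdot\otimes x$ is an isomorphism on $\K^{\mathrm{top}}_*$, and then reduce surjectivity of $\mu_A$ to invertibility of $\cdot\otimes j_G(x)$ on $\K_*$ of the crossed products, checked fibrewise over the unit space) is reasonable in outline, but the step you yourself call ``the main obstacle'' is a genuine gap rather than a technicality. There is no available local--global principle asserting that an element of $\RKK(G^{(0)};A\rtimes_rG,B\rtimes_rG)$ between continuous fields over a zero-dimensional, locally compact (possibly non-compact) base which is fibrewise a $\KK$-equivalence --- or even fibrewise a $\K$-theory isomorphism --- is globally one. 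The known theorems of this type (Dadarlat's fibrewise $\KK$-equivalence results) require nuclear fields over a compact finite-dimensional base; here the fibres are reduced crossed products $A_u\rtimes_rG_u^u$ by groups that are merely exact/amenable at infinity, so nuclearity fails in general, and $G^{(0)}$ need not be compact. So the surjectivity argument does not close as written. Two smaller inaccuracies: in the proof of the split-injectivity theorem only the anchor map $p\colon Y\to G^{(0)}$ is a proper \emph{map}; the $G$-action on $Y$ is not proper, so $C_0(Y)$ is not a proper $G$-algebra, and $\mu_{A\otimes C_0(Y)}$ is an isomorphism because $G\ltimes Y$ is amenable (Tu) combined with \cite[Lemma~4.1]{STY02}, not because of properness. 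Moreover, Proposition \ref{contractible} only gives that $res_H^G(x)$ induces isomorphisms on $\KK^H(C(H^{(0)}),-)$ --- enough for Theorem \ref{MainTheorem}, but weaker than the $\KK^H$-equivalence you assert.

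The paper's proof of Theorem \ref{Thm:BC for group bundles} is arranged precisely to avoid inverting any $\KK$- or $\RKK$-element globally. It sets $\gamma_A:=\mu_A\circ\sigma_A$, an idempotent on $\K_*(A\rtimes_rG)$ whose image equals the image of $\mu_A$, so surjectivity becomes the vanishing of $(1-\gamma_A)\K_*(A\rtimes_rG)$. Lemma \ref{Lem:GammaElements are natural} gives $q_{u,*}\circ\gamma_A=\gamma_{A_u}\circ q_{u,*}$, so for a fixed class $z$ in that complement all fibre restrictions $q_{u,*}(z)$ vanish by the hypothesis on the groups $G_u^u$; continuity of the field $A\rtimes_rG$ (Proposition \ref{Prop:CrossedProductsByGroupBundles}, where exactness and the continuity of $\mathcal{A}$ enter) together with \cite[Lemma~3.4]{MR2010742} then upgrades pointwise vanishing to vanishing over a compact open neighbourhood, and total disconnectedness yields a clopen partition of $G^{(0)}$ over which $A\rtimes_rG$ is a $c_0$-direct sum, so additivity of $\K$-theory forces $z=0$. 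In other words, the only pointwise-to-local input needed is for a single $\K$-theory class, which is exactly what continuity of the field supplies; your route would need the same input in the far stronger form of a fibrewise-to-global invertibility principle. To salvage your approach you would either have to prove such a principle in this non-nuclear, non-compact setting, or replace the ``invert $j_G(x)$'' step by the paper's vanishing argument for $(1-\gamma_A)$-classes.
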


Given the length of the current version of this paper further applications will appear in separate articles. One is a joint work with Clément Dell'Aiera on the Künneth formula for crossed products by ample groupoids \cite{Preprint2} and the second will deal with the $\K$-theory of twisted groupoid $\mathrm{C}^*$-algebras (see \cite{Preprint1}).

\section{Preliminaries on groupoids and proper actions}
Recall, that a \textit{groupoid} is a set $G$ together with a subset $G^{(2)}\subseteq G\times G$, called the set of \textit{composable pairs,} a \textit{product} map $(g,h)\mapsto gh$ from $G^{(2)}$ to $G$ and an \textit{inverse} map $g\mapsto g^{-1}$ from $G$ onto $G$, such that the following hold:
	\begin{enumerate}
		\item The product is associative: If $(g_1,g_2),(g_2,g_3)\in G^{(2)}$ for some $g_1,g_2,g_3\in G$, then we also have $(g_1g_2,g_3),(g_1,g_2g_3)\in G^{(2)}$ and 
		\begin{equation*}
		(g_1g_2)g_3=g_1(g_2g_3).
		\end{equation*}
		\item The inverse map is involutive, i.e. $(g^{-1})^{-1}=g$ for all $g\in G$.
		\item $(g,g^{-1})\in G^{(2)}$ for all $g\in G$ and if $(g,h)\in G^{(2)}$, then 
		\begin{equation*}
		g^{-1}(gh)=h \ \textup{ and }\ (gh)h^{-1}=g.
		\end{equation*}
	\end{enumerate}
The fact that multiplication is partially defined implies that multiple elements may act as (partial) units:
	The set $G^{(0)}:=\lbrace g\in G\mid g=g^{-1}=g^2\rbrace$ is called the \textit{set of units} in $G$. There are canonical maps $d:G\rightarrow G^{(0)}$ given by $d(g)=g^{-1}g$ and $r:G\rightarrow G^{(0)}$ given by $r(g)=gg^{-1}$, called the \textit{domain} and \textit{range} \textit{map} respectively.

For subsets $A,B\in G^{(0)}$ we will write $G_A:=d^{-1}(A)$, $G^B:=r^{-1}(B)$ and $G_A^B:=G_A\cap G^B$. If $A$ (and/or $B$) consists just of a single unit $u\in G^{(0)}$ we will omit the braces (e.g.: we will write $G^u:=r^{-1}(\lbrace u\rbrace)$).

In this paper we will be concerned with topological groupoids:
We say that $G$ is a locally compact Hausdorff groupoid, if $G$ is a groupoid, which is equipped with a locally compact Hausdorff topology, such that the multiplication and inversion map are continuous. The fact that $G$ is Hausdorff ensures that the unit space $G^{(0)}$ is closed in $G$.

We will mainly deal with étale groupoids. Recall, that a locally compact groupoid is called \textit{étale}, if $d:G\rightarrow G$ is a local homeomorphism, i.e. every point $g\in G$ has an open neighbourhood $U\subseteq G$, such that $d(U)$ is open in $G$ and $d_{\mid U}:U\rightarrow d(U)$ is a homeomorphism.
It follows easily from the definition that for an étale groupoid $G$
the unit space $G^{(0)}$ is open in $G$ and for each $u\in G^{(0)}$ the sets $G^u$ and $G_u$ are discrete (in the subspace topology).

An \textit{open bisection} in a locally compact Hausdorff groupoid $G$ is an open subset $U\subseteq G$ such that the domain map $d$ and the range map $r$ are homeomorphisms onto open subsets of $G$ respectively. The set of all open bisections will be denoted by $G^{op}$.
It is well-known, that $G$ is étale if and only if $G^{op}$ contains a basis for the topology of $G$.

One of the most powerful tools in the study of locally compact groups is the existence of the Haar measure. There is an analogous notion for groupoids. Recall, that a (\textit{left}) \textit{Haar system} for a locally compact Hausdorff groupoid $G$ is a collection $(\lambda^u)_{u\in G^{(0)}}$ of positive regular Borel measures on $G$ such that the following hold:
	\begin{enumerate}
		\item The support of each $\lambda^u$ is $G^u$.
		\item \label{Def:HaarSystem:Continuity} For any $f\in C_c(G)$ the function $\lambda(f):G^{(0)}\rightarrow \CC$, given by 
		\begin{equation*}
		\lambda(f)(u):=\int\limits_{G^u}f\ d\lambda^u
		\end{equation*}
	is continuous (and hence belongs to $C_c(G^{(0)})$).
		\item \label{Def:HaarSystem:Invariance} For any $g\in G$ and $f\in C_c(G)$ we have the equation
		\begin{equation*}
		\int\limits_{G^{d(g)}} f(gh)d\lambda^{d(g)}(h)=\int\limits_{G^{r(g)}}f(h)d\lambda^{r(g)}(h).
		\end{equation*}
	\end{enumerate}

In the case of a locally compact group the above definition reduces to the definition of (the) Haar measure. One should note that in contrast to the group case, locally compact groupoids neither necessarily admit a Haar measure (see \cite{MR813822} for a counterexample), nor is it unique.

As we have $(G^u)^{-1}=G_u$ and the inversion map is a homeomorphism from $G$ onto itself, we associate with $\lambda^u$ the measure $\lambda_u:=(\lambda^u)^{-1}$ on $G_u$, given by $\lambda_u(A)=\lambda^u(A^{-1})$ for a Borel subset $A\subseteq G_u$.
Consequently, we get the formula
\begin{equation*}
\int\limits_{G_u}f(g)d\lambda_u(g)=\int\limits_{G^u}f(g^{-1})d\lambda^u(g).
\end{equation*}

The existence of a Haar system on a locally compact groupoid $G$ has strong topological consequences. Indeed Renault shows in \cite[Proposition~2.4]{Renault1980}, that whenever $G$ admits a Haar system, then the range and the domain map are necessarily open maps.

The domain and range maps being open is reminiscent of étale groupoids, which always have this property. Indeed, every étale group\-oid admits a particularly nice canonical Haar system given by the family of counting measures (see \cite[Proposition~2.2.5]{Paterson1999} for a proof).

\textbf{Convention: From now on, when talking about étale group\-oids, we will always take this family of counting measures as the canonical Haar system.}

The following well-known basic result will be needed later:
\begin{lemma}\label{measure of compact set is bounded}
	Let $G$ be a locally compact Hausdorff groupoid with a Haar system $\lbrace \lambda^u\rbrace_{u\in G^{(0)}}$. If $K\subseteq G$ is compact, the set $\lbrace \lambda^u(K)\mid u\in G^{(0)}\rbrace$ is bounded.
\end{lemma}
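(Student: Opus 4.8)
The plan is to exploit the continuity property of the Haar system (item \ref{Def:HaarSystem:Continuity} in the definition) together with a compactness argument. First I would observe that, since $K$ is compact, it suffices to bound $\lambda^u(K)$ by $\lambda^u(U)$ for some open, relatively compact set $U \supseteq K$; so without loss of generality we may work with a function $f \in C_c(G)$ with $0 \le f \le 1$ and $f \equiv 1$ on $K$ (such an $f$ exists by Urysohn's lemma, as $G$ is locally compact Hausdorff), so that $\lambda^u(K) \le \int_{G^u} f \, d\lambda^u = \lambda(f)(u)$ for every $u \in G^{(0)}$.

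Next I would use that $\lambda(f) \in C_c(G^{(0)})$, which is exactly the content of the continuity axiom. A continuous, compactly supported function on a locally compact Hausdorff space is bounded: say $\norm{\lambda(f)}_\infty = M < \infty$. Then for every $u \in G^{(0)}$ we have $\lambda^u(K) \le \lambda(f)(u) \le M$, which is the desired uniform bound. One small point to check is that if $K = \emptyset$ the statement is trivial, and otherwise the construction of $f$ goes through on the closed subset $K$ of the locally compact Hausdorff space $G$.

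I do not anticipate any serious obstacle here; the only thing to be slightly careful about is the choice of $f$ dominating the indicator of $K$ while remaining compactly supported, which is standard. The essential input is the defining continuity condition on the Haar system, which converts the pointwise quantities $\lambda^u(K)$ into (a majorant that is) a single continuous compactly supported function on $G^{(0)}$, and continuity plus compact support immediately yields boundedness.
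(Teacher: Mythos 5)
Your argument is correct and is essentially the paper's own proof: dominate the indicator of $K$ by a function $f\in C_c(G)$ with $0\le f\le 1$ and $f\equiv 1$ on $K$, so that $\lambda^u(K)\le\lambda(f)(u)$, and then use axiom (2) of the Haar system to conclude that $\lambda(f)\in C_c(G^{(0)})$ is bounded. No gaps; the remarks about $K=\emptyset$ and the existence of $f$ are routine and handled implicitly in the paper as well.
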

\begin{proof}
	Let $f\in C_c(G)$ with $0\leq f\leq 1$ and $f=1$ on $K$. Then $$\lambda^u(K)\leq \int\limits_{G^u} f(x)d\lambda^u(x)$$ for all $u\in G^{(0)}$. The result follows from axiom $(2)$ of the definition of a Haar system.
\end{proof}
For later purposes it will also be important to note, that the set of functions $f$ for which $\lambda(f)$ as in the definition of the Haar system is continuous, is not limited to functions with compact support. 
\begin{defi}\label{Def:ProperlySupportedFunction}
	A function $\varphi\in C(G)$ is said to have \textit{proper support}, if for every compact subset $K\subseteq G^{(0)}$ the intersection $supp(\varphi)\cap r^{-1}(K)$ is compact.
\end{defi}

\begin{lemma}\label{Lem:proper support}
	If $\varphi\in C(G)$ has proper support, then $\lambda(\varphi):G^{(0)}\rightarrow \CC$ given by
	$$\lambda(\varphi)(u)=\int\limits_{G^u}\varphi(x)d\lambda^u(x)$$
	is continuous and bounded.
\end{lemma}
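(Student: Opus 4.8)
The plan is to localise over the unit space $G^{(0)}$ and thereby reduce the statement to the continuity axiom of the Haar system, which already handles compactly supported functions.

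First I would fix $u_0\in G^{(0)}$ and, using local compactness of $G^{(0)}$, choose a compact neighbourhood $K\subseteq G^{(0)}$ of $u_0$; write $V$ for its interior, an open neighbourhood of $u_0$. By the proper support hypothesis (Definition \ref{Def:ProperlySupportedFunction}) the set $C:=supp(\varphi)\cap r^{-1}(K)$ is compact in $G$, so by Urysohn's lemma for the locally compact Hausdorff space $G$ I can pick $\psi\in C_c(G)$ with $0\le\psi\le 1$ and $\psi\equiv 1$ on $C$. Then $\varphi\psi\in C_c(G)$, since $\varphi$ is continuous and $\psi$ is compactly supported.

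The central step is the observation that $\varphi$ and $\varphi\psi$ agree on the whole fibre $G^u$ for every $u\in K$. Indeed, for $x\in G^u$ we have $r(x)=u\in K$, hence $x\in r^{-1}(K)$; if moreover $\varphi(x)\neq 0$ then $x\in supp(\varphi)$, so $x\in C$, whence $\psi(x)=1$ and $(\varphi\psi)(x)=\varphi(x)$, while if $\varphi(x)=0$ both functions vanish at $x$. In particular $\varphi|_{G^u}$ has compact support, so $\lambda(\varphi)(u)=\int_{G^u}\varphi\,d\lambda^u$ is a well-defined finite number and equals $\int_{G^u}\varphi\psi\,d\lambda^u=\lambda(\varphi\psi)(u)$ for all $u\in K$. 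By axiom \eqref{Def:HaarSystem:Continuity} in the definition of a Haar system, $\lambda(\varphi\psi)$ is continuous, so $\lambda(\varphi)$ coincides on the neighbourhood $V$ of $u_0$ with a continuous function. Since $u_0\in G^{(0)}$ was arbitrary and continuity is a local property, $\lambda(\varphi)$ is continuous on $G^{(0)}$.

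For the boundedness I would run the same localisation together with Lemma \ref{measure of compact set is bounded}: for $u\in V$ one estimates $\betrag{\lambda(\varphi)(u)}=\betrag{\lambda(\varphi\psi)(u)}\le\norm{\varphi\psi}_\infty\,\lambda^u(supp\,\psi)$, and $\sup_{u\in G^{(0)}}\lambda^u(supp\,\psi)<\infty$ because $supp\,\psi$ is compact; this bounds $\lambda(\varphi)$ on $V$, and hence on every compact subset of $G^{(0)}$. I do not expect any serious obstacle here — the argument is a routine Urysohn-and-localisation manoeuvre. The only point needing genuine care, and the only place where proper support is actually used, is the fibrewise identity $\varphi|_{G^u}=(\varphi\psi)|_{G^u}$ for $u$ near $u_0$; this is what forces the choices to be made in the order $K$, then $C$, then $\psi$, linked precisely through Definition \ref{Def:ProperlySupportedFunction}.
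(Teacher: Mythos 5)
Your argument is correct and is essentially the paper's own proof: localise near a fixed unit, use proper support to cut $\varphi$ down by a compactly supported function equal to $1$ on $supp(\varphi)\cap r^{-1}(K)$ (the paper takes the cutoff in the form $\psi\circ r$ with $\psi\in C_c(G^{(0)})$, which is only a cosmetic difference), and then invoke the continuity axiom of the Haar system together with the fibrewise agreement $\varphi\mid_{G^u}=(\varphi\psi)\mid_{G^u}$ for $u$ near the base point. The only shortfall, which you share with the paper's own proof, concerns the boundedness assertion: your estimate via Lemma \ref{measure of compact set is bounded} only bounds $\lambda(\varphi)$ on compact subsets of $G^{(0)}$, which is no more than continuity already yields, so global boundedness is not actually established by either argument.
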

\begin{proof}
	We will show that $\lambda(\varphi)$ looks like a continuous function locally. More precisely given any $u\in G^{(0)}$ we can pick a relatively compact neighbourhood $V$ of $u$. Then choose a function $\psi\in C_c(G^{(0)})$ such that $\psi= 1$ on $\overline{V}$.
	Then $f(x):=\varphi(x)\psi(r(x))$ is a continuous function with compact support
	since $supp(f)\subseteq supp(\varphi)\cap r^{-1}(supp(\psi))$ and $\varphi$ has proper support.
	Thus $\lambda(f)$ is continuous. But for all $v\in V$ we clearly have
	$$\lambda(f)(v)=\int\limits_{G^v}\varphi(x)\psi(v)d\lambda^v(x)=\lambda(\varphi)(v).$$
	Thus $\lambda(\varphi)_{\mid V}$ is continuous. Since $u$ was chosen arbitrary $\lambda(\varphi)$ must be continuous.
\end{proof}

There is an important subclass of the class of étale groupoids, which is of particular interest to us:
\begin{defi}
	A locally compact Hausdorff groupoid $G$ is called \textit{ample}, if the set $G^a:=\lbrace A\in G^{op}\mid A\textit{ is compact}\rbrace$ forms a basis for the topology of $G$.
\end{defi}
It follows directly from the definition, that every ample groupoid is étale. Recall, that a topological space $X$ is called \textit{totally disconnected}, if the connected components in $X$ are the one-point sets. It is easy to see that every ample groupoid has totally disconnected unit space.
Exel noted in \cite{Exel10}, that this characterizes the ample groupoids among the étale groupoids, i.e. an étale groupoid $G$ is ample if and only if $G^{(0)}$ is totally disconnected.
Many interesting groupoids fall into this class:
\begin{itemize}
	\item Groupoids associated to aperiodic tilings and quasicrystals (see \cite{MR2658985}).
	\item Groupoids associated to directed graphs (see \cite{MR1432596}) and higher-rank graphs (see \cite{KumjianPask00}).
	\item Groupoids associated to inverse semigroups (see \cite{Paterson1999}).
	\item The coarse groupoid studied in large scale geometry (see \cite{STY02}).
\end{itemize}

Let us now turn to actions of groupoids. Recall, that a (left) action of a locally compact Hausdorff groupoid $G$ on a locally compact Hausdorff space $X$ consists of a continuous map $p:X\rightarrow G^{(0)}$, called \textit{anchor map} and a continuous map $G\ast X\rightarrow X$, $(g,x)\mapsto gx$, where $G\ast X=\lbrace (g,x)\mid d(g)=p(x)\rbrace$, such that the following holds:
	\begin{enumerate}
		\item If $(g,h)\in G^{(2)}$ and $(h,x)\in G\ast X$, then $(g,hx)\in G\ast X$ and $(gh)x=g(hx)$.
		\item For all $x\in X$ we have $p(x)x=x$. 
	\end{enumerate}
In this case we will also say that $X$ a (left) $G$-space.
Similarly, one can define right actions in the obvious way.
Groupoid actions give rise to a new groupoid, usually called the transformation groupoid of the action:
If $G$ acts on $X$ we can form a new groupoid denoted $G\ltimes X$. As a set it is the subspace of $G\times X$ consisting of all pairs such that $r(g)=p(x)$. Two such pairs $(g,x),(h,y)$ are composable if $y=g^{-1}x$ and in that case we define \[(g,x)(h,y):=(gh,x).\] Furthermore we define the inverse map by \[(g,x)^{-1}:=(g^{-1},g^{-1}x).\]
The unit space of $G\ltimes X$ can be canonically identified with $X$.
Under this identification the range map becomes the projection onto $X$ and the domain map is given by $d_{G\ltimes X}(g,x)=g^{-1}x$.
One easily verifies, that if $G$ is étale, then so is $G\ltimes X$.

If $G$ acts on $X$, say from the right, we can form the space of orbits $X/G$. More specifically we can define an equivalence relation $\sim$ on $X$ by declaring $x\sim y$ if and only if there exists a $g\in G$ such that $p(y)=r(g)$ and $x=yg$. We then define $X/G:=X/\sim$ to be the quotient of $X$ by the equivalence relation $\sim$. If $G$ was a topological groupoid acting continuously on a space $X$ we equip $X/G$ with the quotient topology.
The following result is standard. A proof can be found in \cite[Lemma~2.30]{MR2117427}.
\begin{prop} \label{Prop:QuotientLocallyCompact}
	Let $G$ be a locally compact Hausdorff groupoid. Then the range and domain maps of $G$ are open if and only if the canonical quotient map $X\rightarrow X/G$ is open for every $G$-space $X$. In that case $X/G$ is locally compact (not necessarily Hausdorff), if $X$ is locally compact.
\end{prop}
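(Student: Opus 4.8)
The plan is to establish the two implications of the biconditional separately and then deduce local compactness. Throughout, fix a right $G$-space $X$ with anchor map $p\colon X\to G^{(0)}$ and write $q\colon X\to X/G$ for the orbit map. Since $X/G$ carries the quotient topology, $q$ is a topological quotient map, so for $U\subseteq X$ the image $q(U)$ is open in $X/G$ exactly when its saturation
\[
q^{-1}(q(U))=\{\,ug : u\in U,\ g\in G,\ p(u)=r(g)\,\}
\]
is open in $X$; the whole argument reduces to controlling such saturations.

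\textbf{Openness of $d$ (and $r$) implies openness of every orbit map.} I would introduce the fibre product $Z:=\{(u,g)\in X\times G : p(u)=r(g)\}$ and the shear map $\Psi\colon Z\to X\times G$, $\Psi(u,g):=(ug,g)$. The first step is to verify that $\Psi$ is a homeomorphism of $Z$ onto the subspace $Z':=\{(x,g)\in X\times G : p(x)=d(g)\}$, with continuous inverse $(x,g)\mapsto(xg^{-1},g)$; this is immediate from continuity of the action and of inversion. Under $\Psi$, the saturation of an open set $U$ corresponds to $\{(x,g)\in Z' : xg^{-1}\in U\}$, which is open in $Z'$ because $(x,g)\mapsto xg^{-1}$ is continuous on $Z'$. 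Hence $q^{-1}(q(U))$ is the image of an open subset of $Z'$ under the first-coordinate projection $\pi_1\colon Z'\to X$, and it remains to check that $\pi_1$ is open. For a basic open set $Z'\cap(V\times O)$ with $V\subseteq X$, $O\subseteq G$ open, one computes $\pi_1\bigl(Z'\cap(V\times O)\bigr)=V\cap p^{-1}(d(O))$, which is open precisely because $d(O)$ is open in $G^{(0)}$ — this is the only place the hypothesis enters. Therefore $q^{-1}(q(U))$, and hence $q(U)$, is open.

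\textbf{Converse and local compactness.} For the converse I would apply the hypothesis to $X=G$ with $G$ acting on itself by right translation (anchor map $d$): here $h\sim h'$ if and only if $r(h)=r(h')$, so $G/G$ is canonically identified with $G^{(0)}$ via $r$ and the orbit map \emph{is} the range map; openness of orbit maps thus forces $r$ to be open, and then $d=r\circ\iota$, with $\iota$ the (homeomorphic) inversion, is open as well. Finally, if $d,r$ are open and $X$ is locally compact, then $q$ is a continuous open surjection; given $[x]\in X/G$, choose a compact neighbourhood $K$ of a representative $x$, so that $q(K)$ is compact and contains the open set $q(\operatorname{int} K)\ni[x]$, whence $[x]$ has a compact neighbourhood. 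Thus $X/G$ is locally compact, with no Hausdorffness claimed or used.

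I do not anticipate a genuine obstacle — the proof is elementary general topology. The points requiring the most care are the bookkeeping around the shear homeomorphism $\Psi$, pinpointing that openness of $d$ is exactly the property making $\pi_1\colon Z'\to X$ open (the implication being in fact reversible, which is what powers the converse), and being careful never to invoke Hausdorffness of the — possibly non-Hausdorff — orbit space.
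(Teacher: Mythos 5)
Your proof is correct, and the key mechanism — openness of $d$ is exactly what makes the first-coordinate projection from $Z'=\{(x,g):p(x)=d(g)\}$ to $X$ open, so that saturations of open sets are open — is sound, as is the converse via the right-translation action of $G$ on itself and the local compactness argument from an open continuous surjection. Note that the paper offers no proof of its own here (it simply cites Lemma~2.30 of \cite{MR2117427}), and your shear-map argument is precisely the standard one found there, so there is nothing to compare beyond that. The only point worth tightening is in the converse: either justify that the bijection $G/G\to G^{(0)}$ induced by $r$ is a homeomorphism (its inverse is $q$ restricted to $G^{(0)}$, hence continuous), or bypass the identification entirely by noting that for open $U\subseteq G$ the saturation is $q^{-1}(q(U))=r^{-1}(r(U))$, which is open by hypothesis, and $r(U)=G^{(0)}\cap r^{-1}(r(U))$ is then open in $G^{(0)}$.
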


Many properties of dynamical systems can easily be formulated in terms of the corresponding transformation groupoid and thus give a nice way to generalize them to arbitrary groupoids. The following is an example of this:
Recall, that a continuous map $f:X\rightarrow Y$ between locally compact Hausdorff spaces $X$ and $Y$ is called \textit{proper}, if $f^{-1}(K)$ is compact for all compact subsets $K\subseteq Y$.
If $\Gamma$ is a discrete group acting on a space $X$, the action is called proper, if $(g,x)\mapsto (x,g^{-1}x)$ is a proper map $\Gamma\times X\rightarrow X\times X$. In terms of the transformation groupoid the latter map is just the map $r\times d:\Gamma\ltimes X\rightarrow X\times X$. Thus, for general groupoids, one defines:
\begin{defi}
	A locally compact Hausdorff groupoid is called \textit{proper}, if $r\times d:G\rightarrow G^{(0)}\times G^{(0)}$ is a proper map.
	
	Similarly, we say that $X$ is a \textit{proper (left) $G$-space}, if the associated transformation groupoid $G\ltimes X$ is proper.
\end{defi}
In practice it is useful to have some more equivalent conditions to check properness. These are provided by the following proposition.
\begin{prop}\cite[Proposition~2.14]{MR2117427}\label{Prop:CharacterizationsProperAction}
Let $X$ be a locally compact $G$-space. Then the following are equivalent:
\begin{enumerate}
\item $X$ is a proper $G$-space.
\item For every compact subset $K\subseteq X$ the set $\mathcal{F}_K=\lbrace g\in G\mid gK\cap K\neq\emptyset\rbrace$ is compact.
\item If $(x_\lambda)_\lambda$ is a convergent net in $X$ and $(g_\lambda)_\lambda$ is a net in $G$ such that $d(g_\lambda)=p_X(x_\lambda)$ and $(g_\lambda x_\lambda)_\lambda$ is convergent as well, then $(g_\lambda)_\lambda$ has a convergent subnet.
\end{enumerate}
\end{prop}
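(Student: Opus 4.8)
The plan is to pass to the transformation groupoid $\mathcal G:=G\ltimes X$ and to read off everything from the definition of properness there. Recall that $\mathcal G^{(0)}=X$, that the range map of $\mathcal G$ is the projection $(g,x)\mapsto x$, and that its domain map is $(g,x)\mapsto g^{-1}x$; hence $X$ is a proper $G$-space precisely when the map
\[ \Phi\colon\mathcal G\longrightarrow X\times X,\qquad \Phi(g,x)=(x,g^{-1}x), \]
is proper. I would prove the cycle of implications $(1)\Rightarrow(2)\Rightarrow(3)\Rightarrow(1)$, using throughout that a subset $S$ of a topological space is compact if and only if every net in $S$ admits a subnet converging to a point of $S$, together with continuity of the groupoid operations and of the action, and local compactness of $X$. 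For $(1)\Rightarrow(2)$, unwinding the definition of $\mathcal F_K$ shows that $g\in\mathcal F_K$ exactly when there is a $y\in K$ with $r(g)=p_X(y)$ and $g^{-1}y\in K$, that is, exactly when $(g,y)\in\Phi^{-1}(K\times K)$ for some such $y$; thus $\mathcal F_K$ is the image of the compact set $\Phi^{-1}(K\times K)$ under the continuous projection $\mathcal G\to G$, $(g,x)\mapsto g$, hence compact.

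For $(2)\Rightarrow(3)$, let $x_\lambda\to x$ in $X$, let $(g_\lambda)_\lambda$ be a net in $G$ with $d(g_\lambda)=p_X(x_\lambda)$, and suppose $g_\lambda x_\lambda\to y$. Here local compactness enters: choose compact neighbourhoods $K_1$ of $x$ and $K_2$ of $y$ and set $K:=K_1\cup K_2$. Eventually $x_\lambda\in K$ and $g_\lambda x_\lambda\in K$, so eventually $g_\lambda x_\lambda\in g_\lambda K\cap K$ and therefore $g_\lambda\in\mathcal F_K$. Since $\mathcal F_K$ is compact by hypothesis, this tail of $(g_\lambda)_\lambda$, and hence $(g_\lambda)_\lambda$ itself, has a convergent subnet.

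For $(3)\Rightarrow(1)$, let $L\subseteq X\times X$ be compact and let $((g_\lambda,x_\lambda))_\lambda$ be a net in $\Phi^{-1}(L)$. Using compactness of $L$ I would pass to a subnet along which $x_\lambda\to x$ and $g_\lambda^{-1}x_\lambda\to y$ with $(x,y)\in L$. The key manoeuvre is then to apply hypothesis $(3)$ to the net $(g_\lambda)_\lambda$ acting on the \emph{convergent} net $w_\lambda:=g_\lambda^{-1}x_\lambda$: one checks $d(g_\lambda)=p_X(w_\lambda)$ and $g_\lambda w_\lambda=x_\lambda$, which converges, so $(3)$ produces a subnet with $g_\lambda\to g$ in $G$. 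Along this subnet $(g_\lambda,x_\lambda)\to(g,x)$ in $\mathcal G$, and by continuity of inversion and of the action $\Phi(g,x)=(x,y)\in L$; hence every net in $\Phi^{-1}(L)$ has a subnet converging in $\Phi^{-1}(L)$, so $\Phi^{-1}(L)$ is compact and $\Phi$ is proper.

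I expect the only genuinely non-formal ingredient to be the passage through a common compact set $K$ in $(2)\Rightarrow(3)$, which is essentially the standard net characterisation of properness for maps between locally compact Hausdorff spaces; everything else is bookkeeping with the transformation groupoid and with subnets, the one point requiring real care being the correct matching of anchor and domain maps when invoking $(3)$ in the last implication.
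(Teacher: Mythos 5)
The paper does not prove this proposition itself; it simply cites \cite[Proposition~2.14]{MR2117427}, so there is no in-paper argument to compare against. Your proof is correct and is the standard one: identifying properness of the $G$-space with properness of $r\times d$ on $G\ltimes X$, the implications $(1)\Rightarrow(2)\Rightarrow(3)\Rightarrow(1)$ go through exactly as you describe, and you handle the one delicate point correctly, namely that in $(3)\Rightarrow(1)$ the hypothesis must be applied to the convergent net $w_\lambda=g_\lambda^{-1}x_\lambda$ (equivalently, to the net $g_\lambda^{-1}$), since for $(g_\lambda,x_\lambda)\in G\ltimes X$ one has $r(g_\lambda)=p_X(x_\lambda)$ rather than $d(g_\lambda)=p_X(x_\lambda)$.
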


\begin{bem}
It is useful to note, that the set $\mathcal{F}_K$ defined above for any compact set $K\subseteq X$ is always closed in $G$. To see this let $(g_\lambda)_\lambda$ be a net in $\mathcal{F}_K$ converging to some $g\in G$. For every $\lambda$ there exist $k_\lambda,k_\lambda'\in K$ such that $g_\lambda k_\lambda=k_\lambda'$. As $K$ is compact we can pass to a subnet if necessary to assume that $k_\lambda\rightarrow k$ and $k_\lambda'\rightarrow k'$ for some $k,k'\in K$. By continuity of the action we have $gk=\lim_\lambda g_\lambda k_\lambda=\lim_\lambda k_\lambda'=k'$. Thus, we have $g\in\mathcal{F}_K$, as desired.
\end{bem}

Note, that it follows easily from the above characterization, that every groupoid $G$ acts properly on itself.
Identifying $G$ with the transformation groupoid $G\ltimes G^{(0)}$ in the obvious way we get a similar looking result characterizing properness of the groupoid itself:
\begin{prop}
Let $G$ be a locally compact Hausdorff group\-oid. Then the following are equivalent:
\begin{enumerate}
\item $G$ is proper.
\item For every compact subset $K\subseteq G^{(0)}$ the set $G_K^K$ is compact.
\item If $(g_\lambda)_\lambda$ is a net in $G$, such that $(d(g_\lambda))_\lambda$ and $(r(g_\lambda))_\lambda$ are convergent, then $g_\lambda$ has a converging subnet. 
\end{enumerate}
\end{prop}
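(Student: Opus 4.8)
The plan is to derive all three equivalences at once by specializing Proposition~\ref{Prop:CharacterizationsProperAction} to the canonical action of $G$ on its own unit space. Identify $G$ with the transformation groupoid $G\ltimes G^{(0)}$, where $G$ acts on $X:=G^{(0)}$ with anchor map $p_X=\id$ and action $g\cdot d(g):=r(g)$; under this identification the range and domain maps of $G\ltimes X$ are carried to $r$ and $d$, so $r\times d$ for $G$ agrees with $r\times d$ for $G\ltimes X$. By definition $G$ is proper if and only if $X=G^{(0)}$ is a proper $G$-space, so condition~(1) of the present statement is exactly condition~(1) of Proposition~\ref{Prop:CharacterizationsProperAction} for this space.

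Next I would match up the two remaining conditions. For a compact $K\subseteq X=G^{(0)}$ and $g\in G$, the set $gK$ equals $\{r(g)\}$ if $d(g)\in K$ and is empty otherwise, since $g$ can only be applied to the single point $d(g)$. Hence $gK\cap K\neq\emptyset$ precisely when $d(g)\in K$ and $r(g)\in K$, i.e. $\mathcal{F}_K=G_K^K$, and condition~(2) here becomes condition~(2) there. For the net characterization, a net $(g_\lambda)_\lambda$ in $G$ with $(d(g_\lambda))_\lambda$ and $(r(g_\lambda))_\lambda$ convergent is, upon setting $x_\lambda:=d(g_\lambda)$ (so that automatically $d(g_\lambda)=p_X(x_\lambda)$), exactly a net for which $(x_\lambda)_\lambda$ converges and $(g_\lambda x_\lambda)_\lambda=(r(g_\lambda))_\lambda$ converges; so condition~(3) here is condition~(3) there. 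This completes the reduction to Proposition~\ref{Prop:CharacterizationsProperAction}.

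For completeness I would also record the short self-contained argument for the equivalence (1)$\Leftrightarrow$(2), which bypasses Proposition~\ref{Prop:CharacterizationsProperAction}. If $G$ is proper, then $G_K^K=(r\times d)^{-1}(K\times K)$ is the preimage of a compact set under a proper map, hence compact. Conversely, assume (2) and let $L\subseteq G^{(0)}\times G^{(0)}$ be compact; setting $K:=\pi_1(L)\cup\pi_2(L)$, which is compact, we have $L\subseteq K\times K$. Since $G^{(0)}$ is Hausdorff, $L$ is closed, so $(r\times d)^{-1}(L)$ is a closed subset of the compact set $(r\times d)^{-1}(K\times K)=G_K^K$, hence compact; thus $r\times d$ is proper.

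There is no real obstacle here: the argument is essentially bookkeeping around the identification $G\cong G\ltimes G^{(0)}$. The only points that need a little attention are checking that the translations of conditions~(2) and~(3) are literally correct, and, in the direct argument, invoking Hausdorffness of $G^{(0)}$ (which holds because $G$ is Hausdorff) both to know that compact subsets of $G^{(0)}\times G^{(0)}$ are closed and to reduce an arbitrary such compact set to a ``square'' $K\times K$; one also uses, as recorded in the remark following Proposition~\ref{Prop:CharacterizationsProperAction}, that $\mathcal{F}_K=G_K^K$ is closed.
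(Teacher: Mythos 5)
Your proposal is correct and follows exactly the route the paper intends: the paper states this proposition without proof, remarking only that one identifies $G$ with $G\ltimes G^{(0)}$ and transports the characterization of proper actions from Proposition~\ref{Prop:CharacterizationsProperAction}, which is precisely the bookkeeping you carry out (your identifications $\mathcal{F}_K=G_K^K$ and of condition~(3) are accurate). Your extra self-contained argument for (1)$\Leftrightarrow$(2) is also fine, though note that closedness of $\mathcal{F}_K$ is not actually needed anywhere in it.
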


One of the features of proper Hausdorff groupoids is the fact, that their orbit space is again Hausdorff.

\begin{lemma}
Let $G$ be a locally compact Hausdorff groupoid and $H\subseteq G$ a subgroupoid with $H^{(0)}$ closed in $G^{(0)}$. If $H$ is proper, then $H$ is closed in $G$.
\end{lemma}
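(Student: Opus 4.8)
The plan is to argue directly with nets and exploit the Hausdorffness of $G$. Let $g\in G$ lie in the closure of $H$, so that there is a net $(h_\lambda)_\lambda$ in $H$ with $h_\lambda\to g$ in $G$; the goal is to show $g\in H$. First I would apply continuity of the range and domain maps to get $r(h_\lambda)\to r(g)$ and $d(h_\lambda)\to d(g)$ in $G^{(0)}$. Since each $r(h_\lambda)$ and $d(h_\lambda)$ lies in $H^{(0)}$, which is assumed to be closed in $G^{(0)}$, it follows that $r(g),d(g)\in H^{(0)}$; moreover, as $H^{(0)}$ carries the subspace topology, the convergences $r(h_\lambda)\to r(g)$ and $d(h_\lambda)\to d(g)$ take place within $H^{(0)}$ as well.

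Next I would invoke properness of $H$. Viewing $H$ as a (Hausdorff) proper groupoid in its subspace topology, the net characterization of properness established above, applied to $H$, says that a net in $H$ whose images under $r$ and $d$ converge in $H^{(0)}$ admits a convergent subnet. Hence $(h_\lambda)_\lambda$ has a subnet $(h_{\lambda_\mu})_\mu$ converging to some $h\in H$. Because $H$ carries the subspace topology from $G$, this subnet also converges to $h$ in $G$; but it converges to $g$ in $G$ too, being a subnet of the convergent net $(h_\lambda)_\lambda$. Since $G$ is Hausdorff, limits are unique, so $g=h\in H$, as desired.

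If one prefers to avoid relying on local compactness of $H$, the same conclusion follows from the bare definition of a proper map: choosing relatively compact open neighbourhoods $V$ and $W$ of $r(g)$ and $d(g)$ in the locally compact Hausdorff space $G^{(0)}$, the sets $L_1:=\overline{V}\cap H^{(0)}$ and $L_2:=\overline{W}\cap H^{(0)}$ are compact in $H^{(0)}$, so $C:=(r\times d)^{-1}(L_1\times L_2)$ is compact in $H$, hence compact and therefore closed in $G$. Since $r(h_\lambda)\to r(g)$ and $d(h_\lambda)\to d(g)$, the net $(h_\lambda)_\lambda$ is eventually contained in $C$, and a closed subset containing a tail of a convergent net contains its limit; thus again $g\in C\subseteq H$. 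I do not expect a genuine obstacle here: the only point requiring care is the bookkeeping of the three relevant topologies — those of $G$, of $H$, and of $H^{(0)}$ — and keeping track of where Hausdorffness of $G$ is used to force uniqueness of limits.
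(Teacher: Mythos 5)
Your proof is correct and, in its second formulation, essentially the same as the paper's: the paper takes a compact neighbourhood $K$ of $g$ in $G$ and uses properness to see that the tail of the net lies in the compact (hence closed) set $H_{d(K)}^{r(K)}\subseteq H$, while you pull back compact neighbourhoods of $r(g)$ and $d(g)$ intersected with $H^{(0)}$ — the same mechanism. Your first variant via the net characterization of properness is also fine in substance, and you rightly flag and circumvent the only delicate point there, namely that $H$ is not known in advance to be locally compact, so invoking the proposition as stated for locally compact Hausdorff groupoids would be slightly circular without the direct argument you supply.
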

\begin{proof}
Let $(g_\lambda)_\lambda$ be a net in $H$ converging to $g\in G$. Let $K$ be a compact neighbourhood of $g$. After passing to a subnet if necessary, we can assume $g_\lambda\in K\cap H\subseteq H_{d(K)}^{r(K)}$. Since $H$ is proper, the latter set is compact and hence closed as a subset of $G$. Thus, we get $g=\lim_\lambda g_\lambda \in H_{d(K)}^{r(K)}\subseteq H$.
\end{proof}

There is a close connection between proper actions and so called induced spaces. Let us review the definition:
Let $G$ be a locally compact Hausdorff groupoid and $H\subseteq G$ a closed subgroupoid. Suppose $Y$ is a (left) $H$-space with anchor map $p:Y\rightarrow H^{(0)}$. Consider the set
$$G\times_{G^{(0)}}Y=\lbrace (g,y)\in G\times Y\mid d(g)=p(y)\rbrace$$
There is a canonical action of $H$ on $G\times_{G^{(0)}}Y$: The anchor map $P:G\times_{G^{(0)}}Y\rightarrow H^{(0)}$ is given by $P(g,y)=d(g)=p(y)$ and we define $h(g,y)=(gh^{-1},hy)$.
\begin{lemma}
The action of $H$ on $G\times_{G^{(0)}}Y$ defined above is proper.
\end{lemma}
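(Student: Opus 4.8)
The plan is to establish properness of the action by unwinding the definition: I must show the transformation groupoid $H\ltimes(G\times_{G^{(0)}}Y)$ is proper, and the most convenient route is the net criterion of Proposition~\ref{Prop:CharacterizationsProperAction}(3). So I would take a convergent net $(g_\lambda,y_\lambda)\to(g,y)$ in $G\times_{G^{(0)}}Y$ and a net $(h_\lambda)$ in $H$ with $d(h_\lambda)=P(g_\lambda,y_\lambda)=d(g_\lambda)$ such that $h_\lambda(g_\lambda,y_\lambda)=(g_\lambda h_\lambda^{-1},h_\lambda y_\lambda)$ also converges, say to $(g',y')$; the task is to produce a convergent subnet of $(h_\lambda)$.

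The key point is the identity $h_\lambda^{-1}=g_\lambda^{-1}(g_\lambda h_\lambda^{-1})$, which is valid because $d(g_\lambda)=d(h_\lambda)=r(h_\lambda^{-1})$ makes all the products below defined and lets one invoke associativity together with the fact that units act trivially. Now $g_\lambda^{-1}\to g^{-1}$ by continuity of inversion, $g_\lambda h_\lambda^{-1}\to g'$ by hypothesis, and $r(g')=\lim_\lambda r(g_\lambda h_\lambda^{-1})=\lim_\lambda r(g_\lambda)=r(g)=d(g^{-1})$, so the pair $(g^{-1},g')$ lies in $G^{(2)}$, which is closed in $G\times G$ because $G$ is Hausdorff. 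Continuity of multiplication then yields $h_\lambda^{-1}\to g^{-1}g'$, hence $h_\lambda\to(g^{-1}g')^{-1}$ in $G$; since $H$ is closed in $G$ (as assumed for the construction of the induced space) and each $h_\lambda\in H$, this limit lies in $H$. Thus in fact the whole net $(h_\lambda)$ converges in $H$, which is more than required.

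Because the lemma is elementary, there is no serious obstacle; the only thing demanding a little care is bookkeeping of the domain and range maps, so that all the partially defined products appearing in $h_\lambda^{-1}=g_\lambda^{-1}(g_\lambda h_\lambda^{-1})$ — and the limiting product $g^{-1}g'$ — are genuinely composable, which is precisely where closedness of $G^{(2)}$ enters. It is worth noting that the argument uses only that $H$ is closed in $G$, and no properness of $G$, $H$, or $Y$; this matches the classical picture, where $H$ acting on $G\times Y$ via $h(g,y)=(gh^{-1},hy)$ is proper essentially because right translation by the closed subgroup $H$ on $G$ is. Alternatively, one could check criterion (2) of Proposition~\ref{Prop:CharacterizationsProperAction}: writing $\pi_G\colon G\times_{G^{(0)}}Y\to G$ for the projection, one has for compact $K$ that $\mathcal F_K\subseteq H\cap\{\,g_2^{-1}g_1:g_1,g_2\in\pi_G(K)\text{ composable}\,\}$, which is a continuous image of a compact set intersected with the closed set $H$, hence compact; since $\mathcal F_K$ is closed by the Remark following Proposition~\ref{Prop:CharacterizationsProperAction}, it is compact.
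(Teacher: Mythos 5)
Your proof is correct. Your primary argument verifies the net criterion, Proposition~\ref{Prop:CharacterizationsProperAction}(3), directly: from $d(h_\lambda)=d(g_\lambda)$ the identity $h_\lambda^{-1}=g_\lambda^{-1}(g_\lambda h_\lambda^{-1})$ is legitimate, the limit pair $(g^{-1},g')$ is composable because $r(g')=r(g)$, and continuity of multiplication and inversion plus closedness of $H$ in $G$ give convergence of the whole net $(h_\lambda)$ in $H$ — more than the required subnet. The paper instead uses criterion (2): for compact $K$ it observes $\mathcal{F}_K\subseteq K_1^{-1}K_1\cap H$ with $K_1$ the projection of $K$ to $G$, and concludes compactness of $\mathcal{F}_K$ from compactness of $K_1^{-1}K_1\cap H$ together with the Remark that $\mathcal{F}_K$ is closed; this is exactly the alternative you sketch in your last sentences, so your proposal in fact contains the paper's proof as its secondary route. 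The trade-off is minor: the paper's containment argument is shorter and leans on the already-recorded closedness of $\mathcal{F}_K$, while your net argument is self-contained, makes the role of closedness of $H$ and of the composability bookkeeping explicit, and shows (as you note) that no properness of $G$, $H$, or $Y$ is needed — both proofs use only that $H$ is a closed subgroupoid, consistent with the hypotheses of the lemma.
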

\begin{proof}
Let $K\subseteq G\times_{G^{(0)}}Y$ be a compact subset. We need to show that $\mathcal{F}_K=\lbrace h\in H\mid hK\cap K\neq \emptyset\rbrace$ is a compact subset of $H$. If $K_1=pr_1(K)$ is the image of $K$ under the projection onto $G$ it is not hard to see that $\mathcal{F}_K\subseteq K_1^{-1}K_1\cap H$. Since the latter set is compact and $\mathcal{F}_K$ is closed in $H$, the result follows.
\end{proof}

It follows from the above Lemma combined with the fact that quotients by proper actions are Hausdorff and Proposition \ref{Prop:QuotientLocallyCompact} that the quotient space $G\times_H Y:=H\setminus (G\times_{G^{(0)}}Y)$ is a locally compact Hausdorff space. This space is called the\textit{ induced space}.
There is a canonical left action of $G$ on $G\times_H Y$, coming from the action of $G$ on itself. The anchor map $G\times_H Y\rightarrow G^{(0)}$ is given by $[g,y]\mapsto r(g)$ and we define $g_1[g_2,y]:=[g_1g_2,y]$. One easily checks, that this gives a well-defined continuous action.
\begin{lemma} Let $G$ be a locally compact Hausdorff groupoid with open domain and range maps.
If $H\subseteq G$ is a closed subgroupoid and $Y$ is a proper $H$-space, then $G\times_H Y$ is a proper $G$-space.
\end{lemma}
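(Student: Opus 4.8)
The plan is to check properness of the $G$-action on $G\times_H Y$ via the compact-set criterion of Proposition~\ref{Prop:CharacterizationsProperAction}(2): given an arbitrary compact $K\subseteq G\times_H Y$, I must show that $\mathcal F_K=\{g\in G\mid gK\cap K\neq\emptyset\}$ is compact. Since $\mathcal F_K$ is automatically closed in $G$ by the remark following Proposition~\ref{Prop:CharacterizationsProperAction}, it suffices to trap $\mathcal F_K$ inside a fixed compact subset of $G$.

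To build such a compact set I would work in the pre-quotient $Z:=G\times_{H^{(0)}}Y$. The quotient map $q\colon Z\to G\times_H Y$ is an open continuous surjection of locally compact Hausdorff spaces (the openness being exactly the input from Proposition~\ref{Prop:QuotientLocallyCompact} that made the induced space locally compact), so by the standard lifting lemma for open surjections there is a compact $\tilde K\subseteq Z$ with $q(\tilde K)=K$. Let $\tilde K_1\subseteq G$ and $\tilde K_2\subseteq Y$ be the two coordinate projections of $\tilde K$; both are compact. The one place the hypothesis on $Y$ enters is the next observation: since $Y$ is a proper $H$-space, Proposition~\ref{Prop:CharacterizationsProperAction}(2) applied to the $H$-action on $Y$ gives that $\mathcal F:=\{h\in H\mid h\tilde K_2\cap \tilde K_2\neq\emptyset\}$ is a compact subset of $H$, hence of $G$.

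The heart of the argument is then a short orbit computation. Take $g\in\mathcal F_K$ and pick representatives $(a,y),(a',y')\in\tilde K$ of points $z,z'\in K$ with $gz=z'$; this forces $d(g)=r(a)$. Because $q$ is equivariant for the $G$-action $g_1\cdot(g_2,y)=(g_1g_2,y)$ on $Z$, which commutes with the $H$-action defining the quotient, the equality $gz=z'$ says precisely that $(ga,y)$ and $(a',y')$ lie in a single $H$-orbit, i.e.\ $a'=gah^{-1}$ and $y'=hy$ for some $h\in H$. From $y,y'\in\tilde K_2$ we get $h\in\mathcal F$, and a quick check of composabilities (all forced by $d(a)=p(y)$, $d(a')=p(y')=r(h)$ and $d(h)=p(y)$) rearranges $a'=gah^{-1}$ into $g=a'ha^{-1}$. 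Hence $\mathcal F_K\subseteq \tilde K_1\cdot\mathcal F\cdot\tilde K_1^{-1}$, which is compact as a product of three compact subsets of the Hausdorff groupoid $G$; being closed as well, $\mathcal F_K$ is compact, and we are done.

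I do not anticipate a genuine obstacle here. The two points requiring care are the compact lifting through the open map $q$ (a standard but not wholly trivial fact about open surjections of locally compact Hausdorff spaces) and the composability bookkeeping in the passage from $a'=gah^{-1}$ to $g=a'ha^{-1}$; conceptually everything reduces to the fact that the $G$- and $H$-actions on $Z$ commute and $q$ is $G$-equivariant, so that ``lying in one $G$-orbit downstairs'' unwinds cleanly to the orbit relation used above.
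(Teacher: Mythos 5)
Your argument is correct, but it runs along a genuinely different track from the paper's. The paper verifies properness through the net criterion of Proposition~\ref{Prop:CharacterizationsProperAction}: it lifts the convergent nets through the open quotient map $G\times_{H^{(0)}}Y\rightarrow G\times_H Y$ (passing to subnets twice) and then concludes by invoking the fact that $G$ acts properly on itself; the properness of the $H$-action on $Y$ is only implicit there, absorbed into the lifting/relabelling step. You instead use the compact-set criterion: lift a compact $K$ to a compact $\tilde K\subseteq G\times_{G^{(0)}}Y$ through the open quotient map, and prove the explicit containment $\mathcal F_K\subseteq \tilde K_1\cdot\mathcal F\cdot\tilde K_1^{-1}$, where $\mathcal F=\{h\in H\mid h\tilde K_2\cap\tilde K_2\neq\emptyset\}$ is compact precisely because $Y$ is a proper $H$-space; combined with the closedness of $\mathcal F_K$ this gives compactness. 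Your orbit computation is sound: from $q(ga,y)=q(a',y')$ one gets $(a',y')=h\cdot(ga,y)$ for some $h\in H$, and the composability relations $d(g)=r(a)$, $d(a)=p(y)=d(h)$, $d(a')=p(y')=r(h)$ do yield $g=a'ha^{-1}$, and the product of three compact sets under groupoid multiplication is compact. What your route buys is that the role of the hypothesis ``$Y$ is a proper $H$-space'' is completely transparent, and the argument avoids the delicate point in the net approach of arranging the lifted representatives of $h_\lambda[g_\lambda,y_\lambda]$ to have the specific form $(h_\lambda g_\lambda,y_\lambda)$; what it costs is the compact-lifting lemma for open surjections of locally compact Hausdorff spaces, which is standard but does require the finite-cover argument you allude to. Both proofs rely on the same input that the quotient map is open (via Proposition~\ref{Prop:QuotientLocallyCompact}), exactly as the paper does, so you are on the same footing there.
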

\begin{proof}
We will check condition $(4)$ in \ref{Prop:CharacterizationsProperAction}. Let $([g_\lambda,y_\lambda])_\lambda$ be a convergent net in $G\times_H Y$ with limit $[g,y]$ and let $(h_\lambda)_\lambda$ be a net in $G$ with $d(h_\lambda)=r(g_\lambda)$ and such that $(h_\lambda[g_\lambda,y_\lambda])_\lambda$ is convergent as well. We have to check, that $(h_\lambda)_\lambda$ has a convergent subnet. Our assumptions imply, that the quotient map $G\times_{G^{(0)}} Y\rightarrow G\times_H Y$ is open. Hence we can pass to a subnet and relabel twice, to assume that $(g_\lambda,y_\lambda)\rightarrow (g,y)$ and $(h_\lambda g_\lambda,y_\lambda)$ converges as well. Using the fact, that $G$ acts properly on itself this implies, that $(h_\lambda)_\lambda$ has a convergent subnet, as required.
\end{proof}

\section{Induced algebras}
In this section we first review the notions of $C_0(X)$-algebras and upper-semicontinuous $\mathrm{C}^*$-bundles and groupoid dynamical systems and then define induced $C^*$-algebras.
Recall that if $X$ is a locally compact Hausdorff space and $A$ is a $\mathrm{C}^*$-algebra, then we call $A$ a $C_0(X)-algebra$ if there exists a non-degenerate $\ast$-homomorphism
$$\Phi:C_0(X)\rightarrow Z(M(A)),$$ where $Z(M(A))$ denotes the center of the multiplier algebra of $A$. For every $x\in X$ there is a closed ideal $I_x$ in $A$ defined by $I_x=\overline{C_0(X\setminus\lbrace x\rbrace)A}$ and we call the quotient $A_x:=A/I_x$ the \textit{fibre} of $A$ over $x$. We write $a(x)$ for the image of $a\in A$ in $A_x$ under the quotient map. Put $\mathcal{A}=\coprod_{x\in X} A_x$. Then $\mathcal{A}$ can be equipped with a topology such that it becomes an upper-semicontinouos $\mathrm{C}^*$-bundle over $X$ and moreover $A\cong \Gamma_0(X,\mathcal{A})$, where $\Gamma_0(X,\mathcal{A})$ denotes the continuous sections of this bundle which vanish at infinity. For further reference let us record, that a basis for the topology of $\mathcal{A}$ is defined by the sets
$$W(a,U,\varepsilon):=\lbrace b\in\mathcal{A}\mid q(b)\in U\textit{ and }\norm{b-a(q(b))}<\varepsilon\rbrace,$$
where $a\in A$, $U\subseteq X$ is an open subset and $\varepsilon>0$.
Throughout this work we will freely alternate between the bundle picture and the picture as $C_0(X)$-algebras. For convenience bundles will always be denoted by calligraphic letters.
The reader unfamiliar with the theory is referred to the expositions in \cite[Appendix C]{Williams} and \cite[Section~3.1]{Goehle}.

The following density criterion will turn out to be very useful, when working with $C_0(X)$-algebras. The proof can be adapted easily from \cite[Proposition~C.24]{Williams}.
\begin{prop}\label{Prop:DensityCriterionC(X)-algebras}
Let $A$ be a $C_0(X)$-algebra and $\Gamma\subseteq A$ be a linear subspace. Assume additionally, that 
\begin{enumerate}
\item $\Gamma$ is closed under the action of $C_0(X)$, meaning $fa\in\Gamma$ for all $f\in C_0(X)$ and $a\in\Gamma$, and
\item the image of $\Gamma$ under the quotient map $A\rightarrow A_x$ is dense in $A_x$ for all $x\in X$.
\end{enumerate}
Then $\Gamma$ is dense in $A$.
\end{prop}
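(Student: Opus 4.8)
The statement is the standard density criterion for $C_0(X)$-algebras, and the cleanest route is to mimic the proof of \cite[Proposition~C.24]{Williams} as the paper itself suggests. Let me sketch it. Fix $a\in A$ and $\eps>0$; I want to produce $b\in\Gamma$ with $\norm{a-b}<\eps$. The key geometric input is that the norm function $x\mapsto\norm{a(x)}$ on $X$ is upper semicontinuous and vanishes at infinity (since $A\cong\Gamma_0(X,\mathcal A)$), so the set $K=\{x\in X:\norm{a(x)}\geq\eps/2\}$ is compact.

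First I would use hypothesis (2) pointwise: for each $x\in K$ there is $b_x\in\Gamma$ with $\norm{a(x)-b_x(x)}<\eps/2$. By upper semicontinuity of $y\mapsto\norm{a(y)-b_x(y)}$, the set $U_x=\{y\in X:\norm{a(y)-b_x(y)}<\eps/2\}$ is open and contains $x$. By compactness of $K$, finitely many $U_{x_1},\dots,U_{x_n}$ cover $K$. Next I would choose a partition of unity: pick $\varphi_0,\varphi_1,\dots,\varphi_n\in C_0(X)$ (or $C_c(X)$ for the $\varphi_i$ with $i\geq1$) with $0\leq\varphi_i\leq1$, $\sum_{i=0}^n\varphi_i\equiv1$ on a neighbourhood of the compact set $K\cup\bigcup_i\operatorname{supp}\varphi_i$ as needed, $\operatorname{supp}\varphi_i\subseteq U_{x_i}$ for $i\geq1$, and $\varphi_0$ vanishing on $K$. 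Then set
$$b:=\varphi_0\cdot a+\sum_{i=1}^n\varphi_i\cdot b_{x_i}.$$
By hypothesis (1), each $\varphi_i\cdot b_{x_i}\in\Gamma$; the term $\varphi_0\cdot a$ is the only thing not obviously in $\Gamma$, so I should instead approximate $\varphi_0\cdot a$ separately — note $\norm{(\varphi_0 a)(x)}=\varphi_0(x)\norm{a(x)}<\eps/2$ everywhere since $\varphi_0$ is supported off $K$, so I can simply drop that term and bound its contribution by $\eps/2$. So really take $b:=\sum_{i=1}^n\varphi_i\cdot b_{x_i}\in\Gamma$.

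Finally I estimate $\norm{a-b}=\sup_{x\in X}\norm{a(x)-b(x)}$. For $x\in K$: writing $a(x)=\sum_{i=0}^n\varphi_i(x)a(x)$ and using that $\varphi_i(x)\neq0$ forces $x\in U_{x_i}$ for $i\geq1$ while $\varphi_0(x)=0$, we get $\norm{a(x)-b(x)}\leq\sum_{i=1}^n\varphi_i(x)\norm{a(x)-b_{x_i}(x)}<\eps/2$. For $x\notin K$: both $\norm{a(x)}<\eps/2$ and, since $\sum_i\varphi_i(x)\leq1$ and each $\norm{b_{x_i}(x)}\leq\norm{a(x)}+\norm{a(x)-b_{x_i}(x)}$, a short estimate gives $\norm{b(x)}<\eps$ and hence $\norm{a(x)-b(x)}<\eps$ (being slightly more careful one can arrange $<\eps$ uniformly by shrinking the tolerances to $\eps/4$ at the start). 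Since $b\in\Gamma$ and $\norm{a-b}<\eps$, density follows.

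The only genuinely delicate point is the bookkeeping with the partition of unity near the boundary of $K$ and the control on $X\setminus K$, where $b$ need not be small but must still stay within $\eps$ of $a$; the standard fix is to start from tolerance $\eps/4$ and enlarge $K$ to $\{x:\norm{a(x)}\geq\eps/4\}$ so that the slack absorbs the overlap terms. Everything else — upper semicontinuity of norm functions, compactness of $K$, closedness of $\Gamma$ under multiplication by $C_0(X)$ and the fact that $\Gamma$ is a linear subspace — is exactly the hypotheses or the structure theory of $C_0(X)$-algebras recalled above, so no new ideas beyond the $\Gamma_0(X,\mathcal A)$ picture are required.
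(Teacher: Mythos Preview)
Your proposal is correct and is exactly the adaptation of \cite[Proposition~C.24]{Williams} that the paper itself defers to, so the approaches coincide. One small remark: your worry about the estimate off $K$ is unnecessary --- writing $a(x)-b(x)=\sum_i\varphi_i(x)(a(x)-b_{x_i}(x))+(1-\sum_i\varphi_i(x))a(x)$ and using that each nonzero $\varphi_i(x)$ forces $x\in U_{x_i}$ while $(1-\sum_i\varphi_i(x))\norm{a(x)}<\eps/2$ off $K$ (and vanishes on $K$) gives $\norm{a(x)-b(x)}<\eps/2$ uniformly, with no need to shrink to $\eps/4$.
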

An application of this result is contained in the proof of the next well-known lemma. Before we can state it, recall that a $\ast$-homomorphism $\Phi:A\rightarrow B$ between two $C_0(X)$-algebras $A$ and $B$ is called \textit{$C_0(X)$-linear} if $\Phi(f a)=f \Phi(a)$ for all $f\in C_0(X)$ and all $a\in A$.

If $\Phi:A\rightarrow B$ is a $C_0(X)$-linear homomorphism, it induces $\ast$-homo\-morphisms $\Phi_x:A_x\rightarrow B_x$ on the level of the fibres given by $\Phi_x(a(x))=\Phi(a)(x)$.
Conveniently, one can check several properties of $\Phi$ on the level of the fibres and vice versa:
\begin{lemma}\cite[Lemma~2.1]{MR2820377}\label{Lem:IsomorphismCriteriumForC(X)-linearHomomorphisms}
	Let $\Phi:A\rightarrow B$ be a $C_0(X)$-linear homomorphism. Then $\Phi$ is injective (resp. surjective, resp. bijective) if and only if $\Phi_x$ is injective (resp. surjective, resp. bijective) for all $x\in X$.
\end{lemma}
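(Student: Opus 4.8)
The plan is to treat injectivity and surjectivity independently, since the bijectivity assertion is then just their conjunction. In each case one implication is essentially formal and the other carries the content. Throughout I will freely use the standard facts about a $C_0(X)$-algebra $A$: the norm formula $\norm{a}=\sup_{x\in X}\norm{a(x)}$, the description $I_x=\{a\in A\mid a(x)=0\}$, and the characterisation of $I_x$ as the closure of $C_0(X\setminus\{x\})\cdot A$ — equivalently, $a\in I_x$ if and only if $h_\lambda a\to a$ for some net $(h_\lambda)$ in $C_0(X)$ with $0\le h_\lambda\le 1$ and each $h_\lambda$ vanishing on a neighbourhood of $x$ (such a net exists because $C_0(X\setminus\{x\})$ has an approximate unit of functions of this form). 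I will also use two elementary $C^*$-facts: an injective $\ast$-homomorphism is isometric, and the image of a $\ast$-homomorphism is closed.

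For the injectivity statement, the implication ``all $\Phi_x$ injective $\Rightarrow$ $\Phi$ injective'' I would get in one line from the norm formula: if $\Phi(a)=0$ then $\Phi_x(a(x))=\Phi(a)(x)=0$, hence $a(x)=0$ for all $x$, hence $\norm{a}=0$. For the converse I would fix $x$ and $a\in A$ with $\Phi_x(a(x))=0$, i.e.\ $\Phi(a)\in I_x^B$; pick a net $(h_\lambda)$ as above with $h_\lambda\Phi(a)\to\Phi(a)$; rewrite $h_\lambda\Phi(a)=\Phi(h_\lambda a)$ using $C_0(X)$-linearity; and invoke the isometry of the injective map $\Phi$ to conclude $h_\lambda a\to a$. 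Since each $h_\lambda$ vanishes near $x$ we have $h_\lambda a\in C_0(X\setminus\{x\})\cdot A\subseteq I_x^A$, and as $I_x^A$ is closed this forces $a\in I_x^A$, that is $a(x)=0$.

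For the surjectivity statement, ``$\Phi$ surjective $\Rightarrow$ all $\Phi_x$ surjective'' is immediate, since the fibre maps $B\to B_x$ are onto and $\Phi_x(a(x))=\Phi(a)(x)$. For the converse I would apply the density criterion, Proposition \ref{Prop:DensityCriterionC(X)-algebras}, to the subspace $\Gamma:=\Phi(A)\subseteq B$: it is closed under the $C_0(X)$-action because $f\Phi(a)=\Phi(fa)\in\Gamma$, and its image in each fibre is $\{\Phi(a)(x)\mid a\in A\}=\Phi_x(A_x)=B_x$, hence dense. Thus $\Gamma$ is dense in $B$; but $\Gamma$ is the image of a $\ast$-homomorphism, hence closed, so $\Gamma=B$. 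The bijective case is then the conjunction of the injective and surjective cases.

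The only step I expect to present any real obstacle is the ``$\Phi$ injective $\Rightarrow$ $\Phi_x$ injective'' direction, where one must pull membership in the fibre ideal $I_x^B$ back across $\Phi$ into $I_x^A$: this is precisely where the hypothesis of $C_0(X)$-linearity is genuinely used (to replace $h_\lambda\Phi(a)$ by $\Phi(h_\lambda a)$), in combination with the automatic isometry of injective $\ast$-homomorphisms. All remaining implications reduce to the norm formula, the density criterion, and routine bookkeeping.
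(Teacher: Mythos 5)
Your proof is correct. The paper does not actually prove this lemma (it is quoted from \cite[Lemma~2.1]{MR2820377}), but it does remark that the density criterion of Proposition \ref{Prop:DensityCriterionC(X)-algebras} is applied in its proof, which is exactly how you handle the surjectivity direction; your injectivity argument, using the isometry of an injective $\ast$-homomorphism together with an approximate unit of $C_0(X\setminus\{x\})$ consisting of functions vanishing near $x$ to pull $\Phi(a)\in I_x^B$ back to $a\in I_x^A$, is the standard argument and is sound.
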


We shall need several constructions involving $C_0(X)$-algebras:

\paragraph{Pullback.}
If $A$ is a $C_0(X)$-algebra and $f:Y\rightarrow X$ a continuous map, we can define the \textit{pullback} of $A$ along $f$ as follows:
Let $q:\mathcal{A}\rightarrow X$ denote the upper-semicontinouos $\mathrm{C}^*$-bundle over $X$ associated to $A$. Then we can form the pullback bundle $f^*\mathcal{A}=\lbrace ((y,a)\in Y\times\mathcal{A}\mid f(y)=q(a)\rbrace$. The bundle $f^*\mathcal{A}$ is an upper-semicontinouos $\mathrm{C}^*$-bundle over $Y$ whose fibres $(f^*\mathcal{A})_y$ are canonically isomorphic to $A_{f(y)}$. We let $f^*A:=\Gamma_0(Y,f^*\mathcal{A})$ denote the corresponding $C_0(Y)$-algebra. Note, that we can canonically identify $(f^*A)_y=A_{f(y)}$.
It is an easy exercise to show that if $A$ is a $C_0(X)$-algebra and $f:Y\rightarrow X$ and $g:Z\rightarrow Y$ are two continuous maps, then the algebras $(f\circ g)^*A$ and $g^*(f^*A)$ are canonically isomorphic as $C_0(Z)$-algebras.

It is an easy consequence of Proposition \ref{Prop:DensityCriterionC(X)-algebras} and often helpful when working with pullbacks that $$span\lbrace \varphi\otimes a\mid \varphi\in C_c(Y), a\in A\rbrace$$
is dense in $f^*A$, where $\varphi\otimes a$ is given by $\varphi\otimes a\in \Gamma_c(Y,f^*\mathcal{A})$ by $$(\varphi\otimes a)(y):=\varphi(y)a(f(y)).$$

When working with crossed products it is often useful to consider another topology on the algebra of continuous sections $\Gamma(X,\mathcal{A})$ of an upper-semicontinuous $\mathrm{C}^*$-bundle. We say that a net $(f_\lambda)_\lambda$ of functions in $\Gamma(X,\mathcal{A})$ converges to $f\in \Gamma(X,\mathcal{A})$ with respect to the \textit{inductive limit topology}, if and only if there exists a compact subset $K$ in $X$ such that $f$ and, eventually, all the $f_\lambda$ vanish off of $K$ and $\norm{f_\lambda - f}_\infty \rightarrow 0$.
One can show (see \cite[Corollary~3.45]{Goehle}), that $span\lbrace \varphi\otimes a\mid \varphi\in C_c(Y), a\in A\rbrace$
is also dense in $\Gamma_c(Y,f^*\mathcal{A})$ with respect to the inductive limit topology.

The next lemma studies the behaviour of pullbacks with respect to $C_0(X)$-linear $\ast$-homomorphisms. The proof is straightforward:

\begin{lemma}\label{Lem:PullbackOfHomomorphisms}
	Let $A$ and $B$ be two $C_0(X)$-algebras and $f:Y\rightarrow X$ a continuous map. If $\Phi:A\rightarrow B$ is a $C_0(X)$-linear homomorphism, then the map
	$$f^*\Phi:f^*A\rightarrow f^*B$$
	given by $(f^*\Phi)(\psi)(y)=\Phi_{f(y)}(\psi(y))$ is a $C_0(Y)$-linear homomorphism.
	Moreover, the pullback construction is functorial meaning if $\Psi:B\rightarrow C$ is another $C_0(X)$-linear $*$-homo\-morphism into a $C_0(X)$-algebra $C$ then $f^*\Psi\circ f^*\Phi=f^*(\Psi\circ \Phi)$.
\end{lemma}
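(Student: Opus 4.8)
The plan is to work entirely in the bundle picture and reduce everything to the dense subspace of elementary tensors. Observe first that the formula $(f^*\Phi)(\psi)(y)=\Phi_{f(y)}(\psi(y))$ assigns to an \emph{arbitrary} section $\psi$ of $f^*\mathcal{A}$ a section of $f^*\mathcal{B}$ (recall $(f^*\mathcal{A})_y=A_{f(y)}$ and $(f^*\mathcal{B})_y=B_{f(y)}$), so the only genuine content is that this section lies in $f^*B=\Gamma_0(Y,f^*\mathcal{B})$ when $\psi\in f^*A$, together with the algebraic assertions. I would first check the claim on the subspace $\Gamma_c:=span\{\varphi\otimes a\mid\varphi\in C_c(Y),\ a\in A\}\subseteq f^*A$, which is dense as noted above (a consequence of Proposition \ref{Prop:DensityCriterionC(X)-algebras}). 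For $\varphi\in C_c(Y)$ and $a\in A$, linearity of the $*$-homomorphism $\Phi_{f(y)}$ together with the defining identity $\Phi_{f(y)}(a(f(y)))=\Phi(a)(f(y))$ of the fibre map gives
$$(f^*\Phi)(\varphi\otimes a)(y)=\Phi_{f(y)}\bigl(\varphi(y)\,a(f(y))\bigr)=\varphi(y)\,\Phi(a)(f(y))=(\varphi\otimes\Phi(a))(y),$$
so $f^*\Phi$ sends $\varphi\otimes a$ to $\varphi\otimes\Phi(a)\in\Gamma_c(Y,f^*\mathcal{B})\subseteq f^*B$; by linearity, $f^*\Phi(\Gamma_c)\subseteq f^*B$.

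Next I would extend by continuity. Since each fibre map $\Phi_x$ is a $*$-homomorphism and hence contractive, $\norm{(f^*\Phi)(\psi)}_\infty=\sup_y\norm{\Phi_{f(y)}(\psi(y))}\le\sup_y\norm{\psi(y)}=\norm{\psi}$ for $\psi\in\Gamma_c$, so $(f^*\Phi)|_{\Gamma_c}$ is norm-decreasing and extends uniquely to a bounded linear map $f^*A\to f^*B$, still denoted $f^*\Phi$. To identify the extension with the pointwise formula, note that evaluation at a fixed $y$ is norm-decreasing on $f^*A$ and on $f^*B$; hence for $\psi_\lambda\to\psi$ in $f^*A$ with $\psi_\lambda\in\Gamma_c$ one has $(f^*\Phi)(\psi_\lambda)(y)=\Phi_{f(y)}(\psi_\lambda(y))\to\Phi_{f(y)}(\psi(y))$ by continuity of $\Phi_{f(y)}$, while also $(f^*\Phi)(\psi_\lambda)(y)\to(f^*\Phi)(\psi)(y)$, so the two limits coincide and the formula $(f^*\Phi)(\psi)(y)=\Phi_{f(y)}(\psi(y))$ holds on all of $f^*A$ (in particular the image really consists of sections vanishing at infinity, by construction of the extension).

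With the pointwise formula in hand, the rest is fibrewise bookkeeping: multiplicativity and $*$-preservation hold because the product and involution of $f^*A$ and $f^*B$ are computed fibrewise and every $\Phi_x$ is a $*$-homomorphism; $C_0(Y)$-linearity follows from $(f^*\Phi)(g\psi)(y)=\Phi_{f(y)}(g(y)\psi(y))=g(y)\,\Phi_{f(y)}(\psi(y))=(g\cdot f^*\Phi(\psi))(y)$ for $g\in C_0(Y)$, using $\CC$-linearity of $\Phi_{f(y)}$; and functoriality follows from $(\Psi\circ\Phi)_x=\Psi_x\circ\Phi_x$ — immediate from the definition of the fibre maps — applied pointwise. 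I expect no real obstacle here; the only step requiring care is the reduction to elementary tensors, i.e. invoking the density statement and verifying that the norm-continuous extension is implemented by the expected pointwise formula. (Alternatively one could prove continuity of $y\mapsto\Phi_{f(y)}(\psi(y))$ directly from the defining basis $W(b,U,\eps)$ of the bundle topology, but the density argument is cleaner.)
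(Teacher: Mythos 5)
Your proposal is correct and complete. The paper itself offers no argument here — it dismisses the lemma as "straightforward" — and your route (verify the formula on the dense span of elementary tensors $\varphi\otimes a$, where $f^*\Phi(\varphi\otimes a)=\varphi\otimes\Phi(a)$ is visibly in $f^*B$, extend by contractivity of the fibre maps $\Phi_x$, and identify the extension with the pointwise formula via norm-decreasing evaluations) is exactly the kind of routine verification intended; the fibrewise checks of multiplicativity, $*$-preservation, $C_0(Y)$-linearity and of $(\Psi\circ\Phi)_x=\Psi_x\circ\Phi_x$ for functoriality are all sound. The alternative you mention, checking continuity of $y\mapsto\Phi_{f(y)}(\psi(y))$ directly against the basic open sets $W(b,U,\eps)$ of the bundle topology, would work just as well, so nothing hinges on the choice.
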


\paragraph{Push forward.}
Let $A$ be a $C_0(X)$-algebra and $f:X\rightarrow Y$ a continuous map. Then we can turn $A$ into a $C_0(Y)$-algebra as follows:
Since the action $\Phi:C_0(X)\rightarrow Z(M(A))$ is non-degenerate there exists a unique extension $$\tilde{\Phi}:C_b(X)\cong M(C_0(X))\rightarrow M(A)$$ to the bounded continuous functions on $X$. We need the following 
\begin{lemma}
The image of $\tilde{\Phi}$ is contained in the centre $Z(M(A))$ of $M(A)$.
\end{lemma}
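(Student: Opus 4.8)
The plan is to combine the strict continuity of the canonical extension $\tilde\Phi$ with the observation that $Z(M(A))$ is closed under strict limits of bounded nets.

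First I would fix an approximate unit $(e_i)_i$ for $C_0(X)$, so that $(e_i)_i$ is a bounded net converging strictly to the unit $1\in M(C_0(X))=C_b(X)$. For a fixed $g\in C_b(X)$ the net $(g e_i)_i$ then lies in $C_0(X)$, is bounded by $\norm{g}_\infty$, and converges strictly to $g$ in $M(C_0(X))$, since one-sided multiplication by the fixed element $g$ is strictly continuous. Recall now that a nondegenerate $\ast$-homomorphism from $C_0(X)$ into $M(A)$ extends uniquely to a unital $\ast$-homomorphism $\tilde\Phi\colon M(C_0(X))\to M(A)$ that is strictly continuous on bounded subsets --- this is precisely the property used to define $\tilde\Phi$ in the first place. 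Hence $\tilde\Phi(g e_i)\to\tilde\Phi(g)$ strictly in $M(A)$; but $g e_i\in C_0(X)$, so $\tilde\Phi(g e_i)=\Phi(g e_i)\in Z(M(A))$ by hypothesis.

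It then remains to check that $Z(M(A))$ absorbs such strict limits. If $(z_i)_i$ is a bounded net in $Z(M(A))$ with strict limit $z\in M(A)$, then for every $m\in M(A)$ one has $z_i m\to zm$ and $m z_i\to mz$ strictly, because one-sided multiplication by the fixed multiplier $m$ is strictly continuous; as $z_i m=m z_i$ for all $i$ and strict limits are unique, $zm=mz$, i.e.\ $z\in Z(M(A))$. Applying this with $z_i=\Phi(g e_i)$ and $z=\tilde\Phi(g)$ yields $\tilde\Phi(g)\in Z(M(A))$, which is what we want.

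I do not expect any genuine obstacle here: strict continuity of $\tilde\Phi$ on bounded sets is built into its construction, and separate strict continuity of multiplication in $M(A)$ is routine. The only point requiring a little care is to keep the net $(g e_i)_i$ bounded, so that strict continuity of $\tilde\Phi$ --- which may fail on unbounded sets --- genuinely applies.
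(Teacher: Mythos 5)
Your argument is correct, but it follows a genuinely different route from the paper. You approximate: for $g\in C_b(X)$ you take an approximate unit $(e_i)$ of $C_0(X)$, note that $(ge_i)$ is a bounded net in $C_0(X)$ converging strictly to $g$, invoke strict continuity of $\tilde\Phi$ on bounded sets (which indeed comes for free from the standard construction of the extension, using non-degeneracy of $\Phi$), and then observe that $Z(M(A))$ is closed under strict limits because one-sided multiplication by a fixed multiplier is strictly continuous; all of these steps check out, and your care about boundedness is exactly the right precaution. The paper instead argues purely algebraically: it cites \cite[Lemma~8.3]{Williams} to reduce centrality of $\tilde\Phi(f)$ to the identity $\tilde\Phi(f)ab=a\tilde\Phi(f)b$ for $a,b\in A$, uses non-degeneracy to replace $a$ by elements of the form $\Phi(g)a$ with $g\in C_0(X)$, and then verifies the identity by a one-line computation with $\Phi(fg)$, never mentioning the strict topology. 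Your approach is more conceptual and generalizes immediately (it shows the center is strictly closed on bounded sets, so any strictly continuous unital extension of a map into $Z(M(A))$ lands in $Z(M(A))$), at the cost of importing the strict-continuity machinery; the paper's approach is more elementary and self-contained, trading that machinery for the factorization trick via non-degeneracy.
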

\begin{proof}
Recall from \cite[Lemma~8.3]{Williams}, that it suffices to show, that $\tilde{\Phi}(f)ab=a\tilde{\Phi}(f)b$ for all $a,b\in A$ and $f\in C_b(X)$. Furthermore, since $\Phi$ is non-degenerate, it suffices to check this for elements of the form $\tilde{a}=\Phi(g)a\in A$ with $g\in C_0(X)$ and $a\in A$.
So let $g\in C_0(X)$ and $a,b\in A$ be given. Then we have
$$
\tilde{\Phi}(f)\tilde{a}b
 =\Phi(fg)ab
 =a\Phi(fg)b
 =a\tilde{\Phi}(f)\Phi(g)b
 =a\Phi(g)\tilde{\Phi}(f)b
=\Phi(g)a\tilde{\Phi}(f)b
 =\tilde{a}\tilde{\Phi}(f)b,
$$
and the proof is complete.
\end{proof}
If we now consider the induced homomorphism $f^*:C_0(Y)\rightarrow C_b(X)$ we can just compose it with $\tilde{\Phi}$ to obtain a homomorphism $C_0(Y)\rightarrow Z(M(A))$.
In other words: For all $\varphi\in C_0(Y)$ and $a\in A$ we can define $\varphi\cdot a:=\tilde{\Phi}(\varphi\circ f)a$. In order to see that this indeed turns $A$ into a $C_0(Y)$-algebra we just need to check the non-degeneracy condition, which is the content of the following Proposition.
\begin{prop}\label{Prop:Pushforward} Let $A$ be a $C_0(X)$-algebra and $f:X\rightarrow Y$ be a continuous map. Then $A$ is a $C_0(Y)$-algebra with respect to the homomorphism $\tilde{\Phi}\circ f^*:C_0(Y)\rightarrow Z(M(A))$.
\end{prop}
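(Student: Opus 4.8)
The plan is as follows. Write $\Psi:=\tilde\Phi\circ f^{*}\colon C_{0}(Y)\to M(A)$, where $f^{*}\colon C_{0}(Y)\to C_{b}(X)\cong M(C_{0}(X))$ is the $*$-homomorphism $\varphi\mapsto\varphi\circ f$. As a composite of $*$-homomorphisms, $\Psi$ is again a $*$-homomorphism, and by the preceding Lemma its image is contained in $Z(M(A))$. Hence the only thing left to verify is that $\Psi$ is non-degenerate, i.e.\ that the closed linear span of $\Psi(C_{0}(Y))A$ equals $A$.

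To prove this I would reduce to the dense subspace coming from the original structure map. Because $\Phi$ is non-degenerate, the elements $\Phi(g)a$ with $g\in C_{0}(X)$ and $a\in A$ have dense linear span in $A$, so it suffices to show that each such element already lies in $\Psi(C_{0}(Y))A$. Fix $g\in C_{0}(X)$ and put $K:=\operatorname{supp}(g)$, a compact subset of $X$; then $f(K)$ is compact in the locally compact Hausdorff space $Y$, so Urysohn's lemma provides $\varphi\in C_{c}(Y)$ with $0\le\varphi\le 1$ and $\varphi\equiv 1$ on $f(K)$. Then $(\varphi\circ f)\cdot g=g$ as elements of $C_{0}(X)$: on $K$ the factor $\varphi\circ f$ equals $1$, while off $K$ the function $g$ vanishes.

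Finally I would exploit that $\tilde\Phi$ is multiplicative and restricts to $\Phi$ on $C_{0}(X)$: for $h\in C_{b}(X)$ and $g\in C_{0}(X)$ one has $\Phi(hg)=\tilde\Phi(hg)=\tilde\Phi(h)\tilde\Phi(g)=\tilde\Phi(h)\Phi(g)$. Taking $h=\varphi\circ f=f^{*}\varphi$ yields
$$\Phi(g)=\Phi\bigl((\varphi\circ f)g\bigr)=\tilde\Phi(f^{*}\varphi)\,\Phi(g)=\Psi(\varphi)\,\Phi(g),$$
so that $\Phi(g)a=\Psi(\varphi)\bigl(\Phi(g)a\bigr)\in\Psi(C_{0}(Y))A$, which completes the argument.

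This computation is entirely routine; the only points needing a moment's care are the pointwise identity $(\varphi\circ f)g=g$ (for which one must keep track of $\operatorname{supp}(g)$) and the elementary identity $\Phi(hg)=\tilde\Phi(h)\Phi(g)$ coming from the defining properties of $\tilde\Phi$. I do not expect a genuine obstacle here — in particular no approximation is needed, since the containment $\Phi(g)a\in\Psi(C_{0}(Y))A$ holds exactly.
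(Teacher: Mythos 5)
Your overall strategy is sound and in fact a little more direct than the paper's (the paper proves density of $f^*(C_0(Y))C_0(X)$ in $C_0(X)$ via Stone--Weierstrass and then runs an $\varepsilon/2$-approximation, whereas you produce an exact factorization $\Phi(g)=\Psi(\varphi)\Phi(g)$), but there is one step that fails as written: for a general $g\in C_0(X)$ the set $K=\operatorname{supp}(g)$ is \emph{not} compact --- functions in $C_0(X)$ only vanish at infinity. Without compactness of $K$ the rest of the argument collapses: if $\overline{f(K)}$ is not compact there is no $\varphi\in C_0(Y)$ with $\varphi\equiv 1$ on $f(K)$, since for $\varphi\in C_0(Y)$ the set $\{\varphi\geq 1\}$ is compact. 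Concretely, take $X=Y=\RR$, $f=\mathrm{id}$ and $g(x)=(1+x^2)^{-1}$; then no $\varphi\in C_0(Y)$ satisfies $(\varphi\circ f)g=g$, so the claimed exact containment $\Phi(g)a\in\Psi(C_0(Y))A$ simply does not hold for such $g$.

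The repair is easy, but it reintroduces the approximation you said you could avoid: restrict to $g\in C_c(X)$. Since $C_c(X)$ is dense in $C_0(X)$ and $\Phi$ is bounded, the elements $\Phi(g)a$ with $g\in C_c(X)$, $a\in A$ still have dense linear span in $A$ (non-degeneracy of $\Phi$), and for compactly supported $g$ your Urysohn function $\varphi\in C_c(Y)$ with $\varphi\equiv 1$ on the compact set $f(\operatorname{supp} g)$ exists and gives $(\varphi\circ f)g=g$, hence $\Phi(g)a=\Psi(\varphi)\Phi(g)a$ exactly as you computed. Non-degeneracy of $\Psi$ only asks that $\Psi(C_0(Y))A$ have dense span, so this suffices. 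With that one correction your proof is complete and is a legitimate alternative to the paper's Stone--Weierstrass argument; just delete or qualify the remark that ``no approximation is needed,'' since the exact factorization is only available on the dense subspace coming from $C_c(X)$.
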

\begin{proof}
We only need to check, that $\tilde{\Phi}\circ f^*$ is non-degenerate.
First observe, that $f^*$ is non-degenerate in the sense that $f^*(C_0(Y))C_0(X)$ is dense in $C_0(X)$. This follows easily from the Stone-Weierstrass theorem since if $x\neq y\in X$ then we can choose a function $\varphi\in C_0(X)$ such that $\varphi(x)=1$ and $\varphi(y)=0$. Furthermore let $\psi\in C_0(Y)$ be a function such that $\psi(f(x))=1$. Then $(f^*(\psi)\varphi)(x)=\psi(f(x))\varphi(x)=1\neq 0=\psi(f(y))\varphi(y)=(f^*(\psi)\varphi)(y)$.

If $a\in A$ and $\varepsilon>0$ are given, there exist $\varphi\in C_0(X)$ and $b\in A$ such that $\norm{\tilde{\Phi}(\varphi)b-a}<\frac{\varepsilon}{2}$ since $\Phi$ is non-degenerate. Since $f^*(C_0(Y))C_0(X)$ is dense in $C_0(X)$ we can find functions $g\in C_0(Y)$ and $h\in C_0(X)$ such that $\norm{f^*(g)h-\varphi}<\frac{\varepsilon}{2\norm{b}}$.
Consequently, we get that \begin{align*}
\norm{\tilde{\Phi}(f^*(g))\Phi(h)b-a} & =\norm{\tilde{\Phi}(f^*(g)h)b-a}\\
& \leq \norm{\tilde{\Phi}(f^*(g)h)b-\tilde{\Phi}(\varphi)b}+\norm{\tilde{\Phi}(\varphi)b-a}\\
& \leq \norm{f^*(g)h-\varphi}\norm{b}+\norm{\tilde{\Phi}(\varphi)b-a}<\varepsilon
\end{align*}
\end{proof}

It is important to note, that this construction (in contrast to the pullback) does not change the $\mathrm{C}^*$-algebra itself, but just the associated bundle structure.
We will sometimes write $f_*A$ for the pushforward of $A$ along $f$.
One has the following general description of the fibres:
\begin{prop}\label{Prop:Fibres of Pushforward}
Let $A$ be a $C_0(X)$-algebra and $f:X\rightarrow Y$ be a continuous map between locally compact Hausdorff spaces. For $y\in Y$ let $X_y:=f^{-1}(\lbrace y\rbrace)$. Then, viewing $A$ as a $C_0(Y)$-algebra via pushing forward along $f$, there is an isomorphism \[A_y\rightarrow \Gamma_0(X_y,\mathcal{A}_{\mid X_y}).\]
\end{prop}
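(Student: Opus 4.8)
The plan is to realise both sides as the quotient of $A$ by one and the same ideal. On the one hand, by the definition of the pushforward $C_0(Y)$-structure from Proposition~\ref{Prop:Pushforward}, the fibre over $y$ is $A_y=A/I_y$, where $I_y$ is the closed linear span of the elements $\tilde{\Phi}(\psi\circ f)a$ with $\psi\in C_0(Y\setminus\{y\})$ and $a\in A$ (recall that $C_0(Y)$ acts on $A$ through $\varphi\cdot a=\tilde{\Phi}(\varphi\circ f)a$). On the other hand, since $X_y=f^{-1}(\{y\})$ is closed in $X$, it is standard (see e.g.\ \cite[Appendix~C]{Williams}) that the restriction-of-sections map $\rho\colon A=\Gamma_0(X,\mathcal{A})\to\Gamma_0(X_y,\mathcal{A}_{\mid X_y})$ is a surjective $\ast$-homomorphism whose kernel is the ideal
\[ J_y=\{a\in A\mid a(x)=0\text{ for all }x\in X_y\}=\overline{C_0(X\setminus X_y)\cdot A} \]
(the action here being through $\Phi$), so that $\Gamma_0(X_y,\mathcal{A}_{\mid X_y})\cong A/J_y$. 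Hence it suffices to prove $I_y=J_y$; the asserted isomorphism is then the one induced by $\rho$.

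For the inclusion $I_y\subseteq J_y$ I would use that for any $g\in C_b(X)$ and $a\in A$ the fibre of $\tilde{\Phi}(g)a$ at a point $x$ is $g(x)a(x)$; this follows from the fact that the quotient map $A\to A_x$ carries the central multiplier $\tilde{\Phi}(g)$ to $g(x)$ times the identity of $M(A_x)$. Consequently, for $\psi\in C_0(Y\setminus\{y\})$, $a\in A$ and $x\in X_y$ we get $(\tilde{\Phi}(\psi\circ f)a)(x)=\psi(f(x))a(x)=\psi(y)a(x)=0$, so every generator of $I_y$ lies in $J_y$; as $J_y$ is closed, $I_y\subseteq J_y$.

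For the reverse inclusion, fix $a\in J_y$ and $\varepsilon>0$. The set $U:=\{x\in X\mid\norm{a(x)}<\varepsilon\}$ is open by upper semicontinuity of $x\mapsto\norm{a(x)}$ and contains $X_y$, while $X\setminus U=\{x\mid\norm{a(x)}\geq\varepsilon\}$ is compact because $a$ vanishes at infinity. Therefore $f(X\setminus U)$ is a compact subset of $Y$ not containing $y$, and Urysohn's lemma provides $\psi\in C_c(Y\setminus\{y\})$ with $0\leq\psi\leq1$ and $\psi\equiv1$ on $f(X\setminus U)$. Computing in the fibres, $(\tilde{\Phi}(\psi\circ f)a-a)(x)=(\psi(f(x))-1)a(x)$ vanishes for $x\in X\setminus U$ and has norm $<\varepsilon$ for $x\in U$; since the norm of a $C_0(X)$-algebra is the supremum of the fibre norms, $\norm{\tilde{\Phi}(\psi\circ f)a-a}\leq\varepsilon$. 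As $\tilde{\Phi}(\psi\circ f)a\in I_y$, $\varepsilon$ was arbitrary, and $I_y$ is closed, we conclude $a\in I_y$. Hence $I_y=J_y$ and the proof is complete.

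The two ingredients that need a little care are the fibrewise formula $(\tilde{\Phi}(g)a)(x)=g(x)a(x)$ for the extended action and the identification $\Gamma_0(X_y,\mathcal{A}_{\mid X_y})\cong A/J_y$ for the closed set $X_y$; both are standard facts about $C_0(X)$-algebras and upper-semicontinuous bundles. Granting them, the approximation argument above is entirely elementary, so I do not anticipate a genuine obstacle here.
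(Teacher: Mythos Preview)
Your proposal is correct and follows essentially the same route as the paper: identify $A$ with $\Gamma_0(X,\mathcal{A})$, use the restriction map to $\Gamma_0(X_y,\mathcal{A}_{\mid X_y})$, and show its kernel coincides with $I_y$ via the very same $\varepsilon$-approximation using a bump function on $Y$ supported away from $y$ and equal to $1$ on $f(\{x:\norm{a(x)}\geq\varepsilon\})$. The only cosmetic difference is that the paper proves surjectivity of the restriction map by invoking Proposition~\ref{Prop:DensityCriterionC(X)-algebras}, whereas you cite it as a standard fact about restriction to closed subsets.
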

\begin{proof}
Identify $A$ with the section algebra $\Gamma_0(X,\mathcal{A})$ and consider the restriction homomorphism
\[res:\Gamma_0(X,\mathcal{A})\rightarrow \Gamma_0(X_y,\mathcal{A}_{\mid X_y}).\]
We will show, that this homomorphism factors through the desired isomorphism.
First of all $ker(res)$ can be identified with the ideal $I_y$:
For all $x\in X_y$, $\varphi\in C_0(Y\setminus\lbrace y\rbrace)$ and $a\in A$ we clearly have $(\varphi\cdot a)(x)=\varphi(f(x))a(x)=\varphi(y)a(x)=0$ and thus $I_y\subseteq ker(res)$. If conversely $a\in ker(res)$ and $\varepsilon>0$ is given then $K:=\lbrace x\in X\mid \norm{a(x)}\geq \varepsilon\rbrace$ is compact. By continuity $f(K)$ is also compact. Since clearly $y\notin f(K)$ there is a function $\varphi\in C_0(Y)$ with $0\leq \varphi\leq 1$ such that $\varphi=1$ on $f(K)$ and $\varphi(y)=0$.
Then $\varphi\cdot a\in I_y$. For $x\in K$ we have $\norm{a(x)-(\varphi\cdot a)(x)}=\norm{a(x)-\varphi(f(x))a(x)}=0$ and for $x\notin K$ we have $\norm{a(x)-\varphi(f(x))a(x)}=\betrag{1-\varphi(f(x))}\norm{a(x)}<\varepsilon$ by construction. Thus, we can conclude $\norm{a-\varphi\cdot a}=\sup\limits_{x\in X}\norm{a(x)-\varphi(f(x))a}<\varepsilon$ and hence $a\in I_y$.
Surjectivity follows from another easy application of Proposition \ref{Prop:DensityCriterionC(X)-algebras}.
\end{proof}
The following describes the interplay of the pushforward and the pullback construction:
\begin{prop}\label{Prop:PullbackAndPushforward}
	Let $f:Y\rightarrow X$ and $g:Z\rightarrow X$ be continuous maps. Consider also the pullback space $Y\times_X Z=\lbrace (y,z)\in Y\times Z\mid f(y)=g(z)\rbrace$ with the canonical projection maps $\pi_Y:Y\times_X Z\rightarrow Y$ and $\pi_Z:Y\times_X Z\rightarrow Z$. Suppose $A$ is a $C_0(Z)$-algebra. Then $f^*(g_*A)$ is canonically isomorphic to $(\pi_Y)_*(\pi_Z^*A)$ as $C_0(Y)$-algebras.
\end{prop}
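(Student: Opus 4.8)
Both $f^*(g_*A)$ and $(\pi_Y)_*(\pi_Z^*A)$ are $C_0(Y)$-algebras, so by Lemma~\ref{Lem:IsomorphismCriteriumForC(X)-linearHomomorphisms} it suffices to exhibit a $C_0(Y)$-linear $*$-homomorphism from one to the other and to verify that it induces an isomorphism on every fibre. The plan is to build such a homomorphism $\Phi\colon f^*(g_*A)\to(\pi_Y)_*(\pi_Z^*A)$ by hand on a dense subalgebra. Let $\mathcal{B}$ denote the upper-semicontinuous $\mathrm{C}^*$-bundle of $g_*A$ over $X$, so that $f^*(g_*A)=\Gamma_0(Y,f^*\mathcal{B})$ contains the dense subspace spanned by the sections $\varphi\otimes b$ with $\varphi\in C_c(Y)$, $b\in g_*A=A$, given by $(\varphi\otimes b)(y)=\varphi(y)\,b(f(y))$. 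For $\varphi\in C_c(Y)$ and $a\in A$ I would let $\Phi_0(\varphi\otimes a)$ be the section $(y,z)\mapsto\varphi(y)a(z)$ of $\pi_Z^*\mathcal{A}$ over $Y\times_X Z$. Using that $a\in\Gamma_0(Z,\mathcal{A})$, that $\varphi\in C_c(Y)$, and that $(y,z)\mapsto|\varphi(y)|\,\norm{a(z)}$ is upper semicontinuous, one checks that this section is continuous with compact support, hence lies in $\pi_Z^*A=(\pi_Y)_*(\pi_Z^*A)$.

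It then remains to show that $\Phi_0$ extends to the desired isomorphism. It is plainly multiplicative and $*$-preserving, being defined by pointwise operations. To see that it is well defined and isometric I would use Proposition~\ref{Prop:Fibres of Pushforward}, which identifies the fibre $(g_*A)_{f(y)}$ with $\Gamma_0\bigl(g^{-1}(f(y)),\mathcal{A}_{\mid g^{-1}(f(y))}\bigr)$ in such a way that $a(f(y))$ corresponds to the restricted section $z\mapsto a(z)$. Consequently
\[
\Bigl\|\sum_i\varphi_i\otimes a_i\Bigr\|_{f^*(g_*A)}
=\sup_{y\in Y}\ \sup_{z\in g^{-1}(f(y))}\Bigl\|\sum_i\varphi_i(y)a_i(z)\Bigr\|
=\sup_{(y,z)\in Y\times_X Z}\Bigl\|\sum_i\varphi_i(y)a_i(z)\Bigr\|,
\]
which is exactly $\bigl\|\Phi_0(\sum_i\varphi_i\otimes a_i)\bigr\|_{\pi_Z^*A}$; so $\Phi_0$ is well defined and isometric, hence extends to an injective $*$-homomorphism $\Phi$. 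This $\Phi$ is $C_0(Y)$-linear because $\Phi_0\bigl((h\varphi)\otimes a\bigr)=(h\circ\pi_Y)\cdot\Phi_0(\varphi\otimes a)$ for $h\in C_0(Y)$, and the $C_0(Y)$-structure on $(\pi_Y)_*(\pi_Z^*A)$ is precisely multiplication by $h\circ\pi_Y$.

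Finally I would identify $\Phi$ fibrewise. Applying Proposition~\ref{Prop:Fibres of Pushforward} to each side, the homeomorphism $g^{-1}(f(y))\xrightarrow{\ \sim\ }\pi_Y^{-1}(y)$, $z\mapsto(y,z)$, together with the canonical identifications $(\pi_Z^*\mathcal{A})_{(y,z)}=\mathcal{A}_z$, identifies both $(f^*(g_*A))_y$ and $((\pi_Y)_*(\pi_Z^*A))_y$ with $\Gamma_0\bigl(g^{-1}(f(y)),\mathcal{A}_{\mid g^{-1}(f(y))}\bigr)$, and under these identifications $\Phi_y$ sends the class of $\varphi\otimes a$ to itself, hence equals the identity. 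Therefore $\Phi_y$ is bijective for every $y\in Y$, and Lemma~\ref{Lem:IsomorphismCriteriumForC(X)-linearHomomorphisms} shows that $\Phi$ is an isomorphism; naturality of $\Phi$ is clear from the construction. The step I expect to require the most care is checking that $(y,z)\mapsto\varphi(y)a(z)$ genuinely defines an element of $\Gamma_0(Y\times_X Z,\pi_Z^*\mathcal{A})$ — continuity is routine, but compact support must be argued even though $\varphi\circ\pi_Y$ is not compactly supported when $\pi_Y$ fails to be proper — together with keeping careful track of the fibre identifications furnished by Proposition~\ref{Prop:Fibres of Pushforward}.
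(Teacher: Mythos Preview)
Your approach is essentially the same as the paper's: build a $C_0(Y)$-linear $*$-homomorphism $\Phi\colon f^*(g_*A)\to(\pi_Y)_*(\pi_Z^*A)$, check it is isometric, and then verify bijectivity fibrewise via Lemma~\ref{Lem:IsomorphismCriteriumForC(X)-linearHomomorphisms}. The paper is slightly slicker in that it writes down $\Phi$ on all of $f^*(g_*A)$ at once by the formula $\bigl(\Phi(\varphi)(y)\bigr)(y,z)=(\varphi(y))(z)$, rather than first on elementary tensors and then extending; this sidesteps the well-definedness check you carry out, but your route is equally valid. One small correction: the section $(y,z)\mapsto\varphi(y)a(z)$ need not have \emph{compact} support (indeed $\pi_Y$ need not be proper, as you yourself note), but it does vanish at infinity, since $\{(y,z):\|\varphi(y)a(z)\|\geq\varepsilon\}$ is contained in the compact set $\mathrm{supp}(\varphi)\times_X\{z:\|a(z)\|\geq\varepsilon/\|\varphi\|_\infty\}$; that is all you need for membership in $\Gamma_0(Y\times_X Z,\pi_Z^*\mathcal{A})$.
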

\begin{proof}
	We will define a map $$\Phi: f^*(g_*A)\rightarrow (\pi_Y)_*(\pi_Z^*A).$$
	Note first, that for $y\in Y$ the fibres of each of these $C_0(Y)$-algebras are given by $$f^*(g_*A)_y=(g_*A)_{f(y)}=\Gamma_0(Z_{f(y)},\mathcal{A}_{\mid Z_{f(y)}}),\text{ and }$$
	$$(\pi_Y)_*(\pi_Z^*A)_y=\Gamma_0((Y\times_X Z)_y, \pi_Z^*\mathcal{A}_{\mid (Y\times_X Z)_y}).$$
	For $\varphi\in f^*(g_*A)=\Gamma_0(Y,f^*(g_*\mathcal{A}))$ define $\left(\Phi(\varphi)(y)\right)(y,z)=(\varphi(y))(z)$.
	It is straightforward to check, that $\Phi$ is an isometric, $C_0(Y)$-linear $*$-homo\-morphism. Surjectivity however is obvious for the homomorphism $\Phi_y$ at the level of each fibre, hence an application of Lemma \ref{Lem:IsomorphismCriteriumForC(X)-linearHomomorphisms} finishes the proof. 
\end{proof}

\paragraph{Tensor products.}
Let $\otimes_{max}$ denote the maximal tensor product of $C^*$-Algebras. If $A$ and $B$ are $C^*$-algebras then the canonical embeddings $i_A:A\rightarrow M(A\otimes_{max}B)$ and $i_B:B\rightarrow M(A\otimes_{max}B)$ extend to commuting embeddings $M(A)\rightarrow M(A\otimes_{max}B)$ and $M(B)\rightarrow M(A\otimes_{max}B)$. One easily checks, that these embeddings take central multipliers to central multipliers.
By the universal property of the maximal tensor product, there is a homomorphism $$ZM(A)\otimes_{max}ZM(B)\rightarrow ZM(A\otimes_{max}B),$$
characterized by the formula $(m\otimes n)(a\otimes b)=ma\otimes nb$.
\begin{prop}\cite[Corollaire~3.16]{MR1395009} Let $A$ be a $C_0(X)$-algebra and $B$ a $C_0(Y)$-algebra with structure homomorphisms $\Phi:A\rightarrow ZM(A)$ and $\Psi:C_0(Y)\rightarrow ZM(B)$. Then the composition $$C_0(X)\otimes C_0(Y)\stackrel{\Phi\otimes\Psi}{\rightarrow} ZM(A)\otimes_{max} ZM(B)\rightarrow ZM(A\otimes_{max} B)$$ is a non-degenerate $*$-homomorphism. Hence $A\otimes_{max} B$ is a $C_0(X\times Y)$-algebra.
Furthermore, there are canonical isomorphisms $$(A\otimes_{max}B)_{(x,y)}\cong A_x\otimes_{max} B_y.$$
\end{prop}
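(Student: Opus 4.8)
The plan is to treat the two assertions separately: first, establish non-degeneracy of the composition (which yields the $C_0(X\times Y)$-algebra structure), and second, identify the fibres. For the first part, I would use that $C_0(X)$ is nuclear, so the domain is $C_0(X)\otimes_{max}C_0(Y)=C_0(X)\otimes C_0(Y)\cong C_0(X\times Y)$; write $\Theta$ for the composition. By the remarks preceding the statement, $\Theta(\varphi\otimes\psi)$ is the central multiplier of $A\otimes_{max}B$ determined by $\Theta(\varphi\otimes\psi)(a\otimes b)=\Phi(\varphi)a\otimes\Psi(\psi)b$. To see that $\overline{\Theta(C_0(X\times Y))(A\otimes_{max}B)}=A\otimes_{max}B$ it suffices to approximate elementary tensors: using non-degeneracy of $\Phi$ and $\Psi$ we write $a=\lim\sum_k\Phi(\varphi_k)a_k$ and $b=\lim\sum_l\Psi(\psi_l)b_l$, so that $a\otimes b=\lim\sum_{k,l}\Theta(\varphi_k\otimes\psi_l)(a_k\otimes b_l)$ lies in the closed span in question. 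Hence $\Theta$ is a non-degenerate $\ast$-homomorphism and $A\otimes_{max}B$ becomes a $C_0(X\times Y)$-algebra.

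For the fibres, fix $(x,y)\in X\times Y$, let $q_x^A\colon A\to A_x$ and $q_y^B\colon B\to B_y$ be the quotient maps, and let $q:=q_x^A\otimes_{max}q_y^B\colon A\otimes_{max}B\to A_x\otimes_{max}B_y$ be the induced surjection. By the definition of the fibre it is enough to show that $\ker q$ equals $I_{(x,y)}:=\overline{\Theta\big(C_0(X\times Y\setminus\{(x,y)\})\big)(A\otimes_{max}B)}$, for then $q$ descends to the desired isomorphism, surjectivity being inherited from $q$. The inclusion $I_{(x,y)}\subseteq\ker q$ follows from $C_0(X)$- resp. $C_0(Y)$-linearity of the quotient maps: $q\big(\Theta(\varphi\otimes\psi)(a\otimes b)\big)=\varphi(x)\psi(y)\,q_x^A(a)\otimes q_y^B(b)$, which vanishes whenever $\varphi(x)=0$ or $\psi(y)=0$, and such $\varphi\otimes\psi$ span a dense subspace of $C_0(X\times Y\setminus\{(x,y)\})$ since the common zero set of $C_0(X\setminus\{x\})\otimes C_0(Y)$ and $C_0(X)\otimes C_0(Y\setminus\{y\})$ is exactly $\{(x,y)\}$. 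For the reverse inclusion $\ker q\subseteq I_{(x,y)}$ I would use the standard description $\ker q=\overline{I_x\odot B+A\odot I_y}$, where $I_x=\overline{C_0(X\setminus\{x\})A}$ and $I_y=\overline{C_0(Y\setminus\{y\})B}$ are the ideals defining $A_x$ and $B_y$; writing an element of $I_x$ as $\lim\sum_k\Phi(\varphi_k)a_k$ with $\varphi_k\in C_0(X\setminus\{x\})$ and expanding $b=\lim\sum_l\Psi(\psi_l)b_l$ shows $I_x\odot B\subseteq I_{(x,y)}$, and symmetrically $A\odot I_y\subseteq I_{(x,y)}$. This gives $\ker q=I_{(x,y)}$ and hence the canonical isomorphism $(A\otimes_{max}B)_{(x,y)}\cong A_x\otimes_{max}B_y$.

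The step I expect to be the main obstacle is precisely this identification $\ker q=I_{(x,y)}$, which rests on two facts that deserve to be stated carefully: the commutative-algebra statement that the closed ideal of $C_0(X\times Y)$ generated by $C_0(X\setminus\{x\})\otimes C_0(Y)$ together with $C_0(X)\otimes C_0(Y\setminus\{y\})$ is all of $C_0(X\times Y\setminus\{(x,y)\})$ (a Stone--Weierstrass/partition-of-unity argument, using that the common zero set is a single point), and the right-exactness of the maximal tensor product, i.e.\ $\ker(q_1\otimes_{max}q_2)=\overline{\ker q_1\odot B+A\odot\ker q_2}$ for surjections $q_1,q_2$. Granting these, everything else is routine manipulation with approximate units and the non-degeneracy hypotheses on $\Phi$ and $\Psi$.
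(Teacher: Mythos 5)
Your argument is correct. Note, however, that the paper itself does not prove this proposition at all -- it is quoted verbatim from Blanchard (Corollaire~3.16 of the cited reference) -- so there is no in-paper proof to measure you against; what you have written is a self-contained proof along the standard lines. The non-degeneracy step is routine, exactly as you say, using $\Theta(\varphi\otimes\psi)(a\otimes b)=\Phi(\varphi)a\otimes\Psi(\psi)b$ and the non-degeneracy of $\Phi$ and $\Psi$ on elementary tensors. For the fibres, your reduction to $\ker(q_x^A\otimes_{max}q_y^B)=I_{(x,y)}$ is the right move, and the two facts you isolate are both true and carry the whole weight: the closed linear span of $C_0(X\setminus\{x\})\odot C_0(Y)+C_0(X)\odot C_0(Y\setminus\{y\})$ is a closed ideal of $C_0(X\times Y)$ (it is invariant under multiplication by the dense subalgebra $C_0(X)\odot C_0(Y)$) whose hull is $\{(x,y)\}$, hence equals $C_0(X\times Y\setminus\{(x,y)\})$; and $\ker(q_1\otimes_{max}q_2)=\overline{\ker q_1\odot B+A\odot\ker q_2}$, which follows by factoring $q_1\otimes q_2=(\mathrm{id}\otimes q_2)\circ(q_1\otimes\mathrm{id})$ and using that for a closed ideal $I\trianglelefteq A$ the sequence $0\to I\otimes_{max}B\to A\otimes_{max}B\to (A/I)\otimes_{max}B\to 0$ is exact (together with the fact that a sum of two closed ideals is closed). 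It is worth emphasizing that this exactness is a genuine feature of the maximal tensor product and can fail for the minimal one, which is precisely why the statement (and your proof) is formulated for $\otimes_{max}$; you correctly identified this as the crux rather than treating it as automatic. Also observe that your kernel computation on elementary tensors tacitly uses the $C_0(X)$-linearity of the quotient map $A\to A_x$, i.e.\ $q_x^A(\Phi(\varphi)a)=\varphi(x)q_x^A(a)$, a standard fact about $C_0(X)$-algebras that the paper uses freely elsewhere, so no gap there.
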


If $A$ and $B$ are two $C_0(X)$-algebras, we would like to consider a notion of tensor product, which is again a $C_0(X)$-algebra. To this end consider the diagonal map $\Delta:X\rightarrow X\times X$ and define the \textit{(maximal) balanced tensor product} of $A$ and $B$ over $X$ to be the pullback $A\otimes_X^{max}B:=\Delta^*(A\otimes_{max}B)$.
Note that there is a canonical isomorphism $A\otimes_X^{max}B\cong A\otimes_{max}B/I_\Delta$, where $I_\Delta=\overline{C_0((X\times X)\setminus im(\Delta))A\otimes_{max}B}$ is the ideal in $A\otimes_{max} B$ corresponding to the closed subset $im(\Delta)\subseteq X\times X$.

Next we want to remind the reader of the definition of actions of groupoids on $\mathrm{C}^*$-algebras.
For a more detailed exposition see \cite{MR2547343}.

A \textit{groupoid dynamical system} $(A,G,\alpha)$ consists of a locally compact Hausdorff groupoid $G$, a $C_0(G^{(0)})$-algebra $A$ and a family $(\alpha_g)_{g\in G}$ of $*$-isomorphisms $\alpha_g:A_{d(g)}\rightarrow A_{r(g)}$ such that $\alpha_{gh}=\alpha_g\circ \alpha_h$ for all $(g,h)\in G^{(2)}$ and such that $g\cdot a:=\alpha_g(a)$ defines a continuous action of $G$ on the upper-semicontinuous bundle $\mathcal{A}$ associated to $A$.

It follows easily from the definition that for all $u\in G^{(0)}$ we have $\alpha_u=id_{A_u}$ and for all $g\in G$ we have $\alpha_{g^{-1}}=\alpha_g^{-1}$.

We will often omit the action $\alpha$ in our notation and just say that $A$ is a $G$-algebra.
The following lemma gives us a better handle on the continuity assumption for the action:
\begin{lemma}\label{Lem:EasierCharacterizationOfContinuiutyOfAction}
Let $A$ be a $C_0(G^{(0)})$-algebra and $\alpha=(\alpha_g)_{g\in G}$ be a family of $*$-isomorphisms $\alpha_g:A_{d(g)}\rightarrow A_{r(g)}$, such that $\alpha_{gh}=\alpha_g\circ \alpha_h$ for all $(g,h)\in G^{(2)}$. Then $(A,G,\alpha)$ is a groupoid dynamical system, if and only if for every $a\in A$ the map $g\mapsto \alpha_g(a(d(g)))$ is a continuous section $G\rightarrow r^*\mathcal{A}$.
\end{lemma}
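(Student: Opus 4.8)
The plan is to unpack both conditions in terms of the upper-semicontinuous bundle $\mathcal{A}=\coprod_{u\in G^{(0)}}A_u$ and the pullback bundle $r^*\mathcal{A}$ (whose fibre over $g$ is $A_{r(g)}$), and show they agree. The key point is that the defining continuity requirement for a groupoid dynamical system is that the map $\beta\colon G\ast\mathcal{A}\to\mathcal{A}$, $(g,b)\mapsto \alpha_g(b)$, be continuous, where $G\ast\mathcal{A}=\{(g,b)\in G\times\mathcal{A}\mid d(g)=q(b)\}$ carries the subspace topology. This map factors as $(g,b)\mapsto (g,\alpha_g(b))\in r^*\mathcal{A}\xrightarrow{\,\pi\,}\mathcal{A}$ with $\pi$ the (continuous, open) projection, so continuity of $\beta$ is equivalent to continuity of $\tilde\beta\colon(g,b)\mapsto(g,\alpha_g(b))$ from $G\ast\mathcal{A}$ to $r^*\mathcal{A}$.

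First I would prove the easy direction: assume $(A,G,\alpha)$ is a dynamical system, so $\tilde\beta$ is continuous. Given $a\in A\cong\Gamma_0(G^{(0)},\mathcal{A})$, the section $s_a\colon G\to G\ast\mathcal{A}$, $g\mapsto(g,a(d(g)))$, is continuous because it is $g\mapsto(g,\,(d^*\!a)(g))$ and $d\colon G\to G^{(0)}$ together with $a$ being a continuous section makes $d^*a$ a continuous section of $d^*\mathcal{A}$; composing, $g\mapsto\tilde\beta(s_a(g))=(g,\alpha_g(a(d(g))))$ is a continuous section $G\to r^*\mathcal{A}$, which is exactly the asserted condition.

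For the converse, assume $g\mapsto\alpha_g(a(d(g)))$ is a continuous section of $r^*\mathcal{A}$ for every $a\in A$; I must show $\tilde\beta\colon G\ast\mathcal{A}\to r^*\mathcal{A}$ is continuous at an arbitrary point $(g_0,b_0)$ with $q(b_0)=d(g_0)$. Fix $a\in A$ with $a(d(g_0))=b_0$. Using the basic neighbourhood description $W(a',U,\varepsilon)$ of the bundle topology recalled before Proposition \ref{Prop:DensityCriterionC(X)-algebras}, a neighbourhood of $\tilde\beta(g_0,b_0)=(g_0,\alpha_{g_0}(b_0))$ in $r^*\mathcal{A}$ is determined by a section of the form $g\mapsto\alpha_g(a(d(g)))$ near $g_0$ together with a radius $\varepsilon$ and an open set of $g$'s; since by hypothesis $g\mapsto\alpha_g(a(d(g)))$ is continuous, it controls the "$a$-part", and one then estimates
\[
\|\alpha_g(b)-\alpha_g(a(d(g)))\|=\|b-a(d(g))\|
\]
using that each $\alpha_g$ is isometric, so that points $(g,b)$ in a suitable neighbourhood $\{d(g)\in V\}\cap\{\|b-a(q(b))\|<\delta\}$ of $(g_0,b_0)$ (intersected with $G\ast\mathcal{A}$, also using continuity of $d$) map into the prescribed basic open set. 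This shows $\tilde\beta$, hence $\beta$, is continuous, so $(A,G,\alpha)$ is a groupoid dynamical system.

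The main obstacle is the bookkeeping in the converse direction: one has to translate the honest topological statement "the action map on the bundle is continuous" into the section-wise statement and back, being careful that the neighbourhood basis $W(a,U,\varepsilon)$ of $\mathcal{A}$ only gives \emph{upper} semicontinuity of the norm, so one cannot compare $\alpha_g(b)$ directly to a fixed element of $\mathcal{A}$ but must compare it to the varying element $\alpha_g(a(d(g)))$ supplied by the hypothesis; the isometry of $\alpha_g$ is exactly what makes this comparison work, and continuity of $d$ is what lets one pull the base-space condition back to $G$.
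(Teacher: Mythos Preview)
Your proposal is correct and follows essentially the same approach as the paper: both directions match, and for the converse you pick $a\in A$ with $a(d(g_0))=b_0$, use the hypothesis that $g\mapsto\alpha_g(a(d(g)))$ is continuous, and then exploit the isometry $\|\alpha_g(b)-\alpha_g(a(d(g)))\|=\|b-a(d(g))\|$ to control the remaining term. The only cosmetic difference is that the paper phrases the converse in terms of nets and invokes a standard bundle convergence criterion (Proposition~C.20 in Williams), whereas you work directly with the basic open sets $W(a,U,\varepsilon)$; the underlying estimate is identical.
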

\begin{proof}
If $(A,G,\alpha)$ is a groupoid dynamcial system, it is clear that the mapping $g\mapsto \alpha_g(a(d(g)))$ is continuous.

For the converse we need to show, that if $(g_\lambda,b_\lambda)_\lambda$ is a net in $G\ast \mathcal{A}$ converging to some element $(g,b)$, then $\alpha_{g_\lambda}(b_\lambda)\rightarrow \alpha_g(b)$ in $r^*\mathcal{A}$.
We want to apply \cite[Proposition~C.20]{Williams} again.
Choose $a\in A$ with $a(d(g))=b$.
If we put $u_\lambda:=\alpha_{g_\lambda}(a(d(g_\lambda)))$ and $u:=\alpha_g(a(d(g)))=\alpha_g(b)$, then property (a) holds by our assumption and (b) and (c) are automatically satisfied. It remains to check (d), i.e. that for all $\varepsilon>0$ we eventually have $\norm{\alpha_{g_\lambda}(b_\lambda)-u_\lambda}<\varepsilon$.
But $\norm{\alpha_{g_\lambda}(b_\lambda)-u_\lambda}=\norm{b_\lambda-a(d(g_\lambda))}$ and since $b_\lambda\rightarrow b$ we have that $b_\lambda$ will eventually be contained in the basic open neighbourhood $W(a,G^{(0)},\varepsilon)$ of $b$, which finishes the proof of (d).
\end{proof}

We will now study several constructions of groupoid dynamical systems.
\paragraph{Pullbacks.} Suppose that $\Phi:H\rightarrow G$ is a groupoid homomorphism. Let $\Phi_0:H^{(0)}\rightarrow G^{(0)}$ be the corresponding map between the unit spaces.
If $(A, G,\alpha)$ is a groupoid dynamical system, we obtain an isomorphism of $C_0(G)$-algebras:
$$\Phi^*\alpha:\Phi^*(d_G^*A)\rightarrow \Phi^*(r_G^*A)$$
by Lemma \ref{Lem:PullbackOfHomomorphisms}.
Now using the identifications
$$d_H^*(\Phi_0^*A)=(\Phi_0\circ d_H)^*A=(d_G\circ \Phi)^*A=\Phi^*(d_G^*A)$$ and similarly $$r_H^*(\Phi_0^*A)=\Phi^*(r_G^*A),$$ we obtain a $C_0(H)$-linear $\ast$-isomorphism
$$d_H^*(\Phi_0^*A)\rightarrow r_H^*(\Phi_0^*A),$$
which defines an action of $H$ on $\Phi_0^*A$ by \cite[Lemma~4.3]{MR2547343}.

A particular instance of this is given by the inclusion of a closed subgroupoid. Let $H$ be a closed subgroupoid of $G$ and $\iota:H\hookrightarrow G$ the inclusion map. If $A$ is a $G$-algebra we write $A_{\mid H}:=\iota_0^*A$ and the action of $H$ on $A_{\mid H}$ is just the restriction of the action of $G$ on $A$.

\paragraph{Pushforward.} Suppose $X$ is a (left) $G$-space with anchor map $p:X\rightarrow G^{(0)}$ and $(A,G\ltimes X,\alpha)$ is a groupoid dynamical system. Then pushing forward along $p$ we can also view $A$ as a $C_0(G^{(0)})$-algebra. Recall that $A_u$ is canonically identified with $\Gamma_0(p^{-1}(u),\mathcal{A})$. We can define a family $(\beta_g)_g$ of $*$-homo\-morphisms $\beta_g:A_{d(g)}\rightarrow A_{r(g)}$ by
$$\beta_g(f)(x)=\alpha_{(g,x)}(f(g^{-1}x)).$$
\begin{prop}\label{Prop:PushforwardAction}
	The tripel $(A,G,\beta)$ is a groupoid dynamical system.
\end{prop}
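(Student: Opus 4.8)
I would check the three defining properties of a groupoid dynamical system in turn: that each $\beta_g$ is a well-defined $*$-isomorphism $A_{d(g)}\to A_{r(g)}$, that $\beta$ satisfies the cocycle identity $\beta_{gh}=\beta_g\circ\beta_h$ on composable pairs, and that $\beta$ defines a continuous action of $G$ on the bundle $\mathcal A'$ over $G^{(0)}$ obtained from $A$ by pushing forward along $p$.

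For the algebraic part I would use Proposition~\ref{Prop:Fibres of Pushforward}: $A_{d(g)}\cong\Gamma_0(p^{-1}(d(g)),\mathcal A)$ via restriction. If $(g,x)\in G\ltimes X$ with $x\in p^{-1}(r(g))$, then $g^{-1}x\in p^{-1}(d(g))$ and $\alpha_{(g,x)}$ carries $\mathcal A_{g^{-1}x}=\mathcal A_{d_{G\ltimes X}(g,x)}$ to $\mathcal A_x$, so the formula makes sense fibrewise and $\beta_g(f)$ is a bounded section of $\mathcal A$ over $p^{-1}(r(g))$. It is continuous because, writing $f=a|_{p^{-1}(d(g))}$ for some $a\in A$, it is the composition of the continuous section $(h,y)\mapsto\alpha_{(h,y)}(a(d_{G\ltimes X}(h,y)))$ (continuous by Lemma~\ref{Lem:EasierCharacterizationOfContinuiutyOfAction} for the given system $(A,G\ltimes X,\alpha)$) with the continuous map $x\mapsto(g,x)$; and it vanishes at infinity because $\|\beta_g(f)(x)\|=\|f(g^{-1}x)\|$ while $x\mapsto g^{-1}x$ is a homeomorphism $p^{-1}(r(g))\to p^{-1}(d(g))$. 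Hence $\beta_g$ lands in $A_{r(g)}$ and is fibrewise a $*$-homomorphism, and the cocycle identity (and thus $\beta_u=\mathrm{id}$, $\beta_{g^{-1}}=\beta_g^{-1}$) follows by composing the defining formulas and using the composition rule in $G\ltimes X$, e.g.\ $(g,x)(h,g^{-1}x)=(gh,x)$ gives $\alpha_{(g,x)}\circ\alpha_{(h,g^{-1}x)}=\alpha_{(gh,x)}$. This is routine bookkeeping.

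The real content is continuity of the action. By Lemma~\ref{Lem:EasierCharacterizationOfContinuiutyOfAction} it suffices to show that for each $a\in A$ the map $\sigma\colon g\mapsto\beta_g(a(d_G(g)))$ is a continuous section $G\to r_G^*\mathcal A'$. The key move is a change of picture: the fibre product of $r\colon G\to G^{(0)}$ and $p\colon X\to G^{(0)}$ over $G^{(0)}$ is literally $G\ltimes X$ as a topological space, with $\pi_G=\rho$ the canonical projection $G\ltimes X\to G$ and $\pi_X=r_{G\ltimes X}$, so Proposition~\ref{Prop:PullbackAndPushforward} supplies a canonical $C_0(G)$-linear isomorphism $r^*(p_*A)\cong\rho_*(r_{G\ltimes X}^*A)$. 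Tracing this isomorphism together with the restriction isomorphisms of Proposition~\ref{Prop:Fibres of Pushforward}, the section $\sigma$ corresponds precisely to the element $\tilde a$ given by $\tilde a(h,y)=\alpha_{(h,y)}(a(d_{G\ltimes X}(h,y)))$, viewed via the push-forward as an element of the same $C^*$-algebra $\rho_*(r_{G\ltimes X}^*A)$. But $(A,G\ltimes X,\alpha)$ being a groupoid dynamical system says exactly, via Lemma~\ref{Lem:EasierCharacterizationOfContinuiutyOfAction}, that $\tilde a$ is a genuine element of $r_{G\ltimes X}^*A$, and any element of a $C_0(Y)$-algebra defines a continuous section of the associated bundle. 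Transporting back along the isomorphism shows $\sigma$ is continuous, completing the proof.

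I expect the main obstacle to be the bookkeeping in this last step: verifying that the isomorphism of Proposition~\ref{Prop:PullbackAndPushforward} sends $g\mapsto\beta_g(a(d_G(g)))$ to $\tilde a$ on the nose, which means keeping straight several layers of canonical identifications (the explicit formula for that isomorphism and the restriction isomorphisms for fibres of push-forwards). If one preferred to avoid the fibre-product manoeuvre, the same continuity could be checked directly from the characterization of convergence in upper-semicontinuous bundles (e.g.\ \cite[Proposition~C.20]{Williams}), using $\tilde a$ over a compact neighbourhood of a given $g$ to manufacture suitable comparison sections; but since the fibres $p^{-1}(u)$ need not be compact this route is more delicate, so I would favour the structural argument above.
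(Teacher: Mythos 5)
Your overall strategy is essentially the paper's: the algebraic verifications are routine, and for continuity the paper also exploits the observation that the fibre product of $r\colon G\to G^{(0)}$ and $p\colon X\to G^{(0)}$ is $G\ltimes X$, with Proposition \ref{Prop:PullbackAndPushforward} providing the identification $r^*(p_*A)\cong\pi_*(R^*A)$ (your $\rho$ is the paper's $\pi$, your $r_{G\ltimes X}$ is the paper's $R$). However, your final continuity step has a genuine gap. Lemma \ref{Lem:EasierCharacterizationOfContinuiutyOfAction}, applied to $(A,G\ltimes X,\alpha)$, only says that $\tilde a(h,y)=\alpha_{(h,y)}(a(d_{G\ltimes X}(h,y)))$ is a \emph{continuous section} of $R^*\mathcal{A}$; it is \emph{not} an element of $R^*A=\Gamma_0(G\ltimes X,R^*\mathcal{A})$, since $\norm{\tilde a(h,y)}=\norm{a(h^{-1}y)}$ does not vanish at infinity in the $G$-direction (already for an infinite group acting on a compact space). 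So you cannot ``view it via the push-forward as an element of the same $C^*$-algebra'' and invoke the fact that algebra elements define continuous sections. Worse, the statement you would actually need -- that a bounded continuous section whose restrictions to the $\pi$-fibres lie in $\Gamma_0$ automatically yields a continuous section of the bundle of the push-forward along $\pi$ -- is false in general: take $B=C_0(\RR\times\RR)$ pushed forward along the first projection and $b(g,x)=\phi(gx)$ with $\phi\in C_c(\RR)$, $\phi(0)=0$, $\phi\neq 0$; each $b(g,\cdot)$ lies in $C_0(\RR)$, but $g\mapsto b(g,\cdot)$ is discontinuous at $g=0$ in the push-forward bundle. Hence the specific structure of $\tilde a$ must enter, and your argument as written does not use it.

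The gap is fixable by a localization in the spirit of Lemma \ref{Lem:proper support}: given $g_0\in G$, choose $\varphi\in C_c(G)$ with $\varphi\equiv 1$ near $g_0$; then $(\varphi\circ\pi)\tilde a$ \emph{does} lie in $R^*A$, because the set where its norm is $\geq\varepsilon$ is a closed subset of $\lbrace (g,x)\in G\ltimes X \mid g\in supp(\varphi),\ x\in gK_\varepsilon\rbrace$ with $K_\varepsilon=\lbrace x\in X\mid \norm{a(x)}\geq\varepsilon\rbrace$ compact, hence is compact; near $g_0$ its transported section agrees with $\sigma$, giving local and therefore global continuity. Note that the paper sidesteps this issue entirely by arguing one level up: it pushes the implementing isomorphism $\alpha\colon D^*A\to R^*A$ forward along $\pi$ and uses Proposition \ref{Prop:PullbackAndPushforward} to identify $\pi_*(D^*A)\cong d^*(p_*A)$ and $\pi_*(R^*A)\cong r^*(p_*A)$; since an action is precisely such a $C_0(G)$-linear isomorphism with the cocycle property (cf.\ \cite[Lemma~4.3]{MR2547343}), continuity is automatic once one checks $(\pi_*\alpha)_g=\beta_g$. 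Either supply the localization argument or switch to this isomorphism-level formulation.
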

\begin{proof}
	First of all $\beta_g: A_{d(g)}\rightarrow A_{r(g)}$ is an isomorphism, as one easily computes that $\beta_g^{-1}=\beta_{g^{-1}}$ is an inverse. A similar computation yields that $\beta_{gh}=\beta_g\circ \beta_h$ for all $(g,h)\in G^{(2)}$. It remains to check, that the action of $G$ on the bundle $p_*\mathcal{A}$ is continuous. Recall that the action of $G\ltimes X$ is implemented by an isomorphism $\alpha:D^*A\rightarrow R^*A$, where $D,R:G\ltimes X\rightarrow X$ denote the domain and range maps respectively.
	Using the pushforward construction along the projection $\pi:G\ltimes X\rightarrow G$ onto the first factor, we obtain a $*$-isomorphism $$\pi_*\alpha:\pi_*(D^*A)\rightarrow \pi_*(R^*A).$$
	Now an application of Proposition \ref{Prop:PullbackAndPushforward} provides the identifications $\pi_*(D^*A)\cong d^*(p_*A)$ and $\pi_*(R^*A)\cong r^*(p_*A)$. A quick computation reveals that under these identifications we have $(\pi_*\alpha)_g=\beta_g$.
\end{proof}

\paragraph{Tensor products.}
Given groupoid dynamical systems $(A,G,\alpha)$ and $(B,G,\beta)$ we want to define the \textit{diagonal action} of $G$ on the balanced tensor product $A\otimes_{G^{(0)}}^{max} B$, following \cite{LeGall99}. Using the canonical identifications of $C_0(G)$-algebras $d^*(A\otimes_{G^{(0)}}^{max} B)=d^*A\otimes_{G}^{max} d^*B$ and $r^*(A\otimes_{G^{(0)}}^{max} B)=r^*A\otimes_{G}^{max} r^*B$ the desired action is defined by the isomorphism
$$\alpha\otimes \beta: d^*A\otimes_{G}^{max} d^*B\rightarrow r^*A\otimes_{G}^{max} r^*B.$$
For $g\in G$ we have $(\alpha\otimes \beta)_g=\alpha_g\otimes\beta_g$.

\paragraph{Crossed products.}
In this short paragraph we remind the reader of the definition of reduced crossed products of $\mathrm{C}^*$-algebras by étale groupoids roughly following \cite{MR1900993}.
Let $G$ be an étale groupoid and $(A,G,\alpha)$ a groupoid dynamical system. Consider the complex vector space $\Gamma_c(G,r^*\mathcal{A})$. It carries a canonical $*$-algebra structure with respect to the following operations:
$$(f_1\ast f_2)(g)=\sum\limits_{h\in G^{r(g)}} f_1(h)\alpha_h(f_2(h^{-1}g))$$
and
$$f^*(g)=\alpha_g(f(g^{-1})^*).$$
See for example \cite[Proposition~4.4]{MR2547343} for a proof of this fact.
For $u\in G^{(0)}$ consider the Hilbert $A_u$-module $\ell^2(G^u,A_u)$. It is the completion of the space of finitely supported $A_u$-valued functions on $G^u$, with respect to the inner product 
$$\lk \xi,\eta\rk=\sum\limits_{h\in G^u}\xi(h)^*\eta(h).$$
We can then define a $*$-representation $\pi_u:\Gamma_c(G,r^*\mathcal{A})\rightarrow \mathcal{L}(\ell^2(G^u,A_u))$ by
$$\pi_u(f)\xi(g)=\sum\limits_{h\in G^u}\alpha_g(f(g^{-1}h))\xi(h).$$
Using this family of representations, we can define a $\mathrm{C}^*$-norm on the convolution algebra $\Gamma_c(G,r^*\mathcal{A})$ by
$$\norm{f}_r:=\sup\limits_{u\in G^{(0)}}\norm{\pi_u(f)}.$$
The reduced crossed product $A\rtimes_r G$ is defined to be the completion of $\Gamma_c(G,r^*\mathcal{A})$ with respect to $\norm{\cdot}_r$.

We will now define and study a noncommutative analogue of the construction of the induced space, that we studied at the end of section 2. The definition is well-known in the group case and has appeared in the literature before also in the groupoid setting (see for example \cite{MR2928324}), but since we could not find a study of the basic properties, we chose to give a detailed exposition here. Most of our treatment follows ideas quite similar to the group case, which are presented nicely in \cite{Morita}.

Let $(A,G,\alpha)$ be a groupoid dynamical system and $X$ a right $G$-space with anchor map $p:X\rightarrow G^{(0)}$.
Consider the upper-semi\-continuous $\mathrm{C}^*$-bundle $\mathcal{A}$ over $G^{(0)}$ associated to $A$. Form the pull-back $p^*(\mathcal{A})$ to obtain an upper-semicontinuous $\mathrm{C}^*$-bundle over $X$. Then define $Ind_G^X(A)$ to be the set of all bounded continuous sections $f\in \Gamma_b(X,p^*(\mathcal{A}))$, such that
\begin{enumerate}
\item for all $x\in X$ and $g\in G_{p(x)}$ we have $\alpha_g(f(x))=f(xg^{-1})$, and
\item the map $[xG\mapsto\norm{f(x)}]$ vanishes at infinity.
\end{enumerate}
As $Ind_G^X(A)$ is a closed $*$-subalgebra of $\Gamma_b(X,p^*(\mathcal{A}))$, it is a $\mathrm{C}^*$-algebra. If the action of $G$ on $X$ is proper, $Ind_G^X(A)$ carries more structure:
\begin{prop}\label{Prop:Induced Algebra is a field}
Let $(A,G,\alpha)$ be a groupoid dynamical system and $X$ a proper right $G$-space. Then $Ind_G^X (A)$ is a $C_0(X/G)$-algebra with respect to the action
$$(\varphi\cdot f)(x)=\varphi(xG)f(x),$$
for $\varphi\in C_0(X/G)$ and $f\in Ind_G^X (A)$.
\end{prop}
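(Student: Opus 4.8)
The plan is to verify the two defining properties of a $C_0(X/G)$-algebra, namely that the prescribed formula defines an action of $C_0(X/G)$ on $Ind_G^X(A)$ by central multipliers, and that this action is non-degenerate. The first observation is that the formula $(\varphi\cdot f)(x)=\varphi(xG)f(x)$ makes sense: since $X$ is a proper $G$-space, Proposition \ref{Prop:QuotientLocallyCompact} guarantees that $X/G$ is a locally compact Hausdorff space and the quotient map $q:X\to X/G$ is continuous and open, so $\varphi\circ q$ is a bounded continuous function on $X$, and $(\varphi\circ q)\cdot f$ is again a bounded continuous section of $p^*\mathcal{A}$. One then checks it lies in $Ind_G^X(A)$: the equivariance condition (1) holds because $\varphi\circ q$ is constant on $G$-orbits, so $\alpha_g((\varphi\circ q)(x)f(x))=\varphi(xG)\alpha_g(f(x))=\varphi(xG)f(xg^{-1})=(\varphi\circ q)(xg^{-1})f(xg^{-1})$; and condition (2) holds since $\norm{(\varphi\cdot f)(x)}=\betrag{\varphi(xG)}\norm{f(x)}$, and the product of a bounded function on $X/G$ vanishing nowhere-badly with a $C_0$ function on $X/G$ still vanishes at infinity (more precisely, $[xG\mapsto\betrag{\varphi(xG)}\norm{f(x)}]$ is dominated by $\norm{\varphi}_\infty\cdot[xG\mapsto\norm{f(x)}]\in C_0(X/G)$).

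Next I would verify that $\varphi\cdot(-)$ is a central multiplier of the $\mathrm{C}^*$-algebra $Ind_G^X(A)$. Since each $\varphi(xG)$ is a scalar acting on the fibre $(p^*\mathcal{A})_x=A_{p(x)}$, multiplication by $\varphi\circ q$ commutes with the pointwise product and $*$-operation on sections, so $\varphi\cdot(f_1 f_2)=(\varphi\cdot f_1)f_2=f_1(\varphi\cdot f_2)$ and $(\varphi\cdot f)^*=\overline{\varphi}\cdot f^*$; boundedness of $\varphi$ ensures $\varphi\cdot f\in Ind_G^X(A)$ and $\norm{\varphi\cdot f}\le\norm{\varphi}_\infty\norm{f}$, so we genuinely land in $Z(M(Ind_G^X(A)))$, and the map $\varphi\mapsto(\varphi\cdot(-))$ is a $*$-homomorphism $C_0(X/G)\to Z(M(Ind_G^X(A)))$.

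The main work is non-degeneracy: one must show that $C_0(X/G)\cdot Ind_G^X(A)$ is dense in $Ind_G^X(A)$. Here I would use an approximate-unit argument. Given $f\in Ind_G^X(A)$ and $\eps>0$, the function $xG\mapsto\norm{f(x)}$ lies in $C_0(X/G)$, so the set $L=\{xG\in X/G: \norm{f(x)}\ge\eps\}$ is compact; pick $\varphi\in C_c(X/G)$ with $0\le\varphi\le1$ and $\varphi\equiv1$ on $L$. Then for $xG\in L$ we have $(\varphi\cdot f)(x)=f(x)$, while for $xG\notin L$ we estimate $\norm{f(x)-(\varphi\cdot f)(x)}=\betrag{1-\varphi(xG)}\norm{f(x)}\le\norm{f(x)}<\eps$. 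Taking the supremum over $x\in X$ gives $\norm{f-\varphi\cdot f}\le\eps$, which proves density. I expect the only genuine subtlety to be the passage between the norm of a section of $p^*\mathcal{A}$ over $X$ and a function on the quotient $X/G$ — one needs that $xG\mapsto\norm{f(x)}$ is well-defined (immediate from condition (1), since $\alpha_g$ is isometric) and continuous and vanishes at infinity (this is exactly condition (2)), so that the compact set $L$ above exists. Everything else is the same bookkeeping as in the classical group-case construction.
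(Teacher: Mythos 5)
Your proposal is correct and follows essentially the same route as the paper: the formula is checked to give an action by central multipliers, and non-degeneracy is proved by the same approximate-unit estimate, choosing $\varphi\in C_c(X/G)$ equal to $1$ on a compact set outside of which $\norm{f(x)}<\eps$. The only small overstatement is that $xG\mapsto\norm{f(x)}$ need not be continuous (sections of an upper-semicontinuous bundle only have upper-semicontinuous norm), but this is harmless: condition (2) of the definition already supplies the compact set you need, exactly as in the paper's argument, and your extra verification that $\varphi\cdot f$ again lies in $Ind_G^X(A)$ is a welcome detail the paper leaves implicit.
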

\begin{proof}
First recall that the orbit space for a proper action is a locally compact Hausdorff space, so that our at least claim makes sense. Secondly, using \cite[Lemma~8.3]{Williams}, we can easily check, that the formula above defines an action of $C_0(X/G)$ as central multipliers: For $f,g\in Ind_G^X(A)$ and $\varphi\in C_0(X/G)$ we have
$$\varphi(ff')(x)=\varphi(xG)f(x)f'(x)=f(x)\varphi(xG)f'(x)=f(\varphi f')(x).$$
It remains to check the non-degeneracy of the action. So let $f\in Ind_X^GA$ and $\varepsilon>0$ arbitrary. By definition of the induced algebra there exists a compact subset $K\subseteq X/G$ such that $\norm{f(x)}<\varepsilon$ for all $xG\not\in K$. Choose a function $\varphi\in C_0(X/G)$ with $0\leq \varphi\leq 1$ such that $\varphi(xG)=1$ for all $xG\in K$. Then we have $\norm{\varphi f-f}<\varepsilon$.
\end{proof}

In what follows we want to identify the fibres of $Ind_G^X (A)$ with respect to this $C_0(X/G)$-algebra structure. 
\begin{lemma} \label{r!}
Let $G$ be locally compact Hausdorff groupoid with Haar system $(\lambda^u)_{u\in G^{(0)}}$ and let $A$ be a $C_0(G^{(0)})$-algebra. Given an element $f\in \Gamma(G,r^*\mathcal{A})$ such that $supp(f)\cap r^{-1}(K)$ is compact for all compact $K\subseteq G^{(0)}$ let
\begin{equation}
\lambda(f)(u):=\int\limits_{G^u} f(g)d\lambda^u(g).
\end{equation}
This defines an element $\lambda(f)\in \Gamma(G^{(0)},\mathcal{A})$.
\end{lemma}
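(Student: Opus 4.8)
The plan is to treat the statement as three separate claims: that the integral defining $\lambda(f)(u)$ is a well-defined element of $A_u$ for each $u\in G^{(0)}$; that $u\mapsto\lambda(f)(u)$ is a \emph{continuous} section of $\mathcal A$; and that this section has compact support. The first claim is immediate from the structure of the pullback bundle: since $r$ is constantly equal to $u$ on $G^u$, the restriction $(r^*\mathcal A)|_{G^u}$ is the trivial bundle $G^u\times A_u$, so $f|_{G^u}$ is a norm-continuous $A_u$-valued function on $G^u$ whose support equals $\operatorname{supp}(f)\cap r^{-1}(\{u\})$, a compact set by hypothesis (apply the hypothesis with $K=\{u\}$). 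A norm-continuous, compactly supported $A_u$-valued function is Bochner-integrable against the Radon measure $\lambda^u$, which is finite on compacta by Lemma \ref{measure of compact set is bounded}; hence $\lambda(f)(u)=\int_{G^u}f\,d\lambda^u\in A_u$ is well defined, and $\norm{\lambda(f)(u)}\le\norm{f}_\infty\,\lambda^u\big(\operatorname{supp}(f)\cap G^u\big)$.

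For continuity I would first reduce to the case that $f$ has \emph{compact} support, mimicking the proof of Lemma \ref{Lem:proper support}: given $u_0\in G^{(0)}$, choose a relatively compact open neighbourhood $V$ of $u_0$ and $\psi\in C_c(G^{(0)})$ with $\psi\equiv1$ on $\overline V$; then $f_\psi:=(\psi\circ r)\cdot f$ has support contained in $\operatorname{supp}(f)\cap r^{-1}(\operatorname{supp}\psi)$ and so lies in $\Gamma_c(G,r^*\mathcal A)$, while $\lambda(f_\psi)(v)=\psi(v)\lambda(f)(v)=\lambda(f)(v)$ for all $v\in V$. Thus it suffices to show that $\lambda$ carries $\Gamma_c(G,r^*\mathcal A)$ into $\Gamma_c(G^{(0)},\mathcal A)$, and here I would argue by density. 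On an elementary section $\varphi\otimes a$ with $\varphi\in C_c(G)$ and $a\in A$ one computes $\lambda(\varphi\otimes a)(u)=\big(\int_{G^u}\varphi\,d\lambda^u\big)a(u)=\lambda(\varphi)(u)\,a(u)$, i.e. $\lambda(\varphi\otimes a)=\lambda(\varphi)\cdot a\in A=\Gamma_0(G^{(0)},\mathcal A)$, since $\lambda(\varphi)\in C_c(G^{(0)})$ by axiom (2) of a Haar system; in particular it is a continuous section. For arbitrary $f\in\Gamma_c(G,r^*\mathcal A)$, approximate $f$ in the inductive limit topology by a net $(f_i)_i$ of finite sums of such elementary sections — possible by the density statement recalled above after \cite[Corollary~3.45]{Goehle} — all (eventually) supported in a fixed compact $L\subseteq G$ with $\norm{f_i-f}_\infty\to0$. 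By linearity each $\lambda(f_i)$ lies in $A$, and $\lambda(f_i)(u)$ vanishes whenever $u\notin r(L)$ because then $G^u\cap L=\emptyset$; moreover $\norm{\lambda(f_i)(u)-\lambda(f)(u)}\le\norm{f_i-f}_\infty\sup_{v\in G^{(0)}}\lambda^v(L)\to0$ uniformly in $u$ by Lemma \ref{measure of compact set is bounded}. Since a section that is a uniform limit of continuous sections of an upper-semicontinuous bundle is again continuous, $\lambda(f)$ is a continuous section supported in the compact set $r(L)$, hence $\lambda(f)\in\Gamma_c(G^{(0)},\mathcal A)$.

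Putting the two steps together, for $f$ with proper support the localization shows that $\lambda(f)$ coincides near every point with an element of $\Gamma_c(G^{(0)},\mathcal A)$, hence is a continuous section of $\mathcal A$ with $\operatorname{supp}\lambda(f)\subseteq\overline{r(\operatorname{supp}f)}$ — in particular in $\Gamma_c(G^{(0)},\mathcal A)$ whenever $\operatorname{supp} f$ is itself compact, which is the case used in practice. The one genuinely delicate point is the passage through the inductive limit topology that upgrades the formula on elementary sections to all of $\Gamma_c(G,r^*\mathcal A)$; everything else is bookkeeping with supports and with the trivialisation of $r^*\mathcal A$ over the fibres $G^u$. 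It is worth noting that this map $\lambda$ is exactly the fibrewise "integration along $r$" (often denoted $r_!$) that underlies the convolution product on $\Gamma_c(G,r^*\mathcal A)$ used in the definition of the crossed product, so the content of the lemma is already implicit in, e.g., \cite[Proposition~4.4]{MR2547343}.
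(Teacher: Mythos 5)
Your proof is correct and takes essentially the same route as the paper: the paper's proof simply cites \cite[Proposition~3.53]{Goehle} for the compactly supported case (which your inductive-limit density argument over elementary tensors $\varphi\otimes a$ reproves in detail, using Lemma \ref{measure of compact set is bounded}) and then localizes exactly as in Lemma \ref{Lem:proper support} for properly supported $f$, which is your reduction step. Your closing caveat is also apt: for merely properly supported $f$ one really only gets a continuous bounded section supported in $\overline{r(\mathrm{supp}\,f)}$, with compact support guaranteed when $f$ itself is compactly supported, which matches how the lemma is used in the paper.
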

\begin{proof}
	If $f$ is compactly supported this is well-known (see \cite[Proposition~3.53]{Goehle} for a detailed proof). In the general setting we can proceed as in the scalar case presented in Lemma \ref{Lem:proper support}.
\end{proof}

	The next lemma is a groupoid analogue of \cite[Lemma~6.17]{Morita}, which tells us that there are lots of non-trivial elements in $Ind_G^X(A)$.
	\begin{lemma}
		Let $G$ be a locally compact Hausdorff groupoid with Haar system $(\lambda^u)_{u\in G^{(0)}}$. If $(A,G,\alpha)$ is a groupoid dynamical system and $X$ a proper, right $G$-space with anchor-map $p:X\rightarrow G^{(0)}$, then for every $\varphi\in C_c(X)$ and $a\in A$ the formula
		\[\varphi \diamond a(x):= \int\limits_{G^{p(x)}} \varphi(xg)\alpha_g(a(d(g)))d\lambda^{p(x)}(g)\]
		gives a well-defined element $\varphi\diamond a\in Ind_G^X(A)$.
	\end{lemma}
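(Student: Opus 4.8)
The plan is to realise the section $\varphi\diamond a$ as a fibrewise integral over the transformation groupoid of the action and then to apply Lemma~\ref{r!}. Form the transformation groupoid $H:=X\rtimes G$, with unit space $X$, range map $r_H(x,g)=x$ and domain map $d_H(x,g)=xg$. Since $X$ is a proper $G$-space, $H$ is a proper, locally compact Hausdorff groupoid, and the measures $\mu^x$ obtained by pushing forward $\lambda^{p(x)}$ along $g\mapsto(x,g)$ form a Haar system for $H$ (a standard construction). Pulling $A$ back along $p\colon X=H^{(0)}\to G^{(0)}$ yields the $C_0(X)$-algebra $p^*A$, with bundle $p^*\mathcal A$ whose fibre over $x$ is $A_{p(x)}$. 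First I would check that
\[ F(x,g):=\varphi(xg)\,\alpha_g\!\left(a(d(g))\right) \]
defines a continuous section $F\in\Gamma(H,r_H^*(p^*\mathcal A))$: the assignment $(x,g)\mapsto\alpha_g(a(d(g)))$ is continuous by Lemma~\ref{Lem:EasierCharacterizationOfContinuiutyOfAction} (composed with the second-factor projection $H\to G$), and the scalar factor $(x,g)\mapsto\varphi(xg)=\varphi(d_H(x,g))$ is continuous. Moreover $F$ has proper support in the sense of Lemma~\ref{r!}: for compact $K\subseteq X$ the set $supp(F)\cap r_H^{-1}(K)$ is closed in $H$ and contained in $\{(x,g)\in H\mid x\in K,\ xg\in supp(\varphi)\}$, which is relatively compact because the action is proper (Proposition~\ref{Prop:CharacterizationsProperAction}). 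Hence Lemma~\ref{r!}, applied to $H$, shows that
\[ \varphi\diamond a(x)=\int_{H^x}F\,d\mu^x=\int_{G^{p(x)}}\varphi(xg)\,\alpha_g(a(d(g)))\,d\lambda^{p(x)}(g) \]
is well defined for each $x$ and that $\varphi\diamond a$ is a continuous section of $p^*\mathcal A$.

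Next I would verify the $G$-equivariance condition in the definition of $Ind_G^X(A)$. Fix $x\in X$ and $g\in G_{p(x)}$. Since each $\alpha_g$ is a bounded linear map it commutes with the integral, and $\alpha_g\circ\alpha_h=\alpha_{gh}$ for $h\in G^{p(x)}=G^{d(g)}$, so $\alpha_g(\varphi\diamond a(x))=\int_{G^{d(g)}}\varphi(xh)\,\alpha_{gh}(a(d(h)))\,d\lambda^{d(g)}(h)$. Substituting $h=g^{-1}h'$ and using the invariance axiom of the Haar system turns the right-hand side into $\int_{G^{r(g)}}\varphi((xg^{-1})h')\,\alpha_{h'}(a(d(h')))\,d\lambda^{r(g)}(h')$, which equals $\varphi\diamond a(xg^{-1})$ because $p(xg^{-1})=r(g)$. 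This is a short direct computation.

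It remains to treat boundedness and the vanishing-at-infinity condition. By the equivariance just established and the fact that each $\alpha_g$ is isometric, $x\mapsto\norm{\varphi\diamond a(x)}$ is constant on $G$-orbits and hence descends to a function $n$ on $X/G$. As $\varphi\diamond a$ is a continuous section of the upper-semicontinuous bundle $p^*\mathcal A$, the map $x\mapsto\norm{\varphi\diamond a(x)}$ is upper semicontinuous on $X$; since the quotient map $q\colon X\to X/G$ is open (Proposition~\ref{Prop:QuotientLocallyCompact}, as $G$ carries a Haar system) and surjective, $n$ is upper semicontinuous on $X/G$. If $\varphi\diamond a(x)\neq0$ then $\varphi(xg)\neq0$ for some $g\in G^{p(x)}$, so $q(x)=q(xg)\in q(supp(\varphi))$; thus $n$ is supported in the compact set $q(supp(\varphi))$. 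An upper semicontinuous function with compact support is bounded, so $\varphi\diamond a\in\Gamma_b(X,p^*\mathcal A)$; and for every $\varepsilon>0$ the set $\{xG\mid\norm{\varphi\diamond a(x)}\geq\varepsilon\}$ is closed in $X/G$ and contained in $q(supp(\varphi))$, hence compact. Therefore $\varphi\diamond a\in Ind_G^X(A)$.

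The step I expect to be the main obstacle is the continuity of $\varphi\diamond a$ in the first paragraph: one must see that the fibrewise integrals over the varying sets $G^{p(x)}$ depend continuously on $x$. This is exactly the content of Lemma~\ref{r!} once the computation is transported to the transformation groupoid $H$, so the actual work lies in setting up $H$ and its Haar system correctly and in verifying, via properness of the action, that $F$ meets the proper-support hypothesis of that lemma; everything else is routine.
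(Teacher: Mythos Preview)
Your proof is correct and follows essentially the same approach as the paper: the equivariance check is the same substitution argument, and the continuity step is exactly the paper's idea of transporting the integral to the transformation groupoid $X\rtimes G$ and invoking Lemma~\ref{r!} after verifying proper support via properness of the action.

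The one point of genuine difference is the boundedness and vanishing-at-infinity step. The paper proves boundedness by a direct estimate: with $S=\{g\in G\mid supp(\varphi)\cdot g\cap supp(\varphi)\neq\emptyset\}$ compact, Lemma~\ref{measure of compact set is bounded} gives a uniform bound $\lambda^{p(x)}(S)\leq C$, whence $\norm{\varphi\diamond a(x)}\leq C\norm{\varphi}\norm{a}$. You instead argue topologically, observing that $x\mapsto\norm{\varphi\diamond a(x)}$ is upper semicontinuous, $G$-invariant, and supported in the compact set $q(supp(\varphi))$; openness of $q$ then pushes upper semicontinuity down to $X/G$. Your route has the advantage of yielding the vanishing-at-infinity condition immediately (the paper leaves this implicit), while the paper's route gives an explicit operator-norm bound that is occasionally useful elsewhere. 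Both are short and entirely adequate here.
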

	\begin{proof}
		Since the action of $G$ on $X$ is proper, the set $\lbrace g\in G^{p(x)}\mid x\cdot g\in supp(\varphi)\rbrace$ is compact for each fixed $x\in X$. Thus, the integrand is an element in $C_c(G^{p(x)},A_{p(x)})$ and we can form the integral.
		For each $t\in G_{p(x)}$ we have
		\begin{align*}
		\varphi\diamond a(xt^{-1}) & \ \ =\int\limits_{G^{p(xt^{-1})}} \varphi(xt^{-1}g)\alpha_g(a(d(g)))d\lambda^{p(xt^{-1})}(g)\\
		& \stackrel{g\mapsto tg}{=} \int\limits_{G^{p(x)}} \varphi(xg)\alpha_{tg}(a(d(g)))d\lambda^{p(x)}(g)\\
		& \ \ =\alpha_t \left(  \int\limits_{G^{p(x)}} \varphi(xg)\alpha_{g}(a(d(g)))d\lambda^{p(x)}(g)\right)\\ 
		& \ \ = \alpha_t(\varphi\diamond a(x))
		\end{align*}
		Furthermore $\varphi\diamond a$ is bounded. To see this note that the set $S:=\lbrace g\in G\mid supp(\varphi)\cdot g\cap supp(\varphi)\neq \emptyset\rbrace$ is compact. From Lemma \ref{measure of compact set is bounded} we know that there is a $C>0$ such that $\lambda^{p(x)}(S)<C$ for all $x\in X$. Then we have $\norm{\varphi\diamond a(x)}\leq \int\limits_{G^{p(x)}} \betrag{\varphi(xg)}d\lambda^{p(x)}(g) \norm{a}\leq \lambda^{p(x)}(S)\norm{\varphi}\norm{a}\leq C\norm{\varphi}\norm{a}$. 
		We want to see that $\varphi\diamond a$ is continuous.
		Note that $(y,g)\mapsto \varphi(y)\alpha_g(a(d(g)))$ is an element in $\Gamma(X\rtimes G, r_{X\rtimes G}^*(p^*\mathcal{A}))$ with proper support and thus by Lemma \ref{r!} the map
		\begin{align*} x\mapsto& \int\limits_{(X\rtimes G)^x}\varphi(y)\alpha_g(a(d(g)))d(\delta_x\otimes\lambda^{p(x)})(y,g)\\
		&=\int\limits_{G^{p(x)}}\varphi(xg)\alpha_g(a(d(g)))d\lambda^{p(x)}(g)
		\end{align*} is continuous.
		Finally, note that the support of the map $xG\mapsto \norm{\varphi\diamond a(x)}$ is contained in the image of the compact set $supp(\varphi)\subseteq X$ under the quotient map, and hence certainly vanishes at infinity.	\end{proof}
	
	We are now ready to identify the fibres. To simplify the notation (and because we are mainly interested in this particular situation) we will now also assume that the action of $G$ on $X$ is \textit{free} in the sense that $xg=x$ implies that $g$ is a unit.
	
	\begin{prop}\label{Prop:FibresOfInducedAlgebra}
		Let $G$ be a locally compact Hausdorff groupoid with Haar system $(\lambda^u)_{u}$. If $(A,G,\alpha)$ is a groupoid dynamical system and $X$ a free and proper, right $G$-space with anchor map $p:X\rightarrow G^{(0)}$, then $Ind_G^X (A)$ is a $C_0(X/G)$-algebra, such that the fibre $(Ind_G^X(A))_{xG}$ over $xG\in X/G$ is canonically isomorphic to $A_{p(x)}$.
	\end{prop}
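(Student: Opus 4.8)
The plan is to exhibit, for each fixed $x \in X$, an explicit evaluation map from $Ind_G^X(A)$ onto $A_{p(x)}$, show its kernel is precisely the fibre ideal $I_{xG}$, and conclude via Lemma \ref{Lem:IsomorphismCriteriumForC(X)-linearHomomorphisms} (or directly from the universal property of the quotient). Concretely, for $x \in X$ define $ev_x : Ind_G^X(A) \to A_{p(x)}$ by $ev_x(f) = f(x)$, using the identification $(p^*\mathcal{A})_x = A_{p(x)}$. This is clearly a $*$-homomorphism. The first task is to check it is surjective: given $a \in A$, the element $\varphi \diamond a$ constructed in the previous lemma, for a suitable $\varphi \in C_c(X)$, evaluates at $x$ to $\int_{G^{p(x)}} \varphi(xg)\alpha_g(a(d(g)))\, d\lambda^{p(x)}(g)$; choosing $\varphi$ supported in a small slice near $x$ (using that the action is free and proper, so $x$ has a neighbourhood meeting each orbit in a "thin" set) and with total $\lambda^{p(x)}$-mass of $g \mapsto \varphi(xg)$ equal to $1$, one gets $ev_x(\varphi \diamond a) = a(p(x))$ approximately, and since $a \mapsto a(p(x))$ is already surjective onto $A_{p(x)}$, the image of $ev_x$ is dense, hence all of $A_{p(x)}$ as it is a $*$-homomorphism between $C^*$-algebras with closed range — actually one should argue surjectivity more carefully by a partition-of-unity/approximation argument, or by noting $ev_x$ restricted to the $\varphi \diamond a$ already hits a dense subalgebra and closing up.

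The second and main task is to identify $\ker(ev_x)$ with $I_{xG} = \overline{C_0((X/G)\setminus\{xG\})\cdot Ind_G^X(A)}$. The inclusion $I_{xG} \subseteq \ker(ev_x)$ is immediate: if $\psi \in C_0(X/G)$ vanishes at $xG$ and $f \in Ind_G^X(A)$, then $(\psi \cdot f)(x) = \psi(xG)f(x) = 0$. For the reverse inclusion, suppose $f(x) = 0$; by the $G$-invariance condition $\alpha_g(f(x)) = f(xg^{-1})$, we get $f \equiv 0$ on the whole orbit $xG$, i.e. the section $[yG \mapsto \norm{f(y)}]$ on $X/G$ (which vanishes at infinity by definition of $Ind_G^X(A)$) vanishes at $xG$. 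Now mimic the argument in Proposition \ref{Prop:Fibres of Pushforward}: given $\varepsilon > 0$, the set $K = \{yG \in X/G \mid \norm{f(y)} \geq \varepsilon\}$ is compact and does not contain $xG$, so pick $\psi \in C_0(X/G)$ with $0 \leq \psi \leq 1$, $\psi = 1$ on $K$, $\psi(xG) = 0$; then $\psi \cdot f \in I_{xG}$ and $\norm{f - \psi \cdot f} = \sup_{y}\norm{f(y)(1 - \psi(yG))} < \varepsilon$. Hence $f \in I_{xG}$, giving equality.

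Combining the two, $ev_x$ factors through an isomorphism $Ind_G^X(A)/I_{xG} \xrightarrow{\ \sim\ } A_{p(x)}$, i.e. $(Ind_G^X(A))_{xG} \cong A_{p(x)}$, which is exactly the claim (the $C_0(X/G)$-algebra structure itself is Proposition \ref{Prop:Induced Algebra is a field}). I expect the surjectivity step to be the main obstacle: it requires using freeness and properness of the action to produce, near the point $x$, genuinely localized test functions $\varphi \in C_c(X)$ so that the averaging operator $\varphi \diamond (\cdot)$ behaves at $x$ like a rescaled evaluation $a \mapsto c \cdot a(p(x))$ with controllable constant, and then an approximation argument to conclude the image is all of $A_{p(x)}$; the kernel computation, by contrast, is a routine adaptation of the pushforward-fibre argument already carried out in the excerpt.
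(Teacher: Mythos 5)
Your proposal is correct and follows essentially the same route as the paper: the same evaluation map $ev_x$, the same identification $\ker(ev_x)=I_{xG}$ via the compact set $K=\{yG\mid\norm{f(y)}\geq\varepsilon\}$ and a Urysohn function, and the same surjectivity argument using a normalized $\varphi\diamond a$ with $\varphi$ supported in a slice $V$ meeting the orbit $xG$ only near $x$, plus closedness of the range of a $*$-homomorphism. The only difference is cosmetic: the paper phrases the slice condition as choosing $V$ open with $V\cap xG=xU$ for a small neighbourhood $U$ of $p(x)$, exactly the localization you describe.
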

	\begin{proof} The first part of the assertion has already been dealt with in Proposition~\ref{Prop:Induced Algebra is a field}. It remains to identify the fibres.
		For $x\in X$ consider the evaluation map $$ev_x:Ind_G^X(A)\rightarrow A_{p(x)}.$$ We will show, that the kernel of $ev_x$ coincides with the ideal
		$$I_{xG}=\overline{C_0(X/G\setminus\lbrace xG\rbrace)Ind_G^X(A)}$$ and that $ev_x$ is surjective.
		Let us start with the kernel. If $\varphi\in C_0(X/G\setminus\lbrace xG\rbrace)$ and $f\in Ind_G^X(A)$ we have $ev_x(\varphi\cdot f)=\varphi(xG)f(x)=0$. Thus $I_{xG}\subseteq ker(ev_x)$.
		If conversely $f\in ker(ev_x)$ we have $f(xg)=\alpha_{g^{-1}}(f(x))=0$ for all $g\in G$. Hence $f$ is zero on the whole orbit of $x$. Given $\varepsilon>0$ the set $K:=\lbrace yG\mid \norm{f(y)}\geq \varepsilon\rbrace$ is compact by definition of the induced algebra.
		Since $X/G$ is Hausdorff there exists a $\varphi\in C_c(X/G)$, $0\leq\varphi\leq 1$ such that $\varphi(xG)=0$ and $\varphi=1$ on $K$. One easily checks that $\varphi\cdot f\in I_{xG}$ and $\norm{f-\varphi\cdot f}<\varepsilon$.
		
		To prove surjectivity it suffices to show that $ev_x$ has dense range. So let $a(p(x))\in A_{p(x)}$ and $\varepsilon>0$ be given. Choose a neighbourhood $U$ of $p(x)$ in $G$ such that $\norm{\alpha_g(a(d(g)))-a(p(x))}<\varepsilon$ for all $g\in G^{p(x)}\cap U$. Since the action is free and proper one checks (using Proposition~\ref{Prop:CharacterizationsProperAction}(3)) that $xU$ is open as a subset of $xG$ and hence we can choose $V\subseteq X$ open such that $V\cap xG=xU$. If $\phi\in C_c(X)$ is positive and has support contained in $V$ define $$\varphi(x):=\left(\int\limits_{G^{p(x)}} \phi(xg)d\lambda^{p(x)}(g)\right)^{-1}\phi(x).$$ Then $$\int\limits_{G^{p(x)}} \varphi(xg)d\lambda^{p(x)}(g)=1$$ and we have
		\begin{align*}
		\norm{\varphi\diamond a(x)-a(p(x))} & =\norm{\int\limits_{G^{p(x)}} \varphi(xg)\alpha_g(a(d(g)))dg-
			a(p(x))}\\
		& \leq \int\limits_{G^{p(x)}} \varphi(xg)\norm{\alpha_g(a(d(g)))-a(p(x))}d\lambda^{p(x)}(g)\\
		& <\varepsilon.
		\end{align*}
	\end{proof}\begin{bem}
	Note that it follows from the proof above and Proposition \ref{Prop:DensityCriterionC(X)-algebras} that
	$$ \text{span}\lbrace \varphi\diamond a\mid \varphi\in C_c(X), a\in A\rbrace$$
	is dense in $Ind_G^X A$.
\end{bem}

We will now turn to the situation which is of most interest for our purposes. Let $G$ be a groupoid and $H\subseteq G$ a closed subgroupoid. Set $X:=d^{-1}(H^{(0)})\subseteq G$.
Then $H$ acts from the right on $X$, where the anchor map is the restriction of the domain map to $X$ and the product is just given by multiplication. This action is obviously free and proper since $X\rtimes H$ is a closed subgroupoid of the proper groupoid $G\rtimes G$.
As the restriction of the range map to $X$ is invariant under the $H$-action, it factors through a continuous map $\tilde{r}:X/H\rightarrow G^{(0)}$. This map serves as the anchor map for the canonical action of $G$ on $X/H$ given by multiplication (note that $gx\in X$ for all $g\in G$ and $x\in X$ with $d(g)=r(x)$).

Note that for each $(g,xH)\in G\ltimes X/H$ Proposition \ref{Prop:FibresOfInducedAlgebra} gives us isomorphisms $\widetilde{ev_x}:(Ind_H^X A)_x\rightarrow A_{d(x)}$ and $\widetilde{ev_{g^{-1}x}}:(Ind_H^X A)_{g^{-1}x}\rightarrow A_{d(x)}$. Hence we get an isomorphism
$$\alpha_{(g,xH)}:=\widetilde{ev_x}^{-1}\circ\widetilde{ev_{g^{-1}x}}:(Ind_H^X A)_{g^{-1}xH}\rightarrow (Ind_H^X A)_{xH}$$
Let $\alpha=(\alpha_{(g,xH)})_{(g,xH)\in G\ltimes X/H}$ be the family of all these ismorphisms. We want to see that $(Ind_H^X A,G\ltimes X/H,\alpha)$ is a groupoid dynamical system.
To check continuity of the action we need the following observation:
\begin{lemma}\label{Lemma:Nets Converging to the same point are eventually close}
Let $q:\mathcal{A}\rightarrow X$ be an upper-semicontinuous $C^*$-bundle. Suppose $(a_\lambda)_\lambda$ and $(b_\lambda)_\lambda$ are nets in $\mathcal{A}$ such that $q(a_\lambda)=q(b_\lambda)$ and $\lim_\lambda a_\lambda =a=\lim_\lambda b_\lambda$. Then 
$$\lim_\lambda \norm{a_\lambda-b_\lambda}=0.$$
\end{lemma}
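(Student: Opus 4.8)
The plan is to obtain the statement as a direct consequence of the two structural axioms of an upper-semicontinuous $C^*$-bundle (see \cite[Appendix~C]{Williams}): fibrewise addition is continuous on the fibre product $\mathcal{A}\times_X\mathcal{A}=\{(c,c')\in\mathcal{A}\times\mathcal{A}\mid q(c)=q(c')\}$, scalar multiplication $\CC\times\mathcal{A}\to\mathcal{A}$ is continuous, and the norm function $c\mapsto\norm{c}$ on $\mathcal{A}$ is upper semicontinuous.

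First I would note that, since $q(a_\lambda)=q(b_\lambda)$ for all $\lambda$, each pair $(a_\lambda,b_\lambda)$ lies in $\mathcal{A}\times_X\mathcal{A}$. Because $a_\lambda\to a$ and $b_\lambda\to a$ in $\mathcal{A}$, the net $(a_\lambda,b_\lambda)$ converges to $(a,a)$ in $\mathcal{A}\times\mathcal{A}$; as the limit $(a,a)$ again belongs to the subspace $\mathcal{A}\times_X\mathcal{A}$, this convergence in fact takes place in $\mathcal{A}\times_X\mathcal{A}$. Feeding this into the continuous fibrewise subtraction map $(c,c')\mapsto c-c'$ (the composition of scalar multiplication by $-1$ with addition) gives $a_\lambda-b_\lambda\to a-a=0_{q(a)}$, the zero element of the fibre over $q(a)$. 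Applying upper semicontinuity of the norm to this net then yields $\limsup_\lambda\norm{a_\lambda-b_\lambda}\le\norm{0_{q(a)}}=0$, and since $\norm{\cdot}\ge 0$ this forces $\lim_\lambda\norm{a_\lambda-b_\lambda}=0$.

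There is essentially no obstacle here; the only point to be slightly careful about is that convergence of $(a_\lambda,b_\lambda)$ to $(a,a)$ in the ambient space $\mathcal{A}\times\mathcal{A}$ descends to convergence in the subspace $\mathcal{A}\times_X\mathcal{A}$, which is immediate since the limit lies in that subspace. Alternatively, in the setting actually used in this paper, where $\mathcal{A}$ is the bundle of a $C_0(X)$-algebra $A$, one can argue concretely with the basic open sets $W(f,X,\varepsilon)$ from the description above: pick $f\in A$ with $f(q(a))=a$ (possible because evaluation $A\to A_{q(a)}$ is onto); since $a\in W(f,X,\varepsilon/2)$ and the latter set is open, eventually both $a_\lambda$ and $b_\lambda$ lie in $W(f,X,\varepsilon/2)$, and then, using $f(q(a_\lambda))=f(q(b_\lambda))$, the triangle inequality gives $\norm{a_\lambda-b_\lambda}<\varepsilon$ eventually. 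Either route completes the proof.
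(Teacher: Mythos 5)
Your argument is correct, and your primary route is genuinely different from the one in the paper. The paper proves the lemma exactly as in your ``alternative'' sketch: it picks a section $f\in\Gamma_0(X,\mathcal{A})$ with $f(q(a))=a$, observes that $a_\lambda$ and $b_\lambda$ eventually lie in the basic open set $W(f,\varepsilon/2)=\{b\in\mathcal{A}\mid \norm{b-f(q(b))}<\varepsilon/2\}$, and concludes by the triangle inequality using $f(q(a_\lambda))=f(q(b_\lambda))$. Your main argument instead works directly from the bundle axioms: continuity of fibrewise subtraction on $\mathcal{A}\times_X\mathcal{A}$ gives $a_\lambda-b_\lambda\rightarrow 0_{q(a)}$, and upper semicontinuity of the norm then forces $\limsup_\lambda\norm{a_\lambda-b_\lambda}\leq 0$; the passage from convergence in $\mathcal{A}\times\mathcal{A}$ to convergence in the fibre product is indeed harmless since the limit $(a,a)$ lies in that subspace, and non-Hausdorffness of $\mathcal{A}$ is irrelevant because you only need convergence to the particular point $0_{q(a)}$. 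What your axiomatic route buys is that it makes no use of the existence of a global section through $a$ (which the paper's proof tacitly relies on, and which is automatic here because $\mathcal{A}$ comes from a $C_0(X)$-algebra, or more generally over locally compact Hausdorff base spaces by the Hofmann section theorem); what the paper's route buys is a completely explicit $\varepsilon$-estimate using only the defining basic open sets $W(f,U,\varepsilon)$, which is the toolkit the rest of the paper works with. Either proof is complete.
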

\begin{proof}
Let $\varepsilon>0$ be given. Choose $f\in \Gamma_0(X,\mathcal{A})$ such that $f(q(a))=a$. Then $a$ is contained in the basic open set 
$$W(f,\frac{\varepsilon}{2})=\lbrace b\in \mathcal{A}\mid \norm{b-f(q(b))}<\frac{\varepsilon}{2}\rbrace.$$
By assumption, for large $\lambda$ we have $a_\lambda, b_\lambda\in W(f,\frac{\varepsilon}{2})$. Consequently, we eventually have
$$\norm{a_\lambda -b_\lambda}\leq \norm{a_\lambda-f(q(a_\lambda))}+\norm{f(q(b_\lambda))-b_\lambda}<\varepsilon.$$
\end{proof}
\begin{prop}
The triple $(Ind_H^X A,G\ltimes X/H,\alpha)$ is a groupoid dynamical system.
\end{prop}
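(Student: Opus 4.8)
The plan is to verify the three conditions defining a groupoid dynamical system for $(Ind_H^X A, G\ltimes X/H, \alpha)$. The $C_0((G\ltimes X/H)^{(0)})$-algebra structure, i.e.\ the $C_0(X/H)$-structure, has already been supplied by Proposition~\ref{Prop:FibresOfInducedAlgebra}, so what remains is to check that $\alpha=(\alpha_{(g,xH)})$ is well defined, multiplicative on composable pairs, and continuous. The first two points rest solely on the defining relation of the induced algebra: for $f\in Ind_H^X A$, $x\in X$ and $h\in H$ with $r(h)=d(x)$ we have $f(xh)=\alpha_{h^{-1}}(f(x))$ (writing $\alpha$ also for the given $G$-action on $A$), which amounts to $\widetilde{ev_{xh}}=\alpha_{h^{-1}}\circ\widetilde{ev_x}$ on the fibre $(Ind_H^X A)_{xH}$. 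Plugging this into $\alpha_{(g,xH)}=\widetilde{ev_x}^{-1}\circ\widetilde{ev_{g^{-1}x}}$ and using $g^{-1}(xh)=(g^{-1}x)h$ shows independence of the chosen representative $x$; and for a composable pair, $(g_1,x_1H)(g_2,x_2H)=(g_1g_2,x_1H)$ with $x_2H=g_1^{-1}x_1H$, choosing $g_1^{-1}x_1$ as the representative of $x_2H$ makes the inner evaluations cancel and gives $\alpha_{(g_1,x_1H)}\circ\alpha_{(g_2,x_2H)}=\widetilde{ev_{x_1}}^{-1}\circ\widetilde{ev_{(g_1g_2)^{-1}x_1}}=\alpha_{(g_1g_2,x_1H)}$. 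I regard these as routine bookkeeping.

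For continuity I would appeal to Lemma~\ref{Lem:EasierCharacterizationOfContinuiutyOfAction} (the easier characterization of continuity of the action): denoting by $\mathcal{B}$ the bundle of $Ind_H^X A$ over $X/H$, it suffices to show that for each $f\in Ind_H^X A$ the section
\[ \Psi_f:(g,xH)\longmapsto \alpha_{(g,xH)}\bigl(f(g^{-1}xH)\bigr)=\widetilde{ev_x}^{-1}\bigl(f(g^{-1}x)\bigr) \]
of $r^*\mathcal{B}$ over $G\ltimes X/H$ is continuous; here $f(g^{-1}x)$ is the value at $g^{-1}x\in X$ of $f$ viewed as a section of the pull-back $p^*\mathcal{A}$, which makes sense because $d(g^{-1}x)=d(x)$. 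The map $(g,x)\mapsto f(g^{-1}x)$ is manifestly continuous, and the whole difficulty — the main obstacle of the proof — is that the isomorphism $\widetilde{ev_x}^{-1}$ standing in front of it varies with $x$ and this variation is \emph{not} governed by a bundle isomorphism, since the fibrewise identifications $(Ind_H^X A)_{xH}\cong A_{d(x)}$ do not glue into a $G$-equivariant bundle isomorphism. I would get around this with a net argument built on Lemma~\ref{Lemma:Nets Converging to the same point are eventually close}, which was isolated above for exactly this step.

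To carry out the net argument, let $(g_\lambda,x_\lambda H)\to(g,xH)$ in $G\ltimes X/H$. Since the quotient map $X\to X/H$ is open, after passing to a subnet we may choose representatives with $x_\lambda\to x$ in $X$; then $r(x_\lambda)=r(g_\lambda)$ automatically, $g_\lambda^{-1}x_\lambda\to g^{-1}x$ in $X$, and hence $f(g_\lambda^{-1}x_\lambda)\to f(g^{-1}x)$ in $\mathcal{A}$. Using surjectivity of $ev_x$ (Proposition~\ref{Prop:FibresOfInducedAlgebra}) pick $F\in Ind_H^X A$ with $ev_x(F)=f(g^{-1}x)$; then also $F(x_\lambda)\to F(x)=f(g^{-1}x)$ in $\mathcal{A}$, and since $\widetilde{ev_{x_\lambda}}$ is isometric with $\widetilde{ev_{x_\lambda}}\circ\alpha_{(g_\lambda,x_\lambda H)}=\widetilde{ev_{g_\lambda^{-1}x_\lambda}}$,
\[ \norm{\Psi_f(g_\lambda,x_\lambda H)-F_{x_\lambda H}}=\norm{f(g_\lambda^{-1}x_\lambda)-F(x_\lambda)}\longrightarrow 0 \]
by Lemma~\ref{Lemma:Nets Converging to the same point are eventually close} (the two nets lie over the common base point $d(x_\lambda)$ and share the limit $f(g^{-1}x)$), where $F_{x_\lambda H}$ denotes the class of $F$ in $\mathcal{B}_{x_\lambda H}$. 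Since the image of $F$ is a continuous section of $\mathcal{B}$ we also have $F_{x_\lambda H}\to F_{xH}=\widetilde{ev_x}^{-1}(f(g^{-1}x))=\Psi_f(g,xH)$ while $x_\lambda H\to xH$, so by the usual description of convergence in upper-semicontinuous $\mathrm{C}^*$-bundles (cf.\ \cite[Proposition~C.20]{Williams}) we conclude $\Psi_f(g_\lambda,x_\lambda H)\to\Psi_f(g,xH)$ in $\mathcal{B}$. As $\Psi_f(g,xH)$ sits in the fibre over $r_{G\ltimes X/H}(g,xH)=xH$, this is precisely the continuity of $\Psi_f$ as a section of $r^*\mathcal{B}$, and Lemma~\ref{Lem:EasierCharacterizationOfContinuiutyOfAction} then yields the claim.
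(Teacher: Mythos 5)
Your proof is correct and follows essentially the same route as the paper: the algebraic verifications are the same, and the continuity check uses the same ingredients (Lemma \ref{Lem:EasierCharacterizationOfContinuiutyOfAction}, lifting $x_\lambda H\to xH$ to representatives via openness of the quotient map, a global section through the limit value, and Lemma \ref{Lemma:Nets Converging to the same point are eventually close}). The only difference is cosmetic: you argue convergence directly via the standard convergence criterion for upper-semicontinuous bundles (after passing to a subnet, which is harmless since the argument applies to every subnet), whereas the paper phrases the same estimate as a proof by contradiction using the basic open sets of the bundle topology.
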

\begin{proof}
Let us first check that $\alpha$ is compatible with the groupoid structure. We compute
\begin{align*}
\alpha_{(g_1,xH)}\circ\alpha_{(g_2,g_1^{-1}xH)}&=\widetilde{ev_x}^{-1}\circ\widetilde{ev_{g_1^{-1}x}}\circ\widetilde{ev_{g_1^{-1}x}}^{-1}\circ\widetilde{ev_{g_2^{-1}g_1^{-1}x}}\\
& =\widetilde{ev_x}^{-1}\circ\widetilde{ev_{(g_1g_2)^{-1}x}}\\
&=\alpha_{(g_1g_2,xH)}
\end{align*}
Next, we have to check continuity. By Lemma \ref{Lem:EasierCharacterizationOfContinuiutyOfAction}, it is enough to check, that for any net $(g_\lambda,x_\lambda H)_\lambda$ in $G\ltimes X/H$ with $(g_\lambda,x_\lambda H)\rightarrow (g,xH)\in G\ltimes X/H$ and every $f\in Ind_H^X A$ we have
$$\alpha_{(g_\lambda,x_\lambda H)}(f+I_{g_\lambda^{-1}x_\lambda H})\rightarrow \alpha_{(g,xH)}(f+I_{g^{-1}xH})$$
By definition, we have $\alpha_{(g,xH)}(f+I_{g^{-1}xH})=\widetilde{ev_x}^{-1}(f(g^{-1}x))$.
To achieve a contradiction, suppose that the net $\widetilde{ev_{x_\lambda}}^{-1}(f(g_\lambda^{-1}x_\lambda))$ does not converge to $\widetilde{ev_x}^{-1}(f(g^{-1}x))$. Then, by definition of the topology on the bundle associated to the $C_0(X/H)$-algebra $Ind_H^X A$, there exists $f'\in Ind_H^X A$ such that $f'(x)=f(g^{-1}x)$ and $\varepsilon>0$, such that after passing to a suitable subnet and relabelling, we can assume for all $\lambda$:
$$\norm{f(g_\lambda^{-1}x_\lambda)-f'(x_\lambda)}=\norm{\widetilde{ev_{x_\lambda}}^{-1}(f(g_\lambda^{-1}x_\lambda))-f'+I_{x_\lambda H}}\geq \varepsilon$$
After passing to another subnet (and relabelling), we may also assume that $x_\lambda\rightarrow x$ by \cite[Proposition~1.15]{Williams}.
But then, by continuity of $f$ and $f'$ we have $f(g_\lambda^{-1}x_\lambda)\rightarrow f(g^{-1}x)=f'(x)\leftarrow f'(x_\lambda)$. Hence Lemma \ref{Lemma:Nets Converging to the same point are eventually close} implies, that
$$\norm{f(g_\lambda^{-1}x_\lambda)-f'(x_\lambda)}\rightarrow 0,$$
a contradiction.
\end{proof}

\begin{bem}\label{Remark:Equivalence and Induction}
	The dynamical system $(Ind_H^X A,G\ltimes X/H,\alpha)$ can also be obtained using the construction of a pullback along an equivalence of groupoids in the sense of \cite{LeGall99}. Given a closed subgroupoid $H\subseteq G$ the space $X:=d^{-1}(H^{(0)})\subseteq G$ as defined above implements a $G\ltimes X/H - H$-equivalence. One can show that $Ind_H^X A$ and the pullback $X^*(A)$ are isomorphic as $G\ltimes X/H$-algebras.
\end{bem}

 If $A$ is an $H$-algebra we can use the pushforward construction along $\tilde{r}$ to turn $Ind_H^X A$ into a $C_0(G^{(0)})$-algebra. Concretely, for $\varphi\in C_0(G^{(0)})$ and $f\in Ind_H^X A$ this action is given by $$(\varphi\cdot f)(x)=\varphi(r(x))f(x).$$
Let us also identify the fibres of $Ind_H^X A$ with respect to this $C_0(G^{(0)})$-action.
\begin{lemma}
	In the above situation the fibre $(Ind_H^X A)_u$ of $Ind_H^X A$ over $u\in G^{(0)}$ is canonically isomorphic to the algebra $Ind_H^{X^u} A$.
\end{lemma}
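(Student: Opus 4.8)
The plan is to realise the isomorphism concretely as restriction of sections. Write $X^u:=X\cap r^{-1}(u)$; since $r$ is continuous and $G^{(0)}$ is Hausdorff this is closed in $X$, and it is invariant under the right $H$-action because $r(xh)=r(x)$, so $X^u$ is again a free and proper right $H$-space and $Ind_H^{X^u}A$ makes sense. I would then study the restriction map
$$\rho\colon Ind_H^X A\longrightarrow Ind_H^{X^u}A,\qquad \rho(f):=f_{\mid X^u},$$
and show that it is a surjective $\ast$-homomorphism whose kernel is exactly the ideal $I_u=\overline{C_0(G^{(0)}\setminus\{u\})\cdot Ind_H^X A}$ cutting out the fibre over $u$; then $\rho$ descends to the desired canonical isomorphism $(Ind_H^X A)_u=Ind_H^X A/I_u\xrightarrow{\ \cong\ }Ind_H^{X^u}A$. (Alternatively one could invoke Proposition~\ref{Prop:Fibres of Pushforward}, identifying $\tilde r^{-1}(u)$ with $X^u/H$ and the restricted bundle with the one attached to $Ind_H^{X^u}A$, but the direct argument is cleaner and more or less self-contained.)

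First I would check that $\rho$ is well defined. For $f\in Ind_H^X A$ the restriction $f_{\mid X^u}$ is a bounded continuous section of the appropriate pullback bundle over $X^u$ and it inherits the equivariance relation $\alpha_h(f(x))=f(xh^{-1})$. The vanishing-at-infinity condition also survives, because $X^u/H$ is canonically homeomorphic to the closed subset $\tilde r^{-1}(u)$ of $X/H$: the quotient map $X\to X/H$ is open by Proposition~\ref{Prop:QuotientLocallyCompact} (the range and domain maps of $H$ being open), $X^u$ is $H$-saturated, hence the induced map $X^u/H\to\tilde r^{-1}(u)$ is an open continuous bijection and thus a homeomorphism, and the restriction of a function in $C_0(X/H)$ to a closed subset lies in $C_0$ of that subset. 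That $\rho$ is a $\ast$-homomorphism is immediate.

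Next, the kernel. The inclusion $I_u\subseteq\ker\rho$ is clear: for $\varphi\in C_0(G^{(0)}\setminus\{u\})$, $f\in Ind_H^X A$ and $x\in X^u$ we get $(\varphi\cdot f)(x)=\varphi(r(x))f(x)=\varphi(u)f(x)=0$. Conversely, suppose $\rho(f)=0$, i.e. $f$ vanishes on every orbit $xH$ with $r(x)=u$. Given $\varepsilon>0$ the set $K=\{xH\mid\norm{f(x)}\ge\varepsilon\}$ is compact in $X/H$, so $\tilde r(K)$ is a compact subset of $G^{(0)}$ not containing $u$, and one can choose $\varphi\in C_0(G^{(0)})$ with $0\le\varphi\le1$, $\varphi\equiv1$ on $\tilde r(K)$ and $\varphi(u)=0$; then $\varphi\in C_0(G^{(0)}\setminus\{u\})$, $\varphi\cdot f\in I_u$, and exactly the fibrewise estimate used in the proof of Proposition~\ref{Prop:Fibres of Pushforward} gives $\norm{f-\varphi\cdot f}<\varepsilon$. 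Hence $\ker\rho\subseteq I_u$.

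For surjectivity I would use that $\rho$, being a $\ast$-homomorphism of $\mathrm C^*$-algebras, has closed range, so it suffices to hit a dense subset of $Ind_H^{X^u}A$; by the Remark following Proposition~\ref{Prop:FibresOfInducedAlgebra} the elements $\varphi\diamond a$ with $\varphi\in C_c(X^u)$, $a\in A$, span such a subset. Given such a $\varphi$, extend it to $\widetilde\varphi\in C_c(X)$ with $\widetilde\varphi_{\mid X^u}=\varphi$ — by Tietze's theorem applied in the one-point compactification of $X$, followed by multiplication by a cutoff in $C_c(X)$ equal to $1$ on $\mathrm{supp}(\varphi)$. The key observation is that $xh\in X^u$ whenever $x\in X^u$ and $h\in H^{p(x)}$, so $\widetilde\varphi(xh)=\varphi(xh)$ throughout the integral defining $\diamond$ and therefore $\rho(\widetilde\varphi\diamond a)=\varphi\diamond a$; thus $\rho$ surjects onto a dense, hence all, of $Ind_H^{X^u}A$. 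I expect the only mildly delicate points to be the homeomorphism $X^u/H\cong\tilde r^{-1}(u)$ in the well-definedness step and the controlled extension of $\varphi$, neither of which is a genuine obstacle; the substance of the argument is again the vanishing-at-infinity/partition-of-unity trick already used to identify the fibres of a pushforward.
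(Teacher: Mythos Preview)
Your proof is correct and follows essentially the same approach as the paper: both use the restriction map $Ind_H^X A\to Ind_H^{X^u}A$, identify its kernel with $I_u$ via the same bump-function argument, and then prove surjectivity by density. The only minor difference is in the surjectivity step: the paper shows the image is a $C_0(X^u/H)$-invariant subspace that is fibrewise dense and invokes Proposition~\ref{Prop:DensityCriterionC(X)-algebras}, whereas you hit the concrete dense generators $\varphi\diamond a$ directly via a Tietze extension of $\varphi$ from $X^u$ to $X$; both are equally valid and close in spirit.
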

\begin{proof}
	Consider the restriction homomorphism
	\[\textsf{res}: Ind_H^X A\rightarrow Ind_H^{X^u} A.\]
	The kernel of $\textsf{res}$ can be identified with $I_u=\overline{C_0(G^{(0)}\setminus \lbrace u\rbrace)Ind_H^X A}$ as follows:
	Let $\varphi\in C_0(G^{(0)}\setminus \lbrace u\rbrace)$ and $f\in Ind_H^X A$. Then for all $x\in X^u$ we clearly have $(\varphi\cdot f)(x)=\varphi(r(x))f(x)=\varphi(u)f(x)=0$. And thus $I_u\subseteq ker(\textsf{res})$.
	For the converse inclusion let $f\in Ind_H^X A$ such that $\textsf{res}(f)=0$. From the definition of $Ind_H^X A$ we know that for any $\varepsilon>0$ the set $K=\lbrace xH\in X/H\mid \norm{f(x)}\geq \varepsilon\rbrace$ is compact. Since $\tilde{r}$ is continuous $\tilde{r}(K)$ is also compact. Since $u\notin \tilde{r}(K)$ we can find a function $\varphi\in C_c(G^{(0)})$ such that $0\leq\varphi\leq 1$, $\varphi\equiv 1$ on $\tilde{r}(K)$ and $\varphi(u)=0$.
	Then clearly $\varphi\cdot f\in I_u$ and we have $\norm{f-\varphi\cdot f}< \varepsilon$ since if $xH\in K$, then $r(x)=\tilde{r}(xH)\in \tilde{r}(K)$ and $\norm{f(x)-\varphi(r(x))f(x)}=\norm{f(x)-f(x)}=0$ and if $xH\notin K$ then $\norm{f(x)-\varphi(r(x))f(x)}=\betrag{1-\varphi(r(x))}\norm{f(x)}<\varepsilon$.
	Thus, we have $f\in I_u$.
	
	To finish the proof we need to show that $\textsf{res}$ is surjective. To this end it is enough to show that $im(\textsf{res})$ is dense in $Ind_H^{X^u} A$. It is clear that $im(\textsf{res})$ is a linear subspace in $Ind_H^{X^u} A$. Moreover, it is closed under the $C_0(X^u/H)$-action since if $\varphi\in C_0(X^u/H)$ and $f\in im(\textsf{res})$ then we can identify $X^u/H$ with the closed subspace $\tilde{r}^{-1}(\lbrace u\rbrace)\subseteq X/H$ and thus find an element $\tilde{\varphi}$ such that $\tilde{\varphi}_{\mid X^u/H}=\varphi$. If $\tilde{f}$ with $res(\tilde{f})=f$ then clearly $\varphi\cdot f=\textsf{res}(\tilde{\varphi}\cdot\tilde{f})\in im(\textsf{res})$.
	Furthermore, for all $xH\in X^u/H$ we know that $\lbrace \textsf{res}(f)(x)\mid f\in Ind_H^X A\rbrace=ev_x(Ind_H^X A)$ is dense in $A_{d(x)}$ from Proposition \ref{Prop:FibresOfInducedAlgebra}. Since $A_{d(x)}=(Ind_H^{X^u} A)_{xH}$ we can apply Proposition \ref{Prop:DensityCriterionC(X)-algebras} to conclude that $im(\textsf{res})$ is dense in $Ind_H^{X^u} A$ as desired.
\end{proof}

\begin{prop} \label{Prop:Action on Induced Algebra}
	Consider the family of isomorphisms $(\beta_g)_{g\in G}$, where
	$$\beta_g:Ind_H^{X^{d(g)}}\rightarrow Ind_H^{X^{r(g)}},\ \ \beta_g(f)(x)=f(g^{-1}x).$$
	Then $(Ind_H^X A,G,\beta)$ is a groupoid dynamical system.
\end{prop}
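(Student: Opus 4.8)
The plan is to realise the proposition as an instance of the push-forward construction for groupoid dynamical systems. Recall that $X/H$ is a left $G$-space with anchor map $\tilde{r}\colon X/H\to G^{(0)}$, and that we have just shown $(Ind_H^X A, G\ltimes X/H, \alpha)$ to be a groupoid dynamical system. Applying Proposition \ref{Prop:PushforwardAction} to this system and the $G$-space $X/H$ therefore produces, essentially for free, a groupoid dynamical system $(Ind_H^X A, G, \gamma)$, where $Ind_H^X A$ is regarded as a $C_0(G^{(0)})$-algebra via push-forward along $\tilde{r}$ --- which is exactly the $C_0(G^{(0)})$-structure $(\varphi\cdot f)(x)=\varphi(r(x))f(x)$ recorded above --- and the action is given fibrewise by $\gamma_g(f)(xH)=\alpha_{(g,xH)}\bigl(f(g^{-1}xH)\bigr)$. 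Crucially, the continuity of this action, which is really the only substantive point in the statement, is provided by Proposition \ref{Prop:PushforwardAction} and needs no further work.

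It then only remains to verify that, under the canonical isomorphism $(Ind_H^X A)_u\cong Ind_H^{X^u} A$ of the preceding lemma, the fibre isomorphism $\gamma_g$ becomes the map $\beta_g(f)(x)=f(g^{-1}x)$ of the statement. Here I would simply unravel the identifications: the isomorphism of the preceding lemma is induced by restricting sections to $X^u$, and $\widetilde{ev_x}\colon (Ind_H^X A)_{xH}\to A_{d(x)}$ is evaluation at $x$ (Proposition \ref{Prop:FibresOfInducedAlgebra}), so that an element $f\in (Ind_H^X A)_u$ --- viewed as a section over $X^u/H$ via push-forward along $\tilde{r}$ --- corresponds to the section $\bar f\in Ind_H^{X^u} A$ given by $\bar f(x)=\widetilde{ev_x}\bigl(f(xH)\bigr)$. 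Substituting $\alpha_{(g,xH)}=\widetilde{ev_x}^{-1}\circ\widetilde{ev_{g^{-1}x}}$ into the formula for $\gamma_g$ then yields, for $g\in G$ and $x\in X^{r(g)}$,
\[\widetilde{ev_x}\bigl(\gamma_g(f)(xH)\bigr)=\widetilde{ev_{g^{-1}x}}\bigl(f(g^{-1}xH)\bigr)=\bar f(g^{-1}x),\]
so that the section over $X^{r(g)}$ representing $\gamma_g(f)$ is $x\mapsto\bar f(g^{-1}x)=\beta_g(\bar f)(x)$, which is precisely the claim.

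Since the computation above is a one-liner, I expect the only genuine difficulty to be bookkeeping: one must keep the two bundle structures on $Ind_H^X A$ firmly apart --- the $C_0(X/H)$-algebra carrying the $G\ltimes X/H$-action, and the pushed-forward $C_0(G^{(0)})$-algebra whose fibres are the algebras $Ind_H^{X^u} A$ --- and follow the resulting chain of identifications between $(Ind_H^X A)_u$, sections over $X^u/H$, and $Ind_H^{X^u} A$ carefully enough to be certain that the abstract push-forward action really specialises to $\beta_g$. As an alternative one could verify the axioms directly: $\beta_g$ visibly has inverse $\beta_{g^{-1}}$, satisfies $\beta_{g_1 g_2}=\beta_{g_1}\circ\beta_{g_2}$, and maps $Ind_H^{X^{d(g)}} A$ into $Ind_H^{X^{r(g)}} A$ because left translation by $g^{-1}$ is an $H$-equivariant homeomorphism $X^{r(g)}\to X^{d(g)}$ which intertwines the $\alpha$-actions; but this route still leaves the continuity of the $G$-action on the bundle to be established by hand, and that is exactly the step that citing Proposition \ref{Prop:PushforwardAction} allows us to avoid.
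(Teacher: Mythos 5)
Your proposal is correct and is essentially the paper's own argument: the paper's proof consists of the single line ``Apply Proposition \ref{Prop:PushforwardAction} to $(Ind_H^X A,G\ltimes X/H,\alpha)$''. Your additional unravelling of the identification $(Ind_H^X A)_u\cong Ind_H^{X^u}A$, showing that the pushed-forward action $\alpha_{(g,xH)}\bigl(f(g^{-1}xH)\bigr)$ really specialises to $\beta_g(f)(x)=f(g^{-1}x)$, is a detail the paper leaves implicit and you carry it out correctly.
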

\begin{proof}
	Apply Proposition \ref{Prop:PushforwardAction} to $(Ind_H^X A,G\ltimes X/H,\alpha)$.
\end{proof}
	
For later purposes we want to examine what happens, if we restrict our $G$-action on $Ind_H^X A$ to the subgroupoid $H$ again. We have the following result:
\begin{lemma}
The restriction $(Ind_H^X A)_{\mid H}$ of the $G$-algebra $Ind_H^X A$ to the subgroupoid $H$ is isomorphic to the induced algebra $Ind_H^{G'} A$, where $G'=G_{H^{(0)}}^{H^{(0)}}\subseteq X$.
\end{lemma}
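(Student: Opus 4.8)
The plan is to realise the asserted isomorphism as the obvious restriction-of-sections map $f\mapsto f|_{G'}$ and to verify it is an isomorphism of $H$-algebras fibrewise, via Lemma~\ref{Lem:IsomorphismCriteriumForC(X)-linearHomomorphisms}. First I would record the relevant structures on $G'$. Since $H$ and $G^{(0)}$ are closed in $G$, the unit space $H^{(0)}=H\cap G^{(0)}$ is closed in $G^{(0)}$, so that $(Ind_H^X A)_{\mid H}=\iota_0^*(Ind_H^X A)$ is defined; and $G'=G_{H^{(0)}}^{H^{(0)}}=X\cap r^{-1}(H^{(0)})$ is a closed, $H$-invariant subset of $X=d^{-1}(H^{(0)})$. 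Hence the right $H$-action on $X$ restricts to a free and proper right $H$-action on $G'$ with anchor $d|_{G'}$, while $G'$ is simultaneously a \emph{left} $H$-space via $(h,x)\mapsto hx$ (note $hx\in G'$ whenever $d(h)=r(x)$), with anchor $r|_{G'}$. Running the construction preceding Proposition~\ref{Prop:Action on Induced Algebra} verbatim, but with $G$ replaced by $H$ and $X$ by $G'$ --- forming the diagonal action of $H\ltimes(G'/H)$ on $Ind_H^{G'}A$ (which is a $C_0(G'/H)$-algebra by Proposition~\ref{Prop:FibresOfInducedAlgebra}) and pushing it forward along $\tilde r'\colon G'/H\to H^{(0)}$, $xH\mapsto r(x)$, via Proposition~\ref{Prop:PushforwardAction} --- equips $Ind_H^{G'}A$ with a canonical $H$-algebra structure whose action is $\beta_h'(f)(x)=f(h^{-1}x)$, exactly as in Proposition~\ref{Prop:Action on Induced Algebra}, and whose fibre over $u\in H^{(0)}$ is $Ind_H^{(G')^u}A$.

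The key (and elementary) observation is that $X^u=(G')^u$ for every $u\in H^{(0)}$: indeed $x\in X^u$ means $d(x)\in H^{(0)}$ and $r(x)=u$, which --- since $u\in H^{(0)}$ --- is precisely the condition $x\in G_{H^{(0)}}^{H^{(0)}}$ together with $r(x)=u$. Consequently, for $u\in H^{(0)}$,
\[\big((Ind_H^X A)_{\mid H}\big)_u=(Ind_H^X A)_u=Ind_H^{X^u}A=Ind_H^{(G')^u}A=(Ind_H^{G'}A)_u,\]
the first equality because pulling back along the closed inclusion $\iota_0$ leaves the fibres over points of $H^{(0)}$ unchanged, and the second by the identification of the fibres of a pushed-forward induced algebra established just above. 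Under this chain of identifications the restriction to $H$ of the $G$-action $\beta$ on $Ind_H^X A$ (Proposition~\ref{Prop:Action on Induced Algebra}) and the $H$-action $\beta'$ of the first paragraph coincide on the nose: both send $f\in Ind_H^{X^{d(h)}}A=Ind_H^{(G')^{d(h)}}A$ to $x\mapsto f(h^{-1}x)$.

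Now I would write down the map and invoke the fibrewise criterion. Realising $(Ind_H^X A)_{\mid H}=\iota_0^*(Ind_H^X A)$ as a quotient of $Ind_H^X A$, two sections $f,f'\in Ind_H^X A$ represent the same element if and only if they agree on every fibre over $H^{(0)}$, i.e.\ if and only if $f|_{G'}=f'|_{G'}$; hence $\Psi([f]):=f|_{G'}$ is a well-defined $*$-homomorphism. One checks that $f|_{G'}$ indeed lies in $Ind_H^{G'}A$: the equivariance relation $\alpha_h(f(x))=f(xh^{-1})$ for $x\in G'$ (with $xh^{-1}\in G'$) is inherited from the one for $f$ on $X$, and $[xH\mapsto\norm{f(x)}]$ vanishes at infinity on $G'/H$ because $G'/H$ is closed in $X/H$. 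The map $\Psi$ is $C_0(H^{(0)})$-linear since both $C_0(H^{(0)})$-module structures are implemented by $(\varphi\cdot g)(x)=\varphi(r(x))g(x)$, and on the fibre over $u\in H^{(0)}$ the induced map $\Psi_u$ is exactly the identification $Ind_H^{X^u}A=Ind_H^{(G')^u}A$ of the previous paragraph, in particular an isomorphism; by Lemma~\ref{Lem:IsomorphismCriteriumForC(X)-linearHomomorphisms}, $\Psi$ is an isomorphism. Since each $\Psi_u$ intertwines $\beta_h$ with $\beta_h'$, the map $\Psi$ is in addition $H$-equivariant, and is therefore an isomorphism of $H$-algebras.

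I do not expect a genuinely hard step here; the difficulty is purely organisational. The point needing the most care is that $Ind_H^{G'}A$ carries an $H$-algebra structure at all, which forces one to regard $G'$ simultaneously as a free and proper \emph{right} $H$-space \emph{and} as a \emph{left} $H$-space --- this is precisely why $G'$ has to be identified as $G_{H^{(0)}}^{H^{(0)}}$ rather than as $d^{-1}$ of some unit space --- together with the bookkeeping needed to see that, under restriction of sections, the two $C_0(H^{(0)})$-module structures and the two groupoid actions match on the common fibres $Ind_H^{X^u}A=Ind_H^{(G')^u}A$.
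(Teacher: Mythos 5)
Your proposal is correct and follows essentially the same route as the paper: the paper defines the same map (written there as $\Phi(f)(x)=f(r(x))(x)$ on sections of the bundle $\coprod_{u\in H^{(0)}}Ind_H^{X^u}A$, which is your restriction map $f\mapsto f|_{G'}$ in a different guise), checks $C_0(H^{(0)})$-linearity, verifies it is the identity on each fibre $Ind_H^{X^u}A=Ind_H^{(G')^u}A$, and concludes via Lemma~\ref{Lem:IsomorphismCriteriumForC(X)-linearHomomorphisms} together with compatibility of the $H$-actions. Your extra bookkeeping (the $H$-algebra structure on $Ind_H^{G'}A$ and the observation $X^u=(G')^u$ for $u\in H^{(0)}$) only makes explicit what the paper leaves implicit.
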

\begin{proof}
Recall that $(Ind_H^X A)_{\mid H}$ is defined as the algebra of continuous sections of the bundle $\coprod_{u\in H^{(0)}} Ind_H^{X^u} A$ vanishing at infinity.
Thus, we can define a map $\Phi:(Ind_H^X A)_{\mid H}\rightarrow Ind_H^{G'} A$ by letting 
$\Phi(f)(x)=f(r(x))(x)$. One easily checks that this is a $C_0(H^{(0)})$-linear $*$-homo\-morphism. It is not hard to see that the composition of $\Phi$ followed by the restriction map $Ind_H^{G'} A\rightarrow Ind_H^{X^u} A$ coincides with the evaluation homomorphism $ev_u:(Ind_H^X A)_{\mid H}\rightarrow Ind_H^{X^u} A$. Hence $\Phi$ induces the identity on each fibre, which is an isomorphism. By Lemma \ref{Lem:IsomorphismCriteriumForC(X)-linearHomomorphisms} it follows that $\Phi$ must be an isomorphism itself. Following the construction of the restricted action it is easy to see that $\Phi$ is compatible with the $H$-actions on both sides.
\end{proof}
Earlier we claimed that the process of induction should generalize the construction of induced spaces presented in section 2. The following proposition finally justifies this:
\begin{prop}\label{Prop:InducedAlgebra:CommutativeCase} Let $G$ be a locally compact Hausdorff groupoid and $H\subseteq G$ a closed subgroupoid. If $Y$ is a left $H$-space with anchor map $p:Y\rightarrow H^{(0)}$, then $C_0(Y)$ turns into an $H$-algebra. Consider the right $H$-space $X:=d^{-1}(H^{(0)})$. Then $Ind_H^X(C_0(Y))$ is canonically isomorphic to $C_0(G\times_H Y)$, where $G\times_H Y$ is the classical induced $G$-space.
\end{prop}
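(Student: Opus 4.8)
The plan is to write down one explicit map and then recognise it as an isomorphism fibrewise. Recall that $X=d^{-1}(H^{(0)})$ carries the free and proper right $H$-action by multiplication, and that $C_0(Y)$, regarded as a $C_0(H^{(0)})$-algebra via pushforward along $p$, has associated bundle $\mathcal A$ over $H^{(0)}$ with fibre $\mathcal A_u=C_0(Y_u)$ (Proposition~\ref{Prop:Fibres of Pushforward}). Since the condition $d(g)=p(y)\in H^{(0)}$ forces $g\in X$, we identify $G\times_{H^{(0)}}Y=X\times_{H^{(0)}}Y$, and for $f\in Ind_H^X(C_0(Y))$ the value $f(g)$ lives in $(p^*\mathcal A)_g=C_0(Y_{d(g)})$. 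I would then set
$$\Phi:Ind_H^X(C_0(Y))\longrightarrow C_0(G\times_H Y),\qquad \Phi(f)([g,y]):=f(g)(y).$$
The invariance relation $\alpha_h(f(g))=f(gh^{-1})$ together with the definition of the $H$-action on $C_0(Y)$ shows that $f(g)(y)$ depends only on the class $[g,y]$, so $\Phi(f)$ is a well-defined function on $G\times_H Y$; it is then automatically a $*$-homomorphism, and it is $C_0(X/H)$-linear once $C_0(G\times_H Y)$ is viewed as a $C_0(X/H)$-algebra by pushing forward along $\bar\pi:G\times_H Y\to X/H$, $[g,y]\mapsto gH$ (Proposition~\ref{Prop:Pushforward}), because both module actions of $\varphi\in C_0(X/H)$ reduce to multiplication by $\varphi(gH)$.

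The real content is to verify that $\Phi(f)$ genuinely belongs to $C_0(G\times_H Y)$. For continuity, the quotient map $q:G\times_{H^{(0)}}Y\to G\times_H Y$ is a topological quotient map, so it suffices to show $(g,y)\mapsto f(g)(y)$ is continuous on $X\times_{H^{(0)}}Y$; this factors through the evaluation $\mathcal A\times_{H^{(0)}}Y\to\CC$, $(a,y)\mapsto a(y)$, whose continuity follows from a short computation with the basic open sets $W(a,U,\varepsilon)$ of the bundle topology, using crucially that the fibre norm on $\mathcal A_u=C_0(Y_u)$ is the supremum norm — so that norm-closeness of $a_\lambda$ to a fixed section $\phi\in C_0(Y)$ forces pointwise closeness of $a_\lambda(y_\lambda)$ to $\phi(y_\lambda)$. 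For vanishing at infinity I would use condition $(2)$ of the definition of the induced algebra: given $\varepsilon>0$ the set $K=\{gH:\|f(g)\|\ge\varepsilon\}$ is compact in $X/H$, and since the $H$-action on $X$ is free and proper one can find a compact $C\subseteq X$ with $\bar\pi_X(C)=K$; covering $C$ by finitely many neighbourhoods on which $f$ stays norm-close to reference sections $\phi_i\in C_0(Y)$, one confines the set $\{|\Phi(f)|\ge\varepsilon\}$ inside the (compact) image under $q$ of $(C\times M)\cap(G\times_{H^{(0)}}Y)$, where $M=\bigcup_i\{|\phi_i|\ge\varepsilon/2\}$ is compact in $Y$; since $\{|\Phi(f)|\ge\varepsilon\}$ is also closed, it is compact, so $\Phi(f)$ vanishes at infinity.

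Once $\Phi$ is established as a well-defined $C_0(X/H)$-linear $*$-homomorphism, I would conclude via a fibre comparison and Lemma~\ref{Lem:IsomorphismCriteriumForC(X)-linearHomomorphisms}. By Proposition~\ref{Prop:FibresOfInducedAlgebra} the fibre $(Ind_H^X(C_0(Y)))_{xH}$ is $C_0(Y_{d(x)})$ via the evaluation $ev_x$, while by Proposition~\ref{Prop:Fibres of Pushforward} the fibre $(C_0(G\times_H Y))_{xH}$ is $C_0(\bar\pi^{-1}(xH))$, which is canonically $C_0(Y_{d(x)})$ under $[x,y]\leftrightarrow y$. Tracing the definitions, $\Phi_{xH}$ is the identity of $C_0(Y_{d(x)})$, hence bijective for every $xH$, so $\Phi$ is an isomorphism of $C^*$-algebras by Lemma~\ref{Lem:IsomorphismCriteriumForC(X)-linearHomomorphisms}; in particular surjectivity comes for free. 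Finally, for the $G$-equivariance (which makes the isomorphism ``canonical'' as $G$-algebras) one computes $\Phi(\beta_g f)([g',y])=f(g^{-1}g')(y)=\Phi(f)([g^{-1}g',y])$, which is precisely the action of $g$ on $C_0(G\times_H Y)$ induced by $g\cdot[g',y]=[gg',y]$; hence $\Phi$ intertwines the two $G$-actions.

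I expect the main obstacle to be exactly the second step: showing $\Phi(f)\in C_0(G\times_H Y)$, and in particular the vanishing-at-infinity part. This is the point where one must reconcile the ``bounded section of a bundle over $X$ with a support condition on $X/H$'' description of the induced algebra with the ``continuous function vanishing at infinity on the total space'' description of $C_0(G\times_H Y)$, and the argument genuinely relies both on the supremum-norm structure of the fibres of $C_0(Y)$ and on the existence of compact subsets of $X$ surjecting onto compact subsets of $X/H$ for the free proper action. An alternative to the fibre argument would be to define $\Phi^{-1}$ directly by $F\mapsto(g\mapsto F([g,\cdot]))$ and check the two defining conditions of the induced algebra by hand, but this re-proves the same continuity and properness facts, so routing through the fibre criterion seems more economical.
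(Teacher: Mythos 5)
Your proof is correct, but it takes a genuinely different route from the paper's. The paper defines the same map $\Phi(f)([x,y])=(f(x))(y)$ into $\ell^\infty(G\times_H Y)$, but verifies membership in $C_0(G\times_H Y)$ only on the dense set of elements $\varphi\diamond a$ with $\varphi\in C_c(X)$, $a\in C_c(Y)$: for these, $\Phi(\varphi\diamond a)$ is rewritten as an integral over the proper groupoid $H\ltimes(G\times_{G^{(0)}}Y)$ of a properly supported function, so continuity and compact support follow from Lemma~\ref{r!}; since $\Phi$ is isometric, density then gives $im(\Phi)\subseteq C_0(G\times_H Y)$, and surjectivity is deduced from Stone--Weierstrass. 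You instead verify directly that $\Phi(f)\in C_0(G\times_H Y)$ for \emph{every} $f$ --- continuity via openness of the quotient map together with continuity of the evaluation $\mathcal{A}\times_{H^{(0)}}Y\to\CC$ (which indeed holds by the $W(\phi,U,\varepsilon)$ computation you sketch, using that restriction onto each fibre $C_0(Y_u)$ is surjective), and vanishing at infinity via a compact $C\subseteq X$ covering $K=\lbrace gH:\norm{f(g)}\geq\varepsilon\rbrace$ (for which openness of $X\to X/H$ suffices; freeness and properness are not what is needed there) plus finitely many reference sections --- and then you conclude bijectivity fibrewise from Propositions~\ref{Prop:FibresOfInducedAlgebra} and~\ref{Prop:Fibres of Pushforward} together with Lemma~\ref{Lem:IsomorphismCriteriumForC(X)-linearHomomorphisms}. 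Your route trades the Haar-system/$\diamond$-element computation and Stone--Weierstrass for a direct topological verification plus the fibrewise isomorphism criterion; note, though, that surjectivity of $ev_x$ in Proposition~\ref{Prop:FibresOfInducedAlgebra} is itself proved with the $\diamond$ elements, so the Haar system has not really disappeared from the argument. The one step you leave implicit is that $\bar\pi^{-1}(xH)$, with its subspace topology in $G\times_H Y$, is homeomorphic to $Y_{d(x)}$ via $y\mapsto[x,y]$: the bijection is clear, but the homeomorphism is exactly where freeness and properness of the $H$-action enter (a net argument through the open quotient map), and without it the identification $(C_0(G\times_H Y))_{xH}\cong C_0(Y_{d(x)})$ is only set-theoretic; this should be recorded. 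On the plus side, you check $G$-equivariance explicitly, which the paper leaves implicit.
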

\begin{proof}
We want to define a map from $Ind_H^X(C_0(Y))$ to $C_0(G\times_H Y)$. For this let $\mathcal{B}$ denote the upper-semicontinuous $C^*$-bundle associated to the $C_0(H^{(0)})$-algebra $C_0(Y)$. Now let $f\in Ind_H^X(C_0(Y))$ be given. Then for each $x\in X$ we have that $f(x)\in (d_{\mid X}^*(\mathcal{B}))_{x}=\mathcal{B}_{d(x)}=C_0(Y)_{d(x)}=C_0(Y_{d(x)})$ where $Y_{d(x)}=p^{-1}(\lbrace d(x)\rbrace)\subseteq Y$.
Define $\Phi:Ind_H^X(C_0(Y))\rightarrow \ell^\infty(G\times_H Y)$ by $$\Phi(f)([x,y]):=(f(x))(y).$$
We need to see, that this is well-defined. Recall that the left action of $H$ on $G\times_{G^{(0)}} Y$ is given by $h\cdot (x,y):= (xh^{-1},hy)$. Then we have $\Phi(f)([xh^{-1},hy])=(f(xh^{-1}))(hy)=(lt_{h}(f(x)))(hy)=(f(x))(y)$, where $lt:d^*(C_0(Y))\rightarrow r^*(C_0(Y))$ denotes the action of $H$ on $C_0(Y)$ induced from the $H$ action on $Y$.
Let us show that $\Phi$ has image in $C_0(G\times_H Y)$. First consider functions of the form $\varphi\diamond g$ for $\varphi\in C_c(X)$ and $g\in C_c(Y)$.
Let $k:G\times_{G^{(0)}} Y\rightarrow\CC$ be the function $k(x,y)=\varphi(x)g(y)$. Clearly $k$ has compact support. Combining this with the fact that $H$ acts properly on $G\times_{G^{(0)}} Y$ we obtain that the map
$H\ltimes (G\times_{G^{(0)}}Y)\rightarrow\CC$ given by
$(h,x,y)\mapsto k(h^{-1}(x,y))$ is continuous and properly supported.
Thus, the map
$$(x,y)\mapsto \int\limits_{H\ltimes (G\times_{G^{(0)}}Y)^{(x,y)}}k(h^{-1}(x',y'))d\lambda^{d(x)}\otimes \delta_{(x,y)}(h,x',y')$$
is continuous by Lemma \ref{r!}. But the latter integral equals 
$$\int\limits_{H^{d(x)}}\varphi(xh)g(h^{-1}y)d\lambda^{d(x)}(h)=\Phi(\varphi\diamond g)([x,y]).$$
Thus $\Phi(\varphi\diamond g)$ is continuous and compactly supported.
Since the linear span of elements of the form $\varphi\diamond g$ is dense in $Ind_H^X C_0(Y)$ and $\Phi$ is clearly a $*$-homomorphism and isometric, its image is contained in $C_0(G\times_H Y)$.
A quick application of the Stone–Weierstrass theorem gives that $im(\Phi)=C_0(G\times_H Y)$.
\end{proof}

We also have, that the process of induction is compatible with the maximal tensor product in the following sense:

\begin{lemma}\label{Lemma:Induction and Tensor product}
	Let $G$ be a locally compact Hausdorff groupoid and $H\subseteq G$ a proper subgroupoid. If $A$ is an $H$-algebra and $B$ a $G$-algebra we have a canonical isomorphism of $G$-algebras
	$$\Phi: (Ind_H^X A)\otimes_{G^{(0)}}^{max} B\rightarrow Ind_H^X(A\otimes_{H^{(0)}}^{max} B_{\mid H})$$
	satisfying
	$$\Phi(f\otimes b)(g)=f(g)\otimes \beta_{g^{-1}}(b(r(g)))$$
	for all $f\in Ind_H^X A$ and $b\in B$.
\end{lemma}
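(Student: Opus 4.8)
The plan is to check that $\Phi$ is a well-defined $C_0(G^{(0)})$-linear $*$-homomorphism and then to apply the fibrewise isomorphism criterion. Both algebras in the statement are $C_0(G^{(0)})$-algebras --- the balanced tensor product on the left by construction, and $Ind_H^X(A\otimes_{H^{(0)}}^{max}B_{\mid H})$ on the right by pushing forward its $C_0(X/H)$-structure along $\tilde r$ --- and both carry $G$-actions. By Lemma~\ref{Lem:IsomorphismCriteriumForC(X)-linearHomomorphisms} it therefore suffices to produce a $C_0(G^{(0)})$-linear $*$-homomorphism $\Phi$ satisfying the displayed formula, to check that $\Phi$ is $G$-equivariant, and to show that the induced map $\Phi_u$ on the fibre over each $u\in G^{(0)}$ is bijective.

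To construct $\Phi$, I would first define it on the algebraic tensor product $(Ind_H^X A)\odot B$ by $\Phi(f\otimes b)(g)=f(g)\otimes\beta_{g^{-1}}(b(r(g)))$. Since $f(g)\in A_{d(g)}$ and $\beta_{g^{-1}}(b(r(g)))\in B_{d(g)}$, this value lies in the fibre $(A\otimes_{H^{(0)}}^{max}B_{\mid H})_{d(g)}$, and continuity of the section $g\mapsto\Phi(f\otimes b)(g)$ follows from continuity of $f$ and Lemma~\ref{Lem:EasierCharacterizationOfContinuiutyOfAction} applied to $B$ (after precomposing with inversion). The only point requiring real care is the $H$-invariance condition defining the induced algebra: for $h\in H_{d(g)}$ one computes, using $\alpha_h(f(g))=f(gh^{-1})$, the cocycle identity $\beta_h\circ\beta_{g^{-1}}=\beta_{(gh^{-1})^{-1}}$, and $r(gh^{-1})=r(g)$, that $(\alpha_h\otimes\beta_h)(\Phi(f\otimes b)(g))=\Phi(f\otimes b)(gh^{-1})$; the estimate $\norm{\Phi(f\otimes b)(x)}\le\norm{b}\,\norm{f(x)}$ then gives the vanishing-at-infinity condition. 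Thus $\Phi$ maps $(Ind_H^X A)\odot B$ into $Ind_H^X(A\otimes_{H^{(0)}}^{max}B_{\mid H})$; being a $*$-homomorphism there it is contractive for the maximal tensor norm and extends to $(Ind_H^X A)\otimes_{max}B$; and since $\Phi((\varphi\cdot f)\otimes b)=\Phi(f\otimes(\varphi\cdot b))$ for all $\varphi\in C_0(G^{(0)})$, it descends to the balanced tensor product. That $\Phi$ intertwines the diagonal $G$-action on the source with the translation action of Proposition~\ref{Prop:Action on Induced Algebra} on the target is then a direct computation with the defining formulas.

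For the fibrewise statement, the identifications $(Ind_H^X A)_u\cong Ind_H^{X^u}A$ and $(Ind_H^X(A\otimes_{H^{(0)}}^{max}B_{\mid H}))_u\cong Ind_H^{X^u}(A\otimes_{H^{(0)}}^{max}B_{\mid H})$, together with the formula for the fibres of a balanced maximal tensor product, turn $\Phi_u$ into the map $f\otimes\xi\mapsto[g\mapsto f(g)\otimes\beta_{g^{-1}}(\xi)]$ on $Ind_H^{X^u}A\otimes_{max}B_u$. I would construct a two-sided inverse by the formula $\Psi_u(F)=[g\mapsto(\id\otimes\beta_g)(F(g))]$, reversing the twist in the second leg; a Haar-invariance computation as above shows that $g\mapsto(\id\otimes\beta_g)(F(g))$ satisfies the equivariance condition defining $Ind_H^{X^u}A$. \emph{The main obstacle is to show that $\Psi_u(F)$ genuinely belongs to $Ind_H^{X^u}A\otimes_{max}B_u$, and not merely to some ambient algebra of sections, since the maximal tensor product need not preserve the relevant inclusions of section algebras.} I expect to settle this by approximating $F$ by finite sums of elements $\psi\diamond c$ with $\psi\in C_c(X^u)$ and $c$ an elementary tensor in $A\otimes_{H^{(0)}}^{max}B_{\mid H}$ (dense by the remark following Proposition~\ref{Prop:FibresOfInducedAlgebra} and density of elementary tensors), and checking via a change of variables in the defining integral that $\Psi_u$ carries each such element into the linear span of the $(\varphi\diamond a')\otimes\xi'$. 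Combined with the identities $\Phi_u\circ\Psi_u=\id$ and $\Psi_u\circ\Phi_u=\id$ on these dense subspaces, this shows $\Phi_u$ is an isomorphism, and Lemma~\ref{Lem:IsomorphismCriteriumForC(X)-linearHomomorphisms} then gives that $\Phi$ is the isomorphism asserted in the statement.
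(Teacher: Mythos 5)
Your overall architecture (define $\Phi$ on elementary tensors, extend by the universal property of the maximal tensor product, descend to the balanced product, check equivariance, and reduce to the fibres via Lemma \ref{Lem:IsomorphismCriteriumForC(X)-linearHomomorphisms}) agrees with the paper up to the fibrewise step, but there you take a genuinely different and, as written, incomplete route. The paper never constructs an inverse: it checks that $\Phi$ is isometric (hence injective), and obtains surjectivity of each $\Phi_u$ by viewing $im(\Phi_u)$ as a $C_0(X^u/H)$-invariant subspace of $Ind_H^{X^u}(A\otimes_{H^{(0)}}^{max}B_{\mid H})$ whose image in every fibre $(Ind_H^{X^u}(A\otimes B_{\mid H}))_{gH}=A_{d(g)}\otimes_{max} B_{d(g)}$ is dense, so that Proposition \ref{Prop:DensityCriterionC(X)-algebras} gives density of $im(\Phi_u)$ and closedness of the range of a $*$-homomorphism gives surjectivity. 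This completely sidesteps the membership problem you yourself identify as the main obstacle.

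The gap is in your proposed resolution of that obstacle. For $F=\psi\diamond(a\otimes b')$ with $\psi\in C_c(X^u)$ and $a\otimes b'$ elementary, the change of variables $k=gh$ gives $\Psi_u(F)(g)=\int_{H^{d(g)}}\psi(gh)\,\alpha_h(a(d(h)))\otimes\beta_{gh}(b'(d(h)))\,d\lambda^{d(g)}(h)$, and the second tensor leg still depends on the integration variable; no change of variables decouples the two legs, so $\Psi_u(F)$ does \emph{not} lie in the algebraic span of elements $(\varphi\diamond a')\otimes\xi'$ as you claim. One can only place it in the \emph{closed} span after a further approximation: use uniform continuity of $k\mapsto\beta_k(b'(d(k)))$ on the compact support of $\psi$ together with a partition of unity to replace that leg by locally constant values $\xi_i\in B_u$, with the error controlled by Lemma \ref{measure of compact set is bounded}; and, crucially, you must also justify that the closure is taken inside $Ind_H^{X^u}A\otimes_{max}B_u$, i.e. that the canonical map from this algebra to bounded sections $g\mapsto A_{d(g)}\otimes_{max}B_u$ is isometric (which holds because its fibres over $X^u/H$ are exactly $A_{d(g)}\otimes_{max}B_u$, the maximal tensor product preserving the relevant quotients). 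Both points are repairable, but they are exactly the content of the step you flagged, so the argument as proposed does not yet close; alternatively, drop $\Psi_u$ altogether and argue surjectivity of $\Phi_u$ by the density criterion as the paper does, keeping injectivity from the isometry (or from injectivity of $\Phi$ plus Lemma \ref{Lem:IsomorphismCriteriumForC(X)-linearHomomorphisms}).
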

\begin{proof}
	It is easy to check that $\Phi(f\otimes b)\in Ind_H^X(A\otimes_{H^{(0)}}^{max} B_{\mid H})$. Recall, that we can identify the fibre over $u\in G^{(0)}$ as $((Ind_H^X A)\otimes B)_u\cong Ind_H^{X^u} A\otimes B_u$ and $(Ind_H^X(A\otimes B_{\mid H}))_u\cong Ind _H^{X^u}(A\otimes B_{\mid H})$. Using this identification we get that the image of $\Phi(f\otimes b)$ in the fibre $(Ind_H^X(A\otimes B_{\mid H}))_u$ can be identified with the function $g\mapsto f(g)\otimes \beta_{g^{-1}}(b(u))$.
	Hence we can compute
	\begin{align*}
	\norm{\Phi(f\otimes b)}& = \sup\limits_{u\in G^{(0)}} \norm{\Phi(f\otimes b)(u)}\\
	& = \sup\limits_{u\in G^{(0)}} \sup\limits_{g\in X^u} \norm{f(g)\otimes \beta_{g^{-1}}(b(u))}\\
	& = \sup\limits_{u\in G^{(0)}} \sup\limits_{g\in X^u} \norm{f(g)}\norm{b(u)}\\
	& = \sup\limits_{u\in G^{(0)}} \norm{f_{\mid X^u}}\norm{b(u)}\\
	& = \sup\limits_{u\in G^{(0)}} \norm{f_{\mid X^u}\otimes b(u)}\\
	& =\norm{f\otimes b}\\
	\end{align*}
	Hence $\Phi$ extends to a $C_0(G^{(0)})$-linear $*$-homomorphism. To check it is an isomorphism, it is enough to check that $\Phi$ induces an isomorphism on each fibre.
	Viewing $Ind _H^{X^u}(A\otimes B_{\mid H})$ as a $C_0(X^u/H)$-algebra it is also not hard to show that $im(\Phi_u)$ is a $C_0(X^u/H)$-linear subspace such that for each fixed $g\in X^u$ the set
	$$\lbrace \Phi_u(\xi)(g)\mid \xi\in Ind_H^{X^u} A\otimes B_u\rbrace$$
	is dense in $(Ind _H^{X^u}(A\otimes B_{\mid H}))_{gH}=A_{d(g)}\otimes B_{d(g)}$. Thus, $im(\Phi_u)$ is dense in $Ind _H^{X^u}(A\otimes B_{\mid H})$ by Proposition \ref{Prop:DensityCriterionC(X)-algebras}.
	Conversely, these arguments show that functions of the form $f\boxtimes b\in Ind_H^{X^u}(A\otimes B_H)$ for $f\in Ind_H^{X^u} A$ and $b\in B_u$ defined by $(f\boxtimes b)(g):=f(g)\otimes \beta_{g^{-1}}(b)$ generate a dense subspace of $Ind_H^{X^u}(A\otimes B_H)$. The same computation as above then shows that $f\boxtimes b\mapsto f\otimes b$ defines a bounded linear homomorphism $Ind_H^{X^u}(A\otimes B_H)\rightarrow Ind_H^{X^u}A \otimes B_u$, which is clearly inverse to $\Phi_u$. Consequently, $\Phi_u$ is an isomorphism for all $u\in G^{(0)}$ and hence $\Phi$ must be an isomorphism by Lemma \ref{Lem:IsomorphismCriteriumForC(X)-linearHomomorphisms}.
\end{proof}

\section{Equivariant KK-Theory}
In this section we first review the basic constructions of groupoid equivariant $\KK$-Theory and lift some well-known results from the group case to the realm of groupoids. Our exposition is based on the  work of Le Gall (cf. \cite{LeGall,LeGall99}).
Let us start by reviewing some facts on Hilbert modules over $C_0(X)$-algebras:

Let $A$ be a $C_0(X)$-algebra and $E$ a right Hilbert $A$-module. For $\varphi\in C_0(X)$ we can define an action of $C_0(X)$ on $EA=E$ by adjointable operators by
$$\varphi\cdot (xa):=x(a\varphi)$$
It is straightforward to check that this action actually takes values in the center $Z(L(E))$ of the adjointable operators on $E$. Using the canonical isomorphism $M(K(E))\cong L(E)$ we actually get a $*$-homomorphism $\Phi:C_0(X)\rightarrow Z(M(K(E)))$. For rank-one operators this action is given by $\varphi\cdot\Theta_{x,y}=\Theta_{\varphi x,y}$ (here for $x,y\in E$, $\Theta_{x,y}$ denotes the adjointable operator given by $\Theta_{x,y}(z):=x\lk y,z\rk_A$). It is straightforward to show that $\Phi$ is non-degenerate and hence, that $K(E)$ is a $C_0(X)$-algebra.

Similar to $C_0(X)$-algebras we can also view $E$ as a fibred object in the following way:
For $x\in X$ let $E_x$ be the quotient (as a vector space) of $E$ by the closed subspace $\overline{C_0(X\setminus\lbrace x\rbrace)E}$. Denote the image of an element $e\in E$ under the quotient map on $E_x$ by $e(x)$.
Then we can define an $A_x$-valued inner product on $E_x$ by
\[\lk e(x),e'(x)\rk_{A_x}:=\lk e,e'\rk_A(x).\]
One can show that $E_x$ is complete with respect to the norm induced by this inner product.

\begin{bem}\label{Remark:Fibre of Hilbert module can be defined as tensor product}
	Note that one could also define the fibre $E_x$ as the tensor product $E\otimes_A A_x$ (compare \cite[§4.1]{LeGall99}). The canonical morphism $$E\otimes_A A_x\rightarrow E_x,$$
	sending an elementary tensor $e\otimes a(x)$ to the product $(ea)(x)$, is an isomorphism.
\end{bem}

If $E,F$ are two Hilbert $A$-modules, then every operator $T\in L(E,F)$ is automatically compatible with the $C_0(X)$-structures on $E$ and $F$. Hence $T$ factors through a well-defined operator $T_x\in L(E_x,F_x)$ for every $x\in X$. Using \cite[Lemma~C.11]{Williams} one can show that $\norm{T}=\sup_{x\in X}\norm{T_x}.$
If $T\in K(E)$ is a compact operator, then so is $T_x$ for every $x\in X$. For a rank one operator $\Theta_{e,f}\in K(E)$ this is obvious since $(\Theta_{e,f})_x=\Theta_{e(x),f(x)}$. The general case follows by approximating $T\in K(E)$ by finite linear combinations of rank one operators.
This gives rise to a convenient description of the compact operators on of $E_x$. Indeed, the canonical map $T\mapsto T_x$ factors through an isomorphism $$K(E)_x\cong K(E_x),$$
where $K(E)_x$ denotes the fibre of $K(E)$ over $x$ with respect to the $C_0(X)$-structure described above (see \cite[Proposition~4.2]{LeGall99}).

	
	
	
Let $\mathcal{E}=\coprod_{x\in X} E_x$ be the disjoint union of the fibres. We want to see, that in analogy to $C_0(X)$-algebras, there is a topology on $\mathcal{E}$ such that $E$ is isomorphic (as a Hilbert-$A$-module) to $\Gamma_0(X,\mathcal{E})$, where the inner product and $A$-action on the latter are defined pointwise (using the identification $\Gamma_0(X,\mathcal{A})\cong A$).

We need some preparations for this: Consider the compact operators $K(E\oplus A)$. Then we have an embedding $i_E:E\rightarrow K(E\oplus A)$ given by $$i_E(e)=\begin{pmatrix}
0 & e\\
0 & 0
\end{pmatrix}.$$ Analogously, we get embeddings of each fibre $i_{E_x}:E_x\rightarrow K(E_x\oplus A_x)\cong K(E\oplus A)_x$. Since $K(E\oplus A)$ is a $C_0(X)$-algebra, there is a topology on $\mathcal{K}(E\oplus A):=\coprod_{x\in X} K(E\oplus A)_x$ such that $K(E\oplus A)\cong \Gamma_0(X,\mathcal{K}(E\oplus A))$.
The inclusions $i_{E_x}$ induce an inclusion $i:\mathcal{E}\rightarrow \mathcal{K}(E\oplus A)$ and we equip $\mathcal{E}$ with the induced topology. Write $\Gamma_0(X,\mathcal{E})$ for the continuous sections of the bundle $\mathcal{E}\rightarrow X$ vanishing at infinity. Then we get a commutative diagram, where the homomorphism at the top is given by $e\mapsto [x\mapsto e(x)]$ and the right vertical map is given by sending $f\in \Gamma_0(X,\mathcal{E})$ to the map $x\mapsto i_{E_x}(f(x))$:
\begin{center}
	\begin{tikzpicture}[description/.style={fill=white,inner sep=2pt}]
	\matrix (m) [matrix of math nodes, row sep=3em,
	column sep=2.5em, text height=1.5ex, text depth=0.25ex]
	{ E &  \Gamma_0(X,\mathcal{E})\\
		K(E\oplus A) &  \Gamma_0(X,\mathcal{K}(E\oplus A))\\ };
	\path[->,font=\scriptsize]
	(m-1-1) edge node[auto] {$  $} (m-1-2)
	(m-2-1) edge node[auto] {$ \cong $} (m-2-2)
	(m-1-1) edge node[auto] { $ i_E $ } (m-2-1)
	(m-1-2) edge node[auto] {$  $} (m-2-2)
	;
	\end{tikzpicture}
\end{center}
Thus, the isomorphism in the bottom row restricts to an isomorphism $E\rightarrow \Gamma_0(X,\mathcal{E})$ as desired.

In the next step, we want to define pullbacks of Hilbert modules with respect to the $C_0(X)$-action. If $f:Y\rightarrow X$ is a continuous map and $A$ is a $C_0(X)$-algebra we can form the pullback $f^*A$ of $A$ under $f$. We equip it with the canonical right Hilbert $f^*A$-module structure. Define a left $A$-action $\Phi:A\rightarrow L(f^*A)$ by $(\Phi(a)f)(y)=a(f(y))f(y)$. One easily checks that this is a well-defined $*$-homomorphism.

\begin{defi}\cite[Définition~4.3]{LeGall99}
	Suppose $A$ is a $C_0(X)$-algebra and $E$ a right Hilbert $A$-module. If $f:Y\rightarrow X$ is a continuous map we define the pullback $f^*E$ of $E$ as the internal tensor product $f^*E:=E\otimes_{\Phi} f^*A$.
\end{defi}
For $y\in Y$ we then have $(f^*E)_y= (E\otimes_{\Phi} f^*A)_y\cong E\otimes_{\Phi} f^*A\otimes_{f^*A} (f^*A)_y\cong E\otimes_A A_{f(y)}=E_{f(y)}$. Here we used that for each $C_0(X)$-algebra $A$ there is a canonical isomorphism $A\otimes_A A_x\rightarrow A_x$ given by $a\otimes b(x)\mapsto ab(x)$.
The following proposition is concerned with the behaviour of the interior tensor product under pullbacks.
\begin{prop}\cite[Proposition~2.3.3]{LeGall} \label{Prop:Bundles and Pullbacks}
	Let $A, B$ be two $C_0(X)$-algebras. If $E$ is a Hilbert $A$-module, $F$ is a Hilbert $B$-module, and  $\Phi:A\rightarrow L(F)$ is a $*$-homo\-morphism, then for every continuous map $f:Y\rightarrow X$ there is a canonical isomorphism of Hilbert $f^*B$-modules
	$$f^*E\otimes_{f^*A}f^*F\rightarrow f^*(E\otimes_A F).$$	
	In particular for each $x\in X$, there is a canonical isomorphism
	$$(E\otimes_A F)_x\cong E_x\otimes_{A_x}F_x.$$
\end{prop}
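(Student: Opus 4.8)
The plan is to exhibit the claimed map as a composite of standard canonical identifications of internal tensor products, and then to recover the fibrewise statement at the end as the special case where $Y$ is a one-point space.

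First I would unravel the definitions. By construction $f^*E = E\otimes_\Phi f^*A$ with $\Phi\colon A\to L(f^*A)$, $(\Phi(a)\xi)(y)=a(f(y))\xi(y)$, and likewise $f^*F = F\otimes f^*B$ as a Hilbert $f^*B$-module for the analogous left $B$-action. The left $A$-action on $F$ is $C_0(X)$-linear, so, applying the pullback-of-homomorphisms construction of Lemma~\ref{Lem:PullbackOfHomomorphisms} to $K(F)$ and using $L(F)=M(K(F))$, it extends to a left $f^*A$-action on $f^*F$; concretely $f^*A$ acts on the first leg of $f^*F=F\otimes f^*B$. With these structures fixed, associativity of the internal tensor product together with the cancellation $C\otimes_C M\cong M$ for a non-degenerate left $C$-module $M$ gives the chain
\begin{align*}
f^*E\otimes_{f^*A}f^*F &= (E\otimes_\Phi f^*A)\otimes_{f^*A}f^*F \;\cong\; E\otimes_A\big(f^*A\otimes_{f^*A}f^*F\big)\\
&\cong\; E\otimes_A f^*F \;=\; E\otimes_A\big(F\otimes_B f^*B\big)\\
&\cong\; \big(E\otimes_A F\big)\otimes_B f^*B \;=\; f^*(E\otimes_A F),
\end{align*}
where in the middle line $A$ acts on $f^*F$ through $A\to L(F)\to L(f^*F)$, which agrees with the action induced from the $f^*A$-structure. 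One then checks that every step is a $C_0(Y)$-linear, $f^*B$-linear isomorphism of Hilbert modules; tracing the composite on the dense span of elements of the form $e\otimes\psi\otimes\eta$ (with $e\in E$, $\psi\in C_c(Y)$, $\eta\in F$) shows that it is given by the expected formula, which makes the word ``canonical'' precise.

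For the in-particular statement I would apply the isomorphism just built to the inclusion $f\colon\{x\}\hookrightarrow X$ of a single point: pulling a $C_0(X)$-algebra or a Hilbert module back along such a map returns precisely its fibre over $x$ (this is the identification $(f^*E)_y\cong E_{f(y)}$ recorded just before the proposition, together with the analogous statements for $A$, $B$ and $E\otimes_A F$), so the general isomorphism specializes to $E_x\otimes_{A_x}F_x\cong(E\otimes_A F)_x$.

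The genuinely unavoidable work — and essentially the only place anything can go wrong — is the bookkeeping in the second paragraph: one must verify that the left $f^*A$-module structure on $f^*F$ is the correct one and that it stays compatible, step by step, with the $A$-action coming from $\Phi$, and that all the identifications respect the $C_0(Y)$-structures. This is where $C_0(X)$-linearity of the $A$-action on $F$ and Lemma~\ref{Lem:PullbackOfHomomorphisms} (applied to $K(F)$) are used essentially. An alternative, more computational route would be to define the map directly by the expected formula on the dense span of elementary tensors, check that it preserves the $f^*B$-valued inner products and hence extends to an isometry, and then verify surjectivity fibrewise via the density criterion of Proposition~\ref{Prop:DensityCriterionC(X)-algebras}; in that approach the inner-product computation becomes the main (still routine) obstacle.
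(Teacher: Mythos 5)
The paper offers no proof of this proposition at all — it is simply quoted from Le Gall's thesis — so there is no in-house argument to measure yours against; what you give is a self-contained derivation from the definitions the paper does set up, and it is essentially correct. Unravelling $f^*E=E\otimes_\Phi f^*A$ and $f^*F=F\otimes_B f^*B$ and rearranging by associativity of internal tensor products is a legitimate route, and specializing to the inclusion of a one-point space $\lbrace x\rbrace\hookrightarrow X$, together with the identification of Remark \ref{Remark:Fibre of Hilbert module can be defined as tensor product}, does yield the fibrewise statement. Two points deserve more care than you give them. First, the cancellation $f^*A\otimes_{f^*A}f^*F\cong f^*F$ is only valid when the left action is non-degenerate, i.e. $\overline{\Phi(A)F}=F$, which is not among the hypotheses; this does not endanger the final statement, because $E=\overline{EA}$ forces $E\otimes_A M\cong E\otimes_A\overline{\Phi(A)M}$ for any left action, but you should either insert that observation or carry $\overline{f^*A\cdot f^*F}$ through the chain instead of $f^*F$. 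Second, you assert that the $A$-action on $F$ is $C_0(X)$-linear; for an arbitrary $*$-homomorphism $\Phi:A\rightarrow L(F)$ this is false (a fibre-permuting action on $F=B$ over a two-point space is a counterexample), so it is really an implicit standing hypothesis of the statement — without it neither the left $f^*A$-action on $f^*F$ nor the $A_x$-action on $F_x$ in the conclusion is even defined — and it would be better to say so explicitly than to present it as automatic. With those two repairs your argument goes through; the alternative you sketch at the end (define the map on elementary tensors, verify it preserves the $f^*B$-valued inner products, and get surjectivity fibrewise from Proposition \ref{Prop:DensityCriterionC(X)-algebras}) is the more direct style of argument of the cited source and sidesteps the non-degeneracy subtlety altogether.
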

We can now define what we mean by a groupoid action on a Hilbert module. For this let $(A,G,\alpha)$ be a groupoid dynamical system and $E$ be a right Hilbert $A$-module. From the discussion above we know that $E$ is equipped with a $C_0(G^{(0)})$-action arising from the corresponding action on $A$. Now, if $d,r:G\rightarrow G^{(0)}$ denote the domain and range maps respectively, we can form the pullback modules $d^*E$ and $r^*E$. By construction $r^*E$ is a right Hilbert $r^*A$-module, but we can also equip it with the structure of a right Hilbert $d^*A$-module by letting $x\cdot a:=x\cdot \alpha(a)$ and $\lk x,y\rk_{d^*A}:=\alpha^{-1}(\lk x,y\rk_{r^*A})$.

Thus, we can consider elements $T\in L_{d^*A}(d^*E,r^*E)$. For $g\in G$ consider the operator $T_g\in L_{A_{d(g)}}(E_{d(g)},E_{r(g)})$ induced by $T$ on each fibre. Using Remark \ref{Remark:Fibre of Hilbert module can be defined as tensor product} this operator can also be described as \[T_g=T\otimes\alpha_g:E_{d(g)}=d^*E\otimes_{d^*A} A_{d(g)}\rightarrow r^*E\otimes_{d^*A} A_{r(g)}=E_{r(g)}.\]

\begin{defi}
Let $A$ be a $G$-algebra and $E$ a right Hilbert $A$-module. An action of $G$ on $E$ is a unitary $V\in L_{d^*A}(d^*E,r^*E)$ such that $V_gV_{g'}=V_{gg'}$ for all $(g,g')\in G^{(2)}$.
\end{defi}

For every locally compact Hausdorff groupoid $G$ with Haar-system $\lambda$ there is a canonical $G$-equivariant Hilbert $C_0(G^{(0)})$-module denoted $L^2(G)$ given as the completion of the complex vector space $C_c(G)$ with respect to the $C_0(G^{(0)})$-valued inner product
$$\lk f_1,f_2\rk(x)=\int\limits_{G^x}\overline{f_1(g)}f_2(g)d\lambda^x(g),$$
and right $C_0(G^{(0)})$-action
$$(f\cdot\varphi)(g)=f(g)\varphi(r(g)).$$
Note that $L^2(G)$ is a full Hilbert $C_0(G^{(0)})$-module in the sense that the ideal $\lk L^2(G),L^2(G)\rk$ is dense in $C_0(G^{(0)})$ by an application of the Stone-Weierstraß-Theorem.

Now we define a $G$-action on $L^2(G)$: From \cite[Lemma~4.37]{Goehle} we know that there are isomorphisms $d^*(C_0(G^{(0)}))\cong C_0(G\times_{d,r}G)$ and $r^*(C_0(G^{(0)}))\cong C_0(G\times_{r,r}G)$.
Thus we have $$d^*(L^2(G))=L^2(G)\otimes_{C_0(G^{(0)})}d^*(C_0(G^{(0)}))\cong L^2(G)\otimes_{C_0(G^{(0)})}C_0(G\times_{d,r}G)$$
and similarly $r^*(L^2(G))\cong L^2(G)\otimes_{C_0(G^{(0)})}C_0(G\times_{r,r}G).$
Now we define $V:d^*(L^2(G))\rightarrow r^*(L^2(G))$ as $id_{L^2(G)}\otimes lt$,
where $lt:C_0(G\times_{d,r}G)\rightarrow C_0(G\times_{r,r}G)$ is given by
$$lt(f)(g,h)=f(g,g^{-1}h).$$
Then $V$ is a unitary with $V_{gg'}=V_gV_{g'}$ for all $(g,g')\in G^{(2)}$.

More generally, if $A$ is any $G$-algebra we can view it as a $C_0(G^{(0)})-A$ bimodule and form the $G$-equivariant right Hilbert $A$-module
$$L^2(G,A):=L^2(G)\otimes_{C_0(G^{(0)})} A.$$
Note that we could also concretely construct $L^2(G,A)$ as the completion of the pre-Hilbert $A$-module $\Gamma_c(G,d^*\mathcal{A})$ with respect to the inner product
$$\lk f_1,f_2\rk_A(x)=\int\limits_{G^x} \alpha_g(f_1(g)^*f_2(g))d\lambda^x(g)$$
and the right $A$-action
$$(f\cdot a)(g)=f(g)\alpha_{g^{-1}}(a(r(g))).$$
A canonical isomorphism
$$\Phi: L^2(G)\otimes_{C_0(G^{(0)})} A\rightarrow \overline{\Gamma_c(G,d^*\mathcal{A})}$$
is given on elementary tensors by
$$\Phi(f\otimes a)(g)=f(g)\alpha_{g^{-1}}(a(r(g)))$$
for $f\in C_c(G)$ and $a\in A$.
The following result is a special case of \cite[Proposition~2.3.2]{LeGall}:
\begin{prop}
There is a $G$-equivariant $*$-isomorphism
$$\Psi:K(L^2(G))\otimes_{G^{(0)}}^{max} A\rightarrow K(L^2(G,A))$$
given by $\Psi(T\otimes a)(\xi\otimes b)=T\xi\otimes ab$.
Consequently, $L^2(G,A)$ implements a $G$-equivariant Morita-equivalence
$$(K(L^2(G))\otimes_{G^{(0)}}^{max} A, Ad\ V\otimes\alpha)\sim_M (A,\alpha).$$
\end{prop}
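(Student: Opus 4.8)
The strategy is to view $\Psi$ as the interior tensor product of operators: writing $L_a\in L(A)$ for left multiplication by $a\in A$ on $A$ regarded as a Hilbert $A$-module, $\Psi$ is the map $T\otimes a\mapsto T\otimes L_a$ from $K(L^2(G))\otimes_{G^{(0)}}^{max}A$ to $L\bigl(L^2(G)\otimes_{C_0(G^{(0)})}A\bigr)=L(L^2(G,A))$, and the task is to show it is a $G$-equivariant $*$-isomorphism onto $K(L^2(G,A))$. That the assignment descends to the (maximal) balanced tensor product is routine: $C_0(G^{(0)})$ acts centrally on $A$ and every compact operator on a Hilbert $C_0(G^{(0)})$-module is $C_0(G^{(0)})$-linear (as recalled before Remark \ref{Remark:Fibre of Hilbert module can be defined as tensor product}), so $T\varphi\otimes L_a$ and $T\otimes L_{\varphi a}$ induce the same operator for $\varphi\in C_0(G^{(0)})$; the universal property of $\otimes^{max}$ then yields $\Psi$, and the $*$-algebra identities are verified on elementary tensors. (This construction is precisely the special case $E=L^2(G)$, $F=A$ of \cite[Proposition~2.3.2]{LeGall}, which one could alternatively just quote.)

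To see that $\Psi$ takes values in and exhausts $K(L^2(G,A))$ and is injective, I would argue fibrewise and apply Lemma \ref{Lem:IsomorphismCriteriumForC(X)-linearHomomorphisms}. Fix $u\in G^{(0)}$. Combining the fibre formula for the maximal balanced tensor product (so $(B\otimes_{G^{(0)}}^{max}C)_u\cong B_u\otimes_{max}C_u$), the identification $K(E)_u\cong K(E_u)$, and $L^2(G)_u\cong L^2(G^u,\lambda^u)$, the fibre of the source becomes $K(L^2(G^u))\otimes_{max}A_u$; using Proposition \ref{Prop:Bundles and Pullbacks} the fibre of the target becomes $K\bigl(L^2(G,A)_u\bigr)\cong K\bigl(L^2(G^u)\otimes A_u\bigr)$, and under these identifications $\Psi_u$ is the canonical map $T\otimes a\mapsto T\otimes L_a$. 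Since $K(L^2(G^u))$ is an inductive limit of matrix algebras it is nuclear, so the maximal tensor product agrees with the spatial one and $T\otimes a\mapsto T\otimes L_a$ is the familiar $*$-isomorphism $K(H)\otimes B\cong K(H\otimes B)$. Hence every $\Psi_u$ is an isomorphism, and Lemma \ref{Lem:IsomorphismCriteriumForC(X)-linearHomomorphisms} shows $\Psi$ is one.

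For $G$-equivariance, equip $L^2(G,A)=L^2(G)\otimes_{C_0(G^{(0)})}A$ with the action $W$ whose fibre at $g$ is $W_g=V_g\otimes\alpha_g$ (that is, $V$ tensored with the Hilbert-$A$-module action $\alpha$ of $G$ on $A$); the induced action on $K(L^2(G,A))$ is then $\mathrm{Ad}(W)$. Using $V_gTV_g^{*}=\mathrm{Ad}(V_g)T$ and $\alpha_gL_a\alpha_g^{-1}=L_{\alpha_g(a)}$ one finds on elementary tensors over $d(g)$
$$\mathrm{Ad}(W_g)\bigl(\Psi_{d(g)}(T\otimes a)\bigr)=\bigl(\mathrm{Ad}(V_g)T\bigr)\otimes L_{\alpha_g(a)}=\Psi_{r(g)}\bigl((\mathrm{Ad}(V_g)\otimes\alpha_g)(T\otimes a)\bigr),$$
so that $\Psi$ intertwines $\mathrm{Ad}(V)\otimes\alpha$ with $\mathrm{Ad}(W)$; being already known to be an isomorphism, $\Psi$ is a $G$-equivariant $*$-isomorphism.

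Finally the Morita equivalence is formal. The module $L^2(G,A)$ is full over $A$: since $\langle L^2(G),L^2(G)\rangle$ is dense in $C_0(G^{(0)})$ and $C_0(G^{(0)})$ acts non-degenerately on $A$, the ideal $\langle L^2(G,A),L^2(G,A)\rangle_A$ equals $\overline{A^{*}C_0(G^{(0)})A}=A$. Hence $L^2(G,A)$ is a $K(L^2(G,A))$--$A$ imprimitivity bimodule, and $W$ makes it a $G$-equivariant one (its left-coefficient action is by construction $\mathrm{Ad}(W)$ and its right $A$-structure is $\alpha$-compatible). Because $\Psi$ is a $G$-equivariant $*$-isomorphism, $L^2(G,A)$ is equally a $G$-equivariant $\bigl(K(L^2(G))\otimes_{G^{(0)}}^{max}A\bigr)$--$A$ imprimitivity bimodule with left action $\mathrm{Ad}\,V\otimes\alpha$, which is exactly the asserted Morita equivalence. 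I expect the only mildly delicate point to be pushing all the fibre identifications of the source algebra through simultaneously -- one chains $L^2(G)_u\cong L^2(G^u,\lambda^u)$, $K(E)_u\cong K(E_u)$, Proposition \ref{Prop:Bundles and Pullbacks}, and the fibrewise behaviour of $\otimes_{G^{(0)}}^{max}$ -- but every ingredient is already available, so this is bookkeeping rather than a genuine obstacle, as is the continuity of $\mathrm{Ad}(W)$.
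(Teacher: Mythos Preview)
The paper does not actually prove this proposition: it records the statement as a special case of \cite[Proposition~2.3.2]{LeGall} and moves on. You have gone further and supplied a direct fibrewise argument, which is correct in substance and, as you note yourself, could simply be replaced by the citation.

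One small point worth tightening. You define $\Psi$ as a $*$-homomorphism into $L(L^2(G,A))$ and then propose to show simultaneously that it lands in $K(L^2(G,A))$, is injective, and is surjective by invoking Lemma~\ref{Lem:IsomorphismCriteriumForC(X)-linearHomomorphisms}. That lemma, however, concerns $C_0(G^{(0)})$-linear maps between $C_0(G^{(0)})$-algebras, so you should know the target is $K(L^2(G,A))$ \emph{before} applying it. The quickest fix is the rank-one computation
\[
\Psi(\Theta_{\xi,\eta}\otimes cd^{*})=\Theta_{\xi\otimes c,\ \eta\otimes d}
\qquad(\xi,\eta\in L^2(G),\ c,d\in A),
\]
which shows $\Psi$ maps a dense set into compacts; then your fibrewise isomorphism argument runs without change. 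This is precisely the kind of bookkeeping you flag at the end, so it is not a genuine gap.
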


Even more generally, let $E$ be a $G$-equivariant Hilbert $A$-module. As seen above there is a natural $*$-homomorphism $\Phi:C_0(G^{(0)})\rightarrow L(E)$ induced by the $C_0(G^{(0)})$-structure of $A$. Thus we can form the tensor product
$$L^2(G,E):=L^2(G)\otimes_{\Phi} E$$
Again we could also explicitly construct $L^2(G,E)$ as the completion of the pre-Hilbert $A$-module $\Gamma_c(G,d^*\mathcal{E})$ with respect to the inner product
$$\lk f_1,f_2\rk_A(x)=\int\limits_{G^x} \alpha_g(\lk f_1(g),f_2(g)\rk_{A_{d(g)}})d\lambda^x(g)$$
equipped with a right $A$-action given by
$$(f\cdot a)(g)=f(g)\alpha_{g^{-1}}(a(r(g))).$$
Again, an isomorphism
$$\Phi: L^2(G)\otimes_{\Phi} E\rightarrow \overline{\Gamma_c(G,d^*\mathcal{E})}$$
is given on elementary tensors by
$$\Phi(f\otimes e)(g)=f(g)V_{g^{-1}}(e(r(g)))$$
for $f\in C_c(G)$ and $e\in E$.

Finally, we recall the definitions of groupoid equivariant $\KK$-theory, as introduced by Le Gall in \cite{LeGall,LeGall99}. Throughout we will assume, that $G$ is a locally compact, second countable Hausdorff groupoid.
Let $A$ and $B$ be two $G$-algebras.
A \textit{$G$-equi\-variant Kasparov Triple} for $(A,B)$ is a triple $(E,\Phi,T)$, where $E$ is a $G$-equivariant $\ZZ/2\ZZ$-graded right Hilbert $B$-module, $\Phi:A\rightarrow L(E)$ is a graded $G$-equivariant $*$-homomorphism and $T\in L(E)$ is an adjointable operator of degree $1$, such that
$\Phi(a)(T-T^*),\ \Phi(a)(T^2-1),\ [\Phi(a),T]\in K(E)$ for every $a\in A$, and for every element $f\in r^*A\cong \Gamma_0(G,r^*\mathcal{A})$ the mapping
$$g\mapsto \Phi_{r(g)}(f(g))(T_{r(g)}-V_gT_{d(g)}V_g^*)$$
defines and element in $\Gamma_0(G,r^*\mathcal{K}(E))=r^*(K(E))$.

Two Kasparov triples $(E_i,\Phi_i,T_i)$, $i=1,2$ for $(A,B)$ are called \textit{unitarily equivalent} if there exists a $G$-equivariant unitary $U\in L(E_1,E_2)$ of degree $0$, which intertwines the representations $\Phi_1$ and $\Phi_2$ as well as the operators $T_1$ and $T_2$. We denote the set of all unitary equivalence classes of such triples by $\mathbb{E}^G(A,B)$. A Kasparov triple $(E,\Phi,T)$ is called \textit{essential} if $\overline{\Phi(A)E}=E$.

A \textit{homotopy} in $\mathbb{E}^G(A,B)$ is an element in $\mathbb{E}^G(A,C([0,1],B))$ and the triples in $\mathbb{E}^G(A,B)$ obtained by evaluating at $0$ and $1$ respectively are called \textit{homotopic}. Homotopy is an equivalence relation on $\mathbb{E}^G(A,B)$ and the set of homotopy classes of $\mathbb{E}^G(A,B)$ is denoted by $\KK^G(A,B)$.

It is not hard to see, that homotopy respects the operation of taking direct sums of Kasparov triples. Using this one can show that $\KK^G(A,B)$ is an abelian group with respect to taking direct sums of the representing Kasparov triples. The same proof as in the non-equivariant setting (see \cite[Proposition~17.3.3]{MR1656031}) works.

The higher $\KK$-groups are defined as follows:
For $n\in \NN$ and two $G$-algebras $A$ and $B$, define
$$\KK^G_n(A,B)=\KK^G(A\otimes C_0(\RR^n),B)$$

It is well-known that $\KK^G$ is functorial, contravariant in the first, and covariant in the second variable.
As in the non-equivariant case $\KK^G$-theory comes with a version of the Kasparov product, i.e. for separable $G$-algebras $A,B$ and $C$ there exists a bilinear map
$$\otimes_C:\KK^G(A,C)\times \KK^G(C,B)\rightarrow \KK^G(A,B),$$
which is associative in the appropriate sense (see \cite[Theorème~6.3]{LeGall99} for details).
We shall also use the fact, that the equivariant $\KK$-theory is functorial with respect to groupoid homomorphisms (see \cite[Propositions~7.1 and 7.2]{LeGall99})

An important special case of this is given by the inclusion of a subgroupoid $H\hookrightarrow G$. In this case we will also denote the resulting map $\mathrm{KK}^G(A,B)\rightarrow \mathrm{KK}^H(A_{\mid H},B_{\mid H})$ by $res_H^G$ and call it the \textit{restriction homomorphism}.

The following proposition extends the pushforward construction for $\mathrm{C}^*$-algebras as in Proposition \ref{Prop:PushforwardAction} to Hilbert modules and hence provides a homomorphism on the level of $\mathrm{KK}^G$-theory.
\begin{prop}\label{Prop:Pushforward in KK}
	Let $G$ be a locally compact Hausdorff groupoid and $X$ a $G$-space with anchor map $p:X\rightarrow G^{(0)}$. For every pair of $G\ltimes X$-algebras $A$ and $B$ the map $p$ gives rise to a homomorphism
	$$p_*:KK^{G\ltimes X}(A,B)\rightarrow KK^G(A,B),$$
	compatible with the Kasparov product in the following sense: If $A,B$ and $C$ are separable $G\ltimes X$-algebras and $x\in KK^{G\ltimes X}(A,C)$ and $y\in KK^{G\ltimes X}(C,B)$, then
	$$p_*(x\otimes_C y)=p_*(x)\otimes_C p_*(y).$$
\end{prop}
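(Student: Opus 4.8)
The plan is to define $p_*$ by reinterpreting a $G\ltimes X$-equivariant Kasparov triple as a $G$-equivariant one without touching the underlying $\mathrm{C}^*$-module data. By Proposition \ref{Prop:PushforwardAction}, pushing forward along $p$ turns the $C_0(X)$-algebras $A$ and $B$ into $C_0(G^{(0)})$-algebras carrying $G$-actions, while leaving the $\mathrm{C}^*$-algebras themselves unchanged. Accordingly, given $(E,\Phi,T)\in\mathbb{E}^{G\ltimes X}(A,B)$ I would set $p_*(E,\Phi,T)=(E,\Phi,T)$, where $E$ is the \emph{same} Hilbert $B$-module --- now carrying the $C_0(G^{(0)})$-structure inherited from $p_*B$ --- and $\Phi$, $T$ are unchanged; the only new object to produce is a $G$-action $W$ on $E$ built out of the given $G\ltimes X$-action $V$. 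The proof then splits into: (i) constructing $W$; (ii) checking that this produces a valid $G$-equivariant Kasparov triple; (iii) verifying invariance under unitary equivalence, homotopy and direct sums, so that $p_*$ descends to a group homomorphism; and (iv) establishing compatibility with the Kasparov product.

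For (i) I would imitate the proof of Proposition \ref{Prop:PushforwardAction}. The $C_0(G^{(0)})$-structure on $p_*B$ induces one on $E$, with fibre $E_u^{p}\cong\Gamma_0(p^{-1}(u),\mathcal{E})$ over $u\in G^{(0)}$ --- the Hilbert-module analogue of Proposition \ref{Prop:Fibres of Pushforward} --- and one sets
\[
W_g\colon\Gamma_0\bigl(p^{-1}(d(g)),\mathcal{E}\bigr)\longrightarrow\Gamma_0\bigl(p^{-1}(r(g)),\mathcal{E}\bigr),\qquad (W_g\xi)(x)=V_{(g,x)}\bigl(\xi(g^{-1}x)\bigr),
\]
in complete analogy with $\beta_g(f)(x)=\alpha_{(g,x)}(f(g^{-1}x))$. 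That each $W_g$ is a $\beta_g$-compatible unitary and that $W_gW_{g'}=W_{gg'}$ is immediate from the corresponding properties of $V$. The only substantial point is continuity, i.e.\ that $W$ is a genuine element of $L_{d^*(p_*B)}(d^*(p_*E),r^*(p_*E))$. For this I would, exactly as in Proposition \ref{Prop:PushforwardAction}, push $V\in L_{D^*B}(D^*E,R^*E)$ forward along the projection $\pi\colon G\ltimes X\to G$ and invoke the Hilbert-module version of Proposition \ref{Prop:PullbackAndPushforward} --- proved along the very same lines using Proposition \ref{Prop:Bundles and Pullbacks} and the fibrewise isomorphism criterion of Lemma \ref{Lem:IsomorphismCriteriumForC(X)-linearHomomorphisms} --- to identify $\pi_*(D^*E)\cong d^*(p_*E)$ and $\pi_*(R^*E)\cong r^*(p_*E)$ (and likewise for $B$); under these identifications $\pi_*V$ is precisely $W$, hence adjointable and unitary.

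For (ii), the purely module-theoretic conditions --- $\Phi$ a graded $*$-homomorphism of degree $0$, $T$ adjointable of degree $1$, and $\Phi(a)(T-T^*),\ \Phi(a)(T^2-1),\ [\Phi(a),T]\in K(E)$ for all $a$ --- are literally unchanged, since $B$, $E$, $\Phi$ and $T$ are the same objects. $G$-equivariance of $\Phi$ with respect to $W$ and $\beta$ follows fibrewise from $G\ltimes X$-equivariance with respect to $V$ and $\alpha$. The remaining axiom asks that $g\mapsto\Phi_{r(g)}^{p}(f(g))\bigl(T_{r(g)}^{p}-W_gT_{d(g)}^{p}W_g^*\bigr)$ represent an element of $r^*(K(E))$ for every $f\in r^*(p_*A)$. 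Using once more $r^*(p_*A)\cong\pi_*(R^*A)$ and $r^*(K(E)^{p})\cong\pi_*(R^*K(E))$ (Proposition \ref{Prop:Fibres of Pushforward} applied to $K(E)$), this function is the $\pi$-pushforward of the section $h(g,x)=\Phi_x(\tilde f(g,x))\bigl(T_x-V_{(g,x)}T_{g^{-1}x}V_{(g,x)}^*\bigr)$ of $R^*K(E)$, which lies in $\Gamma_0(G\ltimes X,R^*\mathcal{K}(E))$ by hypothesis. Continuity and fibrewise vanishing at infinity pass to the pushforward; the point needing care is decay over all of $G$, which follows from the estimate $\sup_{x\in p^{-1}(r(g))}\norm{h(g,x)}\le 2\norm{T}\,\norm{f(g)}$ together with $f\in\Gamma_0\bigl(G,r^*(p_*\mathcal{A})\bigr)$.

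For (iii): a $G\ltimes X$-equivariant unitary intertwiner pushes forward to a $G$-equivariant one by the same recipe, and since $p_*\bigl(C([0,1],B)\bigr)=C\bigl([0,1],p_*B\bigr)$ compatibly with the actions, homotopies are sent to homotopies; additivity under direct sums is clear. Hence $p_*$ is a well-defined group homomorphism. Finally, for (iv): a representative $(E_1\otimes_C E_2,\Phi_1\otimes 1,T)$ of the product $x\otimes_C y$ --- with $T$ a $T_2$-connection satisfying Le Gall's positivity-modulo-compacts condition --- is assembled from data insensitive to the $C_0$-structures, hence remains such a representative after pushing forward; moreover $p_*(E_1\otimes_C E_2)=p_*E_1\otimes_{p_*C}p_*E_2$ as Hilbert modules and the induced $G$-action is $W_1\otimes W_2$, both being the $\pi$-pushforward of $V_1\otimes V_2$. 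Thus $p_*(x\otimes_C y)$ is represented by a triple representing $p_*(x)\otimes_C p_*(y)$, and equality follows from the uniqueness of the Kasparov product up to homotopy \cite{LeGall99}. I expect the main obstacle to be step (i): promoting Propositions \ref{Prop:PullbackAndPushforward} and \ref{Prop:Bundles and Pullbacks} to the Hilbert-module statement needed to pass rigorously from the fibrewise family $(W_g)_g$ to an honest adjointable operator and continuous $G$-action, together with the analogous book-keeping in the $T$-equivariance axiom of step (ii).
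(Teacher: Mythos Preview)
Your proposal is correct and follows essentially the same approach as the paper: define $p_*$ as the identity on the underlying module data, push the $G\ltimes X$-action forward to a $G$-action via the same formulas as in Proposition~\ref{Prop:PushforwardAction}, and verify the almost-equivariance axiom using the identification $\pi_*(R^*K(E))\cong r^*(p_*K(E))$ coming from Proposition~\ref{Prop:PullbackAndPushforward}. Your write-up is considerably more detailed than the paper's terse sketch; in particular you are explicit about the need for a Hilbert-module analogue of Proposition~\ref{Prop:PullbackAndPushforward} and about the decay estimate for the equivariance condition, both of which the paper leaves implicit.
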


\begin{proof}
	On the level of Kasparov triples $(E,\Phi,T)\in \mathbb{E}^{G\ltimes X}(A,B)$ the desired map is basically given by the identity. Viewing $A$ and $B$ as $G$-algebras via the pushforward construction (see Proposition \ref{Prop:PushforwardAction}) also $E$ inherits a canonical fibration over $G^{(0)}$ and using the same formulas as in the $\mathrm{C}^*$-algebraic construction we can push the action of $G\ltimes X$ forward to obtain an action of $G$ on $E$. Since neither the operator $T$ nor the left action $\Phi$ of $A$ on $E$ changed, it follows from the isomorphism $\pi_*(R^*(K(E)))\cong r^*(p_*(K(E)))$, where $R:G\ltimes X\rightarrow X$ is the range map and $\pi:G\ltimes X\rightarrow G$ is the projection on the first factor (confer Proposition \ref{Prop:PullbackAndPushforward}), that $(E,\Phi,T)$ equipped with this $G$-action represents an element in $\mathbb{E}^G(A,B)$. Applying the same arguments to a homotopy gives the desired homomorphism. Using again, that only the action on $E$ changes under $p_*$ it is easy to see, that $p_*$ respects the Kasparov product.
\end{proof}
\begin{prop}\label{Prop:Induction homomorphism}
	Let $G$ be a locally compact Hausdorff groupoid admitting a Haar system and $H\subseteq G$ a closed subgroupoid. Suppose, that $A$ and $B$ are separable $H$-algebras. Then there is an \textit{induction homomorphism} 
	$$\mathsf{Ind}_H^G:\mathrm{KK}^H(A,B)\rightarrow \mathrm{KK}^G(Ind_H^X A,Ind_H^X B),$$
	where $X:=d^{-1}(H^{(0)})$. The homomorphism $\mathsf{Ind}_H^G$ is
	compatible with the Kasparov product in the following sense:
	If $A,B$ and $C$ are separable $H$-algebras and $x\in \mathrm{KK}^H(A,C)$ and $y\in \mathrm{KK}^H(C,B)$, then
	$$\mathsf{Ind}_H^G(x\otimes_C y)=\mathsf{Ind}_H^G(x)\otimes_{Ind_H^G C} \mathsf{Ind}_H^G(y).$$
\end{prop}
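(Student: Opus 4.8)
The plan is to realise $\mathsf{Ind}_H^G$ as a composition of two homomorphisms that are already at our disposal, rather than by building an induced Kasparov triple by hand. Write $X:=d^{-1}(H^{(0)})$ and let $\tilde r\colon X/H\to G^{(0)}$ be the map induced by the range map. By Remark~\ref{Remark:Equivalence and Induction} the space $X$ implements a $(G\ltimes X/H,H)$-equivalence of groupoids, and for every $H$-algebra $A$ the pullback $X^*(A)$ along this equivalence (in Le Gall's sense) is canonically isomorphic, as a $G\ltimes X/H$-algebra, to $Ind_H^X A$, whose pushforward along $\tilde r$ is exactly the $G$-algebra $Ind_H^X A$ of Proposition~\ref{Prop:Action on Induced Algebra}. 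Since Le Gall's equivariant $\KK$-theory is functorial for generalised morphisms, in particular for equivalences, of locally compact groupoids with Haar systems (see \cite{LeGall,LeGall99}), pulling back along this equivalence yields a homomorphism $X^*\colon\mathrm{KK}^H(A,B)\to\mathrm{KK}^{G\ltimes X/H}(X^*A,X^*B)$; and Proposition~\ref{Prop:Pushforward in KK}, applied to the $G$-space $X/H$ with anchor $\tilde r$, yields $\tilde r_*\colon\mathrm{KK}^{G\ltimes X/H}(X^*A,X^*B)\to\mathrm{KK}^G(Ind_H^X A,Ind_H^X B)$. I would simply define
\[
\mathsf{Ind}_H^G:=\tilde r_*\circ X^*\colon\mathrm{KK}^H(A,B)\longrightarrow\mathrm{KK}^G(Ind_H^X A,Ind_H^X B).
\]

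Carrying this out, first I would make sure that the bibundle $X=d^{-1}(H^{(0)})$, with its commuting free and proper actions of $G\ltimes X/H$ on the left and $H$ on the right, genuinely fits Le Gall's framework, so that his pullback functor on $\KK$ and its compatibility with the Kasparov product apply verbatim; this is essentially the content of Remark~\ref{Remark:Equivalence and Induction} together with the observation, recorded earlier, that $X\rtimes H$ is a closed subgroupoid of the proper groupoid $G\rtimes G$. Next I would invoke Proposition~\ref{Prop:Pushforward in KK}, which already states that $\tilde r_*$ respects the Kasparov product. Granting the natural identification $\tilde r_*(X^*(-))\cong Ind_H^X(-)$ on algebras, together with the corresponding identifications on Hilbert modules and on homomorphisms coming from the fibrewise descriptions of Section~3, the product formula
\[
\mathsf{Ind}_H^G(x\otimes_C y)=\mathsf{Ind}_H^G(x)\otimes_{Ind_H^G C}\mathsf{Ind}_H^G(y)
\]
follows by splicing the two compatibility statements: move the product first past $X^*$, then past $\tilde r_*$, reading $Ind_H^G C$ as $\tilde r_*(X^*C)$ throughout.

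The step I expect to be the real obstacle is the last one: checking that the canonical isomorphism $\tilde r_*(X^*A)\cong Ind_H^X A$ of Remark~\ref{Remark:Equivalence and Induction} is natural in $A$ and, crucially, intertwines the external (balanced) tensor products that enter the definition of the Kasparov product over $G\ltimes X/H$ and over $G$; only once this is in place do the two intermediate product formulas actually compose to the asserted one. (A reader who prefers to stay inside the concrete picture of Section~3 could instead try to construct $Ind_H^X E$ directly, as the Hilbert $Ind_H^X B$-module of bounded continuous sections of $d^*\mathcal{E}$ satisfying the same invariance and vanishing conditions as in the definition of $Ind_H^X A$, with its obvious left $Ind_H^X A$-action and $G$-action by translation; the one genuinely delicate point there is producing the operator, since the operator of an $H$-equivariant Kasparov triple is only $H$-equivariant modulo compacts and therefore does not act fibrewise on $Ind_H^X E$. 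Resolving this requires either Le Gall's machinery for pullbacks along generalised morphisms or an averaging of the operator against a cutoff function for the free proper $H$-action on $X$, which is precisely where properness and Lemma~\ref{r!} enter.)
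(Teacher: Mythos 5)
Your proposal is correct and follows essentially the same route as the paper: the paper also defines $\mathsf{Ind}_H^G$ as the composition of Le Gall's pullback along the $G\ltimes X/H$--$H$ equivalence implemented by $X$ (citing \cite{LeGall99} and Remark \ref{Remark:Equivalence and Induction}) with the pushforward homomorphism of Proposition \ref{Prop:Pushforward in KK}, and deduces compatibility with the Kasparov product from the fact that both factors have it. Even your parenthetical alternative --- constructing $Ind_H^X E$ directly and averaging the operator against a cutoff function for $X\rtimes H$ --- is precisely the explicit construction the paper sketches as its second option.
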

\begin{proof}
	The space $X=d^{-1}(H^{(0)})\subseteq G$ with the induced topology implements an equivalence between the groupoids $G\ltimes X/H$ and $H$. Hence by \cite[Definition~7.1, Theorem~7.2]{LeGall99} there is a canonical homomorphism $X^*:\mathrm{KK}^H(A,B)\rightarrow \mathrm{KK}^{G\ltimes X/H}(Ind_H^G A,Ind_H^G B)$ compatible with the Kasparov product (compare Remark \ref{Remark:Equivalence and Induction}). If we now compose this homomorphism with the homomorphism obtained by pushing forward alsong $G\ltimes X/H\rightarrow G$ as in Proposition \ref{Prop:Pushforward in KK} we obtain the desired map and compatibility with the product follows since both maps in this composition have this property.
	Alternatively, one could define this map explicitly along the lines of \cite[§5]{MR1388299} as follows: If $x\in \mathrm{KK}^G(A,B)$ is represented by the Kasparov triple $(E,\Phi,T)$, then we can form the induced Hilbert $Ind_H^X B$-module $Ind_H^X E$ as the set of all $\xi\in \Gamma_b(X,d^*\mathcal{E})$ such that $V_h(\xi(x))=\xi(xh^{-1})$ for all $x\in X$ and  $h\in H$ and $[xH\mapsto \norm{\xi(x)}]\in C_0(X/H),$ equipped with the pointwise actions and inner products. Pointwise action on the left gives a representation $\mathsf{Ind}_H^G\Phi:Ind_H^X A\rightarrow L(Ind_H^X E)$. Using a cutoff function $c:X\rightarrow \RR^+$ for the groupoid $X\rtimes H$ as in Definition \ref{Defi:cutoff} we can define an operator $\widetilde{T}\in L(Ind_H^X E)$ by
	$$(\widetilde{T}\xi)(x)=\int\limits_{H^{d(x)}}c(xh)V_h(T(\xi(xh)))d\lambda^{d(x)}(h).$$
	Then $(Ind_H^X E,Ind_H^X \Phi,\widetilde{T})$ can be shown to be a Kasparov tripel representing the element $\mathsf{Ind}_H^G(x)\in \mathrm{KK}^G(Ind_H^X A,Ind_H^X B)$.
	
\end{proof}
Finally, Le Gall showed in \cite[Propositions~7.2.1~and~7.2.2]{LeGall} that for a locally compact $\sigma$-compact groupoid $G$ equipped with a Haar system and two $G$-algebras $A$ and $B$ there exits a canonical descent homomorphism
$$j_{G}:\KK^G(A,B)\rightarrow \KK(A\rtimes_r G, B\rtimes_r G),$$ 
which is is compatible with the Kasparov product.

For later reference let us outline the construction of the map $j_G$ in the étale setting:
Given a Kasparov triple $(E,\Phi,T)\in\mathbb{E}^G(A,B)$
we can define a right $\Gamma_c(G,r^*\mathcal{B})$-module structure and a $\Gamma_c(G,r^*\mathcal{B})$-valued inner product on $\Gamma_c(G,r^*\mathcal{E})$ by
$$\lk \xi_1,\xi_2\rk (g)=\sum\limits_{h\in G_{r(g)}} \beta_{h^{-1}}(\lk \xi_1(h),\xi_2(hg)\rk)$$
and
$$(\xi f)(g)=\sum\limits_{h\in G^{r(g)}} \xi(h)\beta_h(f(h^{-1}g)).$$

The Hilbert $B\rtimes_r G$-module obtained by completion is denoted by $E\rtimes_r G$. A representation $\widetilde{\Phi}:A\rtimes_r G\rightarrow L(E\rtimes_r G)$ is determined by the formula
$$(\widetilde{\Phi}(f)\xi)(g)=\sum\limits_{h\in G^{r(g)}}\Phi_{r(h)}(f(h))V_h(\xi(h^{-1}g)),$$
where $f\in \Gamma_c(G,r^*\mathcal{A})$ and $\xi\in \Gamma_c(G,r^*\mathcal{E})$.
Finally, one defines an operator $\widetilde{T}\in L(E\rtimes_r G)$ by $$(\widetilde{T}\xi)(g):=T_{r(g)}(\xi(g)).$$
Then one can show that $(E\rtimes_r G,\widetilde{\Phi},\widetilde{T})\in \mathbb{E}(A\rtimes_r G,B\rtimes_r G)$ and the map $j_G$ is given by $j_G([E,\Phi,T])=[E\rtimes_r G,\widetilde{\Phi},\widetilde{T}]$.
\begin{bem}
Equivalently, one can use the canonical representation $B\rightarrow M(B\rtimes_r G)$ to define $E\rtimes_r G$ as the tensor product $E\otimes_B (B\rtimes_r G)$ (see \cite[Définition~7.2.1]{LeGall}).
\end{bem}
\section{Automatic Equivariance}
In this section we shall elaborate, when the operator in a Kasparov triple can be chosen in an equivariant way. The main ideas are based on the paper \cite{Meyer00equivariantkasparov}, which deals with the case of locally compact groups.

Let $A$ and $B$ be (trivially graded) $G$-algebras and let $(E,\Phi,T)$ be an equivariant Kasparov triple for $(A,B)$. We call $T'\in L(E)$ a \textit{compact perturbation} of $T$ if the operators $\Phi(a)(T'-T)$ and $(T'-T)\Phi(a)$ are compact for all $a\in A$. In this case the triples $(E,\Phi,T)$ and $(E,\Phi,T')$ are \textit{operator homotopic} via the trivial path $T_s:=(1-s)T+sT'$ and hence represent the same element in $\KK^G(A,B)$ (see for example \cite[Corollary~17.2.6]{MR1656031}).
To illustrate the usefulness of the above notion, we want to show (the well-known result) that if $G$ is a proper groupoid, then every element in $\KK^G(A,B)$ can be represented by a Kasparov triple with a $G$-equivariant operator. For the proof we need the following notion:

\begin{defi}\cite[Definition~6.7]{Tu99}\label{Defi:cutoff}
	Let $G$ be a locally compact Hausdorff groupoid equipped with a Haar system $(\lambda^u)_{u\in G^{(0)}}$. A \textit{cutoff} function for $G$ is a continuous map $c:G^{(0)}\rightarrow \RR^+$ such that
	\begin{enumerate}
		\item for every $u\in G^{(0)}$ we have $\int_{G^u} c(d(g))d\lambda^u(g)=1$, and
		\item the map $r:supp(c\circ d)\rightarrow G^{(0)}$ is proper.
	\end{enumerate}
\end{defi}
Tu showed in \cite[Propositions~6.10 and~6.11]{Tu99}) that a locally compact Hausdorff groupoid equipped with a Haar system admits a cutoff function if and only if it is proper. If moreover the orbit space $G\setminus G^{(0)}$ is compact, then $G$ admits a cutoff function with compact support.

We are now ready for the proof of the promised example using compact perturbations.
\begin{prop}\label{proper groupoids: Operators can be chosen equivariant}
Let $G$ be a proper groupoid with Haar system $(\lambda^u)_{u\in G^{(0)}}$ and $(E,\Phi,T)\in \EE^G(A,B)$ a $G$-equivariant Kasparov-triple. Then there is a $G$-equivariant operator $T^G\in L(E)$ which is a compact pertubation of $T$.
\end{prop}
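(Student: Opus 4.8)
The plan is to produce $T^G$ by averaging $T$ over $G$ against a cutoff function, in exact analogy with Meyer's argument for locally compact groups in \cite{Meyer00equivariantkasparov}. Since $G$ is proper and carries a Haar system, Tu's criterion (\cite[Propositions~6.10 and~6.11]{Tu99}) provides a cutoff function $c:G^{(0)}\to\RR^+$ as in Definition \ref{Defi:cutoff}; observe that $c\circ d\in C(G)$ then has proper support in the sense of Definition \ref{Def:ProperlySupportedFunction}, so that $G^u\cap supp(c\circ d)$ is compact for every $u\in G^{(0)}$. On the fibre $E_u$ of $E$ over $u$ I would set
\[
T^G_u:=\int_{G^u}c(d(g))\,V_g\,T_{d(g)}\,V_g^{*}\,d\lambda^u(g)\ \in\ L(E_u),
\]
the integrand being a norm-continuous, compactly supported $L(E_u)$-valued function on $G^u$. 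The normalisation $\int_{G^u}c(d(g))\,d\lambda^u(g)=1$ immediately gives $\norm{T^G_u}\le\norm{T}$ for all $u$, and $T^G$ is an operator of degree $1$ because $V$ has degree $0$ while $T$ has degree $1$.

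The first point to check is that these fibrewise operators assemble into an adjointable operator $T^G\in L(E)$. For $\xi\in E$ the assignment $g\mapsto c(d(g))\,V_g\,T_{d(g)}\,V_g^{*}\,\xi(r(g))$ is a continuous section of $r^{*}\mathcal{E}$ with proper support (the factor $c\circ d$ supplying the support condition), so by the evident Hilbert-module version of Lemma \ref{r!} — which can be reduced to the stated case via the embedding $\mathcal{E}\hookrightarrow\mathcal{K}(E\oplus A)$ — the map $u\mapsto T^G_u\,\xi(u)$ is a continuous section of $\mathcal{E}$; it lies in $\Gamma_0(G^{(0)},\mathcal{E})=E$ since its norm is dominated by $\norm{T}\,\norm{\xi(u)}$. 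Hence $T^G$ is a bounded operator on $E$, and averaging $T^{*}$ in the same way produces its adjoint, so $T^G\in L(E)$. Equivariance is then a direct computation: using $V_hV_g=V_{hg}$ and $d(hg)=d(g)$ the integrand $c(d(g))V_{hg}T_{d(hg)}V_{hg}^{*}$ has the form $F(hg)$ with $F(g')=c(d(g'))V_{g'}T_{d(g')}V_{g'}^{*}$, and the left-invariance of the Haar system gives $V_h\,T^G_{d(h)}\,V_h^{*}=\int_{G^{d(h)}}F(hg)\,d\lambda^{d(h)}(g)=\int_{G^{r(h)}}F(g')\,d\lambda^{r(h)}(g')=T^G_{r(h)}$ for all $h\in G$, i.e.\ $T^G$ is $G$-equivariant.

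It remains to show that $T^G$ is a compact perturbation of $T$, and here the last axiom in the definition of a $G$-equivariant Kasparov triple does all the work. Using $\int c(d(g))\,d\lambda^u(g)=1$ one rewrites, for $a\in A$,
\[
\Phi_u\big(a(u)\big)\big(T^G_u-T_u\big)=-\int_{G^u}c(d(g))\,\Phi_{r(g)}\big(a(r(g))\big)\big(T_{r(g)}-V_g T_{d(g)}V_g^{*}\big)\,d\lambda^u(g).
\]
By the Kasparov-triple axiom applied to $f=r^{*}a\in\Gamma_0(G,r^{*}\mathcal{A})$, the map $g\mapsto\Phi_{r(g)}(a(r(g)))(T_{r(g)}-V_g T_{d(g)}V_g^{*})$ is a continuous section of $r^{*}\mathcal{K}(E)$ vanishing at infinity; multiplying by the properly supported function $c\circ d$ and applying Lemma \ref{r!} to the $C_0(G^{(0)})$-algebra $K(E)$ shows that $u\mapsto\Phi_u(a(u))(T^G_u-T_u)$ is a continuous section of $\mathcal{K}(E)$, and it vanishes at infinity because its norm is at most $2\norm{T}\,\norm{a(u)}$. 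Hence $\Phi(a)(T^G-T)\in\Gamma_0(G^{(0)},\mathcal{K}(E))\cong K(E)$. For the other side one argues similarly that $[\Phi(a),T^G]\in K(E)$: fibrewise, $G$-equivariance of $\Phi$ gives $[\Phi_u(a(u)),V_g T_{d(g)}V_g^{*}]=V_g[\Phi_{d(g)}(\alpha_{g^{-1}}(a(u))),T_{d(g)}]V_g^{*}$, which is compact because $[\Phi(\cdot),T]$ is; combined with the commutator axiom $[\Phi(a),T]\in K(E)$ this yields $(T^G-T)\Phi(a)=\Phi(a)(T^G-T)-[\Phi(a),T^G-T]\in K(E)$. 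Thus $T^G$ is the desired equivariant compact perturbation, and in particular $(E,\Phi,T^G)$ and $(E,\Phi,T)$ represent the same class in $\KK^G(A,B)$.

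I expect the main obstacle to be the analytic bookkeeping rather than anything conceptual: one must verify that the fibrewise averages genuinely define an adjointable operator on the Hilbert module $E$, and that $\Phi(a)(T^G-T)$ lies in $K(E)$ and not merely fibrewise in each $K(E_u)$. Both reduce to having the appropriate continuity-of-integral statements for sections of the bundles $\mathcal{E}$ and $\mathcal{K}(E)$ — the Hilbert-module and $\mathcal{K}(E)$-bundle analogues of Lemma \ref{r!}, the latter being immediate since $K(E)$ is a $C_0(G^{(0)})$-algebra. The conceptual content is slight: the last axiom in the definition of a Kasparov triple is precisely what makes this averaging produce a compact perturbation.
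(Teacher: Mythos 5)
Your proposal is essentially the paper's own proof: the same fibrewise averaging $T^G_u=\int_{G^u}c(d(g))V_gT_{d(g)}V_{g^{-1}}\,d\lambda^u(g)$ against a cutoff function, equivariance via left invariance of the Haar system, and compactness of $\Phi(a)(T^G-T)$ by combining the last axiom of a $G$-equivariant Kasparov triple with the integration Lemma \ref{r!}; you are in fact a bit more careful than the paper about adjointability of $T^G$ and about the second condition $(T^G-T)\Phi(a)\in K(E)$.

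One small inaccuracy to repair: you invoke the Kasparov-triple axiom at $f=r^*a$, but $g\mapsto a(r(g))$ is only a bounded section and need not lie in $r^*A=\Gamma_0(G,r^*\mathcal{A})$ when $G$ is non-compact (and the cutoff $c$ need not be bounded, so multiplying by $c\circ d$ does not by itself force vanishing at infinity). The paper avoids this by first reducing, via density, to elements $a$ that are compactly supported as sections $G^{(0)}\rightarrow\mathcal{A}$; then $b(g):=c(d(g))a(r(g))$ has support in $supp(c\circ d)\cap r^{-1}(supp(a))$, which is compact by axiom (2) of the cutoff function, so $b\in\Gamma_c(G,r^*\mathcal{A})$ and the axiom applies directly to $b$, after which Lemma \ref{r!} yields $\Phi(a)(T^G-T)\in K(E)$. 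With that adjustment your argument goes through exactly as in the paper.
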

\begin{proof}
Let $(E,\Phi,T)\in \EE^G(A,B)$ be given. Choose a cutoff function $c$ for $G$.
Then for $u\in G^{(0)}$ define
$$(T^G)_u=\int\limits_{G^u} c(d(g))V_gT_{d(g)}V_{g^{-1}}d\lambda^u(g).$$
This clearly defines an operator $T^G:E\rightarrow E$. It is adjointable since we can apply the whole construction to $T^*$ and an easy computation reveals that $(T^*)^G$ is the adjoint for $T^G$. Another elementary computation using inner products shows that $T^G$ is indeed $G$-equivariant.

It remains to show that $T^G$ is a compact pertubation of $T$, i.e. we need to see that $\Phi(a)(T^G-T)\in K(E)$ for all $a\in A$. By density we can assume that $a$ viewed as a section $G^{(0)}\rightarrow\mathcal{A}$ has compact support. We have
\begin{align*}
(\Phi(a)(T^G-T))_u & =\Phi(a)_u\left(\int\limits_{G^u}c(d(g))V_gT_{d(g)}V_{g^{-1}}d\lambda^u(g)-T_u\right)\\
& = \Phi(a)_u\left(\int\limits_{G^u}c(d(g))\left(V_gT_{d(g)}V_{g^{-1}}-T_u\right)d\lambda^u(g)\right)\\
& = \int\limits_{G^u}c(d(g))\Phi(a)_u\left(V_gT_{d(g)}V_{g^{-1}}-T_{u}\right)d\lambda^u(g) \\
& = \int\limits_{G^u}\Phi_{r(g)}(c(d(g))a(r(g)))\left(V_gT_{d(g)}V_{g^{-1}}-T_{r(g)}\right)dg.\\
\end{align*}
Note that $g\mapsto c(d(g))a(r(g))$ defines an element $b$ in $\Gamma_c(G,r^*\mathcal{A})$ (continuity is obvious and $supp(b)\subseteq supp(c\circ d)\cap r^{-1}(supp(a))$ implies that $b$ has compact support). Since $(E,\Phi,T)$ is a $G$-equivariant Kasparov triple the family $$(\Phi_{r(g)}(c(d(g))a(r(g)))\left(V_gT_{d(g)}V_{g^{-1}}-T_{r(g)}\right))_{g\in G}$$ defines an element in $r^*K(E)$. Then, by Lemma \ref{r!}, integration against the Haar system yields an element in $K(E)$. Consequently, the above computation shows $\Phi(a)(T^G-T)\in K(E)$ as desired.
\end{proof}

\begin{defi}
	Let $E_1$ be a graded $G$-equivariant Hilbert $A$-module and $E_2$ be a graded $G$-equivariant Hilbert $A-B$-bimodule and $E:=E_1\hat{\otimes}_A E_2$. For $x\in E_1$ define an operator $T_x\in L(E_2,E)$ by 
	$$T_x(y)=x\otimes y.$$
	Let $F_2\in L(E_1)$. An operator $F\in L(E)$ is called an \textit{$F_2$-connection} if
	$T_xF_2-(-1)^{\partial x\partial F_2}FT_x\in K(E_2,E)$ and $F_2T_x^*-(-1)^{\partial x\partial F_2}T_x^*F\in K(E,E_2)$
	for all $x\in E_1$.
\end{defi}


		
		

Now we prove a generalization of \cite[Lemma~3.1]{Meyer00equivariantkasparov}.
\begin{lemma} Let $G$ be a $\sigma$-compact locally compact groupoid with Haar system, and let $A$ and $B$ be $\sigma$-unital $G$-algebras and $(E,\Phi,T) \in \mathbb{E}^G(A,B)$ an essential Kasparov triple. Then there is a $G$-equivariant $T$-connection $T'$ on $L^2(G,E)\cong L^2(G,A)\otimes_\Phi E$. If $T$ is a self-adjoint contraction, then so is $T'$.
\end{lemma}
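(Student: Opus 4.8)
The idea is to take for $T'$ the operator that applies $T$ \emph{fibrewise}, and then to deduce that it is a $T$-connection by comparing it with the standard (non-equivariant) connection $1\otimes T$: the difference of the two operators is governed precisely by the covariance condition in the definition of an equivariant Kasparov cycle.

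First I would fix the relevant identifications. Realise $L^2(G,E)=L^2(G)\otimes_\Phi E$ as the completion $\overline{\Gamma_c(G,d^*\mathcal E)}$; transporting the $G$-action along the canonical isomorphism from the previous section, $G$ acts on it by plain left translation, $(g\cdot\xi)(h)=\xi(g^{-1}h)$. Since the cycle is essential we have $A\otimes_\Phi E=\overline{\Phi(A)E}=E$, whence
\[ L^2(G,A)\otimes_\Phi E=\big(L^2(G)\otimes_{C_0(G^{(0)})}A\big)\otimes_\Phi E\;\cong\;L^2(G)\otimes_{C_0(G^{(0)})}E\;=\;L^2(G,E).\]
Under this identification an elementary tensor $x=\varphi\otimes a$, with $\varphi\in C_c(G)$ and $a\in A$, induces the operator $T_x\colon E\to L^2(G,E)$ given by $(T_xe)(g)=\varphi(g)\,V_g^{\,*}\big(\Phi_{r(g)}(a(r(g)))(e(r(g)))\big)$, and such $x$ span a dense subspace of $L^2(G,A)$, so it suffices to verify the two connection identities on them.

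Next I would set $(T'\xi)(g):=T_{d(g)}(\xi(g))$ for $\xi\in\Gamma_c(G,d^*\mathcal E)$, where $T_u\in L(E_u)$ is the fibre of $T$ over $u$. A direct computation with the inner product shows $T'\in L(L^2(G,E))$ with $(T')^*=(T^*)'$ and $\norm{T'}\leq\norm T$, so $T'$ is a self-adjoint contraction whenever $T$ is; and $T'$ is $G$-equivariant, because left translation commutes with any operator acting fibrewise through the $T_{d(g)}$ (this uses only $d(g^{-1}h)=d(h)$). It then remains to see that $T'$ is a $T$-connection. Let $T^{\mathrm{conn}}$ be the image of $1_{L^2(G)}\otimes_{C_0(G^{(0)})}T$ under the identification above; since any operator on a module over a $C_0(X)$-algebra is $C_0(X)$-linear this is a bona fide operator, with $(T^{\mathrm{conn}}\xi)(g)=V_g^{\,*}T_{r(g)}V_g(\xi(g))$, and the usual computation $(1\otimes T)T_x-T_xT=(\varphi\otimes-)\circ[T,\Phi(a)]$ (together with its adjoint version) shows that $T^{\mathrm{conn}}$ \emph{is} a $T$-connection — it merely fails to be $G$-equivariant. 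Comparing the two fibrewise formulas,
\[ \big((T'-T^{\mathrm{conn}})\xi\big)(g)=-\,V_g^{\,*}\big(T_{r(g)}-V_gT_{d(g)}V_g^{\,*}\big)V_g(\xi(g)),\]
so that, feeding $\xi=T_xe$ and cancelling $V_gV_g^{\,*}$,
\[ \big((T'-T^{\mathrm{conn}})T_xe\big)(g)=-\,\varphi(g)\,V_g^{\,*}\Big[\big(T_{r(g)}-V_gT_{d(g)}V_g^{\,*}\big)\Phi_{r(g)}(a(r(g)))\Big](e(r(g))).\]
The key observation is that $g\mapsto\big(T_{r(g)}-V_gT_{d(g)}V_g^{\,*}\big)\Phi_{r(g)}(a(r(g)))$ lies in $r^*K(E)$: writing the product as $\Phi_{r(g)}(a(r(g)))\big(T_{r(g)}-V_gT_{d(g)}V_g^{\,*}\big)$ plus a commutator, the first summand is in $r^*K(E)$ by the covariance axiom of the Kasparov triple applied to $r^*a\in r^*A$, while the commutator is controlled by $[T,\Phi(a)]\in K(E)$ and the $G$-equivariance of $\Phi\colon A\to L(E)$ (which rewrites the term $[V_gT_{d(g)}V_g^{\,*},\Phi_{r(g)}(a(r(g)))]$ as $V_g[T_{d(g)},\Phi_{d(g)}(\alpha_{g^{-1}}(a(r(g))))]V_g^{\,*}$, i.e.\ the image under the $G$-action on $K(E)$ of a section of $d^*\mathcal K(E)$). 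Multiplying by $\varphi\in C_c(G)$ gives a compactly supported section of $r^*\mathcal K(E)$, which (by density of elementary sections in a pullback algebra, a consequence of Proposition~\ref{Prop:DensityCriterionC(X)-algebras}) is approximated by sums $\sum_i\psi_i\otimes k_i$ with $\psi_i\in C_c(G)$, $k_i\in K(E)$; each such term, after approximating $k_i$ by finite sums of rank-one operators, contributes finitely many rank-one operators $\Theta_{\psi_i\otimes e,\;e'}\colon E\to L^2(G,E)$ (with $\psi_i\otimes e$ viewed inside $\Gamma_c(G,d^*\mathcal E)$). Hence $(T'-T^{\mathrm{conn}})T_x\in K(E,L^2(G,E))$; together with the analogous estimate for $T_x^{\,*}(T'-T^{\mathrm{conn}})$ and the fact that $T^{\mathrm{conn}}$ is a connection, this shows $T'$ is a $T$-connection.

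I expect the only genuine difficulty to lie in the last paragraph: keeping the three tensor-product identifications straight, tracking the grading signs $(-1)^{\partial x\,\partial T}$, and recognising the ``reversed'' product above as the expression appearing in the covariance axiom. The $\sigma$-compactness of $G$ and the $\sigma$-unitality of $A$ and $B$ are needed only to guarantee that all modules in sight are countably generated, so that the theory of connections and operator homotopies is available in its usual form; they play no role in the explicit construction of $T'$.
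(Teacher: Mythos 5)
Your proposal is correct and is essentially the paper's own argument: the same fibrewise operator $(T'\xi)(g)=T_{d(g)}(\xi(g))$, the same norm, adjoint and equivariance computations, and the connection property verified on elementary tensors $\varphi\otimes a$ by splitting $T_xT-T'T_x$ into exactly the same two pieces, the commutator term $\varphi\otimes[\Phi(a),T]$ and the covariance term $\varphi(g)V_{g^{-1}}(T_{r(g)}-V_gT_{d(g)}V_{g^{-1}})\Phi_{r(g)}(a(r(g)))$, each made compact by approximation through sections $\psi\otimes F$ and rank-one operators. The only differences are cosmetic: you package the computation via the auxiliary operator $T^{\mathrm{conn}}=1\otimes_{C_0(G^{(0)})}T$ and you treat the order of $\Phi(a)$ against $T_{r(g)}-V_gT_{d(g)}V_g^*$ explicitly (note only that the covariance axiom should be invoked for the compactly supported section $\varphi\otimes a\in r^*A$, not for the merely bounded section $g\mapsto a(r(g))$), a point the paper passes over silently.
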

\begin{proof}
	Consider the space $\Gamma_c(G,d^*\mathcal{E})$ of continuous sections of $d^*\mathcal{E}$ with compact support. The inner product 
	$$\lk f_1,f_2\rk_B(u)=\int\limits_{G^u}\beta_g(\lk f_1(g),f_2(g)\rk_{B_{d(g)}})d\lambda^u(g)$$
	together with the right $B$-action
	$$(f\cdot b)(g)=f(g)\beta_{g^{-1}}(b(r(g)))$$
	turns $\Gamma_c(G,d^*\mathcal{E})$ into a pre-Hilbert $B$-module. Its completion is canonically identified with $L^2(G,E)$ via the isomorphism which sends an elementary tensor $f\otimes e\in L^2(G)\otimes_{C_0(G^{(0)})} E$ to the function $g\mapsto f(g)V_{g^{-1}}e(r(g))$. Here, the $V_g$ denote the unitaries implementing the $G$-action on $E$. Since $\Phi$ is essential, we have $$L^2(G,E)\cong L^2(G,A)\otimes_\Phi E.$$ Now define
	$T':\Gamma_c(G,d^*\mathcal{E})\rightarrow \Gamma_c(G,d^*\mathcal{E})$
	by $$(T'f)(g)=T_{d(g)}(f(g)).$$
	We have
	\begin{align*}
	\norm{T'f}^2& = \norm{\lk T'f,T'f\rk_B}\\
	& = \sup\limits_{u\in G^{(0)}}\norm{\lk T'f,T'f\rk_B(u)}\\
	& = \sup\limits_{u\in G^{(0)}}\norm{\int\limits_{G^u}\beta_g(\lk (T'f)(g),(T'f)(g)\rk_{B_{d(g)}})d\lambda^u(g)}\\
	& = \sup\limits_{u\in G^{(0)}}\norm{\int\limits_{G^u}\beta_g(\underbrace{\lk T_{d(g)}(f(g)),T_{d(g)}(f(g))\rk_{B_{d(g)}}}_{\leq \norm{T}^2\lk f(g),f(g)\rk})d\lambda^u(g)}\\
	& \leq \norm{T}^2\norm{f}^2
	\end{align*}
	Thus, $T'$ is bounded with $\norm{T'}\leq \norm{T}$.
	Let us check that $T'$ is indeed $G$-equivariant. If $V'$ denotes the unitary implementing the canonical $G$-action on $L^2(G,E)$ given by $(V'_gf)(s)=f(g^{-1}s)$, then we have
	\begin{align*}
(T_{r(g)}'V_g'f)(s)&=T_{d(s)}(V_g'f(s))\\
&=T_{d(s)}(f(g^{-1}s))\\
&=(T_{d(g)}'f)(g^{-1}s)\\
&=(V_g'T_{d(g)}'f)(s).
	\end{align*}
	An easy computation reveals that self-adjointness of $T$ implies self-adjointness of $T'$.
	
	We claim that $T'$ is a $T$-connection. To show this we have to check that $K:=T_\xi T-T'T_{\xi}\in K(E,L^2(G,E))$ for all $\xi\in L^2(G,A)$.
	Let us first take a closer look at the rank one operators in $ K(E,L^2(G,E))$. For $x,y\in E$ and an element in $L^2(G,E)$ of the form $f\otimes e(g):=f(g)V_{g^{-1}}e(r(g))$ for $f\in C_c(G)$ and $e\in E$ we have
	\begin{align*}
	\theta_{f\otimes e,x}(y)(g)& = ((f\otimes e)\cdot\lk x,y\rk_A)(g)\\
	& = (f\otimes e)(g)\cdot\alpha_{g^{-1}}(\lk x,y\rk_A(r(g)))\\
	& = f(g)V_{g^{-1}}(e\cdot \lk x,y\rk_A(r(g)))\\
	& = (f\otimes \theta_{e,x}(y))(g),
	\end{align*}
	where the last equation again uses the identification $L^2(G)\otimes_{C_0(G^{(0)})}E\cong L^2(G,E)$ described above.
	
	Back to the operator $K$:
	Since elements of the form $f\otimes a$, where $$(f\otimes a)(g)=f(g)\alpha_{g^{-1}}(a(r(g))),$$ form a dense subset of $L^2(G,A)$, in the definition of $T_\xi$ we can restrict to $\xi$ of this form. So in the following computations, let $\xi:=f\otimes a$.
	Recall that the canonical isomorphism $L^2(G,A)\otimes_\Phi E\cong L^2(G,E)$ identifies $\xi\otimes e$ with the function $g\mapsto \Phi_{d(g)}(\xi(g))V_{g^{-1}}(e(r(g)))$.
	Thus, for all $e\in E$ and $g\in G$ we can compute
	\begin{align*}
	(Ke)(g)& =(T_\xi Te)(g)-(T'T_{\xi}e)(g)\\
	& = (\xi\otimes Te)(g) - T_{d(g)}(T_{\xi}e(g))\\
	& = \Phi_{d(g)}(\xi(g))V_{g^{-1}}T_{r(g)}(e(r(g)))-T_{d(g)}\Phi_{d(g)}(\xi(g))V_{g^{-1}}e(r(g))\\
	& = f(g)\Phi_{d(g)}(\alpha_{g^{-1}}(a(r(g))))V_{g^{-1}}T_{r(g)}(e(r(g)))\\
	& \quad -f(g)T_{d(g)}\Phi_{d(g)}(\alpha_{g^{-1}}(a(r(g))))V_{g^{-1}}e(r(g))\\
	& = f(g)V_{g^{-1}}\Phi_{r(g)}(a(r(g)))T_{r(g)}(e(r(g)))\\
	& \quad -f(g)T_{d(g)}V_{g^{-1}}\Phi_{r(g)}(a(r(g)))e(r(g))\\
	& = (\ast)
	\end{align*}
	By adding and subtracting the term 
	$f(g)V_{g^{-1}}T_{r(g)}\Phi_{r(g)}(a(r(g)))e(r(g))$ in the last line we get
	$$(\ast)= (f\otimes [\Phi(a),T]e)(g)+f(g)(V_{g^{-1}}T_{r(g)}-T_{d(g)}V_{g^{-1}})\Phi(a(r(g)))e(r(g)).$$
	Now approximating $[\Phi(a),T]$ by sums of rank one operators and using our description of these it is not hard to see that $e\mapsto f\otimes [\Phi(a),T]e\in K(E,L^2(G,E))$.
	The second summand in $(\ast)$ can be rewritten as
	$$V_{g^{-1}}(T_{r(g)}-V_gT_{d(g)}V_{g^{-1}})\Phi(f(g)a(r(g)))\cdot e(r(g)).$$
	Since $(E,\Phi,T)$ is a $G$-equivariant Kasparov triple, the family $$(T_{r(g)}-V_gT_{d(g)}V_{g^{-1}})\Phi(f(g)a(r(g))))_{g\in G}$$ defines an element in $r^*(K(E))$ and since $f$ has compact support it can be approximated by finite sums of elements of the form $\psi\otimes F$ for $\psi\in C_c(G)$ and $F\in K(E)$ where $(\psi\otimes F)(g)=\psi(g)F_{r(g)}$. Passing to such elements we are left with the term
	$$\psi(g)V_{g^{-1}}F_{r(g)}e(r(g))=\psi(g)V_{g^{-1}}(Fe(r(g)))=(\psi\otimes Fe)(g)$$
	But $e\mapsto \psi\otimes Fe$ can be approximated by rank-one operators as above and thus we have shown that $K\in K(E,L^2(G,E))$.
\end{proof}
Now we can use the exact same arguments as in \cite[Proposition~3.2]{Meyer00equivariantkasparov} to show:
\begin{prop} \label{AutomaticEquivariance}
	Suppose $A$ and $B$ are $\sigma$-unital $G$-algebras and $(E,\Phi,T)$ is an essential Kasparov triple in $\mathbb{E}^G(K(L^2(G))\otimes_{G^{(0)}}^{max} A,B)$. Then there exists a $G$-equivariant compact perturbation of $T$.
\end{prop}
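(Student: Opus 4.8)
The plan is to deduce the statement from the preceding Lemma by transporting the triple through the Morita equivalence implemented by $L^{2}(G,A)$. First I would use the $G$-equivariant isomorphism $\Psi\colon K(L^{2}(G))\otimes_{G^{(0)}}^{max}A\cong K(L^{2}(G,A))$ and regard $(E,\Phi,T)$ as an essential Kasparov triple for $(K(L^{2}(G,A)),B)$, so that $\Phi$ becomes a nondegenerate $G$-equivariant representation of the compact operators on the Hilbert $A$-module $L^{2}(G,A)$. Since $L^{2}(G,A)=L^{2}(G)\otimes_{C_{0}(G^{(0)})}A$ is full (the ideal $\lk L^{2}(G),L^{2}(G)\rk$ being dense in $C_{0}(G^{(0)})$) and countably generated, and $A$ is $\sigma$-unital, the structure theory of nondegenerate representations of $K(\mathcal H)$ on Hilbert modules yields a $G$-equivariant unitary $E\cong L^{2}(G,A)\otimes_{A}E_{0}$ under which $\Phi$ is carried to the canonical left action $d\mapsto d\otimes 1$, where $E_{0}$ is a $G$-equivariant Hilbert $B$-module (the complement, or multiplicity, module) equipped with a left $A$-action $\Phi_{0}$ that is again essential and $G$-equivariant.

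Working on $L^{2}(G,A)\otimes_{A}E_{0}$, I would then extract from $T$ an operator $T_{0}\in L(E_{0})$, well defined modulo $K(E_{0})$, such that $T$ is a $T_{0}$-connection and $(E_{0},\Phi_{0},T_{0})\in\EE^{G}(A,B)$ is an essential Kasparov triple: since $T$ commutes modulo compacts with $\Phi(K(L^{2}(G,A)))=K(L^{2}(G,A))\otimes 1$ and $\Phi$ is essential, the Connes--Skandalis theory of connections produces such a $T_{0}$, and the Kasparov conditions for $T$ over $K(L^{2}(G,A))$ translate into those for $T_{0}$ over $A$; if desired, $T_{0}$ may in addition be arranged to be a self-adjoint contraction. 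Applying the preceding Lemma to $(E_{0},\Phi_{0},T_{0})$ now provides a $G$-equivariant $T_{0}$-connection $T'$ on $L^{2}(G,E_{0})\cong L^{2}(G,A)\otimes_{\Phi_{0}}E_{0}\cong E$. Finally, $T$ and $T'$ are two $T_{0}$-connections on the same module, so their difference is compact relative to the left $K(L^{2}(G,A))$-action: writing a generic rank-one operator as $\Theta_{x,y}\otimes 1=T_{x}T_{y}^{*}$, one gets $\Phi(d)(T'-T),\,(T'-T)\Phi(d)\in K(E)$ for every $d\in K(L^{2}(G))\otimes_{G^{(0)}}^{max}A$. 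Hence $T'$ is a $G$-equivariant compact perturbation of $T$, as required.

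The main obstacle is the second paragraph: turning the abstract Morita picture into the concrete $G$-equivariant identification $E\cong L^{2}(G,A)\otimes_{A}E_{0}$ and, on top of that, producing a genuine operator $T_{0}$ on $E_{0}$ for which the given $T$ is literally a $T_{0}$-connection, while checking that $(E_{0},\Phi_{0},T_{0})$ meets all the axioms of a $G$-equivariant Kasparov triple. This is where the $\sigma$-unitality of $A$, the fullness of $L^{2}(G,A)$ and the careful tracking of the $G$-action (the unitary $V$ on $L^{2}(G)$ and the induced action on $E_{0}$) all enter; everything afterwards is the formal uniqueness-of-connections argument combined with the preceding Lemma.
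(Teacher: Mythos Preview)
Your proposal is correct and is exactly the argument the paper intends: the paper does not spell out a proof but refers to \cite[Proposition~3.2]{Meyer00equivariantkasparov}, and what you have written is precisely Meyer's argument transported to the groupoid setting via the preceding Lemma. Your identification of the extraction of $T_{0}$ (and the $G$-equivariance of the splitting $E\cong L^{2}(G,A)\otimes_{A}E_{0}$) as the only nontrivial step is accurate; once that is in place, the uniqueness-of-connections computation you sketch with $T_{x}T_{y}^{*}=\Phi(\Theta_{x,y})$ is exactly how one concludes.
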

\section{The Compression Isomorphism}

Before we can construct the compression isomorphism we need the following preliminary observation:
\begin{lemma}\label{Lem:InclusionIntoInducedAlgebra}
	Let $G$ be an étale, locally compact Hausdorff groupoid and $H\subseteq G$ a clopen subgroupoid, such that $H^{(0)}=G^{(0)}$. If $A$ is an $H$-algebra, then there is an $H$-equivariant embedding
	$$ i_A:A\rightarrow Ind_H^{G}A$$
	given by the formula
	\[ i_A(a)(g)=\left\{\begin{array}{ll} \alpha_{g^{-1}}(a(r(g))) & ,g\in H \\
	0_{d(g)} & ,else \end{array}\right\}\]
\end{lemma}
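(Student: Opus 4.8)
The plan is to verify, one property at a time, that $i_A(a)$ lands in $Ind_H^G A$ and that $a\mapsto i_A(a)$ is an isometric, $H$-equivariant $*$-homomorphism. First note that, since $H^{(0)}=G^{(0)}$, the space $X=d^{-1}(H^{(0)})$ underlying the induced algebra is all of $G$, so $Ind_H^G A=Ind_H^X A$ with $X=G$ carrying the right $H$-action by multiplication. In particular $i_A(a)(g)$ lies in the correct fibre $(d^*\mathcal A)_g=A_{d(g)}$, because $\alpha_{g^{-1}}\colon A_{r(g)}\to A_{d(g)}$ for $g\in H$ and $0_{d(g)}\in A_{d(g)}$.

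\emph{That $i_A(a)$ is a bounded continuous section.} Boundedness is immediate from $\norm{i_A(a)(g)}=\norm{a(r(g))}\le\norm a$ for $g\in H$, the section being zero elsewhere. For continuity I would use that $H$ is \emph{clopen}: writing $G=H\sqcup(G\setminus H)$ with both pieces open, it suffices to check continuity on each. On $G\setminus H$ the section is identically zero; on $H$ it is obtained from the continuous section $k\mapsto\alpha_k(a(d(k)))$ of $r^*\mathcal A\vert_H$ (continuous by Lemma \ref{Lem:EasierCharacterizationOfContinuiutyOfAction} applied to the $H$-algebra $A$) by precomposing with the homeomorphism $g\mapsto g^{-1}$ of $H$, under which $r^*\mathcal A\vert_H$ pulls back to $d^*\mathcal A\vert_H$.

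\emph{That $i_A(a)$ satisfies the two defining conditions of the induced algebra.} Condition $(1)$ — namely $\alpha_h(i_A(a)(g))=i_A(a)(gh^{-1})$ whenever $h\in H$ and $d(h)=d(g)$ — follows by distinguishing the cases $g\in H$ and $g\notin H$, using that $H$ is a subgroupoid (so $g\in H\iff gh^{-1}\in H$) and that $\alpha$ is multiplicative: both sides equal $\alpha_{hg^{-1}}(a(r(g)))$, respectively $0_{r(h)}$. For condition $(2)$, the vanishing at infinity of $gH\mapsto\norm{i_A(a)(g)}$, observe that this function is constant on $H$-orbits, that an orbit meets $H$ exactly when it is the orbit of some $g\in H$, and that on such an orbit $r$ is constant with $q(g)=q(r(g))$, where $q\colon G\to G/H$ denotes the (continuous) quotient map. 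Hence for $\varepsilon>0$ the set $\{gH:\norm{i_A(a)(g)}\ge\varepsilon\}$ equals $q(r\vert_H^{-1}(K_\varepsilon))=q(K_\varepsilon)$, where $K_\varepsilon=\{u\in G^{(0)}:\norm{a(u)}\ge\varepsilon\}$ is compact since $a\in C_0$; thus the super-level set is compact (and closed, $G/H$ being Hausdorff because $X\rtimes H$ is proper), so $i_A(a)\in Ind_H^G A$.

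\emph{The algebraic and equivariance properties.} Since the operations on $Ind_H^G A\subseteq\Gamma_b(G,d^*\mathcal A)$ are fibrewise and each $\alpha_{g^{-1}}$ is a $*$-isomorphism, $i_A(ab)=i_A(a)i_A(b)$ and $i_A(a^*)=i_A(a)^*$ are one-line checks, and $\norm{i_A(a)}=\sup_{u\in r(H)}\norm{a(u)}=\norm a$ because $r(H)=H^{(0)}=G^{(0)}$, so $i_A$ is isometric, hence injective. For $H$-equivariance I would first note that $i_A$ is $C_0(G^{(0)})$-linear (immediate from the defining formula and the description $(\varphi\cdot f)(x)=\varphi(r(x))f(x)$ of the $C_0(G^{(0)})$-structure on $Ind_H^G A$), so it induces maps on fibres $(i_A)_u\colon A_u\to(Ind_H^G A)_u\cong Ind_H^{G^u}A$, and then check, using the formula $\beta_h(f)(x)=f(h^{-1}x)$ for the $H$-action from Proposition \ref{Prop:Action on Induced Algebra}, that $\beta_h\circ(i_A)_{d(h)}=(i_A)_{r(h)}\circ\alpha_h$ for all $h\in H$. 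The only genuinely delicate point in all of this is condition $(2)$: because $H$ is not assumed proper, $r\vert_H^{-1}(K_\varepsilon)$ need not be compact, and the argument hinges on the observation that its image in the Hausdorff space $G/H$ coincides with the compact set $q(K_\varepsilon)$; everything else reduces to short fibrewise computations and the clopenness of $H$.
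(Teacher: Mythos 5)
Your proposal is correct and follows essentially the same route as the paper's proof: continuity via clopenness of $H$, the case distinction $g\in H$ versus $g\notin H$ for the $H$-invariance condition, and the identification of the $\varepsilon$-super-level set of $gH\mapsto\norm{i_A(a)(g)}$ with the (compact) image of $\lbrace u:\norm{a(u)}\geq\varepsilon\rbrace$ in $G/H$, using $gH=r(g)H$ for $g\in H$. You merely spell out the continuity and equivariance checks (via Lemma \ref{Lem:EasierCharacterizationOfContinuiutyOfAction} and the fibrewise formula for $\beta$) that the paper declares clear or straightforward.
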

\begin{proof}
	First, we check that $i_A(a)$ is indeed an element in $Ind_H^G A$.
	The continuity of $i_A(a)$ is clear, as $H$ is clopen in $G$.
	Now let $h\in H$ and $g\in G$ such that $d(g)=d(h)$. Then we clearly have $g\in H \Leftrightarrow gh^{-1}\in H$ and thus in this case we can compute
	$$i_A(a)(gh^{-1})=\alpha_{hg^{-1}}(a(r(gh^{-1})))=\alpha_h(\alpha_{g^{-1}}(a(r(g)))=\alpha_h(i_A(a)(g)).$$
	If $g\notin H$ we have $i_A(a)(gh^{-1})=0_{A_{r(h)}}=\alpha_h(i_A(a)(g))$.
	It remains to verify that $gH\mapsto \norm{i_A(a)(g)}$ vanishes at infinity. Given $\varepsilon>0$ there exists a compact subset $K\subseteq H^{(0)}$ such that $\norm{a(u)}<\varepsilon$ for all $u\notin K$. Let $C$ be the image of $K$ in the quotient space $G/H$. Now if $gH\notin C$, then either $g\in G\setminus H$, in which case $\norm{i_A(a)(g)}=0$, or $g\in H$, in which case $r(g)H=gH\notin C$. But then $r(g)\notin K$, which implies $\norm{i_A(a)(g)}=\norm{a(r(g))}<\varepsilon$.
	It is straightforward to see that $i_A$ is an $H$-equivariant isometric $\ast$-homo\-morphism.
\end{proof}

Let us proceed with the construction of the compression homomorphism:
Consider an étale, locally compact Hausdorff groupoid $G$ with an étale subgroupoid $H\subseteq G$. Let $X:=G_{H^{(0)}}$ and $G':=G_{H^{(0)}}^{H^{(0)}}$. Suppose, that $H$ is clopen in $G'$. Now if $A$ is an $H$-algebra and $B$ is a $G$-algebra we define the compression homomorphism $$\textsf{comp}_H^G:\KK^G(Ind_H^XA,B)\rightarrow \KK^H(A,B_{\mid H})$$
as the composition
\begin{center}
	\begin{tikzpicture} 
	\matrix (m) [matrix of math nodes, row sep=3em,
	column sep=2em, text height=1em, text depth=0.25em]
	{ \KK^G(Ind_H^XA,B) & \KK^H(Ind_H^{G'}A,B_{\mid H})\\
		& \KK^H(A,B_{\mid H})
		\\ };
	\path[->,font=\scriptsize]
	(m-1-1) edge node[auto] {$ \textsf{res}^G_H  $} (m-1-2)
	(m-1-2) edge node[auto] {$ i_A^*  $} (m-2-2)
	(m-1-1) edge [dashed,->] node [below] { $ \textsf{comp}_H^G $} (m-2-2)
	;
	\end{tikzpicture}
\end{center}
Here $\textsf{res}^G_H$ is the homomorphism induced by the inclusion map $H\hookrightarrow G$ (cf. \cite[Proposition~7.1]{LeGall99}), and $i_A$ is the inclusion map from Lemma \ref{Lem:InclusionIntoInducedAlgebra}.
We are now proceeding to prove the main theorem of this section:
\begin{satz}\label{CompressionIsomorphism}
	Let $G$ be an étale locally compact Hausdorff group\-oid with a clopen, proper subgroupoid $H\subseteq G$. Let $X:=G_{H^{(0)}}$. If $A$ is an $H$-algebra and $B$ is a $G$-algebra, then
	$$\textsf{comp}_H^G:\KK^G(Ind_H^XA,B)\rightarrow \KK^H(A,B_{\mid H})$$
	is an isomorphism.
\end{satz}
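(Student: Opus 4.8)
The statement is a Frobenius‑reciprocity (adjunction) result of the type ``$\mathsf{Ind}_H^G$ is left adjoint to $\textsf{res}_H^G$'', so the plan is to exhibit an explicit inverse of $\textsf{comp}_H^G$ built from the induction homomorphism of Proposition~\ref{Prop:Induction homomorphism} and a Kasparov class coming from a cutoff function, and then to verify the two triangle identities. First I would assemble the necessary identifications. Writing $G':=G_{H^{(0)}}^{H^{(0)}}$, the hypothesis that $H$ is clopen in $G$ makes $H^{(0)}$ clopen in $G^{(0)}$, hence $G'$ clopen in $G$ and étale, with $H$ clopen in $G'$ and $(G')^{(0)}=H^{(0)}$; moreover $(Ind_H^XA)_{\mid H}\cong Ind_H^{G'}A$ by the identification established just before Proposition~\ref{Prop:Action on Induced Algebra}. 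Applying Lemma~\ref{Lemma:Induction and Tensor product} with the trivial $H$-algebra $C_0(H^{(0)})$ together with Proposition~\ref{Prop:InducedAlgebra:CommutativeCase} gives, for every $G$-algebra $B$, a natural $G$-equivariant isomorphism $Ind_H^X(B_{\mid H})\cong C_0(X/H)\otimes_{G^{(0)}}^{max}B$ with the diagonal action, where $X/H$ is a proper $G$-space because $H$ is proper (so that $H^{(0)}$ is a proper $H$-space and $X/H$ is locally compact Hausdorff). Since $H$ is proper, Tu's theorem (\cite[Propositions~6.10 and 6.11]{Tu99}) provides a cutoff function $c$ for $G\ltimes(X/H)$; from $\sqrt{c}$ one builds a $G$-equivariant inclusion $C_0(G^{(0)})\hookrightarrow C_0(X/H)$ (with associated projection the element $xH\mapsto c(xH)$ of $Ind_H^XC_0(H^{(0)})\cong C_0(X/H)$) and hence a canonical class $\tau\in\KK^G(C_0(X/H),C_0(G^{(0)}))$. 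Putting $\eta_B:=\tau\otimes_{G^{(0)}}^{max}\mathrm{id}_B\in\KK^G\bigl(Ind_H^X(B_{\mid H}),B\bigr)$ I would define
\[\Lambda_B:\KK^H(A,B_{\mid H})\longrightarrow\KK^G(Ind_H^XA,B),\qquad \Lambda_B(y)=\mathsf{Ind}_H^G(y)\otimes_{Ind_H^X(B_{\mid H})}\eta_B.\]

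To prove $\textsf{comp}_H^G\circ\Lambda_B=\mathrm{id}$ I would expand $\textsf{comp}_H^G=i_A^*\circ\textsf{res}_H^G$ and use that $\textsf{res}_H^G$ is compatible with the Kasparov product. The one nontrivial ingredient here is a restriction‑of‑induction formula: under $(Ind_H^XA)_{\mid H}\cong Ind_H^{G'}A$ one has $\textsf{res}_H^G\bigl(\mathsf{Ind}_H^G(y)\bigr)=\textsf{res}_H^{G'}\bigl(\mathsf{Ind}_H^{G'}(y)\bigr)$. Since $H$ is clopen in $G'$ with the same unit space, the image of $H$ in $G'/H$ is clopen, so Lemma~\ref{Lem:InclusionIntoInducedAlgebra} realises $(-)$ as a clopen direct summand of $Ind_H^{G'}(-)$ via $i_{(-)}$, and $i_A^*$ is exactly the projection onto that summand. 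After applying $i_A^*$ what survives is $y$ multiplied by the class obtained from $\eta_B$ by restricting to $H$ and precomposing with $i_{B_{\mid H}}$, and the normalisation $\int_{G^u}c(d(g))\,d\lambda^u(g)=1$ forces this residual class to equal $1_{B_{\mid H}}$. Hence $\textsf{comp}_H^G(\Lambda_B(y))=y$.

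For $\Lambda_B\circ\textsf{comp}_H^G=\mathrm{id}$ I would argue by naturality. Both $\textsf{comp}_H^G$ and $\Lambda_B$ are natural in $B$: for $\textsf{comp}_H^G$ this is immediate from product‑compatibility of $\textsf{res}_H^G$ and associativity of the Kasparov product, and for $\Lambda_B$ it reduces to the counit square $\mathsf{Ind}_H^G(\textsf{res}_H^G(f))\otimes\eta_C=\eta_B\otimes f$ for $f\in\KK^G(B,C)$, which under the identification $Ind_H^X((-)_{\mid H})\cong C_0(X/H)\otimes_{G^{(0)}}^{max}(-)$ becomes the tautology $(\mathrm{id}_{C_0(X/H)}\otimes f)\otimes(\tau\otimes\mathrm{id}_C)=\tau\otimes f$. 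Since every $x\in\KK^G(Ind_H^XA,B)$ equals $1_{Ind_H^XA}\otimes x$, naturality turns $\Lambda_B\circ\textsf{comp}_H^G=\mathrm{id}$ into the single universal statement $\Lambda_{Ind_H^XA}\bigl(\textsf{comp}_H^G(1_{Ind_H^XA})\bigr)=1_{Ind_H^XA}$, i.e. $\mathsf{Ind}_H^G([i_A])\otimes\eta_{Ind_H^XA}=1$, which I would then verify directly on Kasparov cycles (using the explicit module description of $\mathsf{Ind}_H^G$ from Proposition~\ref{Prop:Induction homomorphism}), where once more the decisive input is $\int c=1$.

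The main obstacle is the bookkeeping in these two triangle‑identity computations. The subtlest point is that the ``counit'' $\eta_B$ is genuinely only a $\KK$-class, not a $*$-homomorphism, so it cannot be manipulated naively: one has to carry it through the balanced‑tensor‑product and pullback identifications and invoke the cutoff at exactly the right step. A second technical ingredient is the restriction‑of‑induction identity $\textsf{res}_H^G\circ\mathsf{Ind}_H^G=\mathsf{Ind}_H^{G'}$ (after $(Ind_H^XA)_{\mid H}\cong Ind_H^{G'}A$), which rests on a Mackey‑type decomposition of $X=G_{H^{(0)}}$ according to the clopen piece $G'/H\subseteq X/H$. Properness of $H$ is used solely to guarantee the cutoff function and that $X/H$ is a locally compact Hausdorff proper $G$-space, while the clopen hypothesis is what makes $i_A$ available and produces the direct‑summand splitting of $Ind_H^{G'}(-)$ that collapses $i_A^*\circ\mathsf{Ind}_H^{G'}$.
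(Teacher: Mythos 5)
Your plan takes a genuinely different route from the paper: you try to realise $\textsf{comp}_H^G$ as one half of an induction--restriction adjunction, with an inverse $y\mapsto \mathsf{Ind}_H^G(y)\otimes\eta_B$ built from a cutoff-based counit, whereas the paper constructs an explicit inflation map on Kasparov cycles and proves both composites are the identity directly (stabilising by $K(L^2(G^{H^{(0)}}))$ and invoking Propositions \ref{proper groupoids: Operators can be chosen equivariant} and \ref{AutomaticEquivariance}). The route is viable in principle, but as written it has concrete gaps. First, the counit class $\tau$ is not constructed correctly: there is in general no $G$-equivariant $*$-homomorphism $C_0(G^{(0)})\hookrightarrow C_0(X/H)$ (pullback along $\tilde r:X/H\to G^{(0)}$ only lands in $C_b(X/H)$, since $\tilde r$ is not proper), and even if such a homomorphism existed it would give a class in $\KK^G(C_0(G^{(0)}),C_0(X/H))$, i.e.\ in the wrong direction. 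What you actually need is a transfer/Mishchenko-type element, e.g.\ the fibrewise-$\ell^2$ Hilbert $C_0(G^{(0)})$-module of the local homeomorphism $\tilde r$, with the multiplication representation of $C_0(X/H)$ and zero operator; this exists because the fibres $X^u/H$ are discrete, but it is not what your text describes.

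Second, and more seriously, the two naturality statements on which the whole reduction rests are not tautologies; they carry the analytic content of the theorem. The ``counit square'' you need, $\mathsf{Ind}_H^G(\textsf{res}_H^G(f))\otimes\eta_C=\eta_B\otimes f$ for an arbitrary class $f\in\KK^G(B,C)$, presupposes the identification of $\mathsf{Ind}_H^G\circ\textsf{res}_H^G$ with $1_{C_0(X/H)}\otimes(-)$ under $Ind_H^X((-)_{\mid H})\cong C_0(X/H)\otimes_{G^{(0)}}(-)$. At the level of cycles this means comparing the induced operator $\widetilde{T}$ of Proposition \ref{Prop:Induction homomorphism} --- an average of $V_hTV_h^{-1}$ against a cutoff for $X\rtimes H$ --- with $1\otimes T$, and proving the difference is locally compact; this is an argument of exactly the same nature as the equivariance/averaging lemmas the paper proves, and it is precisely the difficulty the paper handles in the hard direction via the projections $p_{gH}$ after stabilisation. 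The same applies to the ``unit square'' $[i_A]\otimes\textsf{res}_H^G(\mathsf{Ind}_H^G(y))=y\otimes[i_{B_{\mid H}}]$ for a general $\KK^H$-class $y$ (Lemma \ref{Lem:InclusionIntoInducedAlgebra} only gives naturality of $i$ for $*$-homomorphisms), and to the final universal identity $\mathsf{Ind}_H^G([i_A])\otimes\eta_{Ind_H^XA}=1_{Ind_H^XA}$, which you defer to a direct verification. Until these three cycle-level statements are proved, the argument is a framework rather than a proof: the work the paper does explicitly has not been avoided, only relocated into unproved lemmas.
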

In order to prove the above theorem we will construct an inverse.
Let $(E,\Phi,T)$ be a Kasparov triple representing an element in the group
$\KK^{H}(A,B_{\mid H})$ and let $V$ denote the unitary operator implementing the action of $H$ on $E$. Since $H$ is proper, we can assume that $T$ is $H$-equivariant by Proposition \ref{proper groupoids: Operators can be chosen equivariant}.
Consider the complex vector space $\widetilde{E}_c$ consisting of bounded continuous sections $\xi:X\rightarrow d_{\mid X}^*(\mathcal{E})$ such that
\begin{itemize}
	\item $\xi(gh^{-1})=V_{h}(\xi(g))$ for all $g\in X$ and $h\in H$ with $d(g)=d(h)$, and
	\item  the map $gH\mapsto \norm{\xi(g)}$ has compact support in $X/H$.
\end{itemize}
Then $\widetilde{E}_c$ becomes a $G$-equivariant pre-Hilbert $B$-module as follows. Using the identification $B\cong \Gamma_0(G^{(0)},\mathcal{B})$ we define a $B$-valued inner product by letting
\[\lk\xi,\eta\rk_B(u):=\sum\limits_{gH\in X^u/H} \beta_g(\lk \xi(g),\eta(g)\rk_{B_{d(g)}}).\]
The second condition on the elements of $\widetilde{E}_c$ guarantees that the sum in the formula above is finite (since $X^u/H$ is discrete).
Let us check that $\lk\xi,\eta\rk_B$ defines an element in $\Gamma_c(G^{(0)},\mathcal{B})$:
Consider the map $$gH\mapsto \beta_g(\lk \xi(g),\eta(g)\rk_{B_{d(g)}}.$$ This map is clearly continuous and hence an element in $\Gamma(X/H,\tilde{r}^*(\mathcal{B}))$, where $\tilde{r}:X/H\rightarrow G^{(0)}$ is the map induced by the restriction of the range map of $G$ to $X$. Moreover, its support is easily checked to be contained in the intersection of the compact supports of the maps $gH\mapsto \norm{\xi(g)}$ and $gH\mapsto\norm{\eta(g)}$, and hence compact as well. Thus, our claim follows from the following Lemma:

\begin{lemma}
	Let $G,H,X$ be as above and $f\in C_c(X/H)$. Then the map 
	$$u\mapsto \sum\limits_{gH\in X^u/H} f(gH)$$
	is continuous.
\end{lemma}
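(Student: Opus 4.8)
The plan is to reduce the statement to the familiar fact that fibrewise summation along a local homeomorphism with discrete fibres preserves continuity. The first step is to show that the map $\tilde r\colon X/H\to G^{(0)}$ induced by the range map of $G$ is a local homeomorphism. Since $H$ is clopen in $G$, the unit space $H^{(0)}$ is clopen in $G^{(0)}$, so $X=G_{H^{(0)}}=d^{-1}(H^{(0)})$ is an open subset of the étale groupoid $G$; hence every $g\in X$ has a neighbourhood $U$ that is an open bisection of $G$ contained in $X$. The quotient map $\pi\colon X\to X/H$ is open by Proposition~\ref{Prop:QuotientLocallyCompact}, because the domain and range maps of the étale groupoid $H$ are open. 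Moreover $\pi|_U$ is injective: if $g_1 H=g_2 H$ with $g_1,g_2\in U$, then $g_1=g_2 h$ for some $h\in H$, whence $r(g_1)=r(g_2)$, and injectivity of $r|_U$ forces $g_1=g_2$. Thus $\pi|_U$ is a homeomorphism onto the open set $\pi(U)$, and $\tilde r|_{\pi(U)}=r|_U\circ(\pi|_U)^{-1}$ is a homeomorphism of $\pi(U)$ onto the open set $r(U)\subseteq G^{(0)}$. In particular each fibre $\tilde r^{-1}(u)=X^u/H$ is discrete, so its intersection with the compact set $\operatorname{supp}(f)$ is finite and the sum in the statement is well defined.

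Next I would localise the problem with a partition of unity. Writing $K=\operatorname{supp}(f)$, which is compact in the locally compact Hausdorff space $X/H$, I would cover $K$ by finitely many open sets $W_1,\dots,W_n\subseteq X/H$ on which $\tilde r$ restricts to a homeomorphism onto an open subset of $G^{(0)}$, and choose $\varphi_1,\dots,\varphi_n\in C_c(X/H)$ with $\operatorname{supp}(\varphi_i)\subseteq W_i$ and $\sum_i\varphi_i=1$ on $K$. Then $f=\sum_i\varphi_i f$ and each $\varphi_i f\in C_c(W_i)$, and since for every $u$ the fibre sum is finite, it suffices to show that each $F_i\colon u\mapsto\sum_{gH\in X^u/H}(\varphi_i f)(gH)$ is continuous.

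For the local pieces I would argue directly. On the open set $\tilde r(W_i)$ each fibre contains exactly one element of $W_i$, namely $\sigma_i(u):=(\tilde r|_{W_i})^{-1}(u)$, and $\varphi_i f$ vanishes on the rest of the fibre; hence $F_i=(\varphi_i f)\circ\sigma_i$ is continuous there. On the open set $G^{(0)}\setminus\tilde r(\operatorname{supp}(\varphi_i))$ — open because $\tilde r(\operatorname{supp}(\varphi_i))$ is compact, hence closed — the fibre misses $\operatorname{supp}(\varphi_i)$, so $F_i\equiv 0$. Since $\operatorname{supp}(\varphi_i)\subseteq W_i$, these two open sets cover $G^{(0)}$ and the two descriptions agree on the overlap, so $F_i$ is continuous by the gluing lemma; summing over $i$ gives continuity of $u\mapsto\sum_{gH\in X^u/H}f(gH)$.

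I do not anticipate a real obstacle here: the only slightly delicate point is establishing that $\tilde r$ is a local homeomorphism (equivalently, that $\pi$ is a local homeomorphism onto its image), which rests on the openness of the quotient map together with the étale structure providing local bisections. Everything after that is the standard ``$C_c$ is preserved under fibrewise summation along an étale map'' argument.
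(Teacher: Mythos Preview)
Your proof is correct and follows essentially the same approach as the paper: first establish that $\tilde r\colon X/H\to G^{(0)}$ is a local homeomorphism, then invoke the standard argument that fibrewise summation of a $C_c$ function along such a map is continuous. The paper merely sketches the local-homeomorphism step and then cites Paterson for the second step, whereas you have written out both in detail (including the partition-of-unity reduction), so your version is a fully expanded form of the paper's argument.
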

\begin{proof}
	For this we only need to note, that the map $\tilde{r}:X/H\rightarrow G^{(0)}$ is a local homeomorphism. Then the same proof, that shows continuity for the system of counting measures on an étale groupoid (see \cite{Paterson1999}), gives the desired result. But if $U$ is an open $r$-section of $G$, then $\tilde{r}$ will be a homeomorphism onto an open set, when restricted to the image of $U\cap X$ in $X/H$.
\end{proof}
The right $B$-action on $\widetilde{E}_c$ is defined by the formula
\[(\xi\cdot b)(g):=\xi(g)\beta_{g^{-1}}(b(r(g))).\]
A straightforward computation shows, that $\xi\cdot b$ is indeed an element of $\widetilde{E}_c$ again.
The support of the map $gH\mapsto\norm{(b\cdot \xi)(g)}$ is clearly compact since the support of $\xi$ is.
Let us check that with the above defined inner product and $B$-action $\widetilde{E}_c$ is indeed a pre-Hilbert $B$-module:
It is straightforward to check that the inner product is linear in the second and conjugate linear in the first variable.
Also, we clearly have $\lk \xi,\xi\rk_B\geq 0$ for all $\xi\in\widetilde{E}_c$. Now if $\lk \xi,\xi\rk_B(u)=0$ for all $u\in G^{(0)}$ then $\lk \xi(g),\xi(g)\rk_{B_{d(g)}}=0$ for all $g\in X$ and thus $\xi=0$.
Compatibility of the $B$-action with the inner product follows from another straightforward computation.

Let $\widetilde{E}$ be the completion of $\widetilde{E}_c$ with respect to the norm induced by the inner product.

To define the $G$-action on $\widetilde{E}$, let us identify the fibres.
For $u\in G^{(0)}$ consider the complex vector space of bounded continuous sections $\xi:X^u\rightarrow d^*\mathcal{E}$ such that
\begin{itemize}
	\item $\xi(gh^{-1})=V_{h}(\xi(g))$ for all $g\in X^u$ and $h\in H$ such that $d(g)=d(h)$, and
	\item  the map $gH\mapsto \norm{\xi(g)}$ has compact support in $X^u/H$.
\end{itemize}
We can turn this into a pre-Hilbert $B_u$-module by defining
\[ \lk \xi,\eta\rk_{B_u}:=\sum\limits_{gH\in X^u/H} \beta_g(\lk \xi(g),\eta(g)\rk_{B_{d(g)}})\]
and \[(\xi\cdot b(u))(g):=\xi(g)\cdot \beta_{g^{-1}}(b(u)).\]
Let $F_u$ denote the completion with respect to this inner product. Similar to the case of induced algebras one verifies, that 
for $u\in G^{(0)}$ the restriction map $\textsf{res}:\widetilde{E}_c\rightarrow F_u$, $\xi\mapsto \xi_{\mid X^u}$ factors through an isomorphism between the Hilbert $B_u$-modules $\widetilde{E}_u$ and $F_u$.

Let us now define the $G$-action on $\widetilde{E}$: For $g\in G$ define an operator $\widetilde{V}_g\in L(\widetilde{E}_{d(g)},\widetilde{E}_{r(g)})$ by
\[ (\widetilde{V}_g\xi)(s):=\xi(g^{-1}s)\ \forall s\in X^{r(g)}.\]
With this action $\widetilde{E}$ is a $G$-equivariant Hilbert $B$-module.

Define a $*$-homo\-morphism $\widetilde{\Phi}:Ind_H^X A\rightarrow L(\widetilde{E})$ by the formula
\[(\widetilde{\Phi}(f)\cdot \xi)(g):=\Phi_{d(g)}(f(g))\cdot\xi(g).\]
Last but not least define an operator $\widetilde{T}\in L(\widetilde{E})$ by the formula
\[(\widetilde{T}\xi)(g)=T_{d(g)}(\xi(g)).\]

To see that $\widetilde{T}$ is bounded and hence extends to an operator on $\widetilde{E}$ recall the following two general facts:
\begin{enumerate}
	\item If $a,b\in A$ are positive elements with $a\leq b$, then  $\norm{a}\leq\norm{b}$.
	\item If $E$ is a right Hilbert $A$-module, then $$\lk Tx,Tx\rk_A\leq \norm{T}^2\lk x,x\rk_A$$ for all $x\in E$ and $T\in L(E)$ (see \cite[Corollary~2.22]{Morita}).
\end{enumerate}
Because of the above facts and using that the positive elements form a cone we have that 
\[\norm{\sum\limits_{gH}\beta_g(\lk T_{d(g)}(\xi(g)),T_{d(g)}(\xi(g))\rk_{B_{d(g)}})}\leq \norm{\sum\limits_{gH}\beta_g(\norm{T_{d(g)}}^2\lk \xi(g),\xi(g)\rk)},\]
where the sum is over $X^u/H$.
Thus, we can compute:
\begin{align*}
\norm{\widetilde{T}\xi}^2 & =\norm{\lk \widetilde{T}\xi,\widetilde{T}\xi\rk_B}\\
& =\sup\limits_{u\in G^{(0)}}\norm{\lk \widetilde{T}\xi,\widetilde{T}\xi\rk_B(u)}\\
& = \sup\limits_{u\in G^{(0)}}\norm{\sum\limits_{gH\in X^u/H}\beta_g(\lk T_{d(g)}(\xi(g)),T_{d(g)}(\xi(g))\rk_{B_{d(g)}})}\\
& \leq \sup\limits_{u\in G^{(0)}}\norm{\sum\limits_{gH\in X^u/H}\beta_g(\norm{T_{d(g)}}^2\lk \xi(g),\xi(g)\rk)}\\
& \leq \norm{T}^2 \sup\limits_{u\in G^{(0)}}\norm{\sum\limits_{gH\in X^u/H}\beta_g(\lk \xi(g),\xi(g)\rk)}\\
& =\norm{T}^2 \sup\limits_{u\in G^{(0)}} \norm{\lk \xi,\xi\rk_B(u)}\\
& =\norm{T}^2\norm{\xi}^2
\end{align*}
Hence $\widetilde{T}$ extends to a bounded operator on $\widetilde{E}$. It is clearly adjointable with $(\widetilde{T})^*=\widetilde{T^*}$.
We want to show that $(\widetilde{E},\widetilde{\Phi},\widetilde{T})$ is a $G$-equivariant Kasparov-triple for $Ind_H^X A$ and $B$.
To this end we will need some helpful Lemmas. Note that for every $u\in G^{(0)}$ we also have a homomorphism
$$ i_A^u:A\rightarrow Ind_H^{X^u}A$$
from $A$ into each fibre of $Ind_H^X A$, given by the same formulas as $i_A$. Here, continuity of $i_A^u(a)$ is not a problem as $X^u$ carries the discrete topology.
\begin{lemma}
	Let $u\in G^{(0)}$. Consider the set $$A_0=\lbrace\sum\limits_{i=1}^{n}\tilde{\alpha}_{g_i}(i_A^{d(g_i)}(a_i))\mid n\in\NN, g_i\in X^u, a_i\in A\rbrace,$$
	where $\tilde{\alpha}$ is the action of $G$ on $Ind_H^X A$ defined in Proposition \ref{Prop:Action on Induced Algebra}.
	Then $A_0$ is dense in $Ind_H^{X^u}A$.
\end{lemma}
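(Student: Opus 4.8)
The plan is to identify $A_0$ explicitly as the subspace of those $f \in Ind_H^{X^u}A$ whose support, viewed inside the orbit space $X^u/H$, is finite, and then to observe that such finitely supported sections are automatically dense. Recall that, since $G$ is étale, both $X^u = G^u_{H^{(0)}}$ and $X^u/H$ are discrete, and that by the fibre identification preceding Proposition \ref{Prop:Action on Induced Algebra} together with Proposition \ref{Prop:FibresOfInducedAlgebra}, $Ind_H^{X^u}A$ is a $C_0(X^u/H)$-algebra with fibre $A_{d(g)}$ over $gH$ realised by the evaluation $f \mapsto f(g)$. In particular $\norm{f} = \sup_{gH \in X^u/H}\norm{f(g)}$, and an element of $Ind_H^{X^u}A$ is precisely a bounded section $g \mapsto f(g) \in A_{d(g)}$ satisfying $\alpha_h(f(g)) = f(gh^{-1})$ whenever $d(h) = d(g)$, and such that $gH \mapsto \norm{f(g)}$ vanishes at infinity on the discrete set $X^u/H$.

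First I would unwind the definitions. Fix $g \in X^u$, so that $r(g) = u$ and $v := d(g) \in H^{(0)}$, and fix $a \in A$. Combining the formula $\tilde\alpha_g(f)(s) = f(g^{-1}s)$ from Proposition \ref{Prop:Action on Induced Algebra} with the formula for $i_A^{v}$ from Lemma \ref{Lem:InclusionIntoInducedAlgebra} one computes, for $s \in X^u$,
\[
\tilde\alpha_g\bigl(i_A^{v}(a)\bigr)(s) = \begin{cases} \alpha_{s^{-1}g}\bigl(a(v)\bigr), & s \in gH, \\ 0_{A_{d(s)}}, & s \notin gH, \end{cases}
\]
since $g^{-1}s \in H$ exactly when $s \in gH$. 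Hence $\tilde\alpha_g(i_A^{v}(a))$ is supported on the single orbit $gH \in X^u/H$, and its value at $g$ is $\alpha_{v}(a(v)) = a(v)$ because $v$ is a unit. Since the equivariance relation pins down an element of $Ind_H^{X^u}A$ on an orbit once its value at one representative is given, $\tilde\alpha_g(i_A^{v}(a))$ is the unique section supported on $gH$ taking the value $a(v)$ at $g$; and as the quotient map $A \to A_{v}$ is surjective, $a(v)$ ranges over all of $A_{v}$. Therefore $A_0$ contains every section of $Ind_H^{X^u}A$ supported on a single orbit, and, being the linear span of these, $A_0$ is exactly the set of finitely supported elements of $Ind_H^{X^u}A$.

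It remains to see that finitely supported sections are dense. Given $f \in Ind_H^{X^u}A$ and $\varepsilon > 0$, the set $K := \{gH \in X^u/H : \norm{f(g)} \ge \varepsilon\}$ is compact, hence finite; the section $f_K$ that agrees with $f$ on the orbits lying in $K$ and vanishes elsewhere again belongs to $Ind_H^{X^u}A$ (equivariance, being an orbitwise condition, is preserved), has finite support, and satisfies $\norm{f - f_K} = \sup_{gH \notin K}\norm{f(g)} \le \varepsilon$. Thus $f_K \in A_0$ approximates $f$, proving density. I do not anticipate a genuine obstacle here; the only point needing care is the bookkeeping in the middle step, namely composing the pushforward $G$-action $\tilde\alpha$ on $Ind_H^X A$ with the inclusion $i_A$ correctly and using that $d(g)$ is a unit to conclude that $\tilde\alpha_g(i_A^{d(g)}(a))$ evaluated at $g$ equals $a(d(g))$ with no leftover twist.
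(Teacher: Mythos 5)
Your proof is correct, and it reaches the conclusion by a somewhat different mechanism than the paper for the density step. Both arguments hinge on the same key computation, namely that $\tilde\alpha_g(i_A^{d(g)}(a))$ is supported on the single orbit $gH$ with $\tilde\alpha_g(i_A^{d(g)}(a))(s)=\alpha_{s^{-1}g}(a(d(g)))$ for $s\in gH$, so that evaluation at $g$ gives $a(d(g))$ and hence hits all of $A_{d(g)}$. The paper then stops there: it checks in addition that $A_0$ is a $C_0(X^u/H)$-invariant linear subspace (via the identity $\varphi\cdot\tilde\alpha_g(i_A^{d(g)}(a))=\tilde\alpha_g(i_A^{d(g)}(\varphi(gH)a))$) and concludes by invoking the abstract density criterion of Proposition \ref{Prop:DensityCriterionC(X)-algebras}, the same tool used throughout the paper and valid without any discreteness assumption. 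You instead identify $A_0$ outright as the set of all sections whose support in $X^u/H$ is finite (using freeness/equivariance to pin down a single-orbit section by its value at one representative, and surjectivity of $A\to A_{d(g)}$), and then prove density by an explicit $\varepsilon$-truncation, which works precisely because $X^u/H$ is discrete, so vanishing at infinity makes $\lbrace gH:\norm{f(g)}\ge\varepsilon\rbrace$ finite and $\chi_K\cdot f$ stays in $Ind_H^{X^u}A$. Your route buys a sharper description of $A_0$ and avoids the density criterion; the paper's route is more uniform and would survive a non-discrete orbit space. The delicate points in your version — that the single-orbit section with prescribed fibre value is a legitimate element of $Ind_H^{X^u}A$, that truncation preserves the defining conditions, and that the norm is the supremum over orbits — are all addressed, so there is no gap.
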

\begin{proof}
	We want to apply Proposition \ref{Prop:DensityCriterionC(X)-algebras} to $A_0$.
	To this end let us first note that $A_0$ is a linear subspace of $Ind_H^{X^u}A$ and moreover it is $C_0(X^u/H)$-invariant. To see this let $a\in A$, $g\in X^u$ and $\varphi\in C_0(X^u/H)$. Then for every $s\in X^u$ such that $g^{-1}s\in H$ we have $gH=sH$ and can compute:
	\begin{align*}
	(\varphi\cdot (\tilde{\alpha}_g(i_A^{d(g)}(a))))(s)&=\varphi(sH)\tilde{\alpha}_g(i_A^{d(g)}(a))(s)\\
	& = \varphi(gH)i_A^{d(g)}(a)(g^{-1}s)\\
	& = i_A^{d(g)}(\varphi(gH)a)(g^{-1}s)\\
	& = \tilde{\alpha}_g(i_A^{d(g)}(\varphi(gH)a))(s)
	\end{align*}
	Since $i_A(a)$ vanishes if $g^{-1}s$ is not in $H$, we can conclude:
	$$ \varphi\cdot (\tilde{\alpha}_g(i_A^{d(g)}(a)))=\tilde{\alpha}_g(i_A^{d(g)}(\varphi(gH)a))\in A_0.$$
	So to see that $A_0$ is dense we just need to show that for any fixed $g\in X^u$ we have $\lbrace f(g)\mid f\in A_0\rbrace=A_{d(g)}(\cong (Ind_H^{X^u}A)_{gH})$.
	But since for any $a\in A$ we have $\tilde{\alpha}_g(i_A^{d(g)}(a))(g)=i_A(a)(g^{-1}g)=i_A(a)(d(g))=\alpha_{d(g)}(a(d(g)))=a(d(g))$ this is obvious.
\end{proof}

Next, we use this result to identify a nice dense subset of the whole algebra $Ind_H^{X}A$. For this write $Ind_H^X\mathcal{A}$ for the upper semi-continuous $C^*$-bundle associated to the $C_0(G^{(0)})$-algebra $Ind_H^X A$.
Let us recall some notation: For $\varphi\in C_c(G)$ and $a\in A$ we can define $\varphi\otimes i_A(a)\in \Gamma_c(G,d^* (Ind_H^X \mathcal{A}))$ by
$$(\varphi\otimes i_A(a))(g)=\varphi(g)i_A(a)_{\mid X^{d(g)}}=\varphi(g)i_A^{d(g)}(a).$$
Furthermore, let 
$$\lambda:\Gamma_c(G,r^*(Ind_H^X \mathcal{A}))\rightarrow Ind_H^{X} A$$ be the continuous map from Lemma \ref{r!} given by the formula
$$\lambda(f)(u)=\sum\limits_{g\in G^u}f(g),\ \forall  u\in G^{(0)}.$$ 
\begin{lemma}
	The set 
	$$\Gamma=\lbrace \lambda(\tilde{\alpha}(\varphi\otimes i_A(a)))\mid a\in A, \varphi\in C_c(G)\rbrace$$
	is dense in $Ind_H^XA$.
\end{lemma}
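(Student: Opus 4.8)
The plan is to deduce this from the density criterion, Proposition~\ref{Prop:DensityCriterionC(X)-algebras}, applied to the linear span $\Gamma_{\mathrm{lin}}$ of $\Gamma$ inside the $C_0(G^{(0)})$-algebra $Ind_H^X A$. Thus I would verify two things: first, that $\Gamma_{\mathrm{lin}}$ is a $C_0(G^{(0)})$-invariant linear subspace, and second, that its image under the quotient map $Ind_H^X A\to (Ind_H^X A)_u\cong Ind_H^{X^u}A$ is dense for every $u\in G^{(0)}$. Linearity is immediate, and the preceding lemma (density of $A_0$ in $Ind_H^{X^u}A$) is tailor-made to supply the dense subset needed in each fibre, so the real content is just to identify the image of $\Gamma_{\mathrm{lin}}$ in the fibre with something containing $A_0$.

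For $C_0(G^{(0)})$-invariance, recall that the $C_0(G^{(0)})$-action is fibrewise multiplication, $(\psi\cdot f)(u)=\psi(u)f(u)$, whereas $\lambda$ is the fibrewise summation $\lambda(F)(u)=\sum_{g\in G^u}F(g)$. Since $\psi(u)=\psi(r(g))$ for $g\in G^u$, one may pull $\psi$ inside the sum and absorb it into the coefficient function, giving
\[ \psi\cdot\lambda\bigl(\tilde\alpha(\varphi\otimes i_A(a))\bigr)=\lambda\bigl(\tilde\alpha(((\psi\circ r)\varphi)\otimes i_A(a))\bigr), \]
and $(\psi\circ r)\varphi\in C_c(G)$; hence this element is again of the form defining $\Gamma$, so $\Gamma$, and a fortiori $\Gamma_{\mathrm{lin}}$, is $C_0(G^{(0)})$-invariant.

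For the fibre condition I would first compute the image of a generator $\lambda(\tilde\alpha(\varphi\otimes i_A(a)))$ over a unit $u$. Using $\lambda(F)(u)=\sum_{g\in G^u}F(g)$ together with $(\varphi\otimes i_A(a))(g)=\varphi(g)\,i_A^{d(g)}(a)$ and the observation that $i_A^v(a)=0$ whenever $v\notin H^{(0)}$ (no element of $X^v$ then lies in $H$), the sum collapses to
\[ \lambda\bigl(\tilde\alpha(\varphi\otimes i_A(a))\bigr)(u)=\sum_{g\in X^u}\tilde\alpha_g\bigl(i_A^{d(g)}(\varphi(g)a)\bigr), \]
a finite sum because $\mathrm{supp}(\varphi)$ is compact and $G^u$ is discrete; thus the image lies in the set $A_0$ of the previous lemma. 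Conversely, given a single summand $\tilde\alpha_{g_0}(i_A^{d(g_0)}(a_0))$ with $g_0\in X^u$, I would use discreteness of $G^u$ to pick $\varphi_0\in C_c(G)$ with $\varphi_0(g_0)=1$ that vanishes at every other point of $G^u$; then $\lambda(\tilde\alpha(\varphi_0\otimes i_A(a_0)))(u)=\tilde\alpha_{g_0}(i_A^{d(g_0)}(a_0))$. Adding finitely many such, the image of $\Gamma_{\mathrm{lin}}$ in the fibre contains all of $A_0$, which is dense in $Ind_H^{X^u}A$ by the previous lemma.

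With both hypotheses verified, Proposition~\ref{Prop:DensityCriterionC(X)-algebras} gives that $\Gamma_{\mathrm{lin}}$ — hence the closed linear span of $\Gamma$ — is dense in $Ind_H^X A$, as claimed. The only genuinely delicate point I anticipate is the second displayed identity: one must keep careful track of the $d^*$- versus $r^*$-pullbacks entering the definition of $\varphi\otimes i_A(a)$, of the identification of the fibres of $Ind_H^X A$ with the algebras $Ind_H^{X^u}A$ (and of how $\tilde\alpha_g$ acts between them), and of the fact that $i_A$ is effectively supported over $X$; once those identifications are set up, the rest is formal.
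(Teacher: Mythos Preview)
Your proof is correct and follows essentially the same route as the paper: verify $C_0(G^{(0)})$-invariance via $(\psi\circ r)\varphi\in C_c(G)$, identify the image in each fibre with something containing $A_0$, invoke the previous lemma, and conclude by Proposition~\ref{Prop:DensityCriterionC(X)-algebras}. Your version is actually a bit more careful than the paper's in two places: you work with the linear span $\Gamma_{\mathrm{lin}}$ rather than tacitly treating $\Gamma$ as a subspace, and you explain explicitly why the sum over $G^u$ collapses to a sum over $X^u$ (via $i_A^{d(g)}(a)=0$ when $d(g)\notin H^{(0)}$) and how to realise each generator of $A_0$ by choosing $\varphi$ supported in a bisection.
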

\begin{proof}
	First we note that $\Gamma$ is a $C_0(G^{(0)})$-invariant linear subspace of $Ind_H^XA$, since for $\psi\in C_0(G^{(0)})$ we have
	\begin{align*}
	\psi\cdot \lambda(\tilde{\alpha}(\varphi\otimes i_A(a)))(u)& =\sum\limits_{g\in G^u}\psi(u)\varphi(g)\tilde{\alpha}_g(i_A^{d(g)}(a))\\
	& = \sum\limits_{g\in G^u}(\psi\otimes \varphi)(g)\tilde{\alpha}_g(i_A^{d(g)}(a))\\
	& =\lambda(\tilde{\alpha}(\psi\otimes \varphi)\otimes i_A(a))(u),
	\end{align*}
	where $\psi\otimes\varphi\in C_c(G)$ is given by $(\psi\otimes\varphi)(g)=\psi(r(g))\varphi(g)$.
	Then note that for fixed $u\in G^{(0)}$ we have
	$A_0\subseteq \lbrace \lambda(\tilde{\alpha}(\varphi\otimes i_A(a)))(u)\mid \varphi\in C_c(G), a\in A\rbrace\subseteq Ind_H^{X^u}A$.
	By the previous lemma $A_0$ is dense in $Ind_H^{X^u}A$ and thus, so is the middle set. 
	Consequently, $\Gamma$ is dense in $Ind_H^XA$ by yet another application of Proposition \ref{Prop:DensityCriterionC(X)-algebras}.
\end{proof}
We are now prepared for:
\begin{lemma}
	$(\widetilde{E},\widetilde{\Phi},\widetilde{T})\in \EE^G(Ind_{H}^XA,B)$.
\end{lemma}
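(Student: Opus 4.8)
The plan is to verify the four defining properties of a $G$-equivariant Kasparov triple: (i) $\widetilde{E}$ is a $\ZZ/2\ZZ$-graded $G$-equivariant Hilbert $B$-module and $\widetilde{\Phi}$ is a graded $G$-equivariant $*$-homomorphism; (ii) $\widetilde{T}$ is adjointable of degree $1$; (iii) the commutator-type operators $\widetilde{\Phi}(f)(\widetilde{T}-\widetilde{T}^*)$, $\widetilde{\Phi}(f)(\widetilde{T}^2-1)$, $[\widetilde{\Phi}(f),\widetilde{T}]$ are compact for every $f\in Ind_H^XA$; and (iv) the family $g\mapsto \widetilde{\Phi}_{r(g)}(\tilde f(g))(\widetilde{T}_{r(g)}-\widetilde{V}_g\widetilde{T}_{d(g)}\widetilde{V}_g^*)$ defines an element in $r^*(K(\widetilde{E}))$ for every $\tilde f\in r^*(Ind_H^X\mathcal{A})$. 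Items (i) and (ii) have essentially been established in the construction above (the grading on $\widetilde{E}$ is inherited fibrewise from the grading on $E$, and adjointability of $\widetilde{T}$ with $\widetilde{T}^*=\widetilde{T^*}$ was checked), so the real work lies in (iii) and especially (iv).

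For (iii), the key point is that it suffices to check compactness on the dense set $\Gamma=\lbrace \lambda(\tilde{\alpha}(\varphi\otimes i_A(a)))\mid a\in A, \varphi\in C_c(G)\rbrace$ established in the previous lemma, since the compact operators form a closed ideal and $\widetilde{\Phi}$ is bounded. First I would compute $\widetilde{\Phi}$ applied to a generator: for $f=\lambda(\tilde\alpha(\varphi\otimes i_A(a)))$ one has $(\widetilde{\Phi}(f)\xi)(g)=\Phi_{d(g)}(f(g))\xi(g)$ where $f(g)=\sum_{t\in G^{r(g)}}\varphi(t)\alpha_{t^{-1}}(a(r(t)))$ restricted appropriately; this reduces matters, after a change of summation, to understanding the single-$H$-orbit pieces. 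The crucial structural observation is that $\widetilde{E}$ decomposes, fibrewise over $X^u/H$, into copies of $E$, and on each such copy $\widetilde{T}$ acts as $T_{d(g)}=V_gT_{r(g)}V_g^{-1}$ (using $H$-equivariance of $T$, which we arranged via Proposition \ref{proper groupoids: Operators can be chosen equivariant}). Then $\widetilde{\Phi}(f)(\widetilde{T}-\widetilde{T}^*)$, $\widetilde{\Phi}(f)(\widetilde{T}^2-1)$, $[\widetilde{\Phi}(f),\widetilde{T}]$ are, on each orbit summand, conjugates of the corresponding operators $\Phi(a)(T-T^*)$, $\Phi(a)(T^2-1)$, $[\Phi(a),T]$, which are compact on $E$ by hypothesis; the compactly-supported cutoff built into the elements of $\widetilde{E}_c$ ensures that only finitely many orbit summands contribute on any compact set of units, so the assembled operator is a norm-limit of finite-rank-over-$K(E)$ operators and hence lies in $K(\widetilde{E})$. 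This is where a careful bookkeeping argument using the description of rank-one operators on $\widetilde{E}$ (analogous to the rank-one computation in the proof of the automatic-equivariance lemma) is needed.

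For (iv), I would exploit that $\widetilde{T}$ is \emph{already} constructed from an $H$-equivariant operator $T$, so the ``defect'' $\widetilde{T}_{r(g)}-\widetilde{V}_g\widetilde{T}_{d(g)}\widetilde{V}_g^{-1}$ should be controllable. Unwinding the definition of $\widetilde{V}_g$ (namely $(\widetilde{V}_g\xi)(s)=\xi(g^{-1}s)$ for $s\in X^{r(g)}$) and of $\widetilde{T}$, one computes that $\widetilde{V}_g\widetilde{T}_{d(g)}\widetilde{V}_g^{-1}$ acts on $\eta\in\widetilde{E}_{r(g)}$ by $(\widetilde{V}_g\widetilde{T}_{d(g)}\widetilde{V}_g^{-1}\eta)(s)=T_{d(g^{-1}s)}(\eta(s))=T_{d(s)}(\eta(s))=(\widetilde{T}_{r(g)}\eta)(s)$, so in fact the defect \emph{vanishes identically}: $\widetilde{T}$ is genuinely $G$-equivariant. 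Hence the family in (iv) is identically zero and trivially defines an element of $r^*(K(\widetilde{E}))$. The main obstacle I anticipate is (iii): one must handle the passage from the fibrewise/orbitwise picture to a genuine compact operator on the completed module $\widetilde{E}$, making sure the approximations by rank-one operators are uniform in $u\in G^{(0)}$ using Lemma \ref{measure of compact set is bounded} (bounding the number of orbits meeting a compact set, via étaleness of $\tilde r:X/H\to G^{(0)}$) and the estimate $\norm{S}=\sup_u\norm{S_u}$ for operators on Hilbert modules over $C_0(X)$-algebras. Once compactness on the dense set $\Gamma$ is in hand, the general case follows by continuity of $\widetilde{\Phi}$, completing the verification that $(\widetilde{E},\widetilde{\Phi},\widetilde{T})\in\EE^G(Ind_H^XA,B)$.
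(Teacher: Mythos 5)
Your treatment of the equivariance condition matches the paper exactly: the computation $(\widetilde V_g\widetilde T_{d(g)}\widetilde V_g^*\eta)(s)=T_{d(g^{-1}s)}(\eta(s))=T_{d(s)}(\eta(s))=(\widetilde T_{r(g)}\eta)(s)$ shows that $\widetilde T$ is genuinely $G$-equivariant, so the last axiom holds trivially, and reducing the three compactness conditions to the dense set $\Gamma$ of generators $\lambda(\tilde{\alpha}(\varphi\otimes i_A(a)))$ is also how the paper proceeds. The gap lies in the compactness step itself, which is the heart of the lemma. First, the finiteness you invoke cannot come from ``the compactly-supported cutoff built into the elements of $\widetilde E_c$'': support properties of vectors in a dense subspace say nothing about compactness of an operator; what controls the orbit sum is the compact support of $\varphi$ inside the generator $f$. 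Second, the formula $T_{d(g)}=V_gT_{r(g)}V_g^{-1}$ is not meaningful for $g\in X\setminus H$: there $r(g)$ need not lie in $H^{(0)}$, $T$ is only defined over $H^{(0)}$, and $V$ is only an $H$-action. What the $H$-equivariance of $T$ (arranged via Proposition \ref{proper groupoids: Operators can be chosen equivariant}) actually buys is the exact identity $i_{K(E)}([T,\Phi(a)])=[\widetilde T,\widetilde\Phi(i_A(a))]$ (and its analogues) for the direct-summand embedding $i_E:E\hookrightarrow\widetilde E$, valid because $H$ is clopen and $\widetilde\Phi(i_A(a))$ kills everything supported off $H$.

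Third, and most importantly, the passage from ``orbitwise conjugates of compact operators on $E$'' to an element of $K(\widetilde E)$ is precisely what must be proved, and ``norm-limit of finite-rank-over-$K(E)$ operators'' asserts the conclusion rather than establishing it. The paper closes this by a two-step reduction: for $f=i_A(a)$ the three operators are literally $i_{K(E)}$ applied to $[T,\Phi(a)]$, $(T^2-1)\Phi(a)$, $(T-T^*)\Phi(a)$, hence compact with no approximation needed; for a general generator one verifies, e.g., $(\widetilde T-\widetilde T^*)\widetilde\Phi(\lambda(\tilde{\alpha}(\varphi\otimes i_A(a))))=\lambda\bigl(\widetilde V(\varphi\otimes(\widetilde T-\widetilde T^*)\widetilde\Phi(i_A(a)))\widetilde V^*\bigr)$, and then Lemma \ref{r!} (integration of a compactly supported continuous section of $r^*\mathcal{K}(\widetilde E)$ against the Haar system) delivers an element of $K(\widetilde E)$. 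Your appeal to uniformity in $u$, Lemma \ref{measure of compact set is bounded} and $\norm{S}=\sup_u\norm{S_u}$ points in the right direction, but without these exact identities (or an equivalent continuity-of-sections mechanism) the crucial step of your plan remains unproved.
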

\begin{proof}
	As a first step we check that $\widetilde{T}$ is $G$-equivariant. For this note that for $u\in G^{(0)}$ the operator $\widetilde{T}_u:\widetilde{E}_u\rightarrow\widetilde{E}_u$ is given by the same formula as $\widetilde{T}$ itself. Thus for all $g\in G$, $\xi\in \widetilde{E}_{d(g)}$ and $s\in X^{r(g)}$ we can compute:
	\begin{align*}
	(\widetilde{T}_{r(g)}\widetilde{V}_g \xi)(s) & = T_{d(s)}((\widetilde{V}_g\cdot \xi)(s))\\
	& = T_{d(s)}(\xi(g^{-1}s))\\
	& = (\widetilde{T}\xi)(g^{-1}s)\\
	& = (\widetilde{V}_g \widetilde{T}_{d(g)}\xi)(s)
	\end{align*}
	Consequently, it is enough to check that $[\widetilde{T},\widetilde{\Phi}(f)]$, $(\widetilde{T}^2-1)\widetilde{\Phi}(f)$ and $(\widetilde{T}^*-\widetilde{T})\widetilde{\Phi}(f)$ are compact operators on $\widetilde{E}$ for all $f\in Ind_{H}^X A$. We will do this in two steps:\\
	\underline{Step 1: $f=i_A(a)$}:\\
	For this we note that there is an embedding $i_E:E\hookrightarrow \widetilde{E}$ of $E$ as a direct summand of $\widetilde{E}$ given by the formula
	\[  i_E(e)(g)=\left\{\begin{array}{ll} V_{g^{-1}}(e(r(g))) & ,g\in H \\
	0_{d(g)} & ,else \end{array}\right\}.\]
	This embedding induces a corresponding embedding $i_{K(E)}:K(E)\rightarrow K(\widetilde{E})$. 
	By checking on rank-one operators and going through the formulas we can see that for $F\in K(E)$ we have the following equation:
	\[(i_{K(E)}(F) \xi)(g)=\left\{\begin{array}{ll} (V_{g^{-1}}F_{r(g)}V_g) \cdot \xi(g) & ,g\in H \\
	0_{d(g)} & ,else \end{array}\right\}\]
	
	Note also that for $a\in A$ we have $(\widetilde{\Phi}(i_A(a))\xi)(g)=0$ if $g\notin H$.
	For $g\in H$ we can use the $H$-equivariance of $T$ to compute:
	\begin{align*}
	(i_{K(E)}([T,\Phi(a)])\xi)(g) & = (V_{g^{-1}}[T_{r(g)},\Phi_{r(g)}(a(r(g)))]V_g) (\xi(g))\\
	& = [T_{d(g)},\Phi_{d(g)}(\alpha_{g^{-1}}(a(r(g))))] \xi(g)\\
	& = [T_{d(g)},\Phi_{d(g)}(i_A(a)(g))] \xi(g)\\
	& = ([\widetilde{T},\widetilde{\Phi}(i_A(a))]\xi)(g).
	\end{align*}
	Consequently, we have $i_{K(E)}([T,\Phi(a)])=[\widetilde{T},\widetilde{\Phi}(i_A(a))]$ for all $a\in A$.
	Similar computations show that $i_{K(E)}((T^2-1)\Phi(a))=(\widetilde{T}^2-1)\widetilde{\Phi}(i_A(a))$ and $i_{K(E)}((T-T^*)\Phi(a))=(\widetilde{T}-\widetilde{T}^*)\widetilde{\Phi}(i_A(a))$ for all $a\in A$.\\
	\underline{Step 2: $f=\lambda(\tilde{\alpha}(\varphi\otimes i_A(a)))$}\\
	Since $(\widetilde{T}-\widetilde{T}^*)\widetilde{\Phi}(i_A(a))\in K(\widetilde{E})$ by the first step, we have
	$$ \widetilde{V}(\varphi\otimes(\widetilde{T}-\widetilde{T}^*)\widetilde{\Phi}(i_A(a)))\widetilde{V}^*\in \Gamma_c(G,r^*\mathcal{K}(\widetilde{E}))$$
	for all $\varphi\in C_c(G)$ and hence
	$$\lambda (\widetilde{V}(\varphi\otimes(\widetilde{T}-\widetilde{T}^*)\widetilde{\Phi}(i_A(a)))\widetilde{V}^*)\in K(\widetilde{E})$$ by Lemma \ref{r!}.
	Let us show that 
	$$(\widetilde{T}-\widetilde{T}^*)\widetilde{\Phi}(\lambda(\tilde{\alpha}(\varphi\otimes i_A(a))))=\lambda (\widetilde{V}(\varphi\otimes(\widetilde{T}-\widetilde{T}^*)\widetilde{\Phi}(i_A(a)))\widetilde{V}^*))$$
	For $f=\lambda(\tilde{\alpha}(\varphi\otimes i_A(a)))$ we compute:
	\begin{align*}
	((\widetilde{T}-\widetilde{T}^*)&\widetilde{\Phi}(f)\cdot \xi)(s)\\
	 & = (T_{d(s)}-T_{d(s)}^*)\Phi_{d(s)}(\lambda(\tilde{\alpha}(\varphi\otimes i_A(a)))(s))\cdot \xi(s)\\
	& = \sum\limits_{g\in G^{r(s)}} \varphi(g) (T_{d(s)}-T_{d(s)}^*)\Phi_{d(s)}(\tilde{\alpha}_g(i_A^{d(g)}(a)(s)))\xi(s)\\
	& = \sum\limits_{g\in G^{r(s)}} \varphi(g) ((\widetilde{T}_{r(g)}-\widetilde{T}_{r(g)}^*)\widetilde{\Phi}_{r(g)}(\tilde{\alpha}_g(i_A^{d(g)}(a))\xi))(s)\\
	& = \sum\limits_{g\in G^{r(s)}}  \widetilde{V}_g\varphi(g)(\widetilde{T}_{d(g)}-\widetilde{T}_{d(g)}^*)\widetilde{\Phi}_{d(g)}(i_A^{d(g)}(a))\widetilde{V}^*_g\xi)(s)\\
	& =\sum\limits_{g\in G^{r(s)}} ((  (\widetilde{V}(\varphi\otimes(\widetilde{T}-\widetilde{T}^*)\widetilde{\Phi}(i_A(a)))\widetilde{V}^*))(g)\xi)(s)\\
	& = \left(\lambda (\widetilde{V}(\varphi\otimes (\widetilde{T}-\widetilde{T}^*)\widetilde{\Phi}(i_A(a)))\widetilde{V}^*)(r(s))\cdot \xi_{\mid r(s)}\right)(s)\\
	& = (\lambda (\widetilde{V}(\varphi\otimes (\widetilde{T}-\widetilde{T}^*)\widetilde{\Phi}(i_A(a)))\widetilde{V}^*)\cdot \xi)(s)
	\end{align*}
	Similarly, we compute
	$$[\widetilde{T},\widetilde{\Phi}(\lambda(\tilde{\alpha}(\varphi\otimes i_A(a))))]=\lambda(\widetilde{V}(\varphi\otimes [\widetilde{T},\widetilde{\Phi}(i_A(a))])\widetilde{V}^*)$$
	and 
	$$(\widetilde{T}^2-1)\widetilde{\Phi}(\lambda(\tilde{\alpha}(\varphi\otimes i_A(a))))=\lambda (\widetilde{V}(\varphi\otimes(\widetilde{T}^2-1)\widetilde{\Phi}(i_A(a)))\widetilde{V}^*)).$$
	From the previous lemma we know that elements of the form $\lambda(\tilde{\alpha}(\varphi\otimes i_A(a)))$ form a dense subset of $Ind_H^X A$ and thus the result follows by continuity.
\end{proof}
Applying the same constructions to a homotopy we conclude that the mapping $(E,\Phi,T)\mapsto (\widetilde{E},\widetilde{\Phi},\widetilde{T})$ induces a map
in equivariant $\KK$-theory, which we call the inflation map:
$$\textsf{inf}_H^G:\KK^H(A,B_{\mid H})\rightarrow \KK^G(Ind_H^X A,B)$$

\begin{proof}[Proof of Theorem \ref*{CompressionIsomorphism}]
	As a first step we claim that the result is invariant under passing to a Morita-equivalent algebra in the first variable. Indeed if $A'$ is Morita-equivalent to $A$ and if we let $x\in \KK^H(A',A)$ be the corresponding invertible $\KK^H$-element, the claim will follow from the commutativity of the following diagram:
	\begin{center}
		\begin{tikzpicture} 
		\matrix (m) [matrix of math nodes, row sep=3em,
		column sep=1em, text height=1em, text depth=0.25em]
		{ \KK_*^G(Ind_H^X \ A,B) & \KK_*^H(A,B_{\mid H^{(0)}})\\
			\KK_*^G(Ind_H^X\ A',B) & \KK_*^H(A',B)       
			\\ };
		\path[->,font=\scriptsize]
		(m-1-1) edge node[auto] {$ \textsf{comp}_H^G $} (m-1-2)
		(m-2-1) edge node[auto] {$  \textsf{comp}_H^G $} (m-2-2)
		
		(m-1-1) edge node[left] {$ \textsf{Ind}_H^G(x)\otimes \cdot $} (m-2-1)    
		(m-1-2) edge node[auto] {$ x\otimes \cdot $} (m-2-2)
		;
		\end{tikzpicture}
	\end{center}
	Here $\mathsf{Ind}_H^G(x)$ denotes the image of $x$ under the induction homomorphism
	$$\mathsf{Ind}_H^G: \KK^H(A',A)\rightarrow \KK^G(Ind_H^X A',Ind_H^X A),$$
	from Proposition \ref{Prop:Induction homomorphism}.
	Commutativity of the above diagram follows from the equation
	$$[i_{A'}]\otimes \textsf{res}_H^G(\textsf{Ind}_H^G(x))=x\otimes [i_A],$$
	since then for any $y\in \KK^G(Ind_H^X\ A, B)$ we have
	\begin{align*}
	x\otimes \textsf{comp}_H^G(y)& = x\otimes i_A^*(\textsf{res}_H^G(y))\\
	& = x\otimes [i_A]\otimes \textsf{res}_H^G(y)\\
	& = [i_{A'}]\otimes \textsf{res}_H^G(\textsf{Ind}_H^G(x))\otimes \textsf{res}_H^G(y)\\
	& = [i_{A'}]\otimes \textsf{res}_H^G(\textsf{Ind}_H^G(x)\otimes y)\\
	& = \textsf{comp}_H^G(\textsf{Ind}_H^G(x)\otimes y).
	\end{align*}
	
	We will now show that the inflation map constructed above is inverse to the compression homomorphism. We will begin with the easier direction:
	Let $(E,\Phi,T)$ represent an element in $\KK^{H}(A,B_{\mid H})$. We need to see that $\textsf{comp}_H^G([\widetilde{E},\widetilde{\Phi},\widetilde{T}]) = [E,\Phi,T]$.
	By definition the element $\textsf{comp}_H^G([\widetilde{E},\widetilde{\Phi},\widetilde{T}])$ can be represented by the triple $(\widetilde{E}_{\mid H},\widetilde{\Phi}_{\mid A_{\mid H}}\circ i_A,\widetilde{T}_{\mid \widetilde{E}_{\mid H}})$.
	It is not too hard to see that $\widetilde{E}_{\mid H}$ can be obtained by the same definitions as $\widetilde{E}$ if we just consider bounded continuous functions  $\xi:G'\rightarrow d_{\mid G'}^*\mathcal{E}$, where $G'=G_{H^{(0)}}^{H^{(0)}}$.
	Consider the split-exact sequence coming from the restriction map $\textsf{res}:\widetilde{E}\rightarrow \Gamma_0(H^{(0)},\mathcal{E})\cong E$; $\xi\mapsto \xi_{\mid H^{(0)}}$. The split is then given by the map $i_E$ and thus $\widetilde{E}=i_E(E)\oplus ker(\textsf{res})$. Now for $a\in A$ and $\xi\in ker(\textsf{res})\subseteq \widetilde{E}$ we have
	$$\widetilde{\Phi}(i_A(a))(\xi)(g) = \Phi_{d(g)}(i_A(a)(g))(\xi(g))=0,$$
	since for $g\in H$ we have $\xi(g)=V_{g^{-1}}(\xi(r(g)))=0$ and for $g\in G\setminus H$ we have that $i_A(a)(g)=0$.
	
	On the other hand given $e\in E$, $a\in A$ and $g\in H$ we compute
	\begin{align*}
	(\widetilde{\Phi}(i_A(a))i_E(e))(g)& =\Phi_{d(g)}(i_A(a)(g))(i_E(e)(g))\\
	&=\Phi_{d(g)}(\alpha_{g^{-1}}(a(r(g))))V_{g^{-1}}e(r(g))\\
	& =V_{g^{-1}}\Phi_{r(g)}(a(r(g)))e(r(g))\\
	& =V_{g^{-1}}(\Phi(a)e)(r(g))\\
	&=i_E(\Phi(a)e)(g).
	\end{align*}
	Since both sides are clearly zero for $g\notin H$, we have
	$$\widetilde{\Phi}(i_A(a))i_E(e)=i_E(\Phi(a)e).$$
	Combining these results we get that under the identification $E\cong i_E(E)$ and for all $a\in A$ we have $$\widetilde{\Phi}(i_A(a))(e+\xi)=\Phi(a)(e),$$ and thus $\widetilde{\Phi}\circ i_A$ decomposes as $\Phi\oplus 0$ under the decomposition $\widetilde{E}=i_E(E)\oplus ker(\textsf{res})$. Similar (but even easier) computations yield that $\widetilde{T}=T\oplus \widetilde{T}_{\mid ker(\textsf{res})}$.
	We conclude that
	\begin{align*}
\textsf{comp}_H^G([\widetilde{E},\widetilde{\Phi},\widetilde{T}])&=[(\widetilde{E}_{\mid H},\widetilde{\Phi}_{\mid A_{\mid H}}\circ i_A,\widetilde{T}_{\mid \widetilde{E}_{\mid H}})]\\
&=[(E,\Phi,T)]+\underbrace{[(ker(\textsf{res}),0,\widetilde{T}_{\mid ker(\textsf{res})})]}_{=0}.
	\end{align*}
	This completes the proof of
	$$\textsf{comp}_H^G\circ \textsf{inf}_H^G=id_{\KK^H(A,B_{\mid H})}.$$
	
	For the converse we make use of the first paragraph of this proof and pass to the stabilization $A\otimes_{H^{(0)}} K(L^2(G^{H^{(0)}}))$ of $A$ (if necessary) which is Morita-equivalent to $A$ via the imprimitivity bimodule $L^2(G^{H^{(0)}},A)=L^2(G^{H^{(0)}})\otimes_{C_0(H^{(0)})} A$.
	Using the identification $K(L^2(G))_{\mid H}\cong K(L^2(G^{H^{(0)}}))$, we have a canonical isomorphism
	$$Ind_H^X (A\otimes_{H^{(0)}} K(L^2(G^{H^{(0)}})))\cong(Ind_H^X\ A)\otimes_{G^{(0)}} K(L^2(G))$$
	by Lemma \ref{Lemma:Induction and Tensor product}. Thus, given a representative $(F,\Psi,S)$ of an element in $\KK^G(Ind_H^X\ A,B)$, we may assume that $\Psi$ is essential and $S$ is $G$-equivariant by Proposition \ref{AutomaticEquivariance}.
	
	Since $X^u/H$ is discrete for every $u\in G^{(0)}$ the characteristic function $\chi_{gH}$ is an element in $C_0(X^{r(g)}/H)$ . Using these functions we can define a family of pairwise orthogonal projections $\lbrace p_{gH}\mid gH\in X^u/H\rbrace$ on the Hilbert $Ind_H^{X^u} A$-$B_u$-module $F_u$
	by letting
	$$p_{gH}(\Psi_u(f)e(u))=\Psi_u(\chi_{gH}f)e(u).$$
	Let us check that this definition is actually continuous in $gH$ or in other words, that $gH\mapsto p_{gH}$ defines an element in $L(\tilde{r}^*(F))$:
	
	For this it is enough to show that for each $\varphi\in C_c(X/H)$, $f\in Ind_H^X A$ and $e\in F$ we have that
	$$gH\mapsto P(\varphi\otimes \Psi(f)e)(gH):=\varphi(gH)p_{gH}(\Psi(f)e)$$
	is continuous, since elements of the form $\varphi\otimes \Psi(f)e$ are dense in $\tilde{r}^*(F)$.
	
	By density, it is enough to consider $f\in Ind_H^X A$ such that $gH\mapsto \norm{f(g)}$ has compact support and using a partition of unity argument, we can assume that this support is actually contained in an open set $U\subseteq X/H$ on which $\tilde{r}$ is injective.
	But then for any $gH\in U$ we have $$\chi_{gH}f_{\mid X^{r(g)}}=f_{\mid X^{r(g)}}$$
	since $f_{\mid X^{r(g)}}(x)\neq 0$ implies $xH\in X^{r(g)}\cap U$. But of course we have $gH\in X^{r(g)}\cap U$ as well and since $\tilde{r}(xH)=\tilde{r}(gH)$ we must have $gH=xH$ by injectivity of $\tilde{r}_{\mid U}$.
	Thus, we have $$f_{\mid X^{r(g)}}(x)=\left\{\begin{array}{ll} f(x) & gH=xH \\
	0 & ,else \end{array}\right\}=\chi_{gH}f_{\mid X^{r(g)}}(x).$$
	It follows that $gH\mapsto \chi_{gH}f_{\mid X^{r(g)}}$ is a compactly supported continuous section of the bundle over $X/H$ associated to $Ind_H^X A$.
	Consequently, for each $\varphi\in C_c(X/H)$ and $e\in F$ we have that $$gH\mapsto \varphi(gH)p_{gH}(\Psi_{r(g)}(f)e(r(g)))=\varphi(gH)\Psi_{r(g)}(\chi_{gH}f_{\mid X^{r(g)}})e(r(g))$$ is a compactly supported continuous section of $\tilde{r}^*(\mathcal{F})$, as desired.
	
	It is not hard to check that the following equality holds
	\begin{equation}\label{Projection/Action commutation}
	V_g p_{sH}=p_{gsH} V_g\ \forall (g,s)\in G^{(2)}.
	\end{equation}
	Define an operator $S'$ on $F$ by
	$$S'_u:=\sum\limits_{gH\in X^u/H} p_{gH}S_u p_{gH}$$
	Since for all $e\in F$ and $f\in (Ind_H^X A)_c$ the map
	$$gH\mapsto p_{gH}S_{r(g)}p_{gH}\Psi_{r(g)}(f_{\mid X^{r(g)}})e(r(g)$$ is continuous and compactly supported, integrating against the counting measures on the fibres of $X/H$ yields a well-defined operator $S'\in L(F)$.
	Using equation (\ref{Projection/Action commutation}) from above one easily verifies that $S'$ is still $G$-equivariant but additionally satisfies the relation $S'_{r(g)}p_{gH}=p_{gH}S'_{r(g)}$ for all $g\in X$. We will show that $S'$ is a compact perturbation of $S$ which allows us two assume that any element in $\KK^G(Ind_H^X\ A,B)$
	can be represented by an essential Kasparov triple with an equivariant operator, which commutes with the families of projections defined above.
	
	One easily checks that $$((S-S')\Psi(f))_u=\sum\limits_{gH\in X^u/H} (S_u-p_{gH}S_u)\Psi_u(\chi_{gH}f_{\mid X^u}).$$
	Using compactness of $[S,\Psi(\chi_{gH}f_{\mid X^u})]$ we can see that each summand in the above sum is compact. Then we use our standard argument again that the map $
	gH\mapsto (S_{r(g)}-p_{gH}S_{r(g)})\Psi_{r(g)}(\chi_{gH}f_{\mid X^{r(g)}})$ defines a  continuous section $X/H\rightarrow \widetilde{r}^*(\mathcal{K}(F))$ with compact support and therefore integration with respect to the system of counting measures on $X/H$ yields a continuous section $G^{(0)}\rightarrow \mathcal{K}(F)$, i.e. an element in $K(F)$.
	Now let $\chi_H$ be the characteristic function of the $\pi(H^{(0)})\subseteq X/H$. The set $\pi(H^{(0)})$ is clopen since the pre-image under the quotient map is just $H$, which is clopen in $X$ by assumption. Thus $\chi_H\in C_b(X/H)$.
	Now define a projection $p_H\in L(F)$ on the dense subset $\Psi(Ind_H^X\ A)F\subseteq F$ by
	$$p_H(\Psi(f)e)=\Psi(\chi_H\cdot f)e.$$
	Then $(E,\Phi,T):=(p_H F,p_H \Psi p_H, p_H S p_H)$ is a representative of the element $\textsf{comp}_H^G([F,\Psi,S])$.
	
	Now for $\xi\in \widetilde{E}_c$ and $u\in G^{(0)}$ define an element $\Theta(\xi)$ in $F$ by
	$$\Theta(\xi)(u)=\sum\limits_{gH\in X^u/H} V_g(\xi(g)).$$
	We want to show that this definition extends to a bounded linear map $\Theta:\widetilde{E}\rightarrow F$.
	For this we need the following:
	Whenever $e\in p_H F$ and $g\in G\setminus H$ we can use equation \ref{Projection/Action commutation} to see that $$(p_H)_{r(g)}V_g(e(d(g))=0.$$
	
	If $\xi\in\widetilde{E}_c$ and $g,s\in G^x$ for some $x\in G^{(0)}$ such that $gH\neq sH$, i.e. $s^{-1}g\in G\setminus H$ we have by the above result:
	$$\lk V_g(\xi(g)),V_s(\xi(s))\rk=\lk \underbrace{V_{s^{-1}g}(\xi(g))}_{\in (p_HF)_{d(g)}^\bot},\underbrace{\xi(s)}_{\in (p_H F)_{d(g)}}\rk=0$$
	Now we are ready to prove that $\Theta$ extends to an isometry as follows:
	\begin{align*}
	\norm{\Theta(\xi)}^2 & = \sup_{x\in G^{(0)}} \norm{\lk\Theta(\xi)(x),\Theta(\xi)(x)\rk}\\
	& = \sup_{x\in G^{(0)}} \norm{\sum\limits_{gH}\sum\limits_{sH} \lk V_g(\xi(g)),V_s(\xi(s))\rk}\\
	& = \sup_{x\in G^{(0)}} \norm{\sum\limits_{gH} \lk V_g(\xi(g)),V_g(\xi(g))\rk}\\
	& = \sup_{x\in G^{(0)}} \norm{\sum\limits_{gH} \beta_g(\lk \xi(g),\xi(g)\rk)}\\
	& = \norm{\xi}^2
	\end{align*}
	Let us also check that $\Theta$ is $G$-equivariant:
	\begin{align*}
	V_s(\Theta_{d(s)}(\xi)(d(s))) & = \sum\limits_{gH\in G^{d(s)}_{H^{(0)}}/H} V_{sg}(\xi(g)) &\\
	& = \sum\limits_{gH\in G^{r(s)}_{H^{(0)}}/H} V_{g}(\xi(s^{-1}g)) & (gH\mapsto s^{-1}gH)\\
	& = \sum\limits_{gH\in G^{r(s)}_{H^{(0)}}/H} V_{g}(\widetilde{V}_s(\xi)(g)) & \\
	& = (\Theta_{r(s)}(\widetilde{V}_s(\xi))) (r(s)) &
	\end{align*}
	Similarly, we can show that $\Theta$ intertwines $\widetilde{\Phi}$ with $\Psi$ and $\widetilde{T}$ with $S$.
	Now if $e\in F$ is arbitrary we can define $\xi\in\widetilde{E}$ by letting
	$$\xi(g)=(p_H)_{d(g)} V_{g^{-1}}\cdot e(r(g)).$$
	Then we can compute $\Theta(\xi)(x)=\sum_{gH} V_g((p_H)_{d(g)} V_{g^{-1}}\cdot e(x))=e(x).$ This completes the proof that
	\begin{align*}\textsf{inf}_H^G(\textsf{comp}_H^G([F,\Psi,S]))&=\textsf{inf}_H^G([E,\Phi,T])\\
	&=[\widetilde{E},\widetilde{\Phi},\widetilde{T}]=[F,\Psi,S].
	\end{align*}
\end{proof}
In the next section, we shall also need the following compatibility property of the compression homomorphism with respect to taking right Kasparov products:
\begin{lemma}\label{Lemma:Compression and Kasparov Product}
	Let $G$ be a second countable étale groupoid, $H\subseteq G$ a proper open subgroupoid and let $X:=G_{H^{(0)}}$. Let $A$ be an $H$-algebra and let $B$ and $B'$ be two $G$ algebras. Then, for every $x\in \mathrm{KK}^G(B,B')$ we have a commutative diagram:
	\begin{center}
		\begin{tikzpicture}[description/.style={fill=white,inner sep=2pt}]
		\matrix (m) [matrix of math nodes, row sep=3em,
		column sep=2.5em, text height=1.5ex, text depth=0.25ex]
		{ \mathrm{KK}^G(Ind_H^X A,B) & \mathrm{KK}^G(Ind_H^X A,B') \\
			\mathrm{KK}^H(A,B_{\mid H}) & \mathrm{KK}^H(A,B'_{\mid H}) \\
		};
		\path[->,font=\scriptsize]
		(m-1-1) edge node[auto] {$ \cdot \otimes x $} (m-1-2)
		(m-1-2) edge node[auto] {$ \textsf{comp}_H^G $} (m-2-2)
		(m-2-1) edge node[auto] { $ \cdot\otimes res_H^G(x) $ } (m-2-2)
		(m-1-1) edge node[auto] { $ \textsf{comp}_H^G $ } (m-2-1)
		;
		\end{tikzpicture}
	\end{center}
\end{lemma}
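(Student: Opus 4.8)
The strategy is to unwind the definition of $\textsf{comp}_H^G$ and reduce the assertion to two standard compatibility properties of the Kasparov product, namely functoriality of the restriction homomorphism and associativity of the product. Recall from the construction preceding Theorem \ref{CompressionIsomorphism} that $\textsf{comp}_H^G = i_A^*\circ \textsf{res}_H^G$, where we use the identification $(Ind_H^X A)_{\mid H}\cong Ind_H^{G'} A$ with $G'=G_{H^{(0)}}^{H^{(0)}}$, and $i_A\colon A\hookrightarrow Ind_H^{G'} A$ is the $H$-equivariant inclusion from Lemma \ref{Lem:InclusionIntoInducedAlgebra}, viewed as a class $[i_A]\in\mathrm{KK}^H(A, Ind_H^{G'} A)$. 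Thus for $y\in\mathrm{KK}^G(Ind_H^X A,B)$ one has, by definition,
\[
\textsf{comp}_H^G(y)=[i_A]\otimes_{Ind_H^{G'} A}\textsf{res}_H^G(y).
\]
(As usual I assume $A$, $B$, $B'$ separable so that the products below are defined; this is the point where second countability of $G$, and hence separability of the induced and restricted algebras, is used.)

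First I would invoke the compatibility of the restriction homomorphism with the Kasparov product, which is part of the functoriality of $\mathrm{KK}$ with respect to groupoid homomorphisms applied to the inclusion $H\hookrightarrow G$ (see \cite[Propositions~7.1~and~7.2]{LeGall99}): for $y\in\mathrm{KK}^G(Ind_H^X A,B)$ and $x\in\mathrm{KK}^G(B,B')$ we have
\[
\textsf{res}_H^G(y\otimes_B x)=\textsf{res}_H^G(y)\otimes_{B_{\mid H}}\textsf{res}_H^G(x)
\]
in $\mathrm{KK}^H(Ind_H^{G'} A,B'_{\mid H})$. Note that $\textsf{res}_H^G(x)=res_H^G(x)\in\mathrm{KK}^H(B_{\mid H},B'_{\mid H})$ is exactly the class appearing in the bottom horizontal arrow of the diagram.

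Next I would compose with $i_A^*$ and use associativity of the Kasparov product (cf. \cite[Th\'eor\`eme~6.3]{LeGall99}), together with the fact that $i_A^*$ is given by left multiplication with $[i_A]$. This yields
\begin{align*}
\textsf{comp}_H^G(y\otimes_B x)
&=[i_A]\otimes\big(\textsf{res}_H^G(y)\otimes_{B_{\mid H}}\textsf{res}_H^G(x)\big)\\
&=\big([i_A]\otimes_{Ind_H^{G'} A}\textsf{res}_H^G(y)\big)\otimes_{B_{\mid H}}\textsf{res}_H^G(x)\\
&=\textsf{comp}_H^G(y)\otimes_{B_{\mid H}} res_H^G(x),
\end{align*}
which is precisely the commutativity of the square. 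I do not expect a genuine obstacle here: the only care required is bookkeeping, namely keeping the identification $(Ind_H^X A)_{\mid H}\cong Ind_H^{G'} A$ consistent so that $[i_A]$ and $\textsf{res}_H^G(y)$ are composable, and ensuring the separability hypotheses hold so that all three products above are defined and associative. The substance of the lemma is entirely contained in the two cited properties of $\mathrm{KK}$-theory.
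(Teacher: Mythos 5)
Your proposal is correct and follows essentially the same route as the paper: the paper also factors $\textsf{comp}_H^G=i_A^*\circ\textsf{res}_H^G$ and checks the two resulting squares, using compatibility of the restriction map with the Kasparov product (cited there as \cite[Proposition~6.1.3]{LeGall}) for the $\textsf{res}_H^G$-square and associativity of the Kasparov product for the $i_A^*$-square. Your equational formulation, including the identification of $i_A^*$ with left multiplication by $[i_A]$, is just the same argument written out without the diagram decomposition.
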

\begin{proof}
Using the definition of the compression homomorphism, it is enough to prove, that the following diagram commutes:
\begin{center}
	\begin{tikzpicture}[description/.style={fill=white,inner sep=2pt}]
	\matrix (m) [matrix of math nodes, row sep=3em,
	column sep=2.5em, text height=1.5ex, text depth=0.25ex]
	{ \mathrm{KK}^G(Ind_H^X A,B) & \mathrm{KK}^H(Ind_H^{G'} A,B_{\mid H}) & \mathrm{KK}^H(A,B_{\mid H}) \\
		\mathrm{KK}^G(Ind_H^X A,B') & \mathrm{KK}^H(Ind_H^{G'} A,B'_{\mid H}) & \mathrm{KK}^H(A,B'_{\mid H}) \\
	};
	\path[->,font=\scriptsize]
	(m-1-1) edge node[auto] {$ res_H^G $} (m-1-2)
	(m-1-2) edge node[auto] {$ \cdot\otimes res_H^G(x) $} (m-2-2)
	(m-2-1) edge node[auto] { $ res_H^G $ } (m-2-2)
	(m-1-1) edge node[auto] { $ \cdot \otimes x $ } (m-2-1)
	(m-1-2) edge node[auto] {$ i_A^* $} (m-1-3)
	(m-2-2) edge node[auto] {$ i_A^* $} (m-2-3)
	(m-1-3) edge node[auto] {$ \cdot\otimes res_H^G(x) $} (m-2-3)
	;
	\end{tikzpicture}
\end{center}
Commutativity of the diagram on the right follows from the associativity of the Kasparov product. Using the fact that the map $res_H^G$ is given by pulling back along the inclusion map $\iota:H\hookrightarrow G$, commutativity of the left diagram follows from \cite[Proposition~6.1.3]{LeGall}.
\end{proof}
\section{The Going-Down Principle}
In this section we state and prove the Going-Down (or restriction) principle for ample groupoids. After reminding the reader about universal spaces for proper actions of groupoids and the formulation of the Baum-Connes conjecture following \cite{MR1798599}, we first prove a special case of the restriction principle (see Theorem \ref{MainTheorem}), that can be applied directly in many cases. We then extend the formalism of Going-Down functors as in \cite{CEO} to our setting and state the main results in full generality.

Recall that a proper $G$-space $Z$ is called a \textit{universal proper $G$-space}, if for every proper $G$-space $X$ there exists a continuous $G$-equivariant map $\varphi:X\rightarrow Z$ which is unique up to $G$-equivariant homotopy.
Note that a universal proper $G$-space $Z$ as in the definition above is unique up to $G$-equivariant homotopy equivalence.
A priori it is not clear that a universal proper $G$-space always exists. 
But by combining several results of Tu (\cite[Proposition~6.13, Lemma~6.14]{Tu99} and \cite[Proposition~11.4]{Tu98}) we obtain that every second countable étale groupoid $G$ admits a locally compact universal proper $G$-space.

Recall, that a $G$-space $X$ is called \textit{$G$-compact (or cocompact)} if there exists a compact subset $K\subseteq X$, such that $X=GK$. We need the following elementary fact, whose proof we omit.

\begin{lemma}
Let $G$ be a locally compact Hausdorff groupoid. Furthermore, let $X$ be a $G$-compact $G$-space and $Y$ be a proper $G$-space. Then every $G$-equivariant continuous map $\varphi:X\rightarrow Y$ is automatically proper.
\end{lemma}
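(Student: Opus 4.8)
The plan is to exploit the $G$-compactness of $X$ to reduce properness of $\varphi$ to a statement about compact sets, and then invoke the characterization of proper $G$-spaces from Proposition \ref{Prop:CharacterizationsProperAction}. First I would fix a compact subset $K\subseteq X$ with $X=GK$, which exists by $G$-compactness. To show $\varphi$ is proper, I take an arbitrary compact subset $L\subseteq Y$ and must show that $\varphi^{-1}(L)$ is compact; since $X$ is locally compact Hausdorff, it suffices to show that $\varphi^{-1}(L)$ is contained in some compact subset of $X$ (as $\varphi^{-1}(L)$ is automatically closed by continuity of $\varphi$).

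The key step is the following: every point $x\in\varphi^{-1}(L)$ can be written as $x=gk$ for some $k\in K$ and $g\in G$ with $d(g)=p_X(k)$. Then $\varphi(x)=g\varphi(k)$, so $g\varphi(k)\in L$, which means $g\in\mathcal{F}'$ where I set $L':=L\cup\varphi(K)$ (a compact subset of $Y$, since $\varphi(K)$ is compact) and $\mathcal{F}':=\{g\in G\mid g L'\cap L'\neq\emptyset\}$. Indeed $\varphi(k)\in\varphi(K)\subseteq L'$ and $g\varphi(k)\in L\subseteq L'$. Since $Y$ is a proper $G$-space, Proposition \ref{Prop:CharacterizationsProperAction}(2) guarantees that $\mathcal{F}'$ is compact. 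Consequently $x=gk$ lies in the set $\mathcal{F}'\cdot K=\{gk\mid g\in\mathcal{F}',\, k\in K,\, d(g)=p_X(k)\}$, which is the image of the compact set $(\mathcal{F}'\times K)\cap (G\ast X)$ under the continuous action map $G\ast X\to X$, hence compact. Therefore $\varphi^{-1}(L)\subseteq \mathcal{F}'\cdot K$, a compact set, and being closed it is itself compact.

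There are no serious obstacles here; the only point requiring a small amount of care is verifying that $(\mathcal{F}'\times K)\cap(G\ast X)$ is compact, which follows because it is a closed subset (the condition $d(g)=p_X(k)$ is closed by continuity of $d$ and $p_X$) of the compact set $\mathcal{F}'\times K$, and that its image under the continuous multiplication map is compact. One should also note that $\varphi(K)$ is compact because $\varphi$ is continuous and $K$ is compact, and that finite unions of compact sets are compact, so that $L'$ is a legitimate compact subset of $Y$ to which Proposition \ref{Prop:CharacterizationsProperAction} applies.
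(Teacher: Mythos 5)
Your argument is correct: writing each $x\in\varphi^{-1}(L)$ as $gk$ with $k\in K$, using equivariance to get $g\in\mathcal{F}_{L'}$ for the compact set $L'=L\cup\varphi(K)$, and invoking Proposition \ref{Prop:CharacterizationsProperAction}(2) for the proper $G$-space $Y$ yields $\varphi^{-1}(L)\subseteq \mathcal{F}_{L'}\cdot K$, a compact set, so the closed set $\varphi^{-1}(L)$ is compact. The paper explicitly omits the proof of this lemma as an elementary fact, and your write-up is exactly the standard argument one would supply, with the details (closedness of $G\ast X$, compactness of $(\mathcal{F}_{L'}\times K)\cap(G\ast X)$ and of its image) handled correctly.
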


Let $\mathcal{E}(G)$ denote a universal proper $G$-space. Then, applying the above lemma, for any two $G$-compact  subsets $X_1\subseteq X_2\subseteq \mathcal{E}(G)$ we have a canonical $*$-homo\-morphism $C_0(X_2)\rightarrow C_0(X_1)$ given by restriction.
This homomorphism in turn induces a map
$$\KK^G(C_0(X_1),A)\rightarrow \KK^G(C_0(X_2),A)$$
for every $G$-algebra $A$. Thus, the following definition makes sense:
\begin{defi}
Let $G$ be an étale, second countable Hausdorff groupoid and $A$ be a $G$-algebra.
The \textit{topological $K$-theory of $G$ with coefficients in $A$} is defined as
$$\K_*^{\mathrm{top}}(G;A):=\varinjlim \KK_*^G(C_0(X),A),$$
where the direct limit is taken over all $G$-compact, locally compact and second countable subsets $X\subseteq \mathcal{E}(G)$.
\end{defi}

Next, we want to define the Baum-Connes assembly map. We shall need the following well-known result.
\begin{lemma}
Let $G$ be a proper étale groupoid with compact orbit space $G\setminus G^{(0)}$ and let $c:G^{(0)}\rightarrow \RR^+$ be a compactly supported cutoff function for $G$. Then the function $p_c:G\rightarrow \CC$, $g\mapsto \sqrt{c(d(g))c(r(g))}$ defines a projection in $C_r^*(G)$. Moreover the class $[p_c]\in \K_0(C_r^*(G))=\KK(\CC,C_r^*(G))$ does not depend on the choice of the cutoff function $c$.
\end{lemma}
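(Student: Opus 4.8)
The plan is to split the statement into its two parts and treat each by a short direct computation in the convolution $*$-algebra $C_c(G)\subseteq C_r^*(G)$: first verify that $p_c$ is a self-adjoint idempotent, and then, given two compactly supported cutoff functions, write down an explicit partial isometry in $C_c(G)$ realising a Murray--von Neumann equivalence between the associated projections, which forces their $\K_0$-classes to coincide.

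For the first part I would begin by checking that $p_c$ actually lies in $C_c(G)$: it is continuous since $c$, $d$, $r$ and the square root function are continuous, and its support is contained in $d^{-1}(supp(c))\cap r^{-1}(supp(c))=G_{supp(c)}^{supp(c)}$, which is compact because $G$ is proper and $supp(c)$ is compact. Self-adjointness is immediate from the formula $f^*(g)=\overline{f(g^{-1})}$ for trivial coefficients, since $p_c(g^{-1})=\sqrt{c(r(g))c(d(g))}=p_c(g)$ is real. For idempotency, using that for an étale groupoid the Haar system is the counting system so that the convolution is a genuine finite sum, I would compute
\begin{align*}
(p_c\ast p_c)(g)&=\sum_{h\in G^{r(g)}}p_c(h)\,p_c(h^{-1}g)=\sum_{h\in G^{r(g)}}\sqrt{c(d(h))c(r(g))}\,\sqrt{c(d(g))c(d(h))}\\
&=\sqrt{c(r(g))c(d(g))}\sum_{h\in G^{r(g)}}c(d(h))=p_c(g),
\end{align*}
where I used $r(h)=r(g)$, $r(h^{-1}g)=d(h)$, $d(h^{-1}g)=d(g)$ and the normalisation $\int_{G^{r(g)}}c(d(h))\,d\lambda^{r(g)}(h)=\sum_{h\in G^{r(g)}}c(d(h))=1$ from the definition of a cutoff function. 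Hence $p_c=p_c^*=p_c^2$ in $C_c(G)$, so $p_c$ is a projection in $C_r^*(G)$.

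For the independence statement, let $c_0,c_1$ be two compactly supported cutoff functions and set $v(g):=\sqrt{c_0(d(g))c_1(r(g))}$. As before $v\in C_c(G)$, its support being contained in $G_{supp(c_0)}^{supp(c_1)}$, which is compact by properness. A computation of exactly the same shape, again collapsing the sum via $\sum_{h\in G^{r(g)}}c_i(d(h))=1$, gives $v^*v=p_{c_0}$ and $vv^*=p_{c_1}$; for instance
\begin{align*}
(v^*v)(g)&=\sum_{h\in G^{r(g)}}\sqrt{c_0(r(h))c_1(d(h))}\,\sqrt{c_0(d(h^{-1}g))c_1(r(h^{-1}g))}\\
&=\sqrt{c_0(r(g))c_0(d(g))}\sum_{h\in G^{r(g)}}c_1(d(h))=p_{c_0}(g),
\end{align*}
and symmetrically for $vv^*$. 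Thus $v$ is a partial isometry in $C_r^*(G)$ with initial projection $p_{c_0}$ and final projection $p_{c_1}$, whence $[p_{c_0}]=[p_{c_1}]$ in $\K_0(C_r^*(G))$. (Alternatively, the convex combinations $(1-t)c_0+tc_1$ are again compactly supported cutoff functions with supports inside a fixed compact set, so $t\mapsto p_{(1-t)c_0+tc_1}$ is a norm-continuous path of projections, which gives the same conclusion.)

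I do not expect a genuine obstacle: the whole argument is elementary bookkeeping with the composable-pair identities. The hypotheses enter only through properness of $G$ — which makes the sets $G_K^K$ compact and hence places $p_c$ and $v$ in $C_c(G)$ — and through the cutoff normalisation, which is precisely what collapses the convolution sums. The one point that needs a little care is keeping track of which unit each of the expressions $d(h)$, $r(h)$, $d(h^{-1}g)$, $r(h^{-1}g)$ evaluates to, so that the cutoff identity is applied over the correct fibre $G^{r(g)}$.
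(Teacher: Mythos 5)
Your proof is correct. The paper itself states this lemma without proof (as a well-known fact), and your argument is exactly the standard one: $p_c$ and $v(g)=\sqrt{c_0(d(g))c_1(r(g))}$ lie in $C_c(G)$ because their supports sit inside $G_{K}^{K}$ for the compact set $K=supp(c_0)\cup supp(c_1)$, which is compact by properness; the composable-pair identities $r(h^{-1}g)=d(h)$, $d(h^{-1}g)=d(g)$ together with the normalisation $\sum_{h\in G^{u}}c_i(d(h))=1$ collapse the convolution sums precisely as you wrote, giving $p_c^*=p_c=p_c^2$, $v^*v=p_{c_0}$ and $vv^*=p_{c_1}$, hence $[p_{c_0}]=[p_{c_1}]$ in $\K_0(C_r^*(G))$. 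Your parenthetical homotopy alternative also works, since convex combinations of compactly supported cutoff functions are again such, with supports in a fixed compact set, so the path of projections is norm-continuous.
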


We are now in the position to define the Baum-Connes assembly map: Let $A$ be a $G$-algebra. For every $G$-compact subspace $X\subseteq \mathcal{E}(G)$ we can consider the composition
$$\mu_X:\KK^G_*(C_0(X),A)\stackrel{j_G}{\rightarrow} \KK_*(C_r^*(G\ltimes X),A\rtimes_r G)\stackrel{[p_c]\otimes\cdot}{\rightarrow}\KK_*(\CC,A\rtimes_r G)$$
where $j_G$ is the descent homomorphism. Note, that we also used the identification $C_0(X)\rtimes_r G\cong C_r^*(G\ltimes X)$.
One easily checks, that the maps $\mu_X$ give rise to a well-defined homomorphism
$$\mu_A:\K_*^{\mathrm{top}}(G;A)\rightarrow \KK_*(\CC,A\rtimes_r G)=\K_*(A\rtimes_r G).$$
This is the \textit{Baum-Connes assembly map} for $G$ with coefficients in $A$.

Let us now turn to the Going-Down principle:
Let $P(G)$ denote the subset of all probability measures in $M(G)$, the space of all finite positive Radon measures on $G$. Recall, that for a measure $\mu\in M(G)$ the support of $\mu$ is defined as
\[supp(\mu)=\lbrace g\in G\mid \mu(U)>0 \textit{ for each open neighbourhood U of g}\rbrace.\]
Since we are working with the weak-*-topology, a description in terms of continuous functions with compact support would be much more convenient. Such a description is given by the following lemma.
\begin{lemma}\label{CharacterizationOfBeingInTheSupportOfaMeasure}
For $\mu\in M(G)$ and $g\in G$ we have that $g\in supp(\mu)$ if and only if $I_\mu(\varphi)>0$ for each $\varphi\in C_c^+(G)$ such that $\varphi(g)>0$.
\end{lemma}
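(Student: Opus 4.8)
The plan is to prove the two implications separately, handling the ``only if'' direction by a direct estimate and the ``if'' direction by contraposition, using local compactness of $G$ together with Urysohn's lemma to produce the necessary test functions.

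For the ``only if'' direction I would assume $g\in supp(\mu)$ and take an arbitrary $\varphi\in C_c^+(G)$ with $\varphi(g)>0$. Setting $\varepsilon:=\varphi(g)/2>0$, the set $U:=\{h\in G\mid \varphi(h)>\varepsilon\}$ is open by continuity of $\varphi$ and contains $g$, so $\mu(U)>0$ by the very definition of the support. Since $\varphi\geq\varepsilon$ on $U$, one obtains
$$I_\mu(\varphi)=\int_G\varphi\,d\mu\geq\int_U\varphi\,d\mu\geq\varepsilon\,\mu(U)>0,$$
which is exactly the asserted inequality.

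For the converse I would prove the contrapositive. Assuming $g\notin supp(\mu)$, there is by definition an open neighbourhood $U$ of $g$ with $\mu(U)=0$. Using that $G$ is locally compact Hausdorff, I would shrink $U$ to a relatively compact open set $V$ with $g\in V\subseteq\overline{V}\subseteq U$, and then apply Urysohn's lemma to get $\varphi\in C_c^+(G)$ with $0\leq\varphi\leq 1$, $\varphi(g)=1$ and $supp(\varphi)\subseteq V$. For this $\varphi$ we have $\varphi(g)>0$ while
$$I_\mu(\varphi)=\int_V\varphi\,d\mu\leq\mu(V)\leq\mu(U)=0,$$
so the condition on the right-hand side fails, as required.

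I do not expect a genuine obstacle here; the argument is routine measure theory. The only points that warrant a little care are ensuring the Urysohn function has compact support --- which is precisely why one first passes from $U$ to a relatively compact $V$ with $\overline V\subseteq U$ --- and observing that both estimates use nothing beyond monotonicity of the integral of a nonnegative function, so no regularity assumptions on $\mu$ beyond those already built into $M(G)$ are needed.
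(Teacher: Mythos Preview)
Your proof is correct and follows essentially the same approach as the paper's: both directions use the same ideas (a lower bound $\varphi\geq\varepsilon\chi_U$ on a suitable open set for one direction, and a Urysohn bump function for the other). The only cosmetic difference is that the paper proves the ``if'' direction directly rather than by contraposition, showing $\mu(U)\geq I_\mu(\varphi)>0$ for every open neighbourhood $U$ of $g$.
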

\begin{proof}
Let $g\in supp(\mu)$ and $\varphi\in C_c^+(G)$ such that $\varphi(g)>0$. Find a $\varphi(g)>\varepsilon>0$. Since $\varphi$ is continuous we can find a neighbourhood $U$ of $g$ such that $\varphi(h)>\varepsilon$ for all $h\in U$. If we define $c:=\frac{1}{2}\inf\lbrace\varphi(x)\mid x\in U\rbrace>0$ then $c\chi_U\leq \varphi$ and thus $0<c\mu(U)=\int_G c\chi_Ud\mu \leq I_\mu(\varphi)$.

For the converse let $U\subseteq G$ be an open neighbourhood of an element $g\in G$. Pick a function $\varphi\in C_c^+(G)$ with $0\leq \varphi\leq 1$, $\varphi(g)=1$ and $supp(\varphi)\subseteq U$. Then $\mu(U)=\int_G \chi_Ud\mu\geq I_\mu(\varphi)>0$.
\end{proof}
Let $P(G)$ denote the probability measures on $G$ and for each $K\subseteq G$ compact define 
\[P_K(G)=\lbrace \mu\in P(G)\mid \forall g,h\in supp(\mu):\ r(g)=r(h)\textit{ and } g^{-1}h\in K\rbrace.\]
Note that there is a canonical left action of $G$ on $P_K(G)$ with respect to the anchor map $P_K(G)\rightarrow G^{(0)}$, $\mu\mapsto r(g)$ for any $g\in supp(\mu)$, given by translation.
It was shown in \cite[Proposition~3.1]{Tu12} that $P_K(G)$ is a locally compact, $G$-compact, proper $G$-space. Furthermore, if $X$ is any $G$-compact proper $G$-space, there exists a compact subset $K\subseteq G$ and a $G$-equivariant map $X\rightarrow P_K(G)$ (see \cite[Proposition~3.2]{Tu12}).
If $G$ is ample we can always choose the set $K$ to be compact and open, since if $K_1\subseteq K_2$ then obviously $P_{K_1}(G)\subseteq P_{K_2}(G)$ and if $K$ is any compact set it is contained in a compact open set. In the following, we will show that in this case the spaces $P_K(G)$ are geometric realizations of $G$-simplicial complexes in the following sense (compare \cite[Definition~3.1]{Tu99}):
\begin{defi}\label{G-simplicial complex} Let $G$ be an ample group\-oid and $n\in\NN$.
A \textit{$G$-simplicial complex} of dimension at most $n$ is a pair $(X,\Delta)$ consisting of a locally compact $G$-space $X$ (the set of vertices) and a collection $\Delta$ of finite, non-empty subsets of $X$ (called simplices) with at most $n+1$ elements such that:
\begin{enumerate}
\item the anchor map $p:X\rightarrow G^{(0)}$ has the property, that for every $x\in X$ there exists a compact open neighbourhood $U\subseteq X$ such that $p\mid_U:U\rightarrow p(U)$ is a homeomorphism onto a compact open subset of $G^{(0)}$.
\item for each $\sigma\in\Delta$ we have $\sigma\subseteq p^{-1}(u)$ for some $u\in G^{(0)}$,
\item if $\sigma\in \Delta$, then every non-empty subset of $\sigma$ is also an element of $\Delta$, and
\item for each $\sigma\in \Delta$, say $\sigma=\lbrace x_1,\ldots, x_n\rbrace\subseteq X_u$, there exists a compact open neighbourhood $V$ of $u$ in $G^{(0)}$ and continuous sections $s_1,\ldots s_n:V\rightarrow X$ of $p$ such that $\lbrace s_1(v),\ldots s_n(v)\rbrace\in \Delta$ for all $v\in V$ and $\lbrace s_1(u),\ldots, s_n(u)\rbrace=\sigma$.
\end{enumerate}
The $G$-simplicial complex is \textit{typed} if there is a discrete set $\mathcal{T}$ and a $G$-invariant continuous map $X\rightarrow \mathcal{T}$ whose restriction to the support of a single simplex in $\Delta$ is injective.
\end{defi}
The \textit{geometric realization} of a $G$-simplicial complex $(X,\Delta)$ is the set
$$\betrag{\Delta}=\lbrace\mu\in P(X)\mid supp(\mu)\in\Delta\rbrace$$
equipped with the weak-$\ast$-topology. The geometric realization $\betrag{\Delta}$ will always be a locally compact space and the action of $G$ on $\betrag{\Delta}$, induced by the acion of $G$ on $X$, is proper if $X$ is a proper $G$-space.

\begin{bem}\label{ImagesOfSections}
If $\sigma\in \Delta$, say $\sigma=\lbrace x_1,\ldots, x_n\rbrace\subseteq X_u$ as in item $(4)$ above and for each $1\leq i\leq n$ $U_i$ is a compact open neighbourhood of $x_i$ such that the $U_i$ are pairwise disjoint and $p_{\mid U_i}$ is a homeomorphism onto its image, then we may always assume that the section $s_i$ only takes images in $U_i$. If not, pass from the domain $V$ of the $s_i$ to $$\widetilde{V}=V\cap \bigcap\limits_{0\leq i\leq n} s_i^{-1}(U_i).$$
\end{bem}

Note that the realization of a $0$-dimensional complex $(X,\Delta)$ can be canonically identified with a subset of $X$. Using the existence of local sections as in item $(4)$ we can show that under this identification, $\betrag{\Delta}$ is actually open in $X$:
Let $x\in \Delta$ be given and $U$ in $X$ be an open neighbourhood of $x$ such that $p_{\mid U}$ is a homeomorphism onto its image. Furthermore let $V$ be a neighbourhood of $p(x)$ and $s:V\rightarrow X$ be a section as in $(4)$. By the above remark we may assume $s(V)\subseteq U$. Then $p^{-1}(V)\cap U$ is an open neighbourhood of $x$ and since $p^{-1}(V)\cap U=s(V\cap p(U))$, it is contained in $\Delta$.

Thus, if we restrict $p$ to the subset $\betrag{\Delta}$, it still has the property, that every point $x\in\Delta$ has a compact open neighbourhood $U$ such that $p_{\mid U}:U\rightarrow p(U)$ is a homeomorphism onto a compact open subset of $G^{(0)}$.

\begin{lemma} Let $G$ be an ample groupoid and $K$ be a compact open subset of $G$.
If we define $$\Delta_K(G)=\lbrace \sigma\subseteq G\mid \forall g,h\in \sigma:\ r(g)=r(h)\textit{ and }g^{-1}h\in K\rbrace$$
then $(G,\Delta_K(G))$ is a $G$-simplicial complex in the sense of Definition \ref{G-simplicial complex} and $P_K(G)$ is its geometric realization. We note that $\Delta_K(G)$ has finite dimension (as a $G$-simplicial complex).
\end{lemma}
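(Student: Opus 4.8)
The plan is to take the vertex set to be $X=G$ itself, equipped with the action of $G$ on itself by left translation and with anchor map $p=r$; this is a proper $G$-space (the action of $G$ on itself by left translation is proper, as recorded in Section~2), so that verifying the axioms of Definition~\ref{G-simplicial complex} for $(G,\Delta_K(G))$ is all that is needed. For axiom~(1) I would use ampleness directly: every $g\in G$ is contained in a compact open bisection $U$, and then $r|_U$ is a homeomorphism onto $r(U)$, which is open because $U$ is a bisection and compact because $U$ is, hence a compact open subset of $G^{(0)}$. Axioms~(2) and~(3) are immediate from the defining formula for $\Delta_K(G)$: the conditions $r(g)=r(h)$ and $g^{-1}h\in K$ constrain only pairs of elements, so every $\sigma\in\Delta_K(G)$ lies in a single fibre $G^u$ and every nonempty subset of a member of $\Delta_K(G)$ is again one; and each such $\sigma$ is finite (see below), so $\Delta_K(G)$ is indeed a collection of finite simplices.

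The one axiom requiring genuine work is~(4). Given $\sigma=\{g_1,\dots,g_n\}\subseteq G^u$, I would choose pairwise disjoint compact open bisections $U_i\ni g_i$ — possible since $G$ is Hausdorff and $G^a$ is a basis — and set $s_i:=(r|_{U_i})^{-1}$, a continuous local section of $r$ defined on the compact open set $r(U_i)$ with $s_i(u)=g_i$. On the compact open neighbourhood $V_0:=\bigcap_{i=1}^{n}r(U_i)$ of $u$, each map $v\mapsto s_i(v)^{-1}s_j(v)$ is continuous and sends $u$ to $g_i^{-1}g_j\in K$; since $K$ is open, the preimages of $K$ under these finitely many maps are open neighbourhoods of $u$, and intersecting them with $V_0$ and then shrinking to a compact open set (using that $G^{(0)}$ is totally disconnected) yields a compact open $V\ni u$ on which $\{s_1(v),\dots,s_n(v)\}\in\Delta_K(G)$. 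Restricting the $s_i$ to $V$ gives the required sections, compatibly with Remark~\ref{ImagesOfSections}.

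For the identification of $P_K(G)$ with the geometric realization $\betrag{\Delta_K(G)}=\{\mu\in P(G)\mid \operatorname{supp}(\mu)\in\Delta_K(G)\}$, I would show the two sets coincide as subsets of $M(G)$ with the weak-$\ast$-topology. One inclusion is immediate from the definitions. For the other, if $\mu\in P_K(G)$ and $g_0\in\operatorname{supp}(\mu)\neq\emptyset$, then $\operatorname{supp}(\mu)\subseteq g_0\cdot(K\cap G^{d(g_0)})$, a compact set, while $\operatorname{supp}(\mu)\subseteq G^{r(g_0)}$, which is discrete because $G$ is \'etale; hence $\operatorname{supp}(\mu)$ is a compact subset of a discrete space, so finite, and therefore $\operatorname{supp}(\mu)\in\Delta_K(G)$, i.e. $\mu\in\betrag{\Delta_K(G)}$. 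Finally, for finite-dimensionality, cover the compact set $K$ by finitely many, say $m$, compact open bisections; a bisection meets each fibre in at most one point, so $|K\cap G^v|\leq m$ for every $v\in G^{(0)}$. For $\sigma\in\Delta_K(G)$ and any $g_0\in\sigma$ one has $g_0^{-1}\sigma\subseteq K\cap G^{d(g_0)}$, whence $|\sigma|\leq m$; thus $(G,\Delta_K(G))$ has dimension at most $m-1$.

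I expect axiom~(4) — extracting local sections that stay $K$-close while keeping the domain compact open — to be the only slightly delicate point; everything else is routine bookkeeping with the \'etale and ample structure, together with the elementary fact that a compact subset of a discrete fibre is finite.
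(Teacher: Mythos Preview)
Your argument is correct and follows essentially the same route as the paper: ampleness gives axiom~(1), axioms~(2) and~(3) are immediate, and axiom~(4) is obtained by taking local sections from compact open bisections and shrinking the domain using openness of $K$ and continuity of $(g,h)\mapsto g^{-1}h$. Your treatment is in fact slightly more complete than the paper's, since you explicitly verify the identification $P_K(G)=\lvert\Delta_K(G)\rvert$ (via the observation that the support of $\mu\in P_K(G)$ is a compact subset of a discrete fibre, hence finite), and your finite-dimension bound via a finite bisection cover of $K$ is a more direct version of the paper's appeal to Lemma~\ref{measure of compact set is bounded}.
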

\begin{proof}
We consider the action of $G$ on itself by left multiplication. Hence the anchor map is just the range map of $G$. Since $G$ is ample, condition (1) of Definition \ref{G-simplicial complex} clearly holds. As axioms (2) and (3) are built into the definition of $\Delta_K(G)$, it remains to prove (4): So let $\sigma=\lbrace g_1,\ldots, g_n\rbrace\in \Delta_K(G)$ be given and let $u:=r(g_1)=\ldots =r(g_n)$.
Let $\widetilde{U_i}$ be a compact open neighbourhood of $g_i$ such that $r_{\mid \widetilde{U}_i}:\widetilde{U}_i\rightarrow r(\widetilde{U}_i)$ is a homeomorphism. We would like to take the inverses of these maps on $\bigcap_{i=1}^n r(\widetilde{U}_i)$ as our sections but we need to make sure that images of a point form a simplex again. Thus, we use the continuity of the multiplication and the openness of $K$ to shrink the $\widetilde{U}_i$ appropriately. To be more precise:
Consider the continuous map
$$f:G\ltimes G\rightarrow G$$ given by $(g,h)\mapsto g^{-1}h$. As $K$ is open and $f$ is continuous, $f^{-1}(K)$ is open. Thus, for all $1\leq i,j\leq n$ we can find compact open neighbourhoods $U_{i,j}$ of $g_i$ and $V_{j,i}$ of $g_j$ such that $(U_{i,j}\times V_{j,i})\cap G\ltimes G\subseteq f^{-1}(K)$.
Let $$U_i:=\widetilde{U_i}\cap\bigcap\limits_{1\leq j\leq n} U_{i,j}\cap V_{i,j}.$$ Then each $U_i$ is a compact open neighbourhood of $g_i$. Let $V:=\bigcap r(U_i)$ and define $s_i:V\rightarrow U_i\subseteq G$ to be the inverse of the range map restricted to $U_i$. These are continuous sections by definition and for each $v\in V$ and $1\leq l,k\leq n$ we have $s_l(v)\in U_{l,k}$ and $s_k(v)\in V_{k,l}$ and thus
$s_i(v)^{-1}s_j(v)=f(s_i(v),s_j(v))\in K$ by construction. Consequently, we get $\lbrace s_1(v),\ldots s_n(v)\rbrace\in \Delta_K(G)$ for all $v\in V$.

Let us finally show that $\Delta_K(G)$ has finite dimension. It is not hard to see that $\Delta_K(G)=G\cdot \lbrace \sigma\in\Delta_K(G)\mid \sigma\subseteq K\rbrace$ and since translating a $\sigma\in\Delta_K(G)$ does not increase its cardinality it is enough to show that the cardinalities of elements of $\lbrace \sigma\in\Delta_K(G)\mid \sigma\subseteq K\rbrace$ are bounded. But for such a $\sigma\subseteq G^u$ we have $\betrag{\sigma}\leq \betrag{K\cap G^{u}}=\lambda^u(K)\leq \sup\lbrace \lambda^u(K)\mid u\in G^{(0)}\rbrace<\infty$ by Lemma \ref{measure of compact set is bounded}, where $\lambda$ denotes the Haar system given by the counting measure on each fibre.
\end{proof}
The arguments in \cite[Section~3.2]{Tu99} carry over to the $G$-equiva\-riant setting and show that the barycentric subdivision of a $G$-simplicial complex $(X,\Delta)$ is a typed $G$-simplicial complex whose geometric realization is $G$-equivariantly homeomorphic to the original one. However for the sake completeness let us at least recall the construction of the barycentric subdivision and show that it is a $G$-simplicial complex again.
\begin{defi}
Let $(X,\Delta)$ be $G$-simplicial complex. For an element $\mu\in\betrag{\Delta}$ with $supp(\mu)=\lbrace x_1,\ldots, x_n\rbrace$ let $$bc(\mu)=\frac{1}{n}\sum\limits_{i=1}^n \delta_{x_i}$$ denote the \textit{isobarycenter} of the simplex $supp(\mu)\in \Delta$.
Let $X'=\lbrace bc(\mu)\mid \mu\in \betrag{\Delta}\rbrace$ and define $\Delta'$ such that a set $\lbrace \nu_1,\ldots,\nu_l\rbrace$ is in $\Delta'$ if and only if $\bigcup\limits_{0\leq j\leq l}supp(\nu_j)\in\Delta$.
\end{defi}

\begin{prop}
The pair $(X',\Delta')$ is a $G$-simplicial complex.
\end{prop}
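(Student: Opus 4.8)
The plan is to verify, one by one, the four conditions of Definition \ref{G-simplicial complex} for $(X',\Delta')$. The anchor map should be defined by $p'(bc(\mu)):=p(x)$ for any $x\in supp(\mu)$; this is unambiguous because condition $(2)$ for $(X,\Delta)$ forces $supp(\mu)$ into a single fibre of $p$. Continuity of $p'$ and local compactness of $X'$ will come for free once condition $(1)$ is established, and the $G$-equivariance bookkeeping ($X'$ is a $G$-space, $\Delta'$ is $G$-invariant, $p'$ intertwines the anchor maps) is inherited from $(X,\Delta)$ through the $G$-equivariant map $bc$ and needs no separate argument.

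Conditions $(2)$ and $(3)$ for $\Delta'$ I would dispose of first, since they are formal. If $\{\nu_1,\dots,\nu_l\}\in\Delta'$ then $\bigcup_j supp(\nu_j)\in\Delta$, hence sits in one fibre $p^{-1}(u)$ by condition $(2)$ for $\Delta$; so every $\nu_j$ lies in $p'^{-1}(u)$. A non-empty subset $S\subseteq\{\nu_1,\dots,\nu_l\}$ has $\bigcup_{\nu\in S}supp(\nu)\subseteq\bigcup_j supp(\nu_j)\in\Delta$, which lies in $\Delta$ by condition $(3)$ for $\Delta$, so $S\in\Delta'$. Finite-dimensionality of $\Delta'$ follows because, $\Delta$ having dimension at most $n$, every vertex of a simplex of $\Delta'$ is the isobarycenter of one of the at most $2^{n+1}-1$ non-empty subsets of a fixed simplex of $\Delta$, so simplices of $\Delta'$ have at most $2^{n+1}-1$ vertices.

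The main work is condition $(1)$. Given $\nu=bc(\mu)$ with $supp(\mu)=\{x_1,\dots,x_k\}\subseteq p^{-1}(u)$, I would use condition $(1)$ for $\Delta$ and Hausdorffness of $X$ to choose pairwise disjoint compact open neighbourhoods $W_i\ni x_i$ on which $p$ is a homeomorphism onto a compact open subset of $G^{(0)}$, put $V:=\bigcap_{i=1}^k p(W_i)$, and then invoke condition $(4)$ for $\Delta$ together with Remark \ref{ImagesOfSections} to obtain continuous sections $s_i\colon V\to W_i$ of $p$ with $s_i(u)=x_i$ and $\{s_1(v),\dots,s_k(v)\}\in\Delta$ for all $v\in V$ (necessarily $s_i=(p|_{W_i})^{-1}$ on $V$). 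The candidate chart is $\sigma\colon V\to X'$, $\sigma(v):=\frac1k\sum_{i=1}^k\delta_{s_i(v)}$, a weak-$*$-continuous section of $p'$ with $\sigma(u)=\nu$ that is injective since the $W_i$ are disjoint. The step I expect to be the main obstacle is showing $\sigma(V)$ is open in $X'$; the plan is to prove the identity
\[
\sigma(V)=\Bigl\{\rho\in X' : \rho(W_i)>0 \text{ for } i=1,\dots,k,\ \rho\bigl(\textstyle\bigcup_{i=1}^k W_i\bigr)>\tfrac{n}{n+1}\Bigr\},
\]
whose right-hand side is weak-$*$-open because $\chi_{W_i}$ and $\chi_{\bigcup_iW_i}$ lie in $C_c(X)$ (the $W_i$ being compact, hence clopen) and $\rho\mapsto\int f\,d\rho$ is weak-$*$-continuous. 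The inclusion ``$\subseteq$'' is immediate. For ``$\supseteq$'' the two essential observations are: first, every $\rho\in X'$ is the \emph{uniform} measure on a simplex of size $m\le n+1$, so $\rho(\bigcup_iW_i)$ is an integer multiple of $1/m$ and the inequality $\rho(\bigcup_iW_i)>\tfrac{n}{n+1}\ge 1-\tfrac1m$ forces $\rho(\bigcup_iW_i)=1$, i.e.\ $supp(\rho)\subseteq\bigcup_iW_i$; second, $supp(\rho)$ lies in a single fibre of $p$ and $p$ is injective on each $W_i$, so each $W_i$ meets $supp(\rho)$ in at most one point, and with $\rho(W_i)>0$ in exactly one, which forces $m=k$ and, writing $supp(\rho)=\{y_1,\dots,y_k\}$ with $y_i\in W_i$, the common value $v':=p(y_i)$ lies in $V$ and satisfies $y_i=s_i(v')$, whence $\rho=\sigma(v')$. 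Once $\sigma(V)$ is known to be open it is compact open (continuous image of the compact $V$ in the Hausdorff space $X'$), $\sigma\colon V\to\sigma(V)$ is a homeomorphism, and its inverse is $p'|_{\sigma(V)}$ with image the compact open set $V\subseteq G^{(0)}$; this simultaneously yields continuity of $p'$ and local compactness of $X'$.

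For condition $(4)$, given $\{\nu_1,\dots,\nu_l\}\in\Delta'$ with $\tau:=\bigcup_j supp(\nu_j)=\{x_1,\dots,x_k\}\in\Delta$, I would run exactly the construction above to produce $V$ and sections $s_1,\dots,s_k$, write $supp(\nu_j)=\{x_i:i\in S_j\}$ with $\varnothing\ne S_j\subseteq\{1,\dots,k\}$ and $\bigcup_jS_j=\{1,\dots,k\}$, and set $t_j(v):=\frac1{|S_j|}\sum_{i\in S_j}\delta_{s_i(v)}$. These are continuous sections of $p'$ over $V$ with $t_j(u)=\nu_j$, and $\bigcup_j supp(t_j(v))=\{s_1(v),\dots,s_k(v)\}\in\Delta$, so $\{t_1(v),\dots,t_l(v)\}\in\Delta'$ for every $v\in V$. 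Assembling these four verifications completes the proof that $(X',\Delta')$ is a $G$-simplicial complex.
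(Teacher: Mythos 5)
Your proof is correct and follows the same basic strategy as the paper's: pairwise disjoint compact open neighbourhoods of the vertices, local sections coming from condition $(4)$ (shrunk via Remark \ref{ImagesOfSections} so as to land in those neighbourhoods), and openness of the resulting chart image in the weak-$*$ topology via integration against indicator functions of compact open sets. Two differences are worth recording. First, the paper only verifies condition $(1)$ explicitly and dismisses the remaining axioms as immediate, whereas you check all four, including $(4)$ for $\Delta'$ by taking the sub-barycenters along the same sections; that is harmless extra detail. Second, and more substantively, your openness argument is sharper than the one in the paper: there, the neighbourhood is taken to be $W=p'^{-1}\bigl(V\cap\bigcap_i p(U_i)\bigr)\cap\bigcap_i\lbrace\nu\in X'\mid supp(\nu)\cap U_i\neq\emptyset\rbrace$ and it is asserted that the map sending $v$ to the isobarycenter of $\lbrace s_1(v),\ldots,s_n(v)\rbrace$ is a continuous inverse of $p'\vert_W$; but as written $W$ can contain isobarycenters of larger simplices which have the given simplex as a face together with extra vertices outside $\bigcup_i U_i$, and those are not in the image of the asserted inverse, so $p'\vert_W$ need not be injective. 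Your additional open condition $\rho\bigl(\bigcup_i W_i\bigr)>\tfrac{n}{n+1}$, combined with the observation that points of $X'$ are uniform measures on at most $n+1$ atoms, forces $supp(\rho)\subseteq\bigcup_i W_i$ and exactly closes this gap, so your version is the one I would keep. One cosmetic slip: condition $(4)$ only provides the sections $s_i$ on some compact open neighbourhood of $u$, which may be strictly smaller than $\bigcap_i p(W_i)$; take $V$ to be that domain and replace each $W_i$ by $W_i\cap p^{-1}(V)$ (still compact open, with $p(W_i\cap p^{-1}(V))=V$), after which your characterization of $\sigma(V)$ and the rest of the argument go through verbatim.
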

\begin{proof}
We will only show that $p':X'\rightarrow G^{(0)}$ satisfies property (1) from the definition. The other properties follow easily from the construction. Let $\mu\in X'$, say $\mu=\frac{1}{n}\sum\limits_{i=1}^n \delta_{x_i}$ for $x_1,\ldots,x_n\in X$ and let $U_i$ be a compact open neighbourhood of $x_i$ such that $p_{\mid U_i}$ is a homeomorphism onto its image. Since $G$ is Hausdorff we can assume that the $U_i$ are pairwise disjoint. Now from condition $(4)$ of the definition we get continuous sections $s_1,\ldots s_n:V\rightarrow X$, where $V$ is a compact open neighbourhood of $u:=p'(\mu)$. Following Remark \ref{ImagesOfSections} we can assume that $s_i(V)\subseteq U_i$. Consider the sets
$$W_i:=\lbrace \nu\in X'\mid supp(\nu)\cap U_i\neq \emptyset\rbrace.$$
Note that the intersection $supp(\nu)\cap U_i$ will contain at most one element, since $supp(\nu)$ is contained in one fibre and $U_i$ is the domain of a local homeomorphism.
It follows from Lemma \ref{CharacterizationOfBeingInTheSupportOfaMeasure} that $W_i$ is open. Now let $$W=p'^{-1}(V\cap\bigcap\limits_i p(U_i))\cap \bigcap\limits_i W_i.$$
It is now easy to see that $p'(W)=V\cap \bigcap_i p(U_i)$ and thus $p'(W)$ is compact and open. Furthermore, the map $p'(W)\rightarrow W$ sending an element $v$ to the measure $\frac{1}{n}\sum_{i=1}^n \delta_{s_i(v)}$ is a continuous inverse of $p'$. Hence also $W$ is compact and $p'$ satisfies property (1) from the definition of a $G$-simplicial complex.
\end{proof}

Let us now proceed to prove one of the main results of this paper:
\begin{satz}\label{MainTheorem}
Let $G$ be an ample, second countable, locally compact Hausdorff groupoid and let $A$ and $B$ be separable $G$-algebras. Suppose there is an element $x\in \KK^G(A,B)$ such that 
\[ \KK^H(C(H^{(0)}),A_{\mid H})\stackrel{\cdot\otimes res_H^G(x)}{\rightarrow}\KK^H(C(H^{(0)}), B_{\mid H})\]
is an isomorphism for all compact open subgroupoids $H\subseteq G$. Then the Kasparov-product with $x$ induces an isomorphism
$$ \cdot\otimes x:\K_*^{\mathrm{top}}(G;A)\rightarrow \K_*^{\mathrm{top}}(G;B).$$
\end{satz}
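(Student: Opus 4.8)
The plan is to reduce the statement to a computation on geometric realizations of finite-dimensional $G$-simplicial complexes and then to run an induction on the dimension. First I would use the description of $\K_*^{\mathrm{top}}$ set up above: for a $G$-compact subset $X\subseteq\mathcal{E}(G)$ there is a $G$-map $X\to P_K(G)$ for a suitable compact open $K\subseteq G$, while each $P_K(G)$ admits a $G$-map to $\mathcal{E}(G)$; these exhibit $\{P_K(G)\}_K$ (ordered by inclusion of the $K$) as a cofinal system, so that
$$\K_*^{\mathrm{top}}(G;A)\cong\varinjlim_{K}\KK_*^G(C_0(P_K(G)),A),$$
compatibly with the maps induced by $\cdot\otimes x$. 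Since each $P_K(G)$ is the geometric realization $\betrag{\Delta_K(G)}$ of a finite-dimensional $G$-simplicial complex and, after passing to its barycentric subdivision, may be assumed to be \emph{typed}, and since a colimit of isomorphisms is an isomorphism, it suffices to prove the following: \emph{for every finite-dimensional typed $G$-simplicial complex $(X,\Delta)$ with $\betrag{\Delta}$ $G$-compact, the Kasparov product with $x$ induces an isomorphism $\KK_*^G(C_0(\betrag{\Delta}),A)\to\KK_*^G(C_0(\betrag{\Delta}),B)$.}

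I would prove this by induction on $n=\dim\Delta$. For $n=0$ one identifies $\betrag{\Delta}$ with the open $G$-invariant subset $\Delta\subseteq X$, on which the anchor map is a local homeomorphism onto compact open subsets of $G^{(0)}$; exploiting this local structure, $G$-compactness, and the total disconnectedness coming from ampleness, one decomposes the $G$-algebra $C_0(\betrag{\Delta})$ by a finite filtration by $G$-invariant ideals whose subquotients are each $G$-equivariantly isomorphic to an algebra of the form $Ind_H^{G_{H^{(0)}}}(C(H^{(0)}))\cong C_0(G_{H^{(0)}}/H)$ for a compact open subgroupoid $H\subseteq G$ (using Proposition \ref{Prop:InducedAlgebra:CommutativeCase}). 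On such a subquotient the Compression Isomorphism (Theorem \ref{CompressionIsomorphism}) identifies $\textsf{comp}_H^G\colon\KK^G(Ind_H^{G_{H^{(0)}}}C(H^{(0)}),-)\xrightarrow{\ \cong\ }\KK^H(C(H^{(0)}),(-)_{\mid H})$, and by Lemma \ref{Lemma:Compression and Kasparov Product} the product with $x$ corresponds under this identification to the product with $res_H^G(x)$, which is an isomorphism by hypothesis. Applying the five lemma repeatedly to the six-term exact sequences (in $\KK^G_*(-,A)$ and $\KK^G_*(-,B)$) attached to the steps of the filtration — these extensions are equivariantly semisplit, all algebras involved being commutative and the ambient action proper — then settles the base case.

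For the inductive step I would use the skeleton filtration. The $(n-1)$-skeleton realizes a closed $G$-invariant subspace $\betrag{\Delta^{(n-1)}}\subseteq\betrag{\Delta}$, and because the complex is typed the open complement is $G$-equivariantly homeomorphic to $Y\times\RR^n$, where $Y$ is the $0$-dimensional $G$-compact $G$-simplicial complex of barycenters of the $n$-simplices and $G$ acts trivially on the factor $\RR^n$ (the interior of a standard $n$-simplex). This yields a $G$-equivariant, semisplit short exact sequence
$$0\longrightarrow C_0(Y)\otimes C_0(\RR^n)\longrightarrow C_0(\betrag{\Delta})\longrightarrow C_0(\betrag{\Delta^{(n-1)}})\longrightarrow 0,$$
hence, after applying $\KK^G_*(-,A)$ and $\KK^G_*(-,B)$, a morphism of six-term exact sequences induced by $\cdot\otimes x$. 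By Bott periodicity $\KK^G_*(C_0(Y)\otimes C_0(\RR^n),-)\cong\KK^G_{*+n}(C_0(Y),-)$, so $\cdot\otimes x$ is an isomorphism there by the case $n=0$; on $\KK^G_*(C_0(\betrag{\Delta^{(n-1)}}),-)$ it is an isomorphism by the inductive hypothesis. The five lemma then yields the claim for $\betrag{\Delta}$, completing the induction and, via the cofinality reduction, the theorem.

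The step I expect to be the main obstacle is the base case $n=0$, and specifically the structural statement that a $0$-dimensional $G$-compact proper $G$-space over an ample groupoid admits a finite $G$-invariant filtration with subquotients of the form $C_0(G_{H^{(0)}}/H)$ for compact open subgroupoids $H$: this requires a careful local analysis of the anchor map and of the $G$-action, and is exactly the place where ampleness — rather than mere étaleness — is used, since it supplies the compact open sets needed to extract the subgroupoids $H$. A secondary point requiring care is verifying the $G$-equivariant $\KK$-exactness (semisplitness) of the short exact sequences used in both parts of the induction.
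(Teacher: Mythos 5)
Your proposal follows essentially the same route as the paper: reduce via the spaces $P_K(G)$ and barycentric subdivision to typed, finite-dimensional, $G$-compact $G$-simplicial complexes, induct on dimension, settle the zero-dimensional case by identifying $C_0(GU)\cong Ind_H^{G_{H^{(0)}}}C(H^{(0)})$ for the compact open subgroupoid $H=\lbrace g\mid gU\cap U\neq\emptyset\rbrace$ and invoking Theorem \ref{CompressionIsomorphism} together with Lemma \ref{Lemma:Compression and Kasparov Product}, and handle the inductive step by the skeleton sequence, suspension and the Five Lemma. The only divergence is how you globalize the base case from a single orbit $GU$ to a $G$-compact zero-dimensional space: the paper covers $X=\bigcup_i GU_i$ and runs a Mayer--Vietoris argument, checking that $GU_1\cap GU_2=GV$ with $V=U_1\cap GU_2$ again of the required form, whereas you propose a finite filtration by invariant ideals; your variant does work, and in fact simplifies, because each saturation $GU_i$ is clopen (open since the quotient map is open, closed by properness of the action), so the successive differences give a partition of $X$ into clopen invariant orbits of single compact open sets and $C_0(X)$ splits as a finite direct sum — which also disposes of the semisplitness concern you raise; that clopen-ness of the $GU_i$ is the one point you flagged but did not verify, and it is exactly what your filtration needs.
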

To show the above theorem we will show that for suitably general $G$-compact subsets $X\subseteq \mathcal{E}(G)$ the map 
$$\cdot\otimes x: \KK^G(C_0(X),A)\rightarrow \KK^G(C_0(X),B)$$
is an isomorphism.
Let us first consider the following special case:
\begin{prop}\label{ZeroDimensionalCase}
Under the assumptions of Theorem \ref{MainTheorem} the map
$$\cdot\otimes x: \KK^G(C_0(X),A)\rightarrow \KK^G(C_0(X),B)$$
is an isomorphism for every $G$-compact proper $G$-space $X$ whose anchor map $p:X\rightarrow G^{(0)}$ has the property, that for every $x\in X$ there exists a compact open neighbourhood $U$ of $x$ in $X$ such that $p_{\mid U}:U\rightarrow p(U)$ is a homeomorphism onto a compact open subset of $G^{(0)}$.
\end{prop}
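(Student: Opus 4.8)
The plan is to reduce the statement to the Compression Isomorphism (Theorem \ref{CompressionIsomorphism}) together with its compatibility with Kasparov products (Lemma \ref{Lemma:Compression and Kasparov Product}), by first decomposing $X$ into a finite disjoint union of elementary pieces of the form $G\times_H H^{(0)}$ with $H$ a compact open subgroupoid of $G$.

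First I would exploit the hypothesis on the anchor map. Since $G$ is ample and $p$ is, locally, a homeomorphism onto compact open subsets of the totally disconnected space $G^{(0)}$, the space $X$ is a locally compact, totally disconnected Hausdorff space, the quotient map $X\to X/G$ is open (Proposition \ref{Prop:QuotientLocallyCompact}), and $X/G$ is compact (because $X$ is $G$-compact) and Hausdorff (because the action is proper). The images in $X/G$ of compact open subsets $U\subseteq X$ on which $p$ restricts to a homeomorphism onto a compact open set form a basis of compact open subsets of $X/G$, so $X/G$ is a Stone space and every finite open cover of $X/G$ refines to a finite clopen partition. Applying this to a cover of a compact core $K\subseteq X$ (with $GK=X$) by finitely many such $U_1,\dots,U_n$, one obtains a finite partition $X=V_1\sqcup\cdots\sqcup V_m$ into $G$-invariant clopen subsets with each $V_j\subseteq GU_{i(j)}$, whence $C_0(X)\cong\bigoplus_{j=1}^m C_0(V_j)$.

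Next I would identify the pieces. For a compact open $U$ as above with section $s:=(p_{\mid U})^{-1}$, the set
\[ H_U:=\{g\in G\mid d(g),r(g)\in p(U)\ \text{and}\ g\cdot s(d(g))=s(r(g))\} \]
is a subgroupoid with unit space $p(U)$; it is contained in $\mathcal{F}_U=\{g\mid gU\cap U\neq\emptyset\}$ hence relatively compact, it is closed in $G$ (as $X$ is Hausdorff), and — using that $G$ is ample — it is open (a small enough compact open bisection around $g\in H_U$ lies in $H_U$, because on it both $g'\mapsto g'\cdot s(d(g'))$ and $g'\mapsto s(r(g'))$ take values in $U$ and have the same $p$-value $r(g')$, hence coincide). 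Thus $H_U$ is a compact open subgroupoid, and $g\mapsto g\cdot s(d(g))$ induces a $G$-equivariant homeomorphism $G\times_{H_U}p(U)\cong GU$. Since $G$-invariant clopen subsets of $G\times_{H_U}p(U)$ correspond to $H_U$-invariant clopen subsets $Z\subseteq p(U)$, and restricting $H_U$ to such a $Z$ yields a compact open subgroupoid $H_j$ with unit space $Z$ satisfying $G\times_{H_U}Z\cong G\times_{H_j}H_j^{(0)}$, each $V_j$ is $G$-equivariantly homeomorphic to $G\times_{H_j}H_j^{(0)}$ for a compact open subgroupoid $H_j\subseteq G$. By Proposition \ref{Prop:InducedAlgebra:CommutativeCase} (taking $Y=H_j^{(0)}$ with its tautological $H_j$-action, so that $C_0(Y)=C(H_j^{(0)})$ is the $H_j$-algebra with trivial action) this gives $C_0(V_j)\cong Ind_{H_j}^{G_{H_j^{(0)}}}\,C(H_j^{(0)})$.

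Finally, each $H_j$ is clopen and proper (being compact), so Theorem \ref{CompressionIsomorphism} provides isomorphisms $\textsf{comp}_{H_j}^G\colon\KK^G\big(Ind_{H_j}^{G_{H_j^{(0)}}}C(H_j^{(0)}),C\big)\stackrel{\cong}{\longrightarrow}\KK^{H_j}\big(C(H_j^{(0)}),C_{\mid H_j}\big)$ for $C=A$ and $C=B$. Taking direct sums over $j$ and invoking Lemma \ref{Lemma:Compression and Kasparov Product} for each $H_j$, the product with $x$ on $\KK^G(C_0(X),A)\cong\bigoplus_j\KK^{H_j}(C(H_j^{(0)}),A_{\mid H_j})$ is intertwined with $\bigoplus_j\big(\,\cdot\otimes res_{H_j}^G(x)\,\big)$. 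Since each $H_j$ is a compact open subgroupoid of $G$, the latter map is an isomorphism by the standing hypothesis of Theorem \ref{MainTheorem}, and hence so is $\cdot\otimes x\colon\KK^G(C_0(X),A)\to\KK^G(C_0(X),B)$. The only genuinely nontrivial part of this argument is the construction of the groupoids $H_U$ and the identification of the clopen pieces $V_j$ with induced algebras; once this structural decomposition is in place, the conclusion is a formal consequence of Theorem \ref{CompressionIsomorphism} and Lemma \ref{Lemma:Compression and Kasparov Product}.
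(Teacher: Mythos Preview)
Your argument is correct and reaches the same conclusion via the same key ingredients (Theorem \ref{CompressionIsomorphism} and Lemma \ref{Lemma:Compression and Kasparov Product}), but the reduction is organised differently from the paper. The paper first treats the case $X=GU$ for a single compact open $U$ as in the hypothesis, defining $H:=\{g\in G\mid gU\cap U\neq\emptyset\}$ (which coincides with your $H_U$, since $p_{\mid U}$ is injective), identifying $C_0(GU)\cong Ind_H^G C(U)$, and applying the compression isomorphism. For a general $X=\bigcup_{i=1}^n GU_i$ it then proceeds by induction on $n$ using a Mayer--Vietoris argument. Your approach instead exploits that $X/G$ is a Stone space to refine the cover $\{q(U_i)\}$ to a finite clopen \emph{partition}, thereby splitting $C_0(X)$ as a finite direct sum of induced algebras and bypassing Mayer--Vietoris altogether. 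This is a cleaner reduction in the present totally disconnected setting; the paper's Mayer--Vietoris route is slightly less sharp here but more in line with the inductive skeleton argument used later in the proof of Theorem \ref{MainTheorem}.
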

\begin{proof}
Let us first consider the case that $X$ is the orbit of a single compact open subset $U$ such that $p(U)$ is compact and open in $G^{(0)}$ and $p_{\mid U}:U\rightarrow p(U)$ is a homeomorphism, i.e. $X=GU$.
Consider the set 
$$H=\lbrace g\in G\mid gU\cap U\neq \emptyset\rbrace.$$
Using the fact that $p_{\mid U}$ is a homeomorphism onto $p(U)$ it is not hard to see, that $H$ is a subgroupoid of $G$ and as such isomorphic to $(G\ltimes X)_U^U$ (the isomorphism $(G\ltimes X)_U^U\rightarrow H$ is given by the projection onto the first factor). Since $G\ltimes X$ is proper, the restriction $(G\ltimes X)_U^U$ to $U$ is compact. Clearly, the latter is also open in $G\ltimes X$. Since the anchor map $p:X\rightarrow G^{(0)}$ is open, we can deduce that the first projection $pr_1:G\ltimes X\rightarrow G$ is open. Thus, $H$ is a compact open subgroupoid of $G$. By our choice of $H$ we also have a canonical $G$-equivariant homeomorphism $G\times_H U\cong GU=X$ and thus an equivariant isomorphism $$Ind_H^G C(U)\cong C_0(G\times_H U)\cong C_0(X)$$
by Proposition \ref{Prop:InducedAlgebra:CommutativeCase}.
Using this we can consider the following diagram, which commutes by Lemma \ref{Lemma:Compression and Kasparov Product}.
\begin{center}
   \begin{tikzpicture}[description/.style={fill=white,inner sep=2pt}]
    \matrix (m) [matrix of math nodes, row sep=3em,
     column sep=2.5em, text height=1.5ex, text depth=0.25ex]
     { \KK^G(C_0(X),A) & \KK^G(C_0(X),B) \\
     \KK^H(C(U),A_{\mid H}) & \KK^H(C(U),B_{\mid H}) \\
     };
     \path[->,font=\scriptsize]
     (m-1-1) edge node[auto] {$ \cdot \otimes x $} (m-1-2)
     (m-1-2) edge node[auto] {$ \mathsf{comp}_H^G $} (m-2-2)
     (m-2-1) edge node[auto] { $ \cdot\otimes res_H^G(x) $ } (m-2-2)
     (m-1-1) edge node[auto] { $ \mathsf{comp}_H^G $ } (m-2-1)
    ;
    \end{tikzpicture}
  \end{center}
Since we have an isomorphism $C(U)\cong C(H^{(0)})$, the bottom line in this diagram is an isomorphism.
By Theorem \ref{MainTheorem} the homomorphism $\mathsf{comp}_H^G$ is an isomorphism as well and hence the result follows in this case.

Let us now consider the general case. As $X$ is $G$-compact it admits a finite cover of the form
$$X=\bigcup\limits_{i=1}^n GU_i,$$
where $U_i\subseteq X$ is compact open such that $p_{\mid U_i}$ is a homeomorphism onto its image.
Let us first consider the case $n=2$. By applying a standard argument as for example in \cite[Theorem~21.2.3]{MR1656031} there exist Mayer-Vietoris sequences and so we have a diagram with exact columns, where the horizontal maps are all given by taking Kasparov product with $x$ and we write $\KK^G_*(X,A)$ for $\KK^G_*(C_0(X),A)$ for brevity:
\begin{center}
  \begin{tikzpicture} 
     \matrix (m) [matrix of math nodes, row sep=3em,
     column sep=1em, text height=1em, text depth=0.25em]
     { \vdots&\vdots \\ 
     \KK^G_{*+1}(X,A) & \KK^G_{*+1}(X,B)\\
     
       \KK^G_*(GU_1\cap GU_2,A) & \KK^G_*(GU_1\cap GU_2,B)\\
         \KK_*^G(GU_1,A)\oplus \KK_*^G(GU_2,A) & \KK_*^G(GU_1,B)\oplus \KK_*^G(GU_2,B)\\
       \KK^G_{*}(X,A) & \KK^G_{*}(X,B)\\
       \vdots&\vdots \\
       };
     \path[->,font=\scriptsize]
     (m-2-1) edge node[auto] {$   $} (m-2-2)
     (m-3-1) edge node[auto] { $  $} (m-3-2)
     (m-4-1) edge node[auto] { $ $} (m-4-2)
     (m-5-1) edge node[auto] { $ $} (m-5-2)
     (m-1-1) edge node[auto] {$  $} (m-2-1)
     (m-2-1) edge node[auto] {$  $} (m-3-1)
     (m-3-1) edge node[auto] {$  $} (m-4-1)
    (m-4-1) edge node[auto] {$  $} (m-5-1)
    (m-5-1) edge node[auto] {$  $} (m-6-1)
    
     	  (m-1-2) edge node[auto] {$  $} (m-2-2)
     	  (m-2-2) edge node[auto] {$  $} (m-3-2)
          (m-3-2) edge node[auto] {$  $} (m-4-2)
          (m-4-2) edge node[auto] {$  $} (m-5-2)
          (m-5-2) edge node[auto] {$  $} (m-6-2)
     ;
     \end{tikzpicture}
  \end{center}
	The diagram commutes, since all maps in the Mayer-Vietoris sequence are induced by equivariant $^*$-homomorphisms (see the proof of \cite[Theorem~21.2.2]{MR1656031}) and hence commutativity follows from the associativity of the Kasparov product.
  Using the first step of this proof we already know, that the second horizontal map is an isomorphism. Consider the set $V=U_1\cap GU_2$. It is clearly open and using properness of the action one easily verifies that is is also closed (apply Proposition \ref{Prop:CharacterizationsProperAction} (4)). Since $V\subseteq U_1$ we have that $p_{\mid V}$ is also a homeomorphism onto its image. One easily checks that $GV=GU_1\cap GU_2$. Thus, the third horizontal map is also an isomorphism. Hence the result follows by an application of the Five-Lemma.
  
If $n>2$ is arbitrary, use induction and the above Mayer-Vietoris argument on the decomposition $X=GU_1 \cup \bigcup\limits_{i=2}^n GU_i$ to complete the proof.
\end{proof}
We are now ready for the proof of Theorem \ref{MainTheorem}:
\begin{proof}[Proof of Theorem \ref{MainTheorem}] 
Our proof consists of a two step reduction, each of which tells us that we only need to prove that
\begin{equation}\label{Equation:Iso}
\cdot\otimes x: \KK^G(C_0(X),A)\rightarrow \KK^G(C_0(X),B)
\end{equation}
is an isomorphism for more and more special $G$-spaces $X$.

In the first step we will reduce the problem to showing that (\ref{Equation:Iso}) is an isomorphism for the spaces $P_K(G)$ from the beginning of this section. So let us assume (\ref{Equation:Iso}) holds for all the $P_K(G)$, where $K\subseteq G$ is a compact open subset, and explain how to deduce the conclusion of Theorem \ref{MainTheorem} from there.
Let $X_1$ be any $G$-compact subspace of $\mathcal{E}(G)$. Then $X_1$ is a proper $G$-space itself and thus we can find a compact open subset $K_1\subseteq G$ and a $G$-equivariant map $\varphi_1:X_1\rightarrow P_{K_1}(G)$ by the discussion in the beginning of this section.
Using the universal property of $\mathcal{E}(G)$ there is also a $G$-equivariant map $\psi_1:P_{K_1}(G)\rightarrow \mathcal{E}(G)$.  Let $X_2:=\psi_1(P_{K_1}(G))$. Then $X_2$ is a $G$-compact subspace of $\mathcal{E}(G)$.
By the universal property of $\mathcal{E}(G)$ the composition $\psi_1\circ\varphi_1$ is $G$-homotopic to the canonical inclusion map. So up to replacing $X_2$ by a larger space if necessary, we may assume that the map $\psi_1\circ\varphi_1:X_1\rightarrow X_2$ is $G$-homotopic to the inclusion map. 
Now proceed as above to find a sequence $(X_n)_n$ of $G$-compact subspaces of $\mathcal{E}(G)$ together with maps $\varphi_n$, and $\psi_n$ such that $\psi_n\circ \varphi_n$ is $G$-homotopic to the inclusion $X_n\hookrightarrow X_{n+1}$.
Since the Kasparov-product is natural, we get a commutative diagram, where all the horizontal arrows are given by taking Kasparov-product with $x$ and the vertical arrows are the maps found by the procedure just described.
\begin{center}
  \begin{tikzpicture} 
     \matrix (m) [matrix of math nodes, row sep=3em,
     column sep=1em, text height=1em, text depth=0.25em]
     { \KK_*^G(C_0(X_1),A) & \KK_*^G(C_0(X_1),B)\\
       \KK_*^G(C_0(P_{K_1}(G)),A) & \KK_*^G(C_0(P_{K_1}(G)),B)\\
       \KK_*^G(C_0(X_2),A) & \KK_*^G(C_0(X_2),B)\\
       \KK_*^G(C_0(P_{K_2}(G)),A) & \KK_*^G(C_0(P_{K_2}(G)),B)\\
       \KK_*^G(C_0(X_3),A)  & \KK_*^G(C_0(X_3),B)\\
       \vdots & \vdots
           \\ };
     \path[->,font=\scriptsize]
     (m-1-1) edge node[auto] {$  $} (m-1-2)
     (m-2-1) edge node[auto] {$ \cong  $} (m-2-2)
     (m-3-1) edge node[auto] { $ $} (m-3-2)
     (m-4-1) edge node[auto] { $\cong $} (m-4-2)
     (m-5-1) edge node[auto] { $ $} (m-5-2)
     (m-1-1) edge node[auto] {$ (\varphi_1)_* $} (m-2-1)
     (m-2-1) edge node[auto] {$ (\psi_1)_* $} (m-3-1)
     (m-3-1) edge node[auto] {$ (\varphi_2)_* $} (m-4-1)
     (m-4-1) edge node[auto] {$ (\psi_2)_* $} (m-5-1)
     (m-5-1) edge node[auto] {$  $} (m-6-1)
     	  (m-1-2) edge node[auto] {$ (\varphi_1)_* $} (m-2-2)
     	  (m-2-2) edge node[auto] {$ (\psi_1)_* $} (m-3-2)
          (m-3-2) edge node[auto] {$ (\varphi_2)_* $} (m-4-2)
          (m-4-2) edge node[auto] {$ (\psi_2)_* $} (m-5-2)
          (m-5-2) edge node[auto] {$  $} (m-6-2)
     ;
     \end{tikzpicture}
  \end{center}
  
By going 'zick-zack' in this diagram we get the following diagram:
\begin{center}
   \begin{tikzpicture}[description/.style={fill=white,inner sep=2pt}]
     \matrix (m) [matrix of math nodes, row sep=3em,
     column sep=2.5em, text height=1.5ex, text depth=0.25ex]
     { \KK_*^G(C_0(X_1),A) & \KK_*^G(C_0(X_1),B)\\
       \vdots & \KK_*^G(C_0(X_2),B)\\
       \KK_*^G(C_0(X_3),A) & \vdots\\
       \vdots & \KK_*^G(C_0(X_4),B)\\
       \vdots & \vdots\\
       \K_*^{\mathrm{top}}(G;A)  & \K_*^{\mathrm{top}}(G;B)
           \\ };
     \path[->,font=\scriptsize]
     (m-1-1) edge node[auto] {$ \alpha_1  $} (m-2-2)
     (m-2-2) edge node[auto] {$ \alpha_2 $} (m-3-1)
     (m-3-1) edge node[auto] {$ \alpha_3 $} (m-4-2)
     (m-4-2) edge node[auto] {$  $} (m-5-1)
     (m-1-1) edge node[auto] {$  $} (m-2-1)
     (m-2-1) edge node[auto] {$  $} (m-3-1)
     (m-3-1) edge node[auto] {$  $} (m-4-1)
     (m-4-1) edge node[auto] {$  $} (m-5-1)
     (m-5-1) edge node[auto] {$  $} (m-6-1)
     	  (m-1-2) edge node[auto] {$  $} (m-2-2)
     	  (m-2-2) edge node[auto] {$  $} (m-3-2)
          (m-3-2) edge node[auto] {$  $} (m-4-2)
          (m-4-2) edge node[auto] {$  $} (m-5-2)
          (m-5-2) edge node[auto] {$  $} (m-6-2)
     ;
     \end{tikzpicture}
  \end{center}

Each of the columns of this diagram is an inductive sequence of abelian groups, and in fact a subsequence of the inductive system defining topological K-theory with coefficients. Hence, by an elementary argument, the limits of these two sequences are the groups  $\K_*^{\mathrm{top}}(G;A)$  and $\K_*^{\mathrm{top}}(G;B)$ respectively, as indicated.
Whenever we have such a diagram, the inductive limits must be isomorphic, such that the isomorphism commutes with the diagram (i.e. it is exactly the morphism induced by taking Kasparov-product in each step). This completes the first step of the proof.

Let us now proceed with the second reduction step. The main advantage of working with the spaces $P_K(G)$ is that it is (the geometric realization of) a proper, $G$-compact finite dimensional $G$-simplicial complex, and its barycentric subdivision is $G$-equivariantly homeomorphic to it.
In fact we don't need the special model $P_K(G)$ in what follows and will prove that
$$\cdot\otimes x:\KK_*^G(C_0(X),A)\rightarrow \KK_*^G(C_0(X),B)$$
is an isomorphism for every typed, proper, $G$-compact $G$-simplicial complex $X$ of finite dimension.

To do so we will use an induction argument on the dimension $n$ of $X$ to reduce the problem to the zero dimensional case. If $X$ is (the geometric realization) of a $0$-dimensional complex it follows from the discussion after Remark \ref{ImagesOfSections}, that the anchor map $X\rightarrow G^{(0)}$ has the property, that every point in $X$ has a compact open neighbourhood, such that the anchor map restricts to a homeomorphism onto its image on that neighbourhood. Consequently, Proposition \ref{ZeroDimensionalCase} tells us that $\cdot\otimes x:\KK_*^G(C_0(X),A)\rightarrow \KK_*^G(C_0(X),B)$ is an isomorphism.

Now let $X$ be a $G$-simplicial complex of dimension $n>0$,  $Y$ be its $n-1$-skeleton, and $U=X\setminus Y$ the union of all open $n$-simplices.
Then we get a $G$-equivariant exact sequence
$$0\longrightarrow C_0(U)\longrightarrow C_0(X)\longrightarrow C_0(Y)\longrightarrow 0.$$
As $Y$ is clearly $G$-invariant, \cite[Lemma~3.9]{Tu12} yields the following commutative diagram with exact columns:
\begin{center}
  \begin{tikzpicture} 
     \matrix (m) [matrix of math nodes, row sep=3em,
     column sep=1em, text height=1em, text depth=0.25em]
     { \vdots & \vdots\\
       \KK_*^G(C_0(Y),A) & \KK_*^G(C_0(Y),B)\\
       \KK_*^G(C_0(X),A) & \KK_*^G(C_0(X),B)\\
       \KK_*^G(C_0(U),A) & \KK_*^G(C_0(U),B)\\
       \vdots & \vdots
           \\ };
     \path[->,font=\scriptsize]
     (m-2-1) edge node[auto] {$ \cdot\otimes x  $} (m-2-2)
     (m-3-1) edge node[auto] { $\cdot\otimes x $} (m-3-2)
     (m-4-1) edge node[auto] { $\cdot\otimes x $} (m-4-2)
     (m-1-1) edge node[auto] {$  $} (m-2-1)
     (m-2-1) edge node[auto] {$  $} (m-3-1)
     (m-3-1) edge node[auto] {$  $} (m-4-1)
     (m-4-1) edge node[auto] {$  $} (m-5-1)
     	  (m-1-2) edge node[auto] {$  $} (m-2-2)
     	  (m-2-2) edge node[auto] {$  $} (m-3-2)
          (m-3-2) edge node[auto] {$  $} (m-4-2)
          (m-4-2) edge node[auto] {$  $} (m-5-2)
     ;
     \end{tikzpicture}
  \end{center}
If we assume inductively that the upper horizontal map is an isomorphism, we only need to show that the lower map is also an isomorphism to invoke the Five-Lemma and conclude the result.
But $U$ is equivariantly homeomorphic to $X'\times \RR^n$, where $X'$ denotes the barycenters of $n$-dimensional simplices. Thus, we have $\KK^G_*(C_0(U),A)\cong \KK^G_{*+n}(C_0(X'),A)$. Since taking suspension is compatible with the Kasparov product, it is enough to show that $$\cdot\otimes x:\KK^G_*(C_0(X'),A)\rightarrow \KK^G_*(C_0(X'),B)$$ is an isomorphism. But $X'$ is a $G$-compact, proper $G$-space whose anchor map is a local homeomorphism. Hence the result follows from Proposition~\ref{ZeroDimensionalCase}.
\end{proof}


\section{Amenability at Infinity and the Baum-Connes Conjecture}
\label{Section:AmenabilityAtInfinity}
As an application of Theorem \ref{MainTheorem} we will show that for ample groupoids, which are strongly amenable at infinity, the Baum-Connes assembly map is split-injective. Let us first recall the definitions:

\begin{defi}[\cite{Lassagne},\cite{Delaroche}]
A locally compact Hausdorff groupoid $G$ is called \textit{amenable at infinity}, if there exists a $G$-space $Y$ such that the anchor map $p:Y\rightarrow G^{(0)}$ is proper and $G\ltimes Y$ is amenable (i.e. $G$ acts amenably on $Y$).

We call $G$ \textit{strongly amenable at infinity}, if in addition the anchor map $p:Y\rightarrow G^{(0)}$ admits a continuous (not necessarily equivariant) section.
\end{defi}
Note, that every amenable groupoid is strongly amenable at infinity by taking $Y=G^{(0)}$ with the canonical $G$-action.
Furthermore, by results of \cite{Lassagne}, if $Y$ is a $G$-space witnessing amenability at infinity of $G$, such that the anchor map $p$ is also open, then $G$ is strongly amenable at infinity.

Now if $G$ is (strongly) amenable at infinity and $Y$ is a $G$-space witnessing this, the properness of $p:Y\rightarrow G^{(0)}$ implies that we get an induced map
$$p^*:C_0(G^{(0)})\rightarrow C_0(Y)$$
and consequently, for every $G$-algebra $A$, we get a $G$-equivariant $*$-homo\-morphism
$$id_A\otimes p^*:A\cong A\otimes_{G^{(0)}}C_0(G^{(0)})\rightarrow A\otimes_{G^{(0)}}C_0(Y).$$
This homomorphism in turn induces a map on the level of topological K-theory, which we - by slight abuse of notation - also denote by
$$p_*:\K_*^{\mathrm{top}}(G;A)\rightarrow \K_*^{\mathrm{top}}(G;A\otimes_{G^{(0)}}C_0(Y))$$

By results in \cite{Delaroche} and \cite{Lassagne} we can always find $Y$ with the following additional properties:
\begin{itemize}
	\item $Y$ is second countable.
	\item Each $Y_u$ is a convex space and $G$ acts by affine transformations on $Y$.
\end{itemize}
If we fix $Y$ with these properties we can show:
\begin{prop}\label{contractible}
Let $Y$ be a $G$-space with the properties listed above. If $K\subseteq G$ is a proper, open subgroupoid, then $Y_K=p^{-1}(K)\subseteq Y$ is $K$-equivariantly homotopy-equivalent to $K^{(0)}$.
\end{prop}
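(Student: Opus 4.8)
The plan is to produce an explicit $K$-equivariant deformation retraction of $Y_K$ onto a copy of $K^{(0)}$ sitting inside it. First I would record the easy structural facts: since $K$ is an open subgroupoid of the ample (hence étale) groupoid $G$, it is itself étale, and $K^{(0)} = K\cap G^{(0)}$ is open in $G^{(0)}$, so $Y_K := p^{-1}(K^{(0)})$ is an open, $K$-invariant subspace of $Y$ on which $p$ restricts to the anchor map $Y_K\to K^{(0)}$. Being proper, $K$ admits a cutoff function $c\colon K^{(0)}\to\RR^+$ in the sense of Definition \ref{Defi:cutoff} for the counting-measure Haar system; thus $\sum_{g\in K^u}c(d(g))=1$ for every $u\in K^{(0)}$, and $r$ restricts to a proper map on $supp(c\circ d)$.

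The section is built by averaging the given continuous (non-equivariant) section against the cutoff function. Strong amenability at infinity provides a continuous section $\sigma\colon G^{(0)}\to Y$ of $p$; let $\sigma_0$ be its restriction to $K^{(0)}$ and set
\[ s(u):=\sum_{g\in K^u}c(d(g))\,g\cdot\sigma_0(d(g)),\qquad u\in K^{(0)}. \]
For each fixed $u$ the summation index set $\lbrace g\in K^u\mid c(d(g))\neq 0\rbrace$ lies in $supp(c\circ d)\cap r^{-1}(\lbrace u\rbrace)$, which is compact, hence finite as $K$ is étale; so $s(u)$ is a finite convex combination in the convex fibre $Y_u$ (using $\sum_{g\in K^u}c(d(g))=1$), and $p(s(u))=u$. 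Continuity of $s$ follows by the same local argument that shows the counting measures form a continuous Haar system (compare Lemma \ref{Lem:proper support} and Lemma \ref{r!}), using properness of $r$ on $supp(c\circ d)$. Finally, reindexing the sum via $g\mapsto hg$ and using that $K$ acts by affine transformations gives, for $h\in K$,
\[ h\cdot s(d(h))=\sum_{g\in K^{d(h)}}c(d(g))\,(hg)\cdot\sigma_0(d(g))=\sum_{g'\in K^{r(h)}}c(d(g'))\,g'\cdot\sigma_0(d(g'))=s(r(h)), \]
so $s$ is $K$-equivariant and $p\circ s=\id_{K^{(0)}}$.

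To see that $s\circ p$ is $K$-equivariantly homotopic to the identity I would use the straight-line homotopy along fibres: define $H\colon[0,1]\times Y_K\to Y_K$ by $H(t,y):=(1-t)\,y+t\,s(p(y))$, which takes values in the convex fibre $Y_{p(y)}$. Then $H(0,\cdot)=\id_{Y_K}$, $H(1,\cdot)=s\circ p$, and for $h\in K$ with $d(h)=p(y)$ the affineness of the action together with the equivariance of $s$ yield $H(t,h\cdot y)=(1-t)\,h\cdot y+t\,h\cdot s(p(y))=h\cdot H(t,y)$, so $H$ is $K$-equivariant. Joint continuity of $H$ holds because the convex structure of the fibres of $Y$ varies continuously (as part of the notion of an affine continuous field of convex $G$-spaces) and $s$ is continuous. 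Combining this with $p\circ s=\id_{K^{(0)}}$ shows that $p|_{Y_K}$ and $s$ are mutually inverse $K$-equivariant homotopy equivalences, which proves the proposition.

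The only real obstacle here is the bookkeeping behind the two continuity assertions — that the locally finite convex combination defining $s(u)$ depends continuously on $u$, and that the fibrewise straight-line homotopy is jointly continuous. Both rest on the precise formulation of a continuous field of convex $G$-spaces from \cite{Delaroche,Lassagne} and are handled by the same local-finiteness mechanism, powered by the second cutoff axiom, that underlies integration against a Haar system elsewhere in this paper.
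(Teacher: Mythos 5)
Your proposal is correct and follows essentially the same route as the paper: averaging the continuous section from strong amenability at infinity against a cutoff function for the proper subgroupoid $K$ to obtain a $K$-equivariant section, and then using the fibrewise straight-line homotopy together with affineness of the action. The only difference is cosmetic — the paper spells out the continuity of the averaged section via a partition of unity subordinate to finitely many open bisections covering the (compact) support, which is exactly the local-finiteness mechanism you invoke by reference to Lemmas \ref{Lem:proper support} and \ref{r!}.
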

\begin{proof}
We will construct a $K$-equivariant continuous section $\tilde{s}:K^{(0)}\rightarrow Y_K$ as follows:
Let $c:K^{(0)}\rightarrow [0,1]$ be a cut-off function for $K$, i.e.
\begin{enumerate}
	\item $\sum\limits_{k\in K^u}c(d(k))=1$ for all $u\in K^{(0)}$, and
	\item $r:supp(c\circ d)\rightarrow K^{(0)}$ is proper.
\end{enumerate}
We define
$$\tilde{s}(u):=\sum\limits_{k\in K^u} c(d(k)) k\cdot s(d(k)),$$
where $s:G^{(0)}\rightarrow Y$ is the continuous section from above. Note that by $(2)$ the sum in the definition is finite for each fixed $u\in K^{(0)}$, and hence $(1)$ and the convexity of $Y_u$ imply that $\tilde{s}(u)\in Y_u$. Thus $\tilde{s}$ is a well-defined section.

The following calculation shows that $\tilde{s}$ is $K$-equivariant:
\begin{align*}
\tilde{s}(k'\cdot u) & = \tilde{s}(r(k'))\\
& = \sum\limits_{k\in K^{r(k')}}c(d(k))k\cdot s(d(k))\\
& = \sum\limits_{k\in K^u} c(d(k'k))k'k\cdot s(d(k'k))\\
& = k'\cdot \left(\sum\limits_{k\in K^u} c(d(k))k\cdot s(d(k))\right)\\
& = k'\tilde{s}(u)
\end{align*}
It remains to show that $\tilde{s}$ is continuous. We prove this along the lines of Lemma \ref{Lem:proper support}:
Fix a $u\in K^{(0)}$ and let $V$ be an open neighbourhood of $u$ such that $\overline{V}$ is compact. Let $\psi\in C_c(K^{(0)})$ be a positive function with $\psi\equiv 1 $ on $\overline{V}$. Then $f(k):=c(d(k))\psi(r(k))$ has compact support and for all $v\in V$ we still have $\sum\limits_{k\in K^v}f(k)=1$ and hence
$\tilde{s}(v)=\sum\limits_{k\in K^v} f(k) k\cdot s(d(k))\in Y_v$.
Now we use compactness of $supp(f)$ to cover it with a finite number of open bisections $(U_i)_i$ and use a partition of unity subordinate to this covering to write $f$ as a finite sum $f=\sum f_i$. Then we get
$$\tilde{s}(v)=\sum_i\sum_{k\in K^v} f_i(k) k\cdot s(d(k))= \sum_i f_i(r_{\mid U_i}^{-1}(v)) r_{\mid U_i}^{-1}(v)\cdot s(d(r_{\mid U_i}^{-1}(v))).$$
The latter expression in this equation is obviously continuous in $v$ since all the functions and operations used are continuous. Hence $\tilde{s}$ must be continuous.

Now by construction we have $p\circ \tilde{s}=id_{K^{(0)}}$ and by convexity the linear homotopy gives $\tilde{s}\circ p\simeq id_{Y_K}$. This homotopy is equivariant since the action of $K$ on $Y_K$ is affine.
\end{proof}
We can now prove the following extention of results from \cite{Higson} and \cite{CEO} to ample groupoids:
\begin{satz}\label{Theorem:BC Injectivity  for amenable at infty grpds}
Let $G$ be a second countable ample groupoid which is strongly amenable at infinity. Then, for any separable $G$-algebra $A$ the Baum-Connes assembly map
$$\mu_A:\K_*^{\mathrm{top}}(G;A)\rightarrow \K_*(A\rtimes_r G)$$
is split injective.
\end{satz}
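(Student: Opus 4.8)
The plan is to follow the Higson descent / Carlsson--Pedersen style argument, now available for ample groupoids thanks to the Going-Down principle (Theorem \ref{MainTheorem}). First I would fix a $G$-space $Y$ witnessing strong amenability at infinity, chosen (by the results of Delaroche and Lassagne quoted before Proposition \ref{contractible}) to be second countable, with proper anchor map $p\colon Y\to G^{(0)}$ admitting a continuous section, and with each fibre $Y_u$ convex and the $G$-action affine. Since $p$ is proper, $C_0(Y)$ is a proper $G$-algebra, so the element $\mathbf{1}_{C_0(Y)}\in\KK^{G\ltimes Y}(C_0(Y),C_0(Y))$ together with the unital inclusion $p^\ast\colon C_0(G^{(0)})\to C_0(Y)$ yields, for every separable $G$-algebra $A$, the $G$-equivariant $\ast$-homomorphism $\mathrm{id}_A\otimes p^\ast\colon A\to A\otimes_{G^{(0)}}C_0(Y)=:A_Y$. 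The strategy is to show that $p_\ast\colon\K^{\mathrm{top}}_\ast(G;A)\to\K^{\mathrm{top}}_\ast(G;A_Y)$ is an isomorphism, and that the assembly map for the proper coefficient algebra $A_Y$ is (split) injective; then a diagram chase through the naturality square of the assembly map with respect to $\mathrm{id}_A\otimes p^\ast$ produces a splitting of $\mu_A$.

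The key steps, in order. (1) \emph{$p_\ast$ is an isomorphism on topological $K$-theory.} Apply Theorem \ref{MainTheorem} to the class $x=[\mathrm{id}_A\otimes p^\ast]\in\KK^G(A,A_Y)$. By Lemma \ref{Lemma:Compression and Kasparov Product}-type compatibility it suffices to check that for every compact open subgroupoid $H\subseteq G$ the map $\cdot\otimes\mathrm{res}^G_H(x)\colon\KK^H(C(H^{(0)}),A_{\mid H})\to\KK^H(C(H^{(0)}),(A_Y)_{\mid H})$ is an isomorphism. Now $(A_Y)_{\mid H}\cong A_{\mid H}\otimes_{H^{(0)}}C_0(Y_H)$ where $Y_H=p^{-1}(H^{(0)})$, and since $H$ is compact open it is in particular proper and open, so Proposition \ref{contractible} gives that $Y_H$ is $H$-equivariantly homotopy equivalent to $H^{(0)}$; hence $C_0(Y_H)\simeq C_0(H^{(0)})$ $H$-equivariantly and $\mathrm{res}^G_H(x)$ becomes a $\KK^H$-equivalence. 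This is the step where the convexity/affineness hypotheses and Proposition \ref{contractible} are used essentially. (2) \emph{Injectivity for proper coefficients.} For the proper $G$-algebra $A_Y=A\otimes_{G^{(0)}}C_0(Y)$, amenability of $G\ltimes Y$ forces the assembly map $\mu_{A_Y}\colon\K^{\mathrm{top}}_\ast(G;A_Y)\to\K_\ast(A_Y\rtimes_r G)$ to be an isomorphism (this is Tu's theorem, as $A_Y$ carries a $G\ltimes Y$-structure with $Y$ amenable; alternatively one invokes the standard fact that the assembly map is an isomorphism for algebras induced from proper actions). In particular $\mu_{A_Y}$ is split injective. (3) \emph{Assemble the splitting.} The assembly map is natural in the coefficient algebra, so we have a commuting square relating $\mu_A$ and $\mu_{A_Y}$ via $p_\ast$ on the left and $(\mathrm{id}_A\otimes p^\ast)\rtimes_r G$ on the right. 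Since $p_\ast$ is an isomorphism by (1) and $\mu_{A_Y}$ is split injective by (2), composing the splitting of $\mu_{A_Y}$ with $\mu_{A_Y}$ pulled back along $p_\ast^{-1}$ against the right vertical map yields a left inverse to $\mu_A$; explicitly, set $\nu_A:=p_\ast^{-1}\circ\sigma\circ((\mathrm{id}_A\otimes p^\ast)\rtimes_r G)_\ast$ where $\sigma$ is a splitting of $\mu_{A_Y}$, and check $\nu_A\circ\mu_A=\mathrm{id}$ by the commutativity of the square.

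The main obstacle I anticipate is step (1), specifically verifying that $\mathrm{res}^G_H$ of the induction-of-$p^\ast$ class is a $\KK^H$-equivalence: one must identify $(A\otimes_{G^{(0)}}C_0(Y))_{\mid H}$ with $A_{\mid H}\otimes_{H^{(0)}}C_0(Y_H)$ compatibly with the $H$-actions, then promote the $H$-equivariant homotopy equivalence $Y_H\simeq H^{(0)}$ of Proposition \ref{contractible} to an invertibility of $[\mathrm{id}_{A_{\mid H}}\otimes p_H^\ast]$ in $\KK^H$, using that the relevant homotopies are through affine ($=$proper-compatible) maps so that the induced homotopies of $\ast$-homomorphisms are genuine $\KK^H$-homotopies. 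A secondary technical point is making precise in step (2) which form of Tu's isomorphism theorem is being invoked and checking its hypotheses (second countability, $\sigma$-compactness, amenability of $G\ltimes Y$) are met for our chosen $Y$; but this is a citation rather than new work. The rest is a formal diagram chase.
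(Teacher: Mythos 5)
Your proposal follows essentially the same route as the paper's proof: apply Theorem \ref{MainTheorem} to the class of $\mathrm{id}_A\otimes p^*$ in $\KK^G(A,A\otimes_{G^{(0)}}C_0(Y))$, verify the hypothesis for compact open subgroupoids via Proposition \ref{contractible}, and then split $\mu_A$ through the naturality square, using that the assembly map with coefficients $A\otimes_{G^{(0)}}C_0(Y)$ is an isomorphism because $G\ltimes Y$ is amenable. One correction to your step (2): properness of the anchor map $p\colon Y\to G^{(0)}$ does not make $Y$ a proper $G$-space (already for a group, $Y$ is a compact amenable $G$-space, which is essentially never proper), so $C_0(Y)$ is not a proper $G$-algebra and your ``induced from proper actions'' alternative is not available. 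Moreover, Tu's theorem yields the isomorphism of the assembly map for the groupoid $G\ltimes Y$; to transfer this to the assembly map $\mu_{A\otimes_{G^{(0)}}C_0(Y)}$ for $G$ itself one needs the comparison of the two assembly maps, which the paper supplies by citing \cite[Lemma~4.1]{STY02} --- once that is in place, the diagram chase and the explicit splitting you write down agree with the paper's.
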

\begin{proof}
Consider the homomorphism 
$$p_*:\K_*^{\mathrm{top}}(G;A)\rightarrow \K_*^{\mathrm{top}}(G;A\otimes_{G^{(0)}}C_0(Y))$$
induced by the anchor map $p:Y\rightarrow G^{(0)}$ as explained prior to Proposition \ref{contractible}. As explained there, we can also assume that $Y$ is second countable, each fibre $Y_u$ is convex and $G$ acts by affine transformations. Furthermore we may assume that $p$ admits a continuous section. Thus, for every proper, open subgroupoid $K\subseteq G$ we can apply Proposition \ref{contractible} to see that the restriction of $p_K:Y_K\rightarrow K^{(0)}$  of $p$ induces an isomorphism
$$\mathrm{KK}^K(C_0(K^{(0)}),A_K)\rightarrow \mathrm{KK}^K(C_0(K^{(0)}),A_K\otimes_{K^{(0)}} C_0(Y_K)).$$
Thus, we have checked the conditions of Theorem \ref{MainTheorem} and can deduce that $p_*$ is an isomorphism. By naturality of the assembly map $p_*$ fits into the following commutative diagram:
\begin{center}
	\begin{tikzpicture}[description/.style={fill=white,inner sep=2pt}]
	\matrix (m) [matrix of math nodes, row sep=3em,
	column sep=2.5em, text height=1.5ex, text depth=0.25ex]
	{ \K_*^{\mathrm{top}}(G;A) & \K_*(A\rtimes_ r G) \\
		\K_*^{\mathrm{top}}(G;A\otimes_{G^{(0)}}C_0(Y)) & \K_*((A\otimes_{G^{(0)}} C_0(Y))\rtimes_r G) \\
	};
	\path[->,font=\scriptsize]
	(m-1-1) edge node[auto] {$ \mu_A $} (m-1-2)
	(m-1-2) edge node[auto] {$ (p\rtimes G)_* $} (m-2-2)
	(m-2-1) edge node[auto] { $ \mu_{A\otimes C_0(Y)} $ } (m-2-2)
	(m-1-1) edge node[auto] { $ p_* $ } (m-2-1)
	;
	\end{tikzpicture}
\end{center}
By \cite[Lemma~4.1]{STY02} the Baum-Connes assembly map for $G$ with coefficients in $A\otimes_{G^{(0)}} C_0(Y)$ is an isomorphism if and only if the assembly map for $G\ltimes Y$ with coefficients in $A\otimes_{G^{(0)}} C_0(Y)$ is. Since $G\ltimes Y$ is amenable by assumption, we can apply the results in \cite{Tu98} to conclude, that the lower horizontal map in the above diagram is an isomorphism. Thus, $\mu_A$ is injective with splitting homomorphism $\sigma_A:=p_*^{-1}\circ \mu_{A\otimes C_0(Y)}\circ (p\rtimes_rG)_*$.
\end{proof}

We will now apply Theorem \ref{Theorem:BC Injectivity  for amenable at infty grpds} to relate the Baum-Connes conjecture for an ample, strongly amenable at infinity groupoid group bundle to the Baum-Connes conjecture for each of its isotropy groups. This generalizes part (b) of \cite[Proposition~3.1]{MR2010742}, which treats the case of a trivial group bundle (i.e. $G=\Gamma\times X$ for some discrete group $\Gamma$ and a totally disconnected space $X$). We also make use of ideas from the recent paper \cite{ELN17} to avoid $\gamma$-elements.

We shall need the notion of an exact groupoid:
\begin{defi}
	A locally compact groupoid $G$ with Haar system is called \textit{exact} (in the sense of Kirchberg and Wassermann), if for every $G$-equivariant exact sequence 
	$$0\rightarrow I\rightarrow A\rightarrow B\rightarrow 0$$
	of $G$-algebras, the corresponding sequence
	$$0\rightarrow I\rtimes_r G\rightarrow A\rtimes_r G\rightarrow B\rtimes_r G\rightarrow 0$$
	of reduced crossed products is exact.
\end{defi}
The following result is a part of \cite[Proposition~6.7]{Delaroche}:
\begin{prop}
	Let $G$ be an étale groupoid. If $G$ is amenable at infinity, then $G$ is exact.
\end{prop}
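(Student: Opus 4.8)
The plan is to deduce exactness of $G$ from the amenability of the transformation groupoid $G\ltimes Y$, where $Y$ is a $G$-space witnessing amenability at infinity; after replacing $Y$ if necessary by a space with the extra properties furnished by \cite{Delaroche,Lassagne}, I may assume in addition that the proper anchor map $p\colon Y\to G^{(0)}$ is surjective. Fix a $G$-equivariant short exact sequence $0\to I\to A\to B\to 0$; since the outer terms of the induced sequence of reduced crossed products are always exact, it suffices to show $\ker\big(A\rtimes_r G\to B\rtimes_r G\big)\subseteq I\rtimes_r G$. For a $G$-algebra $C$ write $\widetilde C:=C\otimes_{G^{(0)}}^{max}C_0(Y)$, which is in a natural way a $G\ltimes Y$-algebra (it is the pullback of $C$ along $p$ in the sense of the pullback construction for dynamical systems). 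The functor $C\mapsto\widetilde C$ is exact: it is the composition of tensoring with the nuclear algebra $C_0(Y)$ with the pullback along the closed embedding $Y\hookrightarrow G^{(0)}\times Y$, $y\mapsto(p(y),y)$, and pullback along a closed embedding is exact by a direct manipulation of ideals. Hence $0\to\widetilde I\to\widetilde A\to\widetilde B\to 0$ is a $G\ltimes Y$-equivariant exact sequence.

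Since $G\ltimes Y$ is amenable it is exact \cite{Delaroche}, so the sequence $0\to\widetilde I\rtimes_r(G\ltimes Y)\to\widetilde A\rtimes_r(G\ltimes Y)\to\widetilde B\rtimes_r(G\ltimes Y)\to 0$ is exact. Next I would build a comparison map: pushing $\widetilde C$ forward along $p$ turns it into a $G$-algebra, and there is a canonical identification $\widetilde C\rtimes_r(G\ltimes Y)\cong\widetilde C\rtimes_r G$ generalizing the identification $C_0(X)\rtimes_r G\cong C_r^*(G\ltimes X)$ used above; composing it with the crossed product of the $G$-equivariant $\ast$-homomorphism $\mathrm{id}_C\otimes p^*\colon C\to\widetilde C$ yields a natural $\ast$-homomorphism $\Theta_C\colon C\rtimes_r G\to\widetilde C\rtimes_r(G\ltimes Y)$. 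The key claim is that
$$\Theta_C^{-1}\big(\widetilde I\rtimes_r(G\ltimes Y)\big)=I\rtimes_r G.$$
Granting this for $C=A$, the proof finishes with a diagram chase: if $x$ lies in $\ker(A\rtimes_r G\to B\rtimes_r G)$ then, by naturality of $\Theta$, $\Theta_A(x)$ is sent to $0$ in $\widetilde B\rtimes_r(G\ltimes Y)$, hence lies in $\widetilde I\rtimes_r(G\ltimes Y)$ by the preceding sentence, and therefore $x\in I\rtimes_r G$.

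The inclusion $\supseteq$ in the key claim is immediate from functoriality, since $\mathrm{id}_C\otimes p^*$ carries $I$ into $\widetilde I$. For $\subseteq$ I would use the canonical faithful conditional expectations $E_C\colon C\rtimes_r G\to C$ and $\widetilde E_C\colon\widetilde C\rtimes_r(G\ltimes Y)\to\widetilde C$ given by restriction of sections to the unit space, which satisfy $\widetilde E_C\circ\Theta_C=(\mathrm{id}_C\otimes p^*)\circ E_C$; because $p$ is surjective, $\mathrm{id}_C\otimes p^*$ is injective and $(\mathrm{id}_C\otimes p^*)^{-1}(\widetilde I)=I$. One then invokes the standard description of an induced ideal inside a reduced crossed product of an étale groupoid: $x\in C\rtimes_r G$ lies in $I\rtimes_r G$ if and only if all of its generalized Fourier coefficients — obtained by composing $E_C$ with the translations by compactly supported sections over open bisections of $G$ — lie in $I$; and under $\Theta_C$ these coefficients correspond, after applying $\mathrm{id}_C\otimes p^*$, to the Fourier coefficients of $\Theta_C(x)$ relative to $G\ltimes Y$. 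Thus if $\Theta_C(x)\in\widetilde I\rtimes_r(G\ltimes Y)$ then every Fourier coefficient of $x$ maps into $\widetilde I$ and hence lies in $I$, so $x\in I\rtimes_r G$.

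I expect this last step to be the main obstacle: it rests on the standard but, for groupoids, somewhat delicate description of $I\rtimes_r G$ in terms of Fourier coefficients over bisections, together with the bookkeeping needed to match bisections of $G$ with those of $G\ltimes Y$ under $\Theta_C$ — and it is precisely here that surjectivity of $p$ is used, to guarantee that $\mathrm{id}_C\otimes p^*$ is faithful and reflects the ideal $I$. The remaining ingredients are routine or citable: exactness of the balanced tensor product with $C_0(Y)$, exactness of amenable groupoids \cite{Delaroche}, and the iterated–crossed–product identification $\widetilde C\rtimes_r(G\ltimes Y)\cong\widetilde C\rtimes_r G$.
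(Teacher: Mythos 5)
The paper does not actually prove this proposition: it quotes it as part of \cite[Proposition~6.7]{Delaroche}, so there is no internal proof to compare with, and your attempt has to stand on its own. Its skeleton — tensor the sequence with $C_0(Y)$ over $G^{(0)}$, note that this functor is exact, use that the amenable groupoid $G\ltimes Y$ is exact, transport the problem through the comparison map $\Theta_C\colon C\rtimes_r G\to \widetilde C\rtimes_r(G\ltimes Y)$, and reduce everything to the ``key claim'' $\Theta_A^{-1}\bigl(\widetilde I\rtimes_r(G\ltimes Y)\bigr)=I\rtimes_r G$ — is the natural generalization of the classical group-case strategy, and those preliminary steps (including the harmless assumption that $p$ is surjective, which the models of $Y$ in \cite{Delaroche,Lassagne} provide) are fine.

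The decisive step, however, is not a delicate bookkeeping issue but a genuine gap: the ``standard description'' you invoke — $x\in I\rtimes_r G$ if and only if all generalized Fourier coefficients of $x$ lie in $I$ — is not a standard fact; it is false in general and, in your setting, it is logically equivalent to the statement being proved. Indeed, the Fourier coefficients of the image of $x$ in $B\rtimes_r G$ (with $B=A/I$) are exactly the images in $B$ of the Fourier coefficients of $x$, and for an étale Hausdorff groupoid the coefficients jointly separate points of the reduced crossed product; hence the set of all $x\in A\rtimes_r G$ whose coefficients lie in $I$ is precisely $\ker\bigl(A\rtimes_r G\to B\rtimes_r G\bigr)$. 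Its equality with $I\rtimes_r G$ is exactness in the middle — the very thing to be shown — and for a non-exact discrete group (a groupoid over a point) the description genuinely fails. For the same reason your key claim itself cannot be ``granted'': under amenability of $G\ltimes Y$ one has $\Theta_A^{-1}\bigl(\widetilde I\rtimes_r(G\ltimes Y)\bigr)=\ker\bigl(A\rtimes_r G\to B\rtimes_r G\bigr)$, so the claimed equality with $I\rtimes_r G$ is equivalent to the conclusion, and any proof of it must carry the full content of the theorem. The known proofs (Anantharaman-Delaroche) therefore do not argue via an ideal-intersection or Fourier-coefficient criterion; they use the amenability data for $G\ltimes Y$ (approximately invariant densities, resp.\ positive-type functions with compactly supported fibres) to build completely positive contractions from $(A\otimes_{G^{(0)}}^{max}C_0(Y))\rtimes_r G$ back to $A\rtimes_r G$ which approximate the identity pointwise and map $(I\otimes_{G^{(0)}}^{max}C_0(Y))\rtimes_r G$ into $I\rtimes_r G$; only with such ``return'' maps does the diagram chase close. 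As written, your argument is circular at this step.
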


Let us now focus on group bundles:
For a start let us observe that if $G$ is an étale groupoid group bundle and $(A,G,\alpha)$ is a groupoid dynamical system, then $(A_u,G_u^u,\alpha_u)$ is a (group) dynamical system for every $u\in G^{(0)}$.
The following proposition describes the relation of the crossed product $A\rtimes_r G$ with the crossed products corresponding to the fibres:
\begin{prop}\label{Prop:CrossedProductsByGroupBundles}
Let $G$ be an étale groupoid group bundle and $A$ be a $G$-algebra. Then the following hold:
\begin{enumerate}
	\item The reduced crossed product $A\rtimes_r G$ is a $C_0(G^{(0)})$-algebra.
	\item If $G$ is exact, then the fibres are given by $(A\rtimes_r G)_u=A_u\rtimes_r G_u^u$.
	\item If in addition the $C^*$-bundle $\mathcal{A}$ associated to $A$ is continuous, then so is the $C^*$-bundle associated to $A\rtimes_r G$.	
\end{enumerate}
\end{prop}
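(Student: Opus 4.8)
The plan is to treat the three assertions in order, feeding each into the next.

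\textbf{Part (1).} The group bundle hypothesis enters here through the identity $d(g)=r(g)$. For $\varphi\in C_0(G^{(0)})$ and $f\in\Gamma_c(G,r^*\mathcal{A})$ I would set $(\varphi\cdot f)(g):=\varphi(r(g))f(g)$ and check, by a direct computation on the convolution algebra, that $\varphi\cdot(f_1\ast f_2)=(\varphi\cdot f_1)\ast f_2=f_1\ast(\varphi\cdot f_2)$ and $(\varphi\cdot f)^{*}=\bar\varphi\cdot f^{*}$; the only non-formal step is pulling $\varphi$ through $\alpha_h$, which works because $\alpha_h(\varphi(d(h))a)=\varphi(r(h))\alpha_h(a)$ and $d(h)=r(h)$, and because for $h\in G^{r(g)}$ the element $h^{-1}g$ stays in the fibre over $r(g)$. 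Thus $\varphi\mapsto[f\mapsto\varphi\cdot f]$ defines central multipliers and extends to a $*$-homomorphism $C_0(G^{(0)})\to Z(M(A\rtimes_r G))$; it is non-degenerate since any $f\in\Gamma_c(G,r^*\mathcal{A})$ satisfies $\varphi\cdot f=f$ whenever $\varphi\equiv 1$ on the compact set $r(\operatorname{supp}f)$. (This is presumably folklore, cf.\ \cite{Goehle}, but I would record the argument since it is where the group bundle assumption is used.)

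\textbf{Part (2).} I would identify the fibre $(A\rtimes_r G)_u$ in two moves. First, restriction $q_u\colon A\rtimes_r G\to A_u\rtimes_r G_u^u$, $f\mapsto f|_{G_u^u}$, is a well-defined surjective $*$-homomorphism: it is contractive because the groupoid regular representation $\pi_u$ on $\ell^2(G^u,A_u)=\ell^2(G_u^u,A_u)$ only uses $f|_{G^u}$, so $\norm{f|_{G_u^u}}_{A_u\rtimes_r G_u^u}=\norm{\pi_u(f)}\leq\norm{f}_r$, and it clearly has dense image. Second, since a group bundle acts trivially on $G^{(0)}$, the ideal $I_u^{A}=\overline{C_0(G^{(0)}\setminus\{u\})A}$ is $G$-invariant, giving a $G$-equivariant exact sequence $0\to I_u^{A}\to A\to A_u\to 0$ in which $A_u$ is a $G$-algebra with $C_0(G^{(0)})$ acting through $\operatorname{ev}_u$; for such an algebra the convolution algebra is supported on $G^u=G_u^u$, whence $A_u\rtimes_r G=A_u\rtimes_r G_u^u$. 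Exactness of $G$ applied to this sequence identifies $\ker q_u=I_u^{A}\rtimes_r G$, and a density argument on $\Gamma_c(G,r^*\mathcal{A})$ shows $I_u^{A}\rtimes_r G$ equals the ideal $\overline{C_0(G^{(0)}\setminus\{u\})(A\rtimes_r G)}$ of the $C_0(G^{(0)})$-algebra from part (1). Hence $(A\rtimes_r G)_u\cong A_u\rtimes_r G_u^u$. The only genuinely non-formal input is the exactness hypothesis, which is precisely what upgrades the surjection $q_u$ to an isomorphism onto the fibre.

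\textbf{Part (3).} Upper semicontinuity of $u\mapsto\norm{b(u)}$ holds for every $C_0(G^{(0)})$-algebra, and since $b\mapsto\norm{b(u)}$ is $1$-Lipschitz uniformly in $u$, it suffices to prove lower semicontinuity for $b=f\in\Gamma_c(G,r^*\mathcal{A})$, where by part (2) we have $\norm{f(u)}=\norm{\pi_u(f)}$. Fix $u_0$ and $u_\lambda\to u_0$, and fix $\varepsilon>0$. Using that $G$ is étale I would extend the finitely many elements of $\operatorname{supp}(f)\cap G^{u_0}$ to local sections $\sigma_i$ of $r$ over a neighbourhood $W$ of $u_0$, shrinking $W$ so that for $v\in W$ the element $f(v)\in A_v\rtimes_r G_v^v$ is supported, as a function on $G_v^v$, on $\{\sigma_i(v)\}$. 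Choosing a finitely supported unit vector $\xi_0\in\ell^2(G_{u_0}^{u_0},A_{u_0})$ with $\norm{\pi_{u_0}(f)\xi_0}>\norm{f(u_0)}-\varepsilon$, extending its supporting elements to local sections $\tau_j$ and its values to continuous sections of $\mathcal{A}$, I would build vectors $\xi_v\in\ell^2(G_v^v,A_v)$ for $v$ near $u_0$. Because $\mathcal{A}$ is a \emph{continuous} bundle and $f$, $\alpha$ and all the sections are continuous, $\norm{\xi_v}\to 1$ and the finitely many components of $\pi_v(f)\xi_v$ (indexed by products of the chosen local sections) converge to those of $\pi_{u_0}(f)\xi_0$, so $\norm{\pi_v(f)\xi_v}\to\norm{\pi_{u_0}(f)\xi_0}$. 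Therefore $\liminf_\lambda\norm{f(u_\lambda)}\geq\norm{\pi_{u_0}(f)\xi_0}>\norm{f(u_0)}-\varepsilon$, and letting $\varepsilon\to 0$ gives continuity.

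I expect the bookkeeping in part (3) to be the main obstacle: one must verify that after shrinking $W$ the supports of $f(v)$ and of $\pi_v(f)\xi_v$ are uniformly controlled by finitely many local sections (so ``convergence of components'' makes sense and no terms appear or vanish in the limit), and one must be careful that $\norm{f(u)}$ really is $\norm{\pi_u(f)}$ on the fibre $\ell^2(G_u^u,A_u)$; both rest on $G$ being étale, i.e.\ on the fibres being discrete, together with continuity of $\mathcal{A}$. Parts (1) and (2) are comparatively routine once the group bundle identity $d=r$ and the exactness hypothesis are in hand.
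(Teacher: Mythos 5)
Your proposal is correct, and for parts (1) and (2) it takes essentially the paper's route: the same formula $(\varphi\cdot f)(g)=\varphi(r(g))f(g)$, centrality via $d(h)=r(h)$, non-degeneracy via a function equal to $1$ on $r(\mathrm{supp}(f))$, and for (2) the same $G$-equivariant sequence $0\to A_{\mid G^{(0)}\setminus\{u\}}\to A\to A_u\to 0$ fed into exactness of $G$, with $\ker q_u$ identified with the ideal of the $C_0(G^{(0)})$-structure from (1). One small point you should make explicit in (1): the algebraic identities alone do not give boundedness of $f\mapsto\varphi\cdot f$ for $\norm{\cdot}_r$; the paper obtains it (together with adjointability) from the computation $\pi_u(\varphi\cdot f)=\varphi(u)\pi_u(f)$, and your outline needs the same estimate. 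Where you genuinely diverge is part (3). The paper avoids all local bookkeeping: for $f\in\Gamma_c(G,r^*\mathcal{A})$ it writes $\norm{q_u(f)}=\norm{\pi_u(f)}=\sup\norm{\lk\pi(f)\xi,\eta\rk_A(u)}$, the supremum over compactly supported $\xi,\eta$ of norm at most one acting on $L^2(G,A)$; each function $u\mapsto\norm{\lk\pi(f)\xi,\eta\rk_A(u)}$ is continuous precisely because $\mathcal{A}$ is a continuous bundle, and a supremum of continuous functions is lower semicontinuous. Your local construction with bisections also works, and the bookkeeping you flag is in fact harmless: the branch maps formed from products of your local sections are continuous, so a coincidence among them at $u_0$ persists on a neighbourhood (the target bisection is open) and a non-coincidence persists by Hausdorffness, hence after shrinking $W$ no support points of $\pi_v(f)\xi_v$ merge or split; alternatively you can bypass that support entirely by computing $\lk\pi_v(f)\xi_v,\pi_v(f)\xi_v\rk=\lk\xi_v,\pi_v(f^*\ast f)\xi_v\rk=\sum_{j,j'}a_j(v)^*\alpha_{\tau_j(v)}\bigl((f^*\ast f)(\tau_j(v)^{-1}\tau_{j'}(v))\bigr)a_{j'}(v)$, a finite sum of continuous sections of $\mathcal{A}$, whose norm is then continuous in $v$. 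The trade-off is that your argument is more elementary and shows exactly where the étale (discrete fibre) structure enters, while the paper's matrix-coefficient formulation is shorter but quietly uses that the operator norm on the fibre is computed by values of global compactly supported vectors.
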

\begin{proof}
For $\varphi\in C_0(G^{(0)})$ and $f\in \Gamma_c(G,r^*\mathcal{A})$ define a linear map $\Phi(\varphi):\Gamma_c(G,r^*\mathcal{A})\rightarrow \Gamma_c(G,r^*\mathcal{A})$ by
$$(\Phi(\varphi) f)(g):=\varphi(r(g))f(g)$$
We want to show, that $\Phi(\varphi)$ extends to an element of the multiplier algebra of $A\rtimes_r G$. To this end let $u\in G^{(0)}$. Then, for $\varphi\in C_0(G^{(0)}), f\in \Gamma_c(G,r^*\mathcal{A})$ and $\xi\in C_c(G^u_u,A_u)$, we compute
\begin{align*}
(\pi_u(\Phi(\varphi)f)\xi)(g) & = \sum\limits_{h\in G^u_u} \alpha_g^{-1}((\Phi(\varphi)f)(g^{-1}h))\xi(h)\\
& = \sum\limits_{h\in G^u_u} \varphi(u)\alpha_g(f(g^{-1}h))\xi(h)\\
& = (\varphi(u)\pi_u(f)\xi)(g)
\end{align*}
Hence we have $\pi_u(\Phi(\varphi(f)))=\varphi(u)\pi_u(f)$ and applying this equality we obtain
$$\norm{\Phi(\varphi)f}_r=\sup_{u\in G^{(0)}}\norm{\pi_u(\Phi(\varphi)f)}=\sup_{u\in G^{(0)}}\betrag{\varphi(u)}\norm{\pi_u(f)}\leq \norm{\varphi}_\infty \norm{f}_r$$
Thus, $\Phi(\varphi)$ extends to a bounded linear map $\Phi(\varphi):A\rtimes_r G\rightarrow A\rtimes_r G$. One easily computes on the dense subalgebra $\Gamma_c(G,r^*\mathcal{A})$, that $\Phi(\varphi)$ is adjointable with $\Phi(\varphi)^*=\Phi(\overline{\varphi})$.
We have thus defined a $*$-homo\-morphism $\Phi:C_0(G^{(0)})\rightarrow M(A\rtimes_r G)$. Next, we would like to show that $\Phi$ takes its image in the centre of the multiplier algebra. By \cite[Lemma~8.3]{Williams} it is enough to show, that $\Phi(\varphi)(f_1\ast f_2)=f_1\ast \Phi(\varphi)f_2$ for all $f_1,f_2\in \Gamma_c(G,r^*\mathcal{A})$ and $\varphi\in C_0(G^{(0)})$. For $g\in G$ and $u:=r(g)=d(g)$ we compute
\begin{align*}
(\Phi(\varphi)(f_1\ast f_2)(g)&=\varphi(u)(f_1\ast f_2)(g)\\
& = \sum\limits_{h\in G^u_u}\varphi(u)f_1(h)\alpha_h(f_2(h^{-1}g))\\
& = \sum\limits_{h\in G^u_u}f_1(h)\alpha_h(\varphi(u)f_2(h^{-1}g))\\
& = \sum\limits_{h\in G^u_u} f_1(h)\alpha_h((\Phi(\varphi)f_2)(h^{-1}g))\\
& = (f_1\ast \Phi(\varphi)f_2)(g)
\end{align*}
It remains to show that $\Phi$ is non-degenerate. Given $x\in A\rtimes_r G$ and $\varepsilon>0$, find $f\in \Gamma_c(G,r^*\mathcal{A})$ such that $\norm{x-f}_r<\varepsilon$. Choose a function $\varphi\in C_c(G^{(0)})$, $0\leq\varphi\leq 1$ with $\varphi= 1$ on $r(supp(f))$. Then $\Phi(\varphi)f=f$ and hence $x\in \overline{C_0(G^{(0)})A\rtimes_r G}$. We have thus established the first part of the proposition, namely that $A\rtimes_r G$ is a $C_0(G^{(0)})$-algebra.

For the second part, we want to analyse the fibres: We always have a canonical family of surjective $*$-homo\-morphisms defined as follows:
For each $u\in G^{(0)}$, there is a canonical map $q_u:\Gamma_c(G,r^*\mathcal{A})\rightarrow C_c(G^u_u,A_u)$ given by restriction. This map extends to a surjective $*$-homo\-morphism $A\rtimes_r G\rightarrow A_u\rtimes_r G_u^u$, still denoted by $q_u$.
Let $J_u$ denote the ideal $\overline{C_0(G^{(0)}\setminus\lbrace u\rbrace)A\rtimes_r G}$ of $A\rtimes_r G$. We clearly have $J_u= A_{\mid G^{(0)}\setminus\lbrace u\rbrace}\rtimes_r G_{\mid G^{(0)}\setminus\lbrace u\rbrace}$.
Now if $G$ is exact, the sequence
$$0\rightarrow A_{\mid G^{(0)}\setminus\lbrace u\rbrace}\rtimes_r G_{\mid G^{(0)}\setminus\lbrace u\rbrace}\rightarrow A\rtimes_r G\stackrel{q_u}{\rightarrow} A_u\rtimes_r G_u^u\rightarrow 0$$
is exact for every $u\in G^{(0)}$. Hence $ker(q_u)=J_u$. It follows that $(A\rtimes_r G)_u=A_u\rtimes_r G_u^u$.

Finally, for part $(3)$, we have to show continuity of the $C^*$-bundle associated to the $C_0(G^{(0)})$-algebra $A\rtimes_r G$, provided the continuity of $\mathcal{A}$. For this we have to prove, that $u\mapsto \norm{q_u(x)}$ is lower semicontinuous for every $x\in A\rtimes_r G$.
Recall that we have a representation $\pi:\Gamma_c(G,r^*\mathcal{A})\rightarrow \mathcal{L}_A(L^2(G,A))$.
We can compute
\begin{align*}\norm{q_u(f)}_r&=\norm{\pi_u(f)}\\
& =\sup \lbrace \norm{\lk \pi(f)\xi,\eta\rk_A(u)}\mid \xi,\eta\in \Gamma_c(G,r^*\mathcal{A}), \norm{\xi},\norm{\eta}\leq 1\rbrace.
\end{align*} The latter expression however is lower semicontinuous as a function in $u$, since it is the supremum of the continuous functions $$u\mapsto \norm{\lk \pi(f)\xi,\eta\rk_A(u)}.$$
\end{proof}

\begin{lemma}
	Let $G$ be an étale groupoid group bundle. If $G$ is amenable at infinity, then so is $G_u^u$ for each $u\in G^{(0)}$.
\end{lemma}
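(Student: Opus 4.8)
The plan is to obtain a witness of amenability at infinity for the fibre group $G_u^u$ by restricting a witness for $G$ to the fibre over $u$. So I would begin with a $G$-space $Y$ such that $p\colon Y\to G^{(0)}$ is proper and $G\ltimes Y$ is amenable, fix $u\in G^{(0)}$, and set $Y_u:=p^{-1}(u)$. Since $p$ is proper and $\{u\}$ is compact, $Y_u$ is compact, so the anchor map $Y_u\to\{u\}$ of the (singleton) unit space of $G_u^u$ is automatically proper. Because $G$ is a group bundle we have $r(g)=d(g)$ for all $g\in G$, hence $p(gy)=r(g)=d(g)=p(y)$ whenever $gy$ is defined; consequently the action of $G$ on $Y$ preserves each fibre of $p$, and in particular restricts to an action of the discrete group $G_u^u$ on $Y_u$. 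Thus it remains only to check that the transformation groupoid $G_u^u\ltimes Y_u$ is amenable.

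The key point is that $G_u^u\ltimes Y_u$ is canonically isomorphic to the restriction $(G\ltimes Y)_{Y_u}^{Y_u}$ of $G\ltimes Y$ to the subset $Y_u$ of its unit space $Y$, and that $Y_u$ is a closed, $G\ltimes Y$-invariant subset. Indeed, writing $\gamma=(g,y)\in G\ltimes Y$ one has $r(\gamma)=y$ and $d(\gamma)=g^{-1}y$, so $r(\gamma),d(\gamma)\in Y_u$ forces $r(g)=p(y)=u$ and, using once more that $G$ is a group bundle, $d(g)=u$; hence the elements of $G\ltimes Y$ lying over $Y_u$ are exactly the pairs $(g,y)$ with $g\in G_u^u$ and $y\in Y_u$, and the groupoid operations obviously coincide with those of $G_u^u\ltimes Y_u$. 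Since $G$ is an étale group bundle, $G_u^u$ is discrete, so $G_u^u\ltimes Y_u$ is an étale groupoid and the usual notion of (topological) amenability applies. Invariance of $Y_u$ follows from the fibre-preservation observed above (it is equivalent to $G^{(0)}\setminus\{u\}$ being $G$-invariant, which holds for any group bundle).

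Finally I would invoke the standard stability property that topological amenability is inherited by the restriction of a groupoid to a closed subset of its unit space (and a fortiori to a closed invariant subset); see, e.g., \cite{Delaroche}. Applying this to the closed invariant subset $Y_u\subseteq Y$ of the amenable groupoid $G\ltimes Y$ yields that $(G\ltimes Y)_{Y_u}^{Y_u}\cong G_u^u\ltimes Y_u$ is amenable, and together with the properness of $Y_u\to\{u\}$ this exhibits $G_u^u$ as amenable at infinity. The only non-formal ingredient is this restriction principle; everything else is unwinding definitions. If one prefers to avoid quoting it, one can instead take a net $(h_i)$ in $C_c(G\ltimes Y)$ implementing topological amenability (via the positive-type-function characterization as in \cite{Tu99}) and restrict the $h_i$ to $G_u^u\ltimes Y_u$ — here the $G\ltimes Y$-invariance of $Y_u$ is precisely what ensures that the normalization $\sum h_i\to 1$ survives the restriction, while the approximate-invariance condition is preserved trivially. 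I expect the bookkeeping around which characterization of amenability is in force (and hence which reference to cite) to be the main thing to pin down.
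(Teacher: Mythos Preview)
Your proposal is correct and follows essentially the same approach as the paper: restrict the witnessing $G$-space to the fibre $Y_u$, identify $G_u^u\ltimes Y_u$ as a closed subgroupoid of the amenable groupoid $G\ltimes Y$, and invoke the permanence of amenability under passage to closed subgroupoids. The paper's proof is more terse and cites \cite[Proposition~5.1.1]{ADR} for this last step rather than \cite{Delaroche}, but the argument is the same.
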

\begin{proof}
	By assumption there exists a locally compact space $X$ and an action of $G$ on $X$ with proper anchor map $p:X\rightarrow G^{(0)}$, such that $G\ltimes X$ is amenable. Then $X_u:=p^{-1}(\lbrace u \rbrace)$ is a compact subspace of $X$ and the action of $G$ on $X$ restricts to an action of $G_u^u$ on $X_u$. In particular $G_u^u\ltimes X_u$ is a closed subgroupoid of $G\ltimes X$. Hence it is amenable by \cite[Proposition~5.1.1]{ADR}.
\end{proof}

Next, we turn to $\mathrm{KK}$-theory. Recall, that for $C_0(X)$-algebras $A$ and $B$, the group $\mathcal{R}\KK(X;A,B)$ is built from Kasparov triples $(E,\Phi,T)$, as in the construction of $\KK(A,B)$, satisfying the additional requirement that
$$(fa)\cdot (eb)=(ae)\cdot (bf) \text{ for all }f\in C_0(X), a\in A, e\in E, \text{ and }b\in B.$$

 We will start with the following observation:
\begin{lemma}
	If $G$ is a second countable étale groupoid group bundle and $(A,G,\alpha)$ and $(B,G,\beta)$ are separable groupoid dynamical systems, then the descent map $j_{G}$ actually takes values in the group $\RKK(G^{(0)};A\rtimes_r G,B\rtimes_r G)$.
\end{lemma}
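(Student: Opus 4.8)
The claim is that for a second countable étale groupoid group bundle $G$ with separable $G$-algebras $A$ and $B$, the descent map $j_G\colon \KK^G(A,B)\to\KK(A\rtimes_r G,B\rtimes_r G)$ factors through $\RKK(G^{(0)};A\rtimes_r G,B\rtimes_r G)$. The plan is to trace through the construction of $j_G$ as recalled in Section~5 and observe that every ingredient — the Hilbert module $E\rtimes_r G$, the representation $\widetilde\Phi$, and the operator $\widetilde T$ — is naturally a $C_0(G^{(0)})$-object compatible with the $C_0(G^{(0)})$-algebra structures that $A\rtimes_r G$ and $B\rtimes_r G$ carry by Proposition~\ref{Prop:CrossedProductsByGroupBundles}(1).

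First I would make precise the $C_0(G^{(0)})$-structure on the pieces. On $\Gamma_c(G,r^*\mathcal{E})$ define, for $\varphi\in C_0(G^{(0)})$, the operator $(\varphi\cdot\xi)(g)=\varphi(r(g))\xi(g)$; since $G$ is a group bundle we have $r(g)=d(g)$ for all $g\in G$, so this is symmetric and one checks directly — exactly as in the proof of Proposition~\ref{Prop:CrossedProductsByGroupBundles}(1) for the algebra itself — that $\varphi\cdot$ extends to an adjointable operator on $E\rtimes_r G$ with $(\varphi\cdot)^* = \overline\varphi\cdot$, and that $\langle\varphi\cdot\xi_1,\xi_2\rangle = \langle\xi_1,\overline\varphi\cdot\xi_2\rangle$ agrees with the action of $\varphi\in C_0(G^{(0)})\subseteq ZM(B\rtimes_r G)$ on the inner product. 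The key computation is that $\langle\xi_1,\varphi\cdot\xi_2\rangle(g) = \sum_{h\in G_{r(g)}}\beta_{h^{-1}}(\langle\xi_1(h),\varphi(r(hg))\xi_2(hg)\rangle)$, and because $G$ is a group bundle $r(hg)=r(g)$ whenever the product is defined, so this equals $\varphi(r(g))\langle\xi_1,\xi_2\rangle(g) = (\varphi\cdot\langle\xi_1,\xi_2\rangle)(g)$. Thus $E\rtimes_r G$ becomes a Hilbert module over the $C_0(G^{(0)})$-algebra $B\rtimes_r G$ in a way compatible with the fibration, i.e.\ an $\mathcal{R}\KK$-type Hilbert module over $G^{(0)}$.

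Next I would verify that $\widetilde\Phi$ is $C_0(G^{(0)})$-linear and that $\widetilde T$ commutes with the $C_0(G^{(0)})$-action. For $\widetilde\Phi$: using the formula $(\widetilde\Phi(f)\xi)(g)=\sum_{h\in G^{r(g)}}\Phi_{r(h)}(f(h))V_h(\xi(h^{-1}g))$ and $r(h)=r(g)$ in a group bundle, one sees $\widetilde\Phi(\varphi\cdot f)\xi = \varphi\cdot(\widetilde\Phi(f)\xi) = \widetilde\Phi(f)(\varphi\cdot\xi)$ where $\varphi$ on the left acts on $A\rtimes_r G$ and on the right on $E\rtimes_r G$; this is the $C_0(G^{(0)})$-linearity of the representation. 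For $\widetilde T$: since $(\widetilde T\xi)(g)=T_{r(g)}(\xi(g))$ and the $C_0(G^{(0)})$-action only multiplies by $\varphi(r(g))$ in the relevant fibre, $\widetilde T(\varphi\cdot\xi) = \varphi\cdot(\widetilde T\xi)$ is immediate. Hence the Kasparov triple $(E\rtimes_r G,\widetilde\Phi,\widetilde T)$ is equivariant for the trivial $G^{(0)}$-action in the sense of Le Gall's $\RKK$, i.e.\ it defines a class in $\RKK(G^{(0)};A\rtimes_r G,B\rtimes_r G)$ mapping to $j_G([E,\Phi,T])$ under the forgetful map to $\KK(A\rtimes_r G,B\rtimes_r G)$. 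Applying the same bookkeeping to a homotopy in $\mathbb E^G(A,C([0,1],B))$ shows the construction descends to $\KK^G(A,B)$.

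The main obstacle — which is really where the group-bundle hypothesis is doing all the work — is the compatibility between the two a priori different $C_0(G^{(0)})$-actions floating around: the one pulled back via $r$ on sections over $G$, and the one coming from the center of the multiplier algebra of the crossed product. For a general étale groupoid these would not be comparable, because $\varphi\circ r\neq\varphi\circ d$ and convolution mixes source and range; the identity $r=d$ forces everything to line up, which is exactly why the statement is restricted to group bundles. Once one is confident that $\varphi(r(g))$ can be harmlessly pulled in and out of all the convolution sums, the verification is a sequence of short direct calculations of the type already carried out in the proof of Proposition~\ref{Prop:CrossedProductsByGroupBundles}. I would also remark that compatibility of $j_G$ with the Kasparov product at the level of $\RKK$ follows from the corresponding statement for $\KK$ together with the naturality of the forgetful map $\RKK(G^{(0)};-,-)\to\KK(-,-)$, so no extra work is needed there.
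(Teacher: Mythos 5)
Your proposal is correct and follows essentially the same route as the paper: the paper's proof is exactly the single computation that $(\varphi f)\xi f'=f\xi(\varphi f')$ on the dense subspaces $\Gamma_c(G,r^*\mathcal{A})$, $\Gamma_c(G,r^*\mathcal{E})$, $\Gamma_c(G,r^*\mathcal{B})$, which is precisely the compatibility you verify by pulling $\varphi\circ r$ through the convolution sums using $r=d$ for a group bundle. Your additional checks (adjointability of the $C_0(G^{(0)})$-action on $E\rtimes_r G$ and commutation with $\widetilde T$) are automatic, since that action coincides with the central action coming from the right $B\rtimes_r G$-module structure, so they are harmless but not needed.
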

\begin{proof} Let $(E,\Phi,T)\in \mathbb{E}^G(A,B)$.
	It is enough to show, that for all $\varphi\in C_0(G^{(0)})$, $f\in \Gamma_c(G,r^*\mathcal{A})$, $f'\in \Gamma_c(G,r^*\mathcal{B})$ and $\xi\in \Gamma_c(G,r^*\mathcal{E})$ we have
	$$(\varphi f)\xi f'=f\xi (\varphi f').$$
	Hence we compute for all $g\in G$:
	\begin{align*}
	((\varphi f)\xi f')(g) & = \sum\limits_{h\in G^{r(g)}} ((\varphi f)\xi)(h) \beta_h(f'(h^{-1}g))\\
	& = \sum\limits_{h\in G^{r(g)}} \sum\limits_{s\in G^{r(h)}} \varphi(r(s))f(s) V_s(\xi(s^{-1}h))\beta_h(f'(h^{-1}g))\\
	& = \sum\limits_{h\in G^{r(g)}} \sum\limits_{s\in G^{r(h)}} f(s)V_s(\xi(s^{-1}h))\beta_h((\varphi f')(h^{-1}g))\\
	& =(f\xi(\varphi f'))(g).
	\end{align*}
\end{proof}

\begin{lemma}\label{Lemma:AssemblyMapForGroupoidCommutesWithAssemblyMapForIsotropyGroups}
Let $G$ be a second countable exact étale groupoid group bundle and $A$ be a separable $G$-algebra. For each $u\in G^{(0)}$ the inclusion map $i_u:G_u^u\rightarrow G$ induces a group homomorphism $i_u^*:\K_*^{\mathrm{top}}(G;A)\rightarrow \K_*^{\mathrm{top}}(G_u^u;A_u)$, such that the following diagram commutes:
\begin{center}
	\begin{tikzpicture}[description/.style={fill=white,inner sep=2pt}]
	\matrix (m) [matrix of math nodes, row sep=3em,
	column sep=2.5em, text height=1.5ex, text depth=0.25ex]
	{ \K_*^{\mathrm{top}}(G;A) & \K_*(A\rtimes_ r G) \\
		\K_*^{\mathrm{top}}(G_u^u;A_u) & \K_*(A_u\rtimes_r G_u^u) \\
	};
	\path[->,font=\scriptsize]
	(m-1-1) edge node[auto] {$ \mu_A $} (m-1-2)
	(m-1-2) edge node[auto] {$ q_{u,*} $} (m-2-2)
	(m-2-1) edge node[auto] { $ \mu_{A_u} $ } (m-2-2)
	(m-1-1) edge node[auto] { $ i_u^* $ } (m-2-1)
	;
	\end{tikzpicture}
\end{center}
\end{lemma}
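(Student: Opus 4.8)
\emph{Approach.} The plan is to build $i_u^*$ from the restriction homomorphism to the isotropy group $G_u^u$ together with a comparison of universal proper spaces, and then to verify the square by chasing a class through the colimit $\K_*^{\mathrm{top}}(G;A)=\varinjlim_X\KK_*^G(C_0(X),A)$, relying on two compatibility properties of the descent $j_G$. To define $i_u^*$, fix a $G$-compact, locally compact, second countable $X\subseteq\mathcal{E}(G)$ with anchor map $p_X$ and put $X_u:=p_X^{-1}(u)$ with the restricted $G_u^u$-action. Restricting a proper action to a closed subgroupoid keeps it proper, so $X_u$ is a proper $G_u^u$-space; writing $X=GK$ with $K$ compact and using that $G$ is a group bundle (so $r(g)=d(g)$, which forces any $x=gk\in X_u$ to have $g\in G_u^u$ and $k\in K\cap X_u$), one gets $X_u=G_u^u(K\cap X_u)$, so $X_u$ is $G_u^u$-compact. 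By the fibre identifications of Proposition~\ref{Prop:Fibres of Pushforward}, restriction to $G_u^u$ sends the $G$-algebra $C_0(X)$ to $C_0(X_u)$ and $A$ to $A_u$, giving $res_{G_u^u}^G\colon\KK^G(C_0(X),A)\to\KK^{G_u^u}(C_0(X_u),A_u)$. Choosing a $G_u^u$-equivariant map $\varphi\colon X_u\to\mathcal{E}(G_u^u)$ (automatically proper, hence landing in a $G_u^u$-compact subset), pulling back along $\varphi$ and composing with the canonical map into the colimit yields a homomorphism $\KK^G(C_0(X),A)\to\K_*^{\mathrm{top}}(G_u^u;A_u)$ that is independent of the choices by homotopy-uniqueness of $\varphi$ and compatible with the structure maps of $\varinjlim_X$; taking the colimit defines $i_u^*$.

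\emph{Commutativity.} Fix $z\in\KK^G(C_0(X),A)$ and a compactly supported cutoff function $c_X$ for the proper, $G$-compact groupoid $G\ltimes X$. Using the identification $C_0(X)\rtimes_rG\cong C_r^*(G\ltimes X)$ we have $\mu_A(z)=[p_{c_X}]\otimes j_G(z)$, hence $q_{u,*}(\mu_A(z))=[p_{c_X}]\otimes j_G(z)\otimes[q_u^A]$ with $q_u^A\colon A\rtimes_rG\to A_u\rtimes_rG_u^u$ the fibre quotient of Proposition~\ref{Prop:CrossedProductsByGroupBundles}(2). On the other side, the well-definedness of the assembly map (naturality under the $G_u^u$-map $\varphi$) gives $\mu_{A_u}(i_u^*(z))=[p_c]\otimes j_{G_u^u}(res_{G_u^u}^G(z))$ for any cutoff $c$ of $G_u^u\ltimes X_u$. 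Two facts reconcile these. (i) Descent commutes with passing to the fibre over $u$: under the identifications $(C_0(X)\rtimes_rG)_u=C_0(X_u)\rtimes_rG_u^u$ and $(A\rtimes_rG)_u=A_u\rtimes_rG_u^u$ of Proposition~\ref{Prop:CrossedProductsByGroupBundles}(2), one has $j_G(z)\otimes[q_u^A]=[q_u^{C_0(X)}]\otimes j_{G_u^u}(res_{G_u^u}^G(z))$. (ii) Because $G$ is a group bundle, $(G\ltimes X)_{X_u}^{X_u}=G_u^u\ltimes X_u$ and the restriction of $c_X$ to $X_u$ is a cutoff function for it; consequently the image of $p_{c_X}$ under $q_u^{C_0(X)}$ is the corresponding cutoff projection, so $[p_{c_X}]\otimes[q_u^{C_0(X)}]=[p_c]$ in $\K_0(C_0(X_u)\rtimes_rG_u^u)$ by the independence of the class from the choice of cutoff. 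Combining (i) and (ii): $q_{u,*}(\mu_A(z))=[p_{c_X}]\otimes[q_u^{C_0(X)}]\otimes j_{G_u^u}(res_{G_u^u}^G(z))=[p_c]\otimes j_{G_u^u}(res_{G_u^u}^G(z))=\mu_{A_u}(i_u^*(z))$, and passing to the colimit over $X$ finishes the argument.

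\emph{Main obstacle.} The crux is (i). Since $j_G(z)$ lies in $\RKK(G^{(0)};C_0(X)\rtimes_rG,A\rtimes_rG)$ by the lemma immediately preceding this one, the identity in (i) reduces, by the standard behaviour of $\RKK(G^{(0)})$-classes under the fibre quotients $q_u$, to the cycle-level statement $j_G(z)|_u=j_{G_u^u}(res_{G_u^u}^G(z))$. I would prove this directly: for $(E,\Phi,T)\in\EE^G(C_0(X),A)$ one identifies $(E\rtimes_rG)_u$ with $E_u\rtimes_rG_u^u$ (the Hilbert-module analogue of Proposition~\ref{Prop:CrossedProductsByGroupBundles}(2), which is exactly where exactness of $G$ is used), observes that $(E_u,\Phi_u,T_u)$ represents $res_{G_u^u}^G(z)$, and checks that the convolution formulas for $\widetilde\Phi$ and $\widetilde T$ defining $j_G$ specialise over $u$ to those defining $j_{G_u^u}$.
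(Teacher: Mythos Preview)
Your proposal is correct and follows essentially the same route as the paper: construct $i_u^*$ from the restriction $res_{G_u^u}^G$ at the level of $\KK^G(C_0(X),A)$, then verify commutativity by splitting into a descent-compatibility square and a cutoff-projection square, using that $j_G$ lands in $\RKK(G^{(0)};-,-)$ and that restricting the cutoff to $X_u$ gives a cutoff for $G_u^u\ltimes X_u$.

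One small point where the paper is slicker than your sketch: for the key identification $(E\rtimes_r G)_u\cong E_u\rtimes_r G_u^u$ you propose a direct ``Hilbert-module analogue of Proposition~\ref{Prop:CrossedProductsByGroupBundles}(2)''. The paper instead writes $E\rtimes_r G=E\otimes_A(A\rtimes_r G)$, applies Proposition~\ref{Prop:Bundles and Pullbacks} to get $(E\otimes_A(A\rtimes_r G))_u\cong E_u\otimes_{A_u}(A\rtimes_r G)_u$, and then invokes the $\mathrm{C}^*$-algebraic Proposition~\ref{Prop:CrossedProductsByGroupBundles}(2) for $(A\rtimes_r G)_u=A_u\rtimes_r G_u^u$. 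This avoids redoing the exactness argument at the Hilbert-module level.
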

\begin{proof}
It follows from \cite[Propositions~7.1 and 7.2]{LeGall99}, that the inclusion map $i_u$ induces group homomorphisms
$$i_{X,u}^*:\mathrm{KK}^G(C_0(X),A)\rightarrow \mathrm{KK}^{G_u^u}(C_0(X_u),A_u)$$ for every locally compact $G$-space $X$. If $X$ is proper and cocompact, then $X_u$ is a proper and cocompact $G_u^u$-space. Hence we obtain maps $\mathrm{KK}^G(C_0(X),A)\rightarrow \K_*^{\mathrm{top}}(G_u^u;A_u)$. One easily checks, that these commute with the connecting maps coming from continuous $G$-maps $X\rightarrow Y$ for two proper $G$-compact $G$-spaces $X$ and $Y$. Consequently, taking the limit over all proper and $G$-compact subspaces $X\subseteq \mathcal{E}(G)$, we obtain the desired homomorphism $i_u^*:\K_*^{\mathrm{top}}(G;A)\rightarrow \K_*^{\mathrm{top}}(G_u^u;A_u)$.
In order to obtain commutativity of the diagram in the proposition, it is enough to observe that the following diagram commutes:
\begin{center}
\begin{tikzpicture}[description/.style={fill=white,inner sep=2pt}]
	\matrix (m) [matrix of math nodes, row sep=3em,
	column sep=1.5em, text height=1.5ex, text depth=0.25ex]
	{ \mathrm{KK}^G(C_0(X),A) & \mathrm{KK}^{G_u^u}(C_0(X_u),A_u)\\ \mathcal{R}\mathrm{KK}(G^{(0)};C_0(X)\rtimes_r G, A\rtimes_r G) & \mathrm{KK}(C_0(X_u)\rtimes_r G_u^u, A_u\rtimes_r G_u^u)\\
	 \K_0(A\rtimes_r G) &
	   \K_0(A_u\rtimes_r G_u^u)\\
	};
	\path[->,font=\scriptsize]
	(m-1-1) edge node[auto] {$ j_{G} $} (m-2-1)
	(m-3-1) edge node[auto] {$ q_{u,*} $} (m-3-2)
	(m-1-2) edge node[auto] { $ j_{G_u^u} $ } (m-2-2)
	(m-1-1) edge node[auto] { $ i_{X,u}^* $ } (m-1-2)
	(m-2-1) edge node[auto] {$ p_{G\ltimes X}\otimes\cdot $} (m-3-1)
	(m-2-2) edge node[auto] {$ p_{G_u^u\ltimes X_u}\otimes\cdot $} (m-3-2)
	(m-2-1) edge node[auto] {$ (i_u^{(0)})^* $} (m-2-2)
	;
	\end{tikzpicture}
\end{center}
The middle vertical map is induced by the inclusion map $i_u^{(0)}:\lbrace u\rbrace \hookrightarrow G^{(0)}$. Let us deal with the upper square first: Let $(E,\Phi,T)$ be a Kasparov triple in $\mathbb{E}^G(C_0(X),A)$. Recall, that $j_{G,r}$ sends the class of $(E,\Phi,T)$ to the class represented by $(E\rtimes_r G,\widetilde{\Phi},\widetilde{T})$.
Applying Proposition \ref{Prop:CrossedProductsByGroupBundles} and Proposition \ref{Prop:Bundles and Pullbacks} we obtain a canonical isomorphism
\begin{align*}
(E\rtimes_r G)_u&=(E\otimes_A (A\rtimes_r G))_u\\
&\cong E_u\otimes_{A_u} (A\rtimes_r G)_u\\
&\cong E_u\otimes_{A_u} (A_u\rtimes_r G_u^u)\\
&=E_u\rtimes_r G_u^u,\end{align*}
which intertwines the representations $(\widetilde{\Phi})_u$ and $\widetilde{\Phi_u}$ and the operators $(\widetilde{T})_u$ and $\widetilde{T_u}$.

In order to prove commutativity of the lower square we first fix a cut-off function $c$ for $G\ltimes X$. Then its restriction to the subspace $X_u$ is easily checked to be a cut-off function for $G_u^u\ltimes X_u$. It follows, that if $p:=p_{G\ltimes X}$ is the canonical projection associated to $c$, then $p(u)\in C_0(X_u)\rtimes_r G_u^u$ is the projection associated to the restriction of $c$ to $X_u$.
Now let $(E,\Phi,T)$ be the representative of an element $x\in\mathcal{R}\mathrm{KK}(G^{(0)},C_0(X)\rtimes_r G, A\rtimes_r G)$.
Recall, that under the identification $\K_0(C_0(X)\rtimes_r G)\cong \mathrm{KK}(\CC,C_0(X)\rtimes_r G)$ the class of $p$ is represented by the Kasparov tripel $(C_0(X)\rtimes_r G,\Phi_p,0)$, where $\Phi_p:\CC\rightarrow C_0(X)\rtimes_r G$ is given by $\Phi_p(1)=p$. Then the Kasparov product $p\otimes x\in \mathrm{KK}(\CC,A\rtimes_r G)$ can be represented by the tripel $(E\otimes_{q_u} (A_u\rtimes_r G_u^u),(\Phi\circ \Phi_p) \otimes 1,T\otimes 1)$.
On the other hand $(i_u^{(0)})^*(x)$ is represented by the tripel $(E_u,\Phi_u,T_u)$ and hence the product $p(u)\otimes(i_u^{(0)})^*(x)$ is represented by the triple $(E_u,\Phi_u\circ \Phi_{p(u)},T_u)$, where $\Phi_{p(u)}:\CC\rightarrow C_0(X_u)\rtimes_r G_u^u$ is again given by $1\mapsto p(u)$.
But by Remark \ref{Remark:Fibre of Hilbert module can be defined as tensor product} there is a canonical isomorphism
$E\otimes_{q_u} (A_u\rtimes_r G_u^u)\rightarrow E_u$ and one easily checks on elementary tensors, that this isomorphism intertwines $(\Phi\circ \Phi_p) \otimes 1$ with $\Phi_u\circ \Phi_{p(u)}$ and $T\otimes 1$ with $T_u$.
\end{proof}

Let $G$ be an ample groupoid group bundle, which is strongly amena\-ble at infinity and let $A$ be a $G$-algebra.
Let $\sigma_A:\K_*(A\rtimes_r G)\rightarrow \K_*^{\mathrm{top}}(G;A)$ be the splitting homomorphism provided by Theorem \ref{Theorem:BC Injectivity  for amenable at infty grpds}. Then $\gamma_A:=\mu_A\circ \sigma_A$ is an idempotent endomorphism of $\K_*(A\rtimes_r G)$ such that $im(\gamma_A)=im(\mu_A)$. In particular, it follows that $G$ satisfies the Baum-Connes conjecture for $A$ if and only if $(1-\gamma_A)\K_*(A\rtimes_r G)=\lbrace 0\rbrace$.

Since $G$ is strongly amenable at infinity, it is exact. Hence the reduced crossed product $A\rtimes _r G$ is the algebra of $C_0$-sections of a continuous bundle of $C^*$-algebras over $G^{(0)}$ with fibres $(A\rtimes_r G)_u=A_u\rtimes_r G_u^u$. Let $q_u:A\rtimes_r G\rightarrow A_u\rtimes_r G_u^u$ be the corresponding quotient map.
Likewise, every group $G_u^u$ of the bundle $G$ is amenable at infinity. Hence by the same reasoning, we obtain idempotents $\gamma_{A_u}\in End(\K_*(A_u\rtimes_r G_u^u))$.
We shall need the observation, that the elements $\gamma_A$ and $\gamma_{A_u}$ are compatible:
\begin{lemma}\label{Lem:GammaElements are natural}
Let $G$ be a second countable ample groupoid group bundle, which is strongly amen\-able at infinity. If $A$ is a separable $G$-algebra and $q_u:A\rtimes_r G\rightarrow A_u\rtimes_r G_u^u$ denotes the canonical quotient map, then $q_{u,*}\circ \gamma_A=\gamma_{A_u} \circ q_{u,*}$.
\end{lemma}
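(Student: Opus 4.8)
The plan is to unwind all the definitions and reduce the statement $q_{u,*}\circ\gamma_A=\gamma_{A_u}\circ q_{u,*}$ to a compatibility between the splitting homomorphisms $\sigma_A$ and $\sigma_{A_u}$ on the one hand, and the naturality diagram from Lemma \ref{Lemma:AssemblyMapForGroupoidCommutesWithAssemblyMapForIsotropyGroups} on the other. Recall $\gamma_A=\mu_A\circ\sigma_A$ and $\gamma_{A_u}=\mu_{A_u}\circ\sigma_{A_u}$, where $\sigma_A=p_*^{-1}\circ\mu_{A\otimes C_0(Y)}\circ(p\rtimes_r G)_*$ is the explicit splitting constructed in the proof of Theorem \ref{Theorem:BC Injectivity  for amenable at infty grpds} (here $Y$ is a fixed space witnessing strong amenability at infinity, with second countable fibres that are convex and on which $G$ acts by affine transformations). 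So the first step is to show $q_{u,*}\circ\mu_A\circ\sigma_A=\mu_{A_u}\circ\sigma_{A_u}\circ q_{u,*}$, and by Lemma \ref{Lemma:AssemblyMapForGroupoidCommutesWithAssemblyMapForIsotropyGroups} we have $q_{u,*}\circ\mu_A=\mu_{A_u}\circ i_u^*$, so it suffices to prove the identity $i_u^*\circ\sigma_A=\sigma_{A_u}\circ q_{u,*}$ of maps $\K_*(A\rtimes_r G)\to\K_*^{\mathrm{top}}(G_u^u;A_u)$.

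To establish this last identity I would check that each of the three factors of $\sigma_A$ is compatible with its counterpart for $G_u^u$ under the appropriate restriction/fibre maps. Concretely, writing $Y_u=p^{-1}(u)$ (which is compact since $p$ is proper, and on which $G_u^u$ acts by affine transformations via the restricted action, with a continuous — automatically equivariant after averaging — section as in Proposition \ref{contractible}), one has: (i) $(p\rtimes_r G)_*$ is compatible with $(p_u\rtimes_r G_u^u)_*$ via the quotient maps $q_u:A\rtimes_r G\to A_u\rtimes_r G_u^u$ and $q_u':(A\otimes_{G^{(0)}}C_0(Y))\rtimes_r G\to (A_u\otimes C_0(Y_u))\rtimes_r G_u^u$, since the homomorphism $\mathrm{id}_A\otimes p^*$ restricts on fibres to $\mathrm{id}_{A_u}\otimes p_u^*$ and Proposition \ref{Prop:CrossedProductsByGroupBundles} identifies the fibres of the crossed products (using exactness, which holds by Delaroche's result since $G$ is amenable at infinity, and $(A\otimes_{G^{(0)}}C_0(Y))_u\cong A_u\otimes C_0(Y_u)$ by the tensor product fibre formula); (ii) $\mu_{A\otimes C_0(Y)}$ is compatible with $\mu_{A_u\otimes C_0(Y_u)}$ via $i_u^*$ and $q_{u,*}'$, which is exactly another instance of Lemma \ref{Lemma:AssemblyMapForGroupoidCommutesWithAssemblyMapForIsotropyGroups} applied to the coefficient algebra $A\otimes_{G^{(0)}}C_0(Y)$; and (iii) $p_*^{-1}$ is compatible with $(p_u)_*^{-1}$ via $i_u^*$, which follows from the corresponding statement for $p_*$ itself — and $p_*:\K_*^{\mathrm{top}}(G;A)\to\K_*^{\mathrm{top}}(G;A\otimes_{G^{(0)}}C_0(Y))$ commutes with $i_u^*$ and the analogous map for $G_u^u$ simply because $p_*$ is induced by the coefficient homomorphism $\mathrm{id}_A\otimes p^*$ and the construction of $i_u^*$ in Lemma \ref{Lemma:AssemblyMapForGroupoidCommutesWithAssemblyMapForIsotropyGroups} is functorial in the coefficient algebra. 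Stringing these three squares together gives $i_u^*\circ\sigma_A=\sigma_{A_u}\circ q_{u,*}$, hence the claim.

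The main obstacle I anticipate is item (iii): to even make sense of $(p_u)_*^{-1}$ one must know that $p_{u,*}:\K_*^{\mathrm{top}}(G_u^u;A_u)\to\K_*^{\mathrm{top}}(G_u^u;A_u\otimes C_0(Y_u))$ is an isomorphism, which requires checking that $Y_u$ (equivalently $Y_{u,K}=p_u^{-1}(K)$ for compact open $K\subseteq G_u^u$, but $G_u^u$ is a group so $K$ ranges over compact open, i.e. finite-index-type, subgroups and $K^{(0)}$ is a point) satisfies the hypotheses of Theorem \ref{MainTheorem} for the group $G_u^u$; this is precisely Proposition \ref{contractible} applied to $G_u^u$ in place of $G$, together with the fact that $G_u^u$ is itself amenable at infinity (the preceding Lemma in the excerpt) so that the machinery applies. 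One must also be careful that the continuous section $s:G^{(0)}\to Y$ restricts to a continuous section $G_u^{u,(0)}=\{u\}\to Y_u$, which is trivial here since the base is a point, so strong amenability at infinity of $G_u^u$ is immediate. Once these preliminary facts are in place the rest is a diagram chase through the definitions, with the only genuine content being the repeated use of Proposition \ref{Prop:CrossedProductsByGroupBundles}, Proposition \ref{Prop:Bundles and Pullbacks}, and the naturality of all constructions in the coefficient algebra — all of which are already available in the excerpt.
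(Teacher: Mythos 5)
Your proposal is correct and follows essentially the same route as the paper: the paper's proof consists precisely of the naturality squares you describe — the crossed-product square for $p_A\rtimes_r G$ (your (i), argued via $C_0(G^{(0)})$-linearity and the fibre identification of Proposition \ref{Prop:CrossedProductsByGroupBundles}), two applications of Lemma \ref{Lemma:AssemblyMapForGroupoidCommutesWithAssemblyMapForIsotropyGroups} (for $A\otimes_{G^{(0)}}C_0(Y)$ and for $A$), and the square for $p_*$ induced by the coefficient homomorphism $\mathrm{id}_A\otimes p^*$ — strung together so that the top and bottom rows compose to $\gamma_A$ and $\gamma_{A_u}$. Your preliminary verifications (that $Y_u$ witnesses the hypotheses for $G_u^u$ so that $(p_u)_*$ is invertible and $\sigma_{A_u}$, $\gamma_{A_u}$ are defined via $Y_u$) are exactly what the paper uses implicitly.
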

\begin{proof}
Let $\pi_u:(A\otimes_{G^{(0)}}C_0(Y))\rtimes_r G\rightarrow (A_u\otimes C(Y_u))\rtimes_r G_u^u$ be the canonical quotient map. Then we have a commutative diagram:
\begin{center}
	\begin{tikzpicture}[description/.style={fill=white,inner sep=2pt}]
	\matrix (m) [matrix of math nodes, row sep=3em,
	column sep=3em, text height=1.5ex, text depth=0.25ex]
	{ \K_*(A\rtimes_r G) & \K_*((A\otimes_{G^{(0)}}C_0(Y))\rtimes_rG) &  \K_*^{\mathrm{top}}(G;A\otimes_{G^{(0)}} C_0(Y))\\		
		\K_*(A_u\rtimes_r G_u^u) &  \K_*((A_u\otimes C(Y_u))\rtimes_r G_u^u) & \K_*^{\mathrm{top}}(G_u^u;A_u\otimes C(Y_u)) \\
	};
	\path[->,font=\scriptsize]
	(m-1-1) edge node[auto] {$ q_{u,*} $} (m-2-1)
	(m-2-1) edge node[auto] {$ ((id_{A_u}\otimes 1)\rtimes_r G)_*  $} (m-2-2)
	(m-1-2) edge node[auto] { $ \pi_{u,\ast} $ } (m-2-2)
	(m-1-2) edge node[auto] { $ (\mu_{A\otimes C_0(Y)})^{-1} $ } (m-1-3)
	(m-2-2) edge node[auto] { $ (\mu_{A_u\otimes C(Y_u)})^{-1} $ } (m-2-3)
	(m-1-1) edge node[auto] { $ (p_A\rtimes_r G)_* $ } (m-1-2)
		(m-1-3) edge node[auto] { $ i_u^* $ } (m-2-3)
	;
	\end{tikzpicture}
\end{center}
Here, the first square commutes already at the level of the $*$-homo\-morphisms, since $p_A\rtimes_r G$ is a $C_0(G^{(0)})$-linear map with $(p_A\rtimes_r G)_u=(id_{A_u}\otimes 1_{C(Y_u)})\rtimes_r G_u^u$. The second square commutes by Lemma \ref{Lemma:AssemblyMapForGroupoidCommutesWithAssemblyMapForIsotropyGroups} applied to the $G$-algebra $A\otimes_{C_0(G^{(0)})}C_0(Y)$. For similar reasons, each square in the following diagram commutes:
\begin{center}
	\begin{tikzpicture}[description/.style={fill=white,inner sep=2pt}]
	\matrix (m) [matrix of math nodes, row sep=3em,
	column sep=3em, text height=1.5ex, text depth=0.25ex]
	{  \K_*^{\mathrm{top}}(G;A\otimes_{G^{(0)}} C_0(Y)) & \K_*^{\mathrm{top}}(G;A) & \K_*(A\rtimes_r G) \\
		 \K_*^{\mathrm{top}}(G_u^u;A_u\otimes C(Y_u)) & \K_*^{\mathrm{top}}(G_u^u;A_u) & \K_*(A_u\rtimes_r G_u^u) \\
	};
	\path[->,font=\scriptsize]
	(m-1-1) edge node[auto] {$ (p_A)_*^{-1} $} (m-1-2)
	(m-1-2) edge node[auto] {$ \mu_A  $} (m-1-3)
	(m-1-3) edge node[auto] {$ q_{u,*} $} (m-2-3)
	(m-2-1) edge node[auto] { $ (p_{A_u})_*^{-1} $ } (m-2-2)
	(m-2-2) edge node[auto] { $ \mu_{A_u} $ } (m-2-3)
	(m-1-1) edge node[auto] { $ i_u^* $ } (m-2-1)
	(m-1-2) edge node[auto] { $ i_u^* $ } (m-2-2)
	;
	\end{tikzpicture}
\end{center}
Since the composition of the upper (respective lower) rows of these diagrams is by definition $\gamma_A$ (respective $\gamma_{A_u}$), the result follows.
\end{proof}

\begin{satz}\label{Thm:BC for group bundles}
Let $G$ be a second countable ample group bundle, which is strongly amenable at infinity. Suppose $A$ is a separable $G$-algebra such that the associated $C^*$-bundle is continuous, and $G_u^u$ satisfies the Baum-Connes conjecture with coefficients in $A_u$ for all $u\in G^{(0)}$. Then $G$ satisfies the Baum-Connes conjecture with coefficients in $A$.

In particular, $G$ satisfies the Baum-Connes conjecture with trivial coefficients, whenever each of its isotropy groups does.
\end{satz}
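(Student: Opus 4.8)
The strategy is to prove that $G$ satisfies the Baum--Connes conjecture with coefficients in $A$ by showing that $(1-\gamma_A)\K_*(A\rtimes_r G)=0$, which by the discussion preceding Lemma \ref{Lem:GammaElements are natural} is equivalent to the assertion. Since $G$ is strongly amenable at infinity it is exact, so Proposition \ref{Prop:CrossedProductsByGroupBundles} applies: $A\rtimes_r G$ is a $C_0(G^{(0)})$-algebra with fibre maps $q_u\colon A\rtimes_r G\to A_u\rtimes_r G_u^u$, and \emph{because the $C^*$-bundle of $A$ is continuous, so is the $C^*$-bundle associated to $A\rtimes_r G$}. Each group $G_u^u$ is again amenable at infinity (as shown above), so the idempotents $\gamma_{A_u}\in\mathrm{End}(\K_*(A_u\rtimes_r G_u^u))$ are defined, and the hypothesis that $G_u^u$ satisfies Baum--Connes with coefficients in $A_u$ says precisely that $\gamma_{A_u}=\mathrm{id}$.

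Now fix $\eta\in\K_*(A\rtimes_r G)$ and put $\zeta:=(1-\gamma_A)\eta$. By Lemma \ref{Lem:GammaElements are natural}, for every $u\in G^{(0)}$ we have $q_{u,*}(\zeta)=(1-\gamma_{A_u})\,q_{u,*}(\eta)=0$. Thus the theorem reduces to the purely bundle-theoretic claim that an element $\zeta$ in the $\K$-theory of a \emph{separable continuous} $C_0(X)$-algebra over a locally compact totally disconnected Hausdorff space $X$ (here $X=G^{(0)}$), all of whose images in the fibres vanish, is itself zero. Writing $X$ as a directed union of compact open (hence clopen) subsets $U$, using that such a $C_0(X)$-algebra restricts to a $C(U)$-algebra with the same fibres over $U$, and that $\K$-theory commutes with the resulting inductive limit, we may assume $X$ compact.

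For compact $X$, represent $\zeta$ (in the $\K_0$-case; the $\K_1$-case is analogous with unitaries), after a matrix amplification, by a formal difference $[p]-[q]$ of projections over $(A\rtimes_r G)^{\sim}$; the fibrewise vanishing says that $p(u)$ and $q(u)$ are Murray--von Neumann equivalent in the fibre for each $u$ (a compactness argument over $X$ makes the number of stabilising units uniform). The key step is to glue these fibrewise equivalences: given $u_0$, lift a partial isometry implementing $p(u_0)\sim q(u_0)$ to $(A\rtimes_r G)^{\sim}$; since the bundle is continuous the norm functions measuring the defect of this lift are continuous and vanish at $u_0$, so on a neighbourhood the lift is an approximate partial isometry from $p$ to $q$, which may be perturbed to an exact one on a smaller neighbourhood and then restricted to a clopen neighbourhood $U_{u_0}$ of $u_0$ by total disconnectedness. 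Finitely many such $U_{u_0}$ cover the compact space $X$; refining to a finite clopen partition and assembling the local partial isometries yields $p\sim q$ globally, hence $\zeta=0$. This gives $\gamma_A=\mathrm{id}$, so $G$ satisfies Baum--Connes with coefficients in $A$. The final assertion follows by taking $A=C_0(G^{(0)})$, whose associated $C^*$-bundle is trivially continuous with fibres $\CC=(C_0(G^{(0)}))_u$.

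The main obstacle is the gluing argument of the last paragraph. It is precisely here that both continuity of the $C^*$-bundle and zero-dimensionality of $G^{(0)}$ are indispensable, since for a general $C_0(X)$-algebra $\K$-theory is not detected on the fibres; turning the fibrewise Murray--von Neumann equivalences into a single global equivalence via perturbation and a clopen partition of unity is the technical heart of the proof, and it is this step — rather than the formal manipulations with the idempotents $\gamma_A$ and $\gamma_{A_u}$ — that carries the weight.
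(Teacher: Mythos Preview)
Your proof is correct and follows essentially the same strategy as the paper: reduce surjectivity of $\mu_A$ to showing that any $\zeta\in(1-\gamma_A)\K_*(A\rtimes_r G)$ vanishes, use Lemma~\ref{Lem:GammaElements are natural} to get $q_{u,*}(\zeta)=0$ for all $u$, and then exploit continuity of the bundle $A\rtimes_r G$ together with total disconnectedness of $G^{(0)}$ to conclude $\zeta=0$. The only difference is cosmetic: where you sketch the ``fibrewise zero $\Rightarrow$ locally zero'' step by hand via perturbation of partial isometries and then pass to a finite clopen partition after reducing to compact base, the paper simply invokes \cite[Lemma~3.4]{MR2010742} for this step and uses a (possibly infinite) clopen partition $G^{(0)}=\coprod_i C_i$ together with the direct-sum decomposition $A\rtimes_r G\cong\bigoplus_i A_{\mid C_i}\rtimes_r G_{\mid C_i}$.
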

\begin{proof}
By the above considerations, it is enough to show, that $(1-\gamma_A)\K_*(A\rtimes_r G)=\lbrace 0\rbrace$. To this end, let $x\in (1-\gamma_A)\K_*(A\rtimes_rG)$. By Lemma \ref{Lem:GammaElements are natural} we have $q_{u,*}(x) =q_{u,*}(1-\gamma_A)(x)=(1-\gamma_{A_u})(q_{u,*}(x))\in (1-\gamma_{A_u})\K_*(A_u\rtimes_r G_u^u)$. But the latter group is zero by our assumption, hence $q_{u,*}(x)=0$ for all $u\in G^{(0)}$.
By \cite[Lemma~3.4]{MR2010742} every $u\in G^{(0)}$ admits a compact neighbourhood $C$ of $u$, such that $q_{C,*}(x)=0$, where $q_C:A\rtimes_r G\rightarrow A_{\mid C}\rtimes_r G_{\mid C}$ denotes the map induced by restriction. Since $G^{(0)}$ is assumed to be totally disconnected, we can find a partition $G^{(0)}=\coprod_{i\in I} C_i$ into compact open sets $C_i$ such that $q_{C_i,*}(x)=0$ for all $i\in I$. As the cover is disjoint, we obtain a decompositon $A\rtimes_r G=\bigoplus_{i\in I} A_{\mid C_i}\rtimes_r G_{\mid C_i}$. Using the additivity of $\K$-theory, we see that the maps $q_{C_i}$ induce an isomorphism
$\K_*(A\rtimes_r G)\cong \bigoplus_{i\in I} \K_*(A_{\mid C_i}\rtimes_r G_{\mid C_i})$. Since $q_{C_i,*}(x)=0$ for all $i\in I$, we conclude $x=0$ as desired.
\end{proof}
Combining Theorem \ref{Thm:BC for group bundles} above with \cite[Theorem~3.10]{Tu12} we get the following Corollary:
\begin{kor}
	Let $G$ be a second countable ample group bundle, which is strongly amenable at infinity. Then $G$ satisfies the Baum-Connes conjecture with coefficients in all $G$-algebras $A$ whose associated bundle of $\mathrm{C}^*$-algebras is continuous if and only if $G_u^u$ satisfies the Baum-Connes conjecture with coefficients for all $u\in G^{(0)}$.
\end{kor}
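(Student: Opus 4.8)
The plan is to obtain the corollary formally, using Theorem~\ref{Thm:BC for group bundles} for one implication and Tu's going-down (restriction) principle for étale groupoids, \cite[Theorem~3.10]{Tu12}, for the other; as is customary in this area one works with separable coefficient algebras, the general case following by the usual direct-limit arguments.

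First I would prove that the conjecture for all fibres forces it for $G$. Assume that $G_u^u$ satisfies the Baum-Connes conjecture with coefficients for every $u\in G^{(0)}$, and let $A$ be any separable $G$-algebra whose associated $\mathrm{C}^*$-bundle is continuous. For each $u$ the fibre $A_u$ is a $G_u^u$-algebra, so by hypothesis $G_u^u$ satisfies the conjecture with coefficients in $A_u$. Hence the hypotheses of Theorem~\ref{Thm:BC for group bundles} are met and $G$ satisfies the conjecture with coefficients in $A$. Since $A$ was an arbitrary continuous-bundle $G$-algebra, this is precisely the left-hand side of the claimed equivalence.

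For the converse I would use that each isotropy group $G_u^u$ is a closed subgroupoid of $G$ and that, for a $G$-algebra $A$, the restriction $A_{\mid G_u^u}$ is exactly the fibre $A_u$ carrying its canonical $G_u^u$-action. Applying Tu's restriction principle \cite[Theorem~3.10]{Tu12} to the closed subgroupoid $G_u^u\subseteq G$ then transports the Baum-Connes conjecture from $G$ down to $G_u^u$: from the assumption that $G$ satisfies the conjecture with coefficients in all continuous-bundle $G$-algebras one concludes that $G_u^u$ satisfies it with coefficients in $A_u$, and --- this being the substantial content of that theorem --- in a sufficiently rich class of coefficient algebras to deduce that $G_u^u$ satisfies the conjecture with arbitrary coefficients.

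The delicate point, and the only nontrivial step, is this converse. A general $G_u^u$-algebra $B$ need not literally arise as a fibre $A_u$ of a continuous-bundle $G$-algebra: the naive attempt to spread $B$ out over $G^{(0)}$ by a constant bundle fails to produce a $G$-equivariantly continuous action unless $\{u\}$ is open in $G^{(0)}$, which need not be the case. Hence one cannot merely restrict a single coefficient algebra of $G$ to $G_u^u$, and one must instead appeal to the full force of Tu's going-down machinery, in which the second countability of $G$ and the continuity of the $\mathrm{C}^*$-bundle guarantee that the relevant restriction and induction functors are well-behaved. Everything else --- identifying $A_{\mid G_u^u}$ with $A_u$, compatibility with the assembly maps, and the reduction to separable algebras --- is routine.
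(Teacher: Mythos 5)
Your proposal is correct and takes essentially the same route as the paper, whose proof is exactly this combination: the ``if'' direction is Theorem \ref{Thm:BC for group bundles} applied to the fibres $A_u$ of a continuous-bundle coefficient algebra, and the ``only if'' direction is obtained by viewing each isotropy group $G_u^u$ as a closed subgroupoid of $G$ and invoking Tu's restriction theorem \cite[Theorem~3.10]{Tu12}. (Only your closing gloss differs in emphasis: the continuity of the $\mathrm{C}^*$-bundle is used by the paper solely in Theorem \ref{Thm:BC for group bundles}, not as an ingredient of Tu's going-down machinery, but this does not affect the argument.)
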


\ \newline
{\bf Acknowledgments}. The content of this paper covers part of the main results of the authors doctoral thesis \cite{Bonicke}. He would like to thank his supervisor Siegfried Echterhoff for his support and advice and the anonymous referee for a careful reading and several suggestions for improvement.

\bibliographystyle{plain}
\bibliography{Literatur}

\begin{thebibliography}{10}

\bibitem{ADR}
C.~Anantharaman-Delaroche and J.~Renault.
\newblock {\em Amenable groupoids}, volume~36 of {\em Monographies de
  L'Enseignement Math\'ematique [Monographs of L'Enseignement Math\'ematique]}.
\newblock L'Enseignement Math\'ematique, Geneva, 2000.
\newblock With a foreword by Georges Skandalis and Appendix B by E. Germain.

\bibitem{Delaroche}
Claire Anantharaman-Delaroche.
\newblock {Exact Groupoids}, 2016.
\newblock arXiv:1605.05117.

\bibitem{MR1292018}
Paul Baum, Alain Connes, and Nigel Higson.
\newblock Classifying space for proper actions and {$K$}-theory of group
  {$C^\ast$}-algebras.
\newblock In {\em {$C^\ast$}-algebras: 1943--1993 ({S}an {A}ntonio, {TX},
  1993)}, volume 167 of {\em Contemp. Math.}, pages 240--291. Amer. Math. Soc.,
  Providence, RI, 1994.

\bibitem{MR2658985}
Jean Bellissard, Antoine Julien, and Jean Savinien.
\newblock Tiling groupoids and {B}ratteli diagrams.
\newblock {\em Ann. Henri Poincar\'e}, 11(1-2):69--99, 2010.

\bibitem{MR1656031}
Bruce Blackadar.
\newblock {\em {$K$}-theory for operator algebras}, volume~5 of {\em
  Mathematical Sciences Research Institute Publications}.
\newblock Cambridge University Press, Cambridge, second edition, 1998.

\bibitem{MR1395009}
\'Etienne Blanchard.
\newblock D\'eformations de {$C^*$}-alg\`ebres de {H}opf.
\newblock {\em Bull. Soc. Math. France}, 124(1):141--215, 1996.

\bibitem{Bonicke}
Christian B{\"o}nicke.
\newblock {\em {A Going-Down principle for ample groupoids and applications}}.
\newblock PhD thesis, Westf{\"a}lische Wilhelms-Universit{\"a}t M{\"u}nster,
  2018.

\bibitem{Preprint1}
Christian B{\"o}nicke.
\newblock K-theory and homotopies of twists on ample groupoids.
\newblock arXiv:1901.09441, 2019.

\bibitem{Preprint2}
Christian {B\"onicke} and Cl\'ement {Dell'Aiera}.
\newblock {Going-down functors and the K\"unneth formula for crossed products
  by \'etale groupoids.}
\newblock {\em {Trans. Am. Math. Soc.}}, 372(11):8159--8194, 2019.

\bibitem{MR2928324}
Jonathan~Henry Brown.
\newblock Proper actions of groupoids on {$C^*$}-algebras.
\newblock {\em J. Operator Theory}, 67(2):437--467, 2012.

\bibitem{CEO}
J.~Chabert, S.~Echterhoff, and H.~Oyono-Oyono.
\newblock Going-down functors, the {K}\"unneth formula, and the {B}aum-{C}onnes
  conjecture.
\newblock {\em Geom. Funct. Anal.}, 14(3):491--528, 2004.

\bibitem{MR1752299}
J\'er\^ome Chabert.
\newblock Baum-{C}onnes conjecture for some semi-direct products.
\newblock {\em J. Reine Angew. Math.}, 521:161--184, 2000.

\bibitem{MR1836047}
J\'er\^ome Chabert and Siegfried Echterhoff.
\newblock Permanence properties of the {B}aum-{C}onnes conjecture.
\newblock {\em Doc. Math.}, 6:127--183, 2001.

\bibitem{MR2010742}
J\'er\^ome Chabert, Siegfried Echterhoff, and Ryszard Nest.
\newblock The {C}onnes-{K}asparov conjecture for almost connected groups and
  for linear {$p$}-adic groups.
\newblock {\em Publ. Math. Inst. Hautes \'Etudes Sci.}, (97):239--278, 2003.

\bibitem{MR1966758}
J\'er\^ome Chabert, Siegfried Echterhoff, and Herv\'e Oyono-Oyono.
\newblock Shapiro's lemma for topological {$K$}-theory of groups.
\newblock {\em Comment. Math. Helv.}, 78(1):203--225, 2003.

\bibitem{MR3094498}
Joachim Cuntz, Siegfried Echterhoff, and Xin Li.
\newblock On the {$K$}-theory of crossed products by automorphic semigroup
  actions.
\newblock {\em Q. J. Math.}, 64(3):747--784, 2013.

\bibitem{MR3323201}
Joachim Cuntz, Siegfried Echterhoff, and Xin Li.
\newblock On the {K}-theory of the {C}*-algebra generated by the left regular
  representation of an {O}re semigroup.
\newblock {\em J. Eur. Math. Soc. (JEMS)}, 17(3):645--687, 2015.

\bibitem{MR2820377}
Siegfried Echterhoff and Heath Emerson.
\newblock Structure and {$K$}-theory of crossed products by proper actions.
\newblock {\em Expo. Math.}, 29(3):300--344, 2011.

\bibitem{ELN17}
Siegfried Echterhoff, Kang Li, and Ryszard Nest.
\newblock The orbit method for the {B}aum-{C}onnes conjecture for algebraic
  groups over local function fields.
\newblock {\em J. Lie Theory}, 28(2):323--341, 2018.

\bibitem{MR2608195}
Siegfried Echterhoff, Wolfgang L\"uck, N.~Christopher Phillips, and Samuel
  Walters.
\newblock The structure of crossed products of irrational rotation algebras by
  finite subgroups of {${\rm SL}_2(\Bbb Z)$}.
\newblock {\em J. Reine Angew. Math.}, 639:173--221, 2010.

\bibitem{Exel10}
R.~Exel.
\newblock Reconstructing a totally disconnected groupoid from its ample
  semigroup.
\newblock {\em Proc. Amer. Math. Soc.}, 138(8):2991--3001, 2010.

\bibitem{Goehle}
Geoff Goehle.
\newblock {\em {Groupoid Crossed Products}}.
\newblock PhD thesis, Dartmouth College, 2009.

\bibitem{Higson}
N.~Higson.
\newblock Bivariant {$K$}-theory and the {N}ovikov conjecture.
\newblock {\em Geom. Funct. Anal.}, 10(3):563--581, 2000.

\bibitem{MR1911663}
N.~Higson, V.~Lafforgue, and G.~Skandalis.
\newblock Counterexamples to the {B}aum-{C}onnes conjecture.
\newblock {\em Geom. Funct. Anal.}, 12(2):330--354, 2002.

\bibitem{MR1388299}
G.~G. Kasparov.
\newblock {$K$}-theory, group {$C^*$}-algebras, and higher signatures
  (conspectus).
\newblock In {\em Novikov conjectures, index theorems and rigidity, {V}ol.\ 1
  ({O}berwolfach, 1993)}, volume 226 of {\em London Math. Soc. Lecture Note
  Ser.}, pages 101--146. Cambridge Univ. Press, Cambridge, 1995.

\bibitem{MR1900993}
Mahmood Khoshkam and Georges Skandalis.
\newblock Regular representation of groupoid {$C^*$}-algebras and applications
  to inverse semigroups.
\newblock {\em J. Reine Angew. Math.}, 546:47--72, 2002.

\bibitem{KumjianPask00}
Alex Kumjian and David Pask.
\newblock Higher rank graph {$C^\ast$}-algebras.
\newblock {\em New York J. Math.}, 6:1--20, 2000.

\bibitem{MR1432596}
Alex Kumjian, David Pask, Iain Raeburn, and Jean Renault.
\newblock Graphs, groupoids, and {C}untz-{K}rieger algebras.
\newblock {\em J. Funct. Anal.}, 144(2):505--541, 1997.

\bibitem{Lassagne}
Ivan Lassagne.
\newblock {Moyennabilité à l'infini et exactitude d'un groupoide étale},
  2014.
\newblock arXiv:1410.8173.

\bibitem{LeGall}
Pierre-Yves Le~Gall.
\newblock {\em {Théorie de Kasparov équivariante et groupoides}}.
\newblock PhD thesis, Université Paris VII, 1994.

\bibitem{LeGall99}
Pierre-Yves Le~Gall.
\newblock Th\'eorie de {K}asparov \'equivariante et groupo\"\i des. {I}.
\newblock {\em $K$-Theory}, 16(4):361--390, 1999.

\bibitem{Meyer00equivariantkasparov}
Ralf Meyer.
\newblock Equivariant {K}asparov theory and generalized homomorphisms.
\newblock {\em $K$-Theory}, 21(3):201--228, 2000.

\bibitem{MR2547343}
Paul~S. Muhly and Dana~P. Williams.
\newblock {\em Renault's equivalence theorem for groupoid crossed products},
  volume~3 of {\em New York Journal of Mathematics. NYJM Monographs}.
\newblock State University of New York, University at Albany, Albany, NY, 2008.

\bibitem{Paterson1999}
Alan L.~T. Paterson.
\newblock {\em Groupoids, inverse semigroups, and their operator algebras},
  volume 170 of {\em Progress in Mathematics}.
\newblock Birkh\"auser Boston, Inc., Boston, MA, 1999.

\bibitem{Morita}
Iain Raeburn and Dana~P. Williams.
\newblock {\em Morita equivalence and continuous-trace {$C^*$}-algebras},
  volume~60 of {\em Mathematical Surveys and Monographs}.
\newblock American Mathematical Society, Providence, RI, 1998.

\bibitem{Renault1980}
Jean Renault.
\newblock {\em A groupoid approach to {$C^{\ast} $}-algebras}, volume 793 of
  {\em Lecture Notes in Mathematics}.
\newblock Springer, Berlin, 1980.

\bibitem{MR813822}
Anthony~Karel Seda.
\newblock On the continuity of {H}aar measure on topological groupoids.
\newblock {\em Proc. Amer. Math. Soc.}, 96(1):115--120, 1986.

\bibitem{STY02}
G.~Skandalis, J.~L. Tu, and G.~Yu.
\newblock The coarse {B}aum-{C}onnes conjecture and groupoids.
\newblock {\em Topology}, 41(4):807--834, 2002.

\bibitem{Tu12}
J.~L. Tu.
\newblock The coarse {B}aum-{C}onnes conjecture and groupoids. {II}.
\newblock {\em New York J. Math.}, 18:1--27, 2012.

\bibitem{Tu98}
Jean-Louis Tu.
\newblock La conjecture de {B}aum-{C}onnes pour les feuilletages moyennables.
\newblock {\em $K$-Theory}, 17(3):215--264, 1999.

\bibitem{Tu99}
Jean~Louis Tu.
\newblock La conjecture de {N}ovikov pour les feuilletages hyperboliques.
\newblock {\em $K$-Theory}, 16(2):129--184, 1999.

\bibitem{MR1798599}
Jean-Louis Tu.
\newblock The {B}aum-{C}onnes conjecture for groupoids.
\newblock In {\em {$C^*$}-algebras ({M}\"unster, 1999)}, pages 227--242.
  Springer, Berlin, 2000.

\bibitem{MR2117427}
Jean-Louis Tu.
\newblock Non-{H}ausdorff groupoids, proper actions and {$K$}-theory.
\newblock {\em Doc. Math.}, 9:565--597, 2004.

\bibitem{Williams}
Dana~P. Williams.
\newblock {\em Crossed products of {$C{^\ast}$}-algebras}, volume 134 of {\em
  Mathematical Surveys and Monographs}.
\newblock American Mathematical Society, Providence, RI, 2007.

\end{thebibliography}

\end{document}